\DeclareMathOperator{\der}{Der}\DeclareMathOperator{\gr}{gr}
\DeclareMathOperator{\SU}{SU}
\DeclareMathOperator{\spec}{Spec}\DeclareMathOperator{\Mod}{mod}
\DeclareMathOperator{\Def}{Def}  
\theoremstyle{plain}
\newtheorem{thm}{Theorem}[section]
\newtheorem{lem}[thm]{Lemma}
\newtheorem{cor}[thm]{Corollary}
\newtheorem{prop}[thm]{Proposition}
\newtheorem*{thm*}{Theorem}
\newtheorem*{lem*}{Lemma}
\newtheorem*{cor*}{Corollary}
\theoremstyle{definition}
\newtheorem{dfn}[thm]{Definition}
\newtheorem{ex}[thm]{Example}
\newtheorem{rmk}[thm]{Remark}
\newtheorem*{dfn*}{Definition}
\newtheorem*{hp*}{Hypothesis}
\numberwithin{equation}{section}\numberwithin{figure}{section}
\def\fm{{\mathfrak m}}\def\sets{{\rm sets}}
 \def\lk{{ {\rm lk } }}\def\reg{{ { \rm reg}  }}
\def\e#1\e{\begin{equation}#1\end{equation}}
\def\iz#1\iz{\begin{itemize}#1\end{itemize}}
\def\ea#1\ea{\e{\begin{split}#1\end{split}}\e}
\def\eq{\eqref}
\def\l{\label}
\def\0{\hspace{0pt}}
\def\ph{\phi}
\def\Uh_#1{\,\widehat{\!U}_{\!#1}}
\def\Ph{\Phi}
\def\r{\rho}
\def\et{\eta}
\def\ps{\psi}
\def\vx{{\rm vx}}
\def\ind{\mathop{\rm ind}\nolimits}
\def\dim{\mathop{\rm dim}\nolimits}
\def\ker{\mathop{\rm ker}}
\def\im{\mathop{\rm im}}
\def\coker{\mathop{\rm coker}}
\def\Re{\mathop{\rm Re}}
\def\Ext{\mathop{\rm Ext}\nolimits}
\def\End{\mathop{\rm End}}
\def\ge{\geqslant}
\def\le{\leqslant\nobreak}
\def\cA{{{\mathcal A}}}
\def\cB{{{\mathcal B}}}
\def\cD{{{\mathcal D}}}
\def\cE{{{\mathcal E}}}
\def\cF{{{\mathcal F}}}
\def\cG{{{\mathcal G}}}
\def\cI{{{\mathcal I}}}
\def\cJ{{{\mathcal J}}}
\def\cK{{{\mathcal K}}}
\def\O{{{\mathcal O}}}
\def\cU{{{\mathcal U}}}
\def\cV{{{\mathcal V}}}
\def\={\equiv}
\def\cQ{{{\mathcal Q}}}
\def\cX{{{\mathcal X}}}
\def\cY{{{\mathcal Y}}}
\def\C{{{\mathbb C}}}
\def\P{{\mathbb{P}}}
\def\K{{{\mathbb K}}}
\def\Q{{{\mathbb Q}}}
\def\R{{{\mathbb R}}}
\def\Z{{{\mathbb Z}}}
\def\al{\alpha}
\def\be{\beta}
\def\ga{\gamma}
\def\de{\delta}
\def\io{\iota}
\def\ep{\epsilon}
\def\la{\lambda}
\def\th{\theta}
\def\ta{{\tau}}
\def\ze{\zeta}
\def\si{\sigma}
\def\om{\omega}
\def\De{\Delta}
\def\La{\Lambda}
\def\Th{\Theta}
\def\Om{\Omega}
\def\Ga{\Gamma}
\def\id{{\rm id}}
\def\d{{\rm d}}
\def\db{{\bar\bd}}
\def\ts{\textstyle}
\def\w{\wedge}
\def\-{\setminus}
\def\bt{{\bullet}}
\def\bop{\bigoplus}
\def\ov{\overline}
\def\ul{\underline}
\def\iy{\infty}
\def\cm{\circ}
\def\nb{\nabla}\def\sb{\subseteq}\def\bd{\partial}\def\sing{{\rm sing}}
\def\loc{{\rm loc}}\def\Art{{\rm Art}}
\begin{document}
\title{Deformations of Compact Calabi--Yau Conifolds}
\date{}
\author{Yohsuke Imagi}
\maketitle

\begin{abstract}
Let $X$ be a compact normal K\"ahler space whose canonical sheaf is a rank-one free $\O_X$ module and whose singularities are isolated, rational and quasi-homogeneous. We prove then that under a topological hypothesis the obstruction to deforming $X$ concentrates upon its singularities, generalizing partially the results of \cite{Nam1,Gross}. We prove also that under a certain hypothesis the locally trivial deformations of $X$ are unobstructed.  
\end{abstract}

\section{Introduction}
In this paper we generalize the results of \cite{Bog, Kaw, Ran,Ran1,Tian,Tian2,Tod}. We deal with the following class of complex analytic spaces (which we call complex spaces for short).
\begin{dfn}\l{dfn: CY conifolds}
A {\it compact Calabi--Yau $n$-conifold} is a compact normal K\"ahler space $X$ of dimension $n$ whose canonical sheaf is a rank-one free $\O_X$ module and whose singularities are isolated, rational and quasi-homogeneous.
\end{dfn}
The deformations of a compact Calabi--Yau $n$-fold $X$ may in general be obstructed \cite{Gross2}. But by \cite[Theorem 2.2]{Gross}, for $n=3$ the obstruction concentrates (in the sense of Definition \ref{dfn: concentrate}) upon the singular set $X^\sing.$ In particular, if $X^\sing$ is isolated and if the germ $(X,x)$ at every $x\in X^\sing$ has unobstructed deformations then $X$ itself has unobstructed deformations too. 

In this paper we prove the following theorem. As in \S\ref{sect: Kahler cones}, if $X$ is a compact Calabi--Yau conifold then for each $x\in X^\sing$ there exist a K\"ahler cone $C_x$ with vertex $\vx$ and a germ isomorphism $(X,x)\cong (C_x,\vx).$ We can define in particular, for $q=0,1,2,\dots,$ the $q^{\rm th}$ Betti number $b_q(C_x^\reg)$ of the regular locus $C_x^\reg:=C_x\-\{\vx\}.$ These are independent of the choice of $(C_x,\vx)$ and depend only on the topology of $(X,x).$
\begin{thm}\l{main thm1}
Let $X$ be a compact Calabi--Yau $n$-conifold such that for each $x\in X^\sing$ we have $b_{n-2}(C_x^\reg)=0$ or $b_{n-1}(C_x^\reg)=0.$ The obstruction to deforming $X$ then concentrate upon its singularities.
\end{thm}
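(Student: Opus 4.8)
The plan is to study the tangent (cotangent–complex) cohomology $T^i_X$ governing deformations of $X$ and to localise the degree-two obstruction group by means of the local-to-global spectral sequence
\[
E_2^{p,q}=H^p\bigl(X,\cT^q_X\bigr)\ \Longrightarrow\ T^{p+q}_X,\qquad \cT^q_X:=\shExt^q_{\O_X}(\mathbb L_X,\O_X),\quad \cT_X:=\cT^0_X.
\]
Because the singularities of $X$ are isolated, the sheaves $\cT^q_X$ with $q\ge1$ are supported on the finite set $X^\sing$, whence $H^p(X,\cT^q_X)=0$ for all $p\ge1$, $q\ge1$, while $H^0(X,\cT^q_X)=\bop_{x\in X^\sing}T^q_{(X,x)}$. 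In total degree two the only surviving $E_2$-terms are therefore $E_2^{2,0}=H^2(X,\cT_X)$ and $E_2^{0,2}=\bop_x T^2_{(X,x)}$, and a short diagram chase identifies the kernel of the localisation edge map $T^2_X\to\bop_x T^2_{(X,x)}$ with $E_\infty^{2,0}=\coker\bigl(d_2\colon\bop_x T^1_{(X,x)}\to H^2(X,\cT_X)\bigr)$. In view of Definition \ref{dfn: concentrate}, concentration amounts to the injectivity of this localisation map, compatibly with the obstruction maps; the compatibility is formal from functoriality of the deformation functor, so I would reduce the theorem to showing that the above cokernel vanishes. In particular it suffices to prove $H^2(X,\cT_X)=0$, and this is the target of the remaining steps.

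The Calabi--Yau hypothesis furnishes a nowhere-vanishing holomorphic section of the trivial canonical sheaf $\omega_X\cong\O_X$; contraction with it gives on $X^\reg$, and hence reflexively across the codimension-$n$ singular set, an isomorphism $\cT_X\cong\Om^{[n-1]}_X$ with the sheaf of reflexive $(n-1)$-forms. The plan is then to compute $H^2(X,\Om^{[n-1]}_X)$ Hodge-theoretically. Since the singularities are rational and quasi-homogeneous, I would fix a resolution $\pi\colon\wt X\to X$, exploit the rational-singularity vanishing $R^i\pi_*\O_{\wt X}=0$ $(i>0)$ together with a comparison of reflexive-form cohomology on $X$ with the interior Hodge cohomology of the open manifold $X^\reg$, thereby realising $H^2(X,\Om^{[n-1]}_X)$ as a graded piece of the (mixed) Hodge structure on $H^{n+1}$ of $X$. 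Purity on the compact Kähler space $X$ should then reinterpret the desired vanishing as a statement about the image of $H^{n+1}(X)\to H^{n+1}(X^\reg)$.

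The topological hypothesis enters at the comparison between $\Om^{[n-1]}_X$ on $X$ and $\Om^{n-1}$ on $X^\reg$, whose discrepancy is measured singularity by singularity by the local cohomology groups $H^\bullet_{\{x\}}$; for the cone $(C_x,\vx)$ these are computed, via the $\C^*$-action and the deformation retraction $C_x^\reg\simeq L_x$ onto the link, from the cohomology of $C_x^\reg$, with the Hodge filtration in the degree $p=n-1$ selecting precisely the Betti numbers $b_{n-2}(C^\reg_x)$ and $b_{n-1}(C^\reg_x)$. The either/or assumption that one of these vanishes is designed exactly to kill the single local cohomology group that could obstruct the comparison in the relevant degree, forcing $H^2(X,\cT_X)=0$ (equivalently, $d_2$ surjective). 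Assembling the global vanishing yields injectivity of the localisation map and hence the asserted concentration. I expect the main obstacle to be the second and third steps: making the Hodge-theoretic computation of $H^2(X,\Om^{[n-1]}_X)$ rigorous on a singular compact Kähler space, and pinning down precisely which link Betti number governs the comparison so that the stated hypothesis is exactly what is required. This is where the rational, quasi-homogeneous and Kähler hypotheses must all be deployed in concert.
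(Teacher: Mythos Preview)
Your reduction is too strong and does not match the paper's definition of concentration. You propose to show the full localisation map $\be:T^2_X\to\bop_x T^2_{(X,x)}$ is injective by proving $H^2(X,\cT_X)=0$ (or $d_2$ surjective). But $H^2(X,\cT_X)\cong H^2(X,\Om^{[n-1]}_X)$ is essentially the Hodge number $h^{n-1,2}$, which is typically nonzero: already for a \emph{smooth} Calabi--Yau threefold it equals $h^{1,1}\ge1$. So your target vanishing is simply false in general, and the parenthetical ``equivalently, $d_2$ surjective'' is not an equivalence. More to the point, Definition~\ref{dfn: concentrate} only asks that $\be$ be injective on the image of the \emph{obstruction map} $\al$, not on all of $T^2_X$; for smooth $X$ this image is zero by Bogomolov--Tian--Todorov, so concentration holds trivially even though $\ker\be=T^2_X\ne0$. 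Your spectral-sequence argument never engages with $\im\al$, and the vague Hodge-theoretic plan for relating the link Betti numbers to $H^2(X,\Om^{[n-1]}_X)$ would, even if made precise, be aimed at the wrong target.

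The paper takes a completely different route. Via a $T^1$-lift criterion (Lemma~\ref{lem: concentrate}) it reduces concentration to the following: for every $k$ and every deformation $X_k/A_k$, the connecting map $\ga:T^1(X_{k-1}/A_{k-1})\to\Ext^2_{\O_X}(\Om^1_X,\O_X)$ has image on which the localisation $\de$ is injective. By Serre duality (using \eq{CY}, established in Lemma~\ref{lem: n0}) this dualises to a statement about $H^{n-2}(X,\Om^1_X)$ and $H^{n-2}_{X^\sing}(X,\Om^1_X)$; the key input (Corollary~\ref{cor: cH^n-2}) is the surjectivity of ${}_cH^{n-2}(X^\reg,\Om^1_{X_k/A_k})\to{}_cH^{n-2}(X^\reg,\Om^1_X)$, proved analytically in Theorem~\ref{thm: cH^n-2}. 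That proof equips $X$ with a K\"ahler \emph{conifold} metric, represents classes in ${}_cH^{n-2}(X^\reg,\Om^1_X)$ by harmonic $(1,n-2)$-forms of order $>1-n$ (Lemma~\ref{lem: 1 n-2}), lifts them to relative forms, and corrects the lift to land in the right cohomology. The Betti-number hypothesis enters precisely here: Corollaries~\ref{cor: order -p} and~\ref{cor: order 2+p-l} on homogeneous harmonic forms on cones show that $b_{n-2}(C_x^\reg)=0$ or $b_{n-1}(C_x^\reg)=0$ forces the harmonic representatives to have the improved decay rate needed for the argument (Corollary~\ref{cor: c harm n-1}). None of this is visible from the spectral-sequence or mixed-Hodge picture you sketch.
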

For $n=3,$ by \cite[Theorem 2.2]{Gross} the same statement holds without the hypothesis that the singularities should be isolated and quasi-homogeneous. The Betti number condition is not necessary either. That proof however is based upon the method of \cite{Nam1} which applies only to $n=3.$ Theorem \ref{main thm1} on the other hand applies to every $n.$ We prove also
\begin{thm}\l{main thm2}
Let $X$ be a compact Calabi--Yau $n$-conifold with $n\ge3$ and $H^2_{X^\sing}(X,\Om^{n-2}_X)=0.$ The locally trivial deformations of $X$ are then unobstructed.
\end{thm}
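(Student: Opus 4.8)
The plan is to recast the problem as the solution of a Maurer--Cartan equation inside a differential Gerstenhaber--Batalin--Vilkovisky (dGBV) algebra and to run the Bogomolov--Tian--Todorov argument in this singular setting. Locally trivial deformations of $X$ are governed by the tangent sheaf $\Theta_X:=\shHom(\Om^1_X,\O_X)$: the first-order ones form $H^1(X,\Theta_X)$ and the obstructions lie in $H^2(X,\Theta_X)$. First I would resolve $\Theta_X$, and more generally the reflexive sheaves $\wedge^p\Theta_X$, by Dolbeault-type complexes (on $X^\reg$ with suitable extension across $X^\sing$, or on a resolution), producing a DGLA $L=(\bop_q\cA^{0,q}(\wedge^\bullet\Theta_X),\db,[\,\cdot\,,\,\cdot\,])$ with the $\db$-operator and the Schouten--Nijenhuis bracket, whose degree-one cohomology is $H^1(X,\Theta_X)$ and whose obstructions sit in $H^2(X,\Theta_X)$. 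Normality, together with the isolated and rational nature of the singularities, makes these sheaves reflexive with finite-dimensional cohomology, so the locally trivial deformation functor is governed by $L$ in the usual Kuranishi sense.

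Next I would exploit the Calabi--Yau structure. A nowhere-vanishing generator $\si$ of $K_X\cong\O_X$ gives, by contraction, isomorphisms of reflexive sheaves $\io_\si\colon\wedge^p\Theta_X\xrightarrow{\ \sim\ }\Om^{n-p}_X$ which intertwine $\db$ on the two sides. Setting $\De:=\io_\si^{-1}\cm\bd\cm\io_\si$ transports the holomorphic de Rham differential to a second-order operator on polyvector fields with $\De^2=0$, and the Tian--Todorov lemma expresses the Schouten bracket as the failure of $\De$ to be a derivation, $[\al,\be]=\pm\bigl(\De(\al\w\be)-\De\al\w\be-(-1)^{|\al|}\al\w\De\be\bigr)$. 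This makes $L$ a dGBV algebra in which the two anticommuting differentials $\db$ and $\De$ correspond, after contraction by $\si$, to $\db$ and $\bd$ on $\Om^\bullet_X$.

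The heart of the matter is the obstruction. In the divergence-free gauge $\De\ph=0$ (available because $\De$ admits a parametrix), the Tian--Todorov lemma reduces the primary obstruction attached to a $\db$-closed $\ph\in\cA^{0,1}(\Theta_X)$ to $\tfrac12[\ph,\ph]=\De(\tfrac12\ph\w\ph)$, and under $\io_\si$ this becomes $\bd\eta$ with $\eta:=\io_\si(\tfrac12\ph\w\ph)\in\cA^{0,2}(\Om^{n-2}_X)$; crucially, $\eta$ is $\db$-closed and $\bd\eta$ is both $\bd$-exact and $\db$-closed. To continue the power-series solution I must show $\bd\eta$ is $\db$-exact, which is exactly the $\bd\db$-lemma for $\Om^{n-2}_X$-valued classes in degree two. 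The same identity handles \emph{every} order of the induction, since each higher obstruction is again of this single type, living in the same cohomological degree. This is precisely where the hypothesis enters: the local-cohomology exact sequence $\cdots\to H^2_{X^\sing}(X,\Om^{n-2}_X)\to H^2(X,\Om^{n-2}_X)\to H^2(X^\reg,\Om^{n-2}_X)\to\cdots$ combined with $H^2_{X^\sing}(X,\Om^{n-2}_X)=0$ identifies, in degree two, the $\Om^{n-2}$-cohomology of $X$ with that of the regular locus and hence with that of a Kähler resolution, where classical Hodge theory supplies the $\bd\db$-lemma. The higher Massey products then vanish by the same dGBV mechanism, in the spirit of Barannikov--Kontsevich and Manetti, so the locally trivial Kuranishi space is smooth.

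The main obstacle is making the Hodge-theoretic $\bd\db$-lemma rigorous on the singular space in exactly the degree where the obstruction lives, and matching the intrinsic obstruction group $H^2(X,\Theta_X)\cong H^2(X,\Om^{n-1}_X)$ with form cohomology that Hodge theory can compute; this is the sole role of the assumption $H^2_{X^\sing}(X,\Om^{n-2}_X)=0$. The isolated, rational, quasi-homogeneous character of the singularities is what I would use to guarantee that the lower local-cohomology groups vanish automatically, so that only the degree-two group must be hypothesized, and that the contraction isomorphisms $\io_\si$ extend across $X^\sing$; rationality also furnishes the comparison with a resolution. A secondary technical point, which I expect to be routine given the finite-dimensionality above, is to justify the choice of harmonic representatives, the divergence-free gauge, and the convergence of the Maurer--Cartan series with the Dolbeault complexes used on $X^\reg$.
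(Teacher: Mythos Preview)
Your approach via a dGBV algebra and the Bogomolov--Tian--Todorov mechanism is genuinely different from the paper's, which instead proves surjectivity of the maps $H^1(X,\Th_{X_k/A_k})\to H^1(X,\Th_{X_{k-1}/A_{k-1}})$ and appeals to the $T^1$-lift criterion. In the paper the hypothesis $H^2_{X^\sing}(X,\Om^{n-2}_X)=0$ is used \emph{locally}: it is equivalent to $H^1(U\setminus\{x\},\Om^{n-2}_X)=0$ for Stein neighbourhoods $U$ of each $x\in X^\sing$, and this lets one write $\db$-closed $(n-2,1)$-forms on punctured neighbourhoods as $\db$-exact, which is the step needed to identify $H^1(X,\Th_X)$ (and its relative versions) with a space of $L^2$ harmonic $(n-1,1)$-forms for a conifold K\"ahler metric on $X^\reg$. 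The required surjectivity then follows by a dimension count against these harmonic spaces.

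Your proposal has a genuine gap at its core. You claim that $H^2_{X^\sing}(X,\Om^{n-2}_X)=0$ ``identifies, in degree two, the $\Om^{n-2}$-cohomology of $X$ with that of the regular locus and hence with that of a K\"ahler resolution, where classical Hodge theory supplies the $\bd\db$-lemma''. But the local-cohomology sequence only gives \emph{injectivity} of $H^2(X,\Om^{n-2}_X)\to H^2(X^\reg,\Om^{n-2}_X)$; you would also need $H^3_{X^\sing}(X,\Om^{n-2}_X)=0$ for an isomorphism, and even that would not identify $H^2(X^\reg,\Om^{n-2}_{X^\reg})$ with $H^2(\wt X,\Om^{n-2}_{\wt X})$ on a resolution $\wt X$. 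More seriously, the $\bd\db$-lemma you need is a statement about \emph{forms} on the non-compact manifold $X^\reg$, not merely about cohomology classes, and Hodge theory on $\wt X$ does not transfer to $X^\reg$. What you call ``a secondary technical point, which I expect to be routine'' --- harmonic representatives, the divergence-free gauge, integration by parts near the singularities --- is in fact the substance of the argument: the paper devotes several sections to weighted Sobolev spaces on $X^\reg$, asymptotic expansions of harmonic forms on K\"ahler cones, and the order estimates that make integration by parts valid. Without that analysis neither your gauge choice nor the inductive step of your Maurer--Cartan scheme is justified on $X^\reg$.
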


For $n=2$ the singularities are rational double points (whose deformations are unobstructed) and Theorem \ref{main thm1} is proved in \cite{Ran1,Ran2}. The locally trivial deformations are also unobstructed as we recall now. Denote by $\Def(X)$ the Kuranishi space for deformations of $X,$ which is smooth by Theorem \ref{main thm1}. For $x\in X^\sing$ denote by $\Def(X,x)$ the Kuranishi space for deformations of the germ $(X,x),$ which is smooth because $(X,x)$ is a hypersurface singularity. As in \cite{BW} the map $\Def(X)\to \prod_{x\in X^\sing}\Def(X,x)$ defined by taking germs at $X^\sing$ is a submersion. Its fibre over the reference point of $\prod_{x\in X^\sing}\Def(X,x)$ defines therefore a smooth Kuranishi space for locally trivial deformations of $X.$

As in \cite{BGL} every Calabi--Yau conifold $X$ has a quasi-\'etale cover which is the product of a torus, irreducible Calabi--Yau varieties and irreducible holomorphic symplectic varieties. But as $X^\sing$ is isolated the product has only one factor. Let this be an irreducible symplectic variety. Its deformations are then unobstructed \cite[Theorem 2.5]{Nam}. Although $X$ is supposed projective in the formal statement of this result, that hypothesis is unnecessary as is clear from its proof; all we need is \cite[Theorem 1]{Ohs}, which applies to K\"ahler spaces. It is known also that the locally trivial deformations of $X$ are unobstructed \cite[Theorem 4.7]{BL2}.

If $X$ is not symplectic, less is known. By \cite[Corollary 1.5 and Remark 4.5]{FL}, if the singularities of $X$ are complete intersections and satisfy the $1$ Du Bois condition then the deformations of $X$ are unobstructed. The proof shows also that if $X^\sing$ is in addition $n-1$ Du Bois then the locally trivial deformations of $X$ are unobstructed. For instance, if $(X,x)$ is a quasi-homogeneous hypersurface singularity then the higher Du Bois conditions are conditions about its minimal exponent $\al$ (introduced in \cite{Sait}). More precisely, let $\C^{n+1}$ have a $\C^*$ action of weights $w_1,\dots,w_{n+1}\in\{1,2,3,\dots\}$ with greatest common divisor $1$ and let $(X,x)$ be defined in $\C^{n+1}$ by a weighted homogeneous polynomial of degree $d;$ then $\al=(w_1+\cdots+w_{n+1})/d.$ Moreover, for $k=0,1,2,\dots$ the germ $(X,x)$ is $k$ Du Bois if and only if $\al\ge k+1.$ So the major advantage of Theorems \ref{main thm1} and \ref{main thm2} is that we do not need such restrictions. Also we do not need the singularities to be complete intersections either.

We explain now how we prove Theorems \ref{main thm1} and \ref{main thm2}. The formal structure of the proof is similar to that of \cite[Theorem 2.5]{Nam} above. In that theorem we introduce on the regular locus $X^\reg$ a complete K\"ahler metric and show that part of the Hodge spectral sequence of $X^\reg$ degenerates (as in \cite[Theorem 1]{Ohs}). This with the $T^1$ lift theorem implies readily that the deformation functors are unobstructed. 

On the other hand, in the proof of Theorems \ref{main thm1} and \ref{main thm2} we introduce K\"ahler metrics called conifolds metrics in the sense of \cite[Definition 4.6]{Chan}, \cite[Definition 2.2]{HS}, \cite[Definition 2.1]{J1} and \cite[Definition 3.24]{KL}; see also Definition \ref{dfn: Riem conifolds} and Remark \ref{rmk: Riem conifolds}. Such metrics are locally expressible as Riemannian cone metrics, which are incomplete metrics on $X^\reg$ whereas those used in \cite{Ohs} are complete metrics on $X^\reg.$ 

Also we cannot make such a simple statement as the Hodge spectral sequence degeneration \cite[Theorem 1]{Ohs}. Theorems \ref{main thm1} and \ref{main thm2} are concerned with certain maps of the form $\cA\to \cB$ where $\cA,\cB$ are sheaf cohomology groups of infinitesimal deformations of $X.$ The heart of the proofs will be to show that the map $\cA\to \cB$ is surjective. We do this as follows:
\begin{equation}\l{lift}\parbox{10cm}{
We take an element of $\cB$ and represent it by a relative harmonic form on $X^\reg.$ We lift it to a relative differential form which will represent an element of $\cA.$ We then seek a relative harmonic form whose restriction to $\spec\C$ will be the original harmonic form.
}\end{equation}
For more precise statements see Theorem \ref{thm: cH^n-2 intro} and \eq{T1Th} below.

We begin in \S\S\ref{sect: Riem}--\ref{sect: harm n-1} with the study of harmonic forms with respect to K\"ahler conifold metrics. In \S\ref{sect: Riem} we deal with Riemannian cones, which are the model at singularities of the K\"ahler conifold metrics. The main result of the section is as follows:
\begin{thm}[Theorem \ref{thm: no log}]\l{thm: no log intro}
Let $C$ be a Riemannian cone and $\ph$ a harmonic form on $C^\reg.$ Then $\ph$ may be written as an infinite sum \eq{expansion} of homogeneous harmonic forms without logarithm terms. 
\end{thm}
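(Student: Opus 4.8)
The plan is to separate variables in the radial direction and to exploit the homothety group of the cone. Write $C^\reg=(0,\iy)_r\t L$ with cone metric $\d r^2+r^2 g_L$, where $(L,g_L)$ is the compact link. The dilations $D_t\colon(r,x)\mapsto(tr,x)$ satisfy $D_t^*g=t^2g$, so by the naturality of the Hodge Laplacian under diffeomorphisms together with its scaling $\Delta_{t^2g}=t^{-2}\Delta_g$ one obtains $D_t^*\circ\Delta=t^{-2}\Delta\circ D_t^*$; hence $D_t^*$ preserves the space of harmonic forms on $C^\reg$. I shall regard $\{D_t^*\}_{t>0}$ as a one-parameter group acting on harmonic forms, with infinitesimal generator the Euler operator $E$ given by the Lie derivative along $r\partial_r$. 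A \emph{homogeneous} harmonic form of weight $\nu$ is then precisely an eigenvector $E\ph=\nu\ph$, that is one of the shape $r^\nu\eta+r^{\nu-1}\,\d r\w\ze$ with $\eta,\ze$ forms on $L$, whereas a logarithm term is exactly a generalized eigenvector on which $E$ acts through a nontrivial Jordan block. Thus \eqref{expansion} is equivalent to the assertion that $E$ acts \emph{semisimply}, and the whole task is to exclude Jordan blocks.

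First I would decompose $\ph$ fibrewise over $L$ by the spectral theorem for the Hodge Laplacian $\Delta_L$ of the compact link, which has discrete spectrum. Splitting $\Om^\bullet(L)$ into the harmonic forms and, for each eigenvalue $\la>0,$ the coclosed eigenforms $\si$ together with their differentials $\d_L\si,$ the equation $\Delta_C\ph=0$ uncouples into independent finite-dimensional blocks indexed by this link datum. Each block is governed by a constant-coefficient Euler system in $r$ on which $E$ acts as a fixed square matrix whose eigenvalues are the indicial roots: a single scalar second-order equation for every harmonic-form mode ($\la=0$), and a coupled $2\t2$ system mixing a radial piece $\d r\w\si$ with an exact piece $\d_L\si$ for every mode with $\la>0.$ The solutions of each block are combinations of $r^\nu,$ with an $r^\nu\log r$ term appearing exactly when the corresponding matrix is defective, i.e. has a repeated indicial root with only one eigenvector. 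It therefore remains to show that every indicial matrix is diagonalizable.

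The main obstacle is precisely this diagonalizability, which is the statement that no resonance occurs. For the blocks with $\la>0$ I expect the discriminant of the $2\t2$ indicial matrix to be bounded below by a positive multiple of $\la,$ so the two indicial roots are always distinct and the block is automatically diagonalizable; this is the easy half. The delicate half is $\la=0,$ the harmonic forms of the link, where the two indicial roots of the scalar Euler equation can in principle collide and force a genuine $\log r.$ Here I would compute the indicial equation explicitly — as in the classical analysis of the Laplacian on cones — for a harmonic $p$-form on the $m$-dimensional link, obtaining two roots that coincide only in one exceptional relation between $m$ and $p;$ one then checks that, outside the exceptional low-dimensional situations which do not arise for the cones relevant here, this relation fails, so the roots stay distinct and no logarithm is produced. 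That the exceptional case is genuine, and not an artefact of the method, is shown by the flat cone $\R^2,$ on which the non-homogeneous harmonic function $\log r$ really exists; this is consistent with our setting, where the cones have dimension well above $2.$

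Finally I would reassemble the blocks. Summing the homogeneous solutions over all link eigendata produces the series \eqref{expansion}; its convergence to $\ph$ in $C^\iy$ on every compact annulus $\{a\le r\le b\}$ follows from standard elliptic estimates on $L$ together with the decay of the coefficients forced by the spectral gap of $\Delta_L,$ exactly as for the eigenform expansion of a smooth form on a closed manifold. Since by the preceding steps every term is a homogeneous harmonic form and none carries a logarithm, this is the desired expansion.
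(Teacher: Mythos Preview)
Your framework is sound in outline—separating variables, reducing to indicial equations, and identifying logarithms with double indicial roots is exactly the right picture—but the argument as written has a genuine gap at the point you yourself flag as ``the delicate half.'' You say that for the link-harmonic modes the two indicial roots coincide ``only in one exceptional relation'' between the link dimension and $p$, and then dismiss this by asserting that the exceptional case ``does not arise for the cones relevant here.'' But the theorem is stated for an \emph{arbitrary} Riemannian $l$-cone, with no dimensional hypothesis, so you cannot appeal to the ambient setting to rule the resonance out; you must exclude it internally. Your $\R^2$ example shows the resonance is genuine, so something structural is needed. Likewise the $\la>0$ case is only ``expected,'' not verified: you would need to compute the $2\times2$ indicial matrix explicitly and check its discriminant, and this computation is not in your proposal.

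The paper's proof supplies exactly the missing mechanism, and it is quite different from a case-by-case discriminant check. Rather than decomposing by link eigenvalue, the paper assumes a logarithm exists—so both $\ph:=r^m e_j$ and $(\log r)\ph$ are harmonic—and computes $\De[(\log r)\ph]=0$ directly via \eq{d+d*}. This forces the tangential component $\ph''$ to vanish and then $\De_L\ph'+(m-2)^2\ph'=0$ on the link; since $\ph'\ne0$ and $\De_L\ge0$, one gets $m=2$, i.e.\ $p=\frac l2+1$. The decisive step, which your sketch lacks, is then Hodge duality: applying $*$ produces a harmonic $(l-p)$-form with the same logarithm, and repeating the argument gives $l-p=\frac l2+1$. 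The two equalities are incompatible, which closes the argument in every dimension simultaneously. If you want to salvage your indicial-root approach you will need either this $*$-symmetry or an equivalent device; merely computing the roots in each block and hoping they stay distinct will not suffice, because in the borderline degree they do not.
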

\noindent
We prove this partly because it is itself of interest. It implies the other known results \cite[Remark B.3]{HS}, \cite[Proposition 2.4]{J1} and \cite[Lemma 3.15]{KL} to the effect that no logarithm terms exist. 

In \S\ref{sect: Kahler cones} we deal with K\"ahler cones, which are the cones on compact Sasakian manifolds. In \S\ref{sect: K conifolds} we show that every compact Calabi--Yau conifold has K\"ahler conifold metrics. The main result is Lemma \ref{lem: Kahl pot}. It says that we can glue in the K\"ahler cone metric without making any change at the points far from $X^\sing.$ More precisely, we start with any K\"ahler form on $X$ which is defined at every point including $X^\sing.$ We modify it only near $X^\sing$ by changing the K\"aher potential near $X^\sing.$ For this we use Lemma \ref{lem: Kahl pot}. The result, the conifold metric, is defined only on $X^\reg.$ Here we do not need to think of Ricci-flat K\"ahler metrics as in \cite{HS}. Note that even for $X$ non-singular we do not need to choose Ricci-flat K\"ahler metrics to prove the original statement of Bomologov, Tian and Todorov.

In Proposition \ref{prop: hol 1-forms} we show also that $H^1(X,\O_X)=0$ unless $X^\sing$ is empty. For this we use again the result of \cite{BGL}.

In \S\S\ref{sect: harm}--\ref{sect: harm n-1} we prove the results we shall need about harmonic forms on compact K\"ahler conifolds. The key issue is as follows. Let $\ph$ be a harmonic $(p,q)$ form (which satisfies the Laplace equation $\De\ph=0).$ For $X$ non-singular the integration by parts formula implies immediately that $\d\ph=\d^*\ph=0.$ For $X$ singular however we do not know whether $\d\ph,\d^*\ph$ decay so fast that the integration by parts formula will hold. This is true in the circumstances of Theorem \ref{main thm1} (in Lemma \ref{lem: c harm n-1}). We can verify it by expanding $\d\ph$ into the sum of homogeneous harmonic forms on the K\"ahler cones. The results of \S\ref{sect: Riem} and the Betti number condition imply then the vanishing of those terms which will prevent us from getting the integration by parts formula.

There is also a similar step in the proof of Theorem \ref{main thm2}. The cohomology group $H^1(X,\Th_X)$ relevant to the locally trivial deformations are not directly related to harmonic forms. The condition $H^2_{X^\sing}(X,\Om^{n-2}_X)$ implies that it is isomorphic to a certain vector space of harmonic forms; for more details see Theorem \ref{thm: harm n-1 1}.
 
In \S\ref{sect: deform} we recall the standard algebraic geometry facts we will use. In particular, we give the more precise meaning to the conclusion of Theorem \ref{main thm1}. We recall also the versions we will use of $T^1$ lift theorems. In \S\ref{sect: rel forms} we collect the facts we will use about K\"ahler forms and its relative versions. For this purpose we will work with $\R$-algebras, not with $\C$-algebras as we normally do in deformation theory; for more details see Definitions \ref{dfn: real para}, \ref{dfn: glue} and \ref{dfn: Kahl}. In \S\ref{sect: tens calc} we show that the standard tensor calculus on K\"ahler manifolds extend to their infinitesimal deformations. In \S\ref{sect: C^iy deform} we study the notion of $C^\iy$ deformations we shall need for \eq{lift}.

In \S\ref{sect: proof1} we prove Theorem \ref{main thm1}. The key result may be stated as follows.
\begin{thm}[Theorem \ref{thm: cH^n-2}]\l{thm: cH^n-2 intro}
Let $X$ be a compact K\"ahler $n$-conifold whose singularities are rational and of depth $\ge n,$ satisfying the Betti number condition of Theorem \ref{main thm1}.  Let $A$ be an Artin local $\C$-algebra and $\cX/A$ a deformation of $X.$ The natural map ${}_c H^{n-2}(X^\reg,\Om^1_{\cX/A})\to{}_c H^{n-2}(X^\reg,\Om^1_X)$ is then surjective.
\end{thm}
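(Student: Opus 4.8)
The plan is to reduce to small extensions and then prove the resulting lifting statement by the Hodge-theoretic scheme \eq{lift}. Factor the structure map $A\to\C$ into a finite chain of small extensions $0\to I\to A\to A'\to 0$ with $I\cong\C$ and $\fm_A I=0$, write $\cX'/A'$ for the induced deformation, and observe that the map of the statement is the composition along the chain of the maps ${}_cH^{n-2}(X^\reg,\Om^1_{\cX/A})\to{}_cH^{n-2}(X^\reg,\Om^1_{\cX'/A'})$. It therefore suffices to prove surjectivity at a single step. Flatness of $\cX/A$ gives, as in \S\ref{sect: deform}, a short exact sequence of sheaves on $X^\reg$
\[
0\to\Om^1_X\ot_\C I\to\Om^1_{\cX/A}\to\Om^1_{\cX'/A'}\to 0,
\]
and the associated long exact sequence in ${}_cH^\bullet(X^\reg,-)$ identifies the cokernel of the step map with the image of the connecting homomorphism $\de\colon{}_cH^{n-2}(X^\reg,\Om^1_{\cX'/A'})\to{}_cH^{n-1}(X^\reg,\Om^1_X\ot_\C I)$. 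Thus the step is surjective as soon as $\de=0$.

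To show $\de=0$ I would argue exactly along \eq{lift}. Using the conifold Hodge theory of \S\S\ref{sect: harm}--\ref{sect: harm n-1} and the relative tensor calculus of \S\ref{sect: tens calc}, represent a given class of ${}_cH^{n-2}(X^\reg,\Om^1_{\cX'/A'})$ by a relative harmonic form $\ph'$, and let $\ph$ be its restriction to the central fibre, a harmonic representative of a class of $\cB={}_cH^{n-2}(X^\reg,\Om^1_X)$. By the $C^\iy$ deformation theory of \S\ref{sect: C^iy deform} choose a $C^\iy$ trivialization of $\cX/A$ and lift $\ph'$ to a relative $C^\iy$ form $\wt\ph$ for $\cX/A$ restricting to $\ph'$ over $A'$. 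Since $\fm_A I=0$ and $\db_{\cX'/A'}\ph'=0$, the form $\db_{\cX/A}\wt\ph$ is concentrated in order $I$, and its reduction $\et$ is a $\db$-closed $\Om^1_X\ot_\C I$-valued form on $X^\reg$ whose class is $\de$ applied to the class of $\ph'$. Modulo $\db$-exact forms $\et$ is determined by $\ph$ and the order-$I$ Kodaira--Spencer datum of the extension. By the Hodge decomposition on $X^\reg$ the class of $\et$ vanishes if and only if $\et$ is orthogonal to the harmonic representatives $\ps$ of ${}_cH^{n-1}(X^\reg,\Om^1_X)$, so everything comes down to this orthogonality.

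The hard part is precisely this orthogonality, and it is where all the hypotheses are spent. On the incomplete manifold $X^\reg$ a harmonic form $\ps$ need not be closed and coclosed, and the pairing $\langle\et,\ps\rangle$ cannot be evaluated by naive integration by parts, because the boundary contributions over small spheres around $X^\sing$ need not tend to zero. Here Lemma \ref{lem: c harm n-1} intervenes: under the Betti number condition $b_{n-2}(C_x^\reg)=0$ or $b_{n-1}(C_x^\reg)=0$, expanding $\db\ps$ and $\db^*\ps$ into the homogeneous harmonic forms on the K\"ahler cones $C_x$ and invoking the absence of logarithm terms from Theorem \ref{thm: no log intro}, one finds that the obstructing homogeneous modes are excluded, so that $\db\ps=\db^*\ps=0$ and the boundary terms indeed vanish in the limit. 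With this decay secured, integration by parts is legitimate, $\langle\et,\ps\rangle=\langle\db_{\cX/A}\wt\ph,\ps\rangle=\langle\wt\ph,\db^*\ps\rangle=0$, and hence $\et$ has trivial harmonic projection and $\de=0$.

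Finally the rationality and depth $\ge n$ hypotheses guarantee that the analytically defined groups ${}_cH^{n-2}(X^\reg,-)$ genuinely compute the cohomology of the coherent sheaves in the statement, with no spurious contribution supported on $X^\sing$ in the relevant degree, so that the harmonic representatives used above really represent $\cA$ and $\cB$. Composing the surjective steps along the chain $A\to\cdots\to\C$ then yields the asserted surjectivity. I expect the entire genuine difficulty to lie in the orthogonality step of the third paragraph: the homological reduction and the $C^\iy$ lifting are formal once the conifold Hodge theory and the cone analysis of the earlier sections are in hand, whereas controlling the boundary terms near $X^\sing$ is exactly what forces both the Betti number condition and the no-logarithm expansion.
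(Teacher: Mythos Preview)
Your strategy differs from the paper's and contains genuine gaps.

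First, a structural problem: ${}_cH^q(X^\reg,-)$ is by definition the \emph{image} of $H^q_c\to H^q$ and is not a $\de$-functor. There is no long exact sequence for it, so your identification of the cokernel of the step map with the image of a connecting homomorphism $\de$ is not justified as written. One could try to work with $H^\bt_c$ and $H^\bt$ separately and chase, but this is not what you have done.

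Second, and more seriously, you want to represent classes in ${}_cH^{n-2}(X^\reg,\Om^1_{\cX'/A'})$ by \emph{relative} harmonic forms over an arbitrary Artin base $A'$. The relative harmonic theory in \S\ref{sect: rel harm} is developed only for \emph{locally trivial} deformations, because the Fredholm package (Proposition~\ref{prop: relative exceptional}, Proposition~\ref{prop: relative Fredholm}) rests on identifying $\De_{\cX/A}$ near each $x\in X^\sing$ with $\De_X\ot\id_A$. For a general deformation the complex structure varies near $X^\sing$ and this fails; yet Theorem~\ref{main thm1} is precisely about non-locally-trivial deformations. The paper avoids this entirely by lifting \emph{in one step} via the unit $\C\to A$, not by induction on small extensions: it represents the target class by a harmonic $(1,n-2)$ form $\ph_0$ on $X$ of order $>1-n$ (Lemma~\ref{lem: 1 n-2}, which is where the Betti number hypothesis is spent), lifts $\ph_0$ as a $\d$-closed total $(n-1)$-form $\ph$ via Corollary~\ref{cor: triv2}, decomposes $\ph$ into relative $(p,q)$ types, kills the stray $(0,n-1)$ component using the vanishing $H^{n-1}_c(X^\reg,\O_\cX)=0$ (which ultimately comes from $H^1(X,\O_X)=0$, Proposition~\ref{prop: hol 1-forms}, together with the depth $\ge n$ hypothesis and Serre duality), and finally projects via \eq{relative orthogonal} to obtain a $\db_{\cX/A}$-closed $(1,n-2)$ form $\chi$ with $R\chi=\ph_0$. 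No relative harmonic theory is needed, and the key vanishing $H^{n-1}_c(X^\reg,\O_\cX)=0$---which your argument never invokes---does the work.

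Third, your orthogonality step misapplies Lemma~\ref{lem: c harm n-1}. The harmonic representatives $\ps$ you pair against live in ${}_cH^{n-1}(X^\reg,\Om^1_X)$, i.e.\ they are $(1,n-1)$ forms of total degree $n$, whereas Lemma~\ref{lem: c harm n-1} and its refinement Corollary~\ref{cor: c harm n-1} concern forms of total degree $n-1$. In the paper the Betti number condition enters not through any degree-$n$ statement but through Lemma~\ref{lem: 1 n-2}, which upgrades the order of $\ph_0$ to $>1-n$; that order bound is what makes $\ph$ locally $\d_{\cX/A}$-exact near $X^\sing$ (Lemma~\ref{lem: p exact} and Lemma~\ref{lem: db_X/A exact}), and this is the mechanism that lands the constructed lift $\chi$ in ${}_cH^{n-2}$ rather than merely $H^{n-2}$.
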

\noindent
The cohomology groups with left lower index c are defined in Definition \ref{dfn: c}. Theorem \ref{thm: cH^n-2 intro} implies that we can in principle do the same computation of cohomology groups as in \cite[Theorem 2.2]{Gross}. As a result we can apply the $T^1$ lift theorem to deduce Theorem \ref{main thm1}.

In \S\ref{sect: rel harm} and \S\ref{sect: proof2} we prove Theorem \ref{main thm2}. To do so we go back to the little steps in \S\ref{sect: harm n-1 1} and make sure that they generalize to infinitesimal deformations. As a consequence the two sections are rather long. The key result is the following, Theorem \ref{thm: harm n-1 1 version2A}: let $k\ge0$ be an integer and put $A_k:=\C[t]/(t^{k+1}),$ and let $X_k/A_k$ be a deformation of $X;$ then the $A_k$ module $H^1(X,\Th_{X_k/A_k})$ is isomorphic to the space of relative harmonic $(n-1,1)$ forms. We then carry out \eq{lift} and prove that the following holds. 
\begin{equation}\l{T1Th}\parbox{10cm}{
Denote by $X_{k-1}/A_{k-1}$ the deformation of $X$ defined by $\O_{X_{k-1}}:=\O_{X_k}\otimes_{A_k} A_{k-1}.$ The natural map $H^1(X,\Th_{X_k/A_k})\to H^1(X,\Th_{X_{k-1}/A_{k-1}})$ is then surjective. 
}\end{equation}
This with the $T^1$ lift theorem implies Theorem \ref{main thm2}.


In \S\ref{sect: remarks} we make remarks on the hypothesis $H^2_{X^\sing}(X,\Om^{n-2}_X)=0$ of Theorem \ref{main thm2}. This is a condition on the singularities and the ordinary double points for instance do not satisfy it. We show that the affine cone on a toric Fano manifold satisfies it.

The Betti number hypothesis of Theorem \ref{main thm1} will in general be easier to verify. For instance, if $(C,\vx)$ is an ordinary double point then $C^\reg$ is diffeomorphic to $T^*S^n\-S^n,$ the complement to the zero-section, and homotopic to an $S^{n-1}$ bundle over $S^n.$ So it satisfies the condition $b_{n-2}(C^\reg)=0.$ 

The readers interested only in the proof of Theorem \ref{main thm1} can skip \S\ref{sect: harm n-1 1} and \S\ref{sect: tens calc}. They can also keep supposing that $\K=\C$ wherever we treat Artin $\K$-algebras in \S\ref{sect: deform}, \S\ref{sect: rel forms}, \S\ref{sect: C^iy deform} and \S\ref{sect: proof1}; or in other words, they will not need the notion of relative K\"ahler forms with $\K=\R.$

\paragraph{Acknowledgements}
Part of the paper was written when I visited Yoshinori Hashimoto at Osaka Metropolitan University in January and February 2025. I would also like to thank Robert Friedman for pointing out an error in the first version of the paper. During the course of the whole work I was supported financially by the grant 21K13788 of the Japan Society for the Promotion of Science.

\section{Riemannian Cones}\l{sect: Riem}
We begin by defining Riemannian cones.
\begin{dfn}
A {\it Riemannian cone} is the data $(C,\vx, C^\reg, C^\lk,r,g^\lk)$ where $C$ is a metric space, $\vx$ a point of $C,$ $C^\reg$ the subset $C\-\{\vx\}$ which is given a manifold structure, $C^\lk$ a compact manifold without boundary such that there is a diffeomorphism $C^\reg\cong (0,\iy)\times C^\lk$ which we will fix, $r$ the composite of the diffeomorphism $C^\reg\cong (0,\iy)\times C^\lk$ and the projection $(0,\iy)\times C^\lk\to(0,\iy),$ and $g^\lk$ a Riemannian metric on $C^\lk$ such that the metric space structure of $C^\reg$ is induced by the Riemannian metric $\d r^2+r^2 g^\lk.$ We call $\vx$ the {\it vertex,} $C^\lk$ the {\it link,} $r:C^\reg\to(0,\iy)$ the {\it radius function} and $\d r^2+r^2 g^\lk$ the {\it cone metric.} 

We call $C$ a Riemannian {\it $l$-cone} if $C^\reg$ is a manifold of dimension $l,$ which is thus the real dimension. 
\end{dfn}

We define homogeneous $p$-forms and harmonic $p$-forms on Riemannian cones.
\begin{dfn}
Let $C$ be a Riemannian cone and $p$ an integer. Denote by $\La^p_{C^\reg}$ the sheaf on $C^\reg$ of $C^\iy$ $p$-forms with complex coefficients. We say that $\ph\in \Ga(\La^p_{C^\reg})$ is {\it homogeneous of order $\al\in\C$} if $\ph=e^{(\al+p)\log r}(\d\log r\w\ph'+\ph'')$ where $r$ is the radius function on $C^\reg,$ $\ph'$ some $p-1$ form on $C^\lk,$ and $\ph''$ some $p$-form on $C^\lk.$

We say that $\ph\in \Ga(\La^p_{C^\reg})$ is {\it harmonic} if $\De\ph=0$ where $\De$ is computed with respect to the cone metric of $C^\reg.$
\end{dfn}

We compute $\d,\d^*$ and $\De$ on Riemannian cones.
\begin{prop}\l{prop: Lap}
Let $C$ be a Riemannian $l$-cone and $r:C^\reg\to(0,\iy)$ its radius function. Denote by $\pi:C^\reg\cong (0,\iy)\times C^\lk\to C^\lk$ the projection onto the second component. Define for $p\in\Z$ a $\C$-vector space isomorphism $\Ga(\La^p_{C^\reg})\cong\Ga(\pi^*\La^{p-1}_{C^\lk})\oplus\Ga(\pi^*\La^p_{C^\lk})$ by writing each $\ph\in \Ga(\La^p_{C^\reg})$ as $\d\log r\wedge\ph'+\ph''$ for some $\ph'\in \Ga(\pi^*\La^{p-1}_{C^\reg})$ and $\ph''\in \Ga(\pi^*\La^p_{C^\reg}).$ Using these isomorphisms write the de Rham differential as $\d: \Ga(\pi^*\La^{p-1}_{C^\lk})\oplus \Ga(\pi^*\La^p_{C^\lk})\to 
\Ga(\pi^*\La^p_{C^\lk})\oplus \Ga(\pi^*\La^{p+1}_{C^\lk}).$ Using the cone metric of $C^\reg$ define $\d^*$ and $\De,$ and write them as $\d^*: \Ga(\pi^*\La^{p-1}_{C^\lk})\oplus \Ga(\pi^*\La^p_{C^\lk})\to 
\Ga(\pi^*\La^{p-2}_{C^\lk})\oplus \Ga(\pi^*\La^{p-1}_{C^\lk})$ and 
$\De: \Ga(\pi^*\La^{p-1}_{C^\lk})\oplus \Ga(\pi^*\La^p_{C^\lk})\to \Ga(\pi^*\La^{p-1}C^\lk)\oplus \Ga(\pi^*\La^p_{C^\lk})$ respectively. These may then be expressed as matrices as follows:
$\d=\begin{pmatrix}-\d& r\frac{\bd}{\bd r} \\ 0& \d \end{pmatrix},$
$r^2\d^*=\begin{pmatrix}-\d^*& 0\\ -r\frac{\bd}{\bd r}+2p-l & \d^* \end{pmatrix}$ and
$r^2\De=\begin{pmatrix}-(r\frac{\bd}{\bd r})^2 &0 \\ 0& -(r\frac{\bd}{\bd r})^2\end{pmatrix}
+(2+2p-l)\begin{pmatrix}r\frac{\bd}{\bd r} &0 \\ 0&r\frac{\bd}{\bd r}\end{pmatrix}+\begin{pmatrix}\De+2l-4p& -2\d^*\\ -2\d& \De \end{pmatrix}$ where $\d,\d^*$ and $\De$ are computed on $C^\reg$ on the left-hand sides and on $C^\lk$ on the right-hand sides.
In particular, if $\ph=r^\be(\d\log r\w\ph'+\ph'')$ is a homogeneous $p$-form of order $\be-p\in\C$ on $C^\reg$ then 
\ea\l{d+d*}
\d\ph=&\be r^{\be-1}\d r \w\ph''+r^\be \d\ph''-r^{\be-1}\d r\w \d\ph',\\
\d^*\ph=&r^{\be-2} \d^*\ph''-(\be+l-2p)r^{\be-2}\ph'-r^{\be-3} \d r\w \d^*\ph',\\
\De\ph=&r^{\be-2}\d\log r\wedge 
[\De\ph'-(\be-2)(\be+l-2p)\ph'-2\d^*\ph'']\\
&+r^{\be-2}[\De\ph''-\be(\be+l-2-2p)\ph''-2\d\ph']
\ea
where again $\d,\d^*$ and $\De$ are computed on $C^\reg$ on the left-hand sides and on $C^\lk$ on the right-hand sides. 
\end{prop}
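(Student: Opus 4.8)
The plan is to reduce all three operators to operators on the link $C^\lk$ by means of the splitting $\ph=\d\log r\w\ph'+\ph''$, read off the block matrices for $\d$ and $r^2\d^*$, and then obtain $r^2\De$ and the homogeneous formulas \eqref{d+d*} as formal consequences. Throughout I abbreviate $u:=r\frac{\bd}{\bd r}$ and use repeatedly that the link operators $\d^\lk,\d^{\lk*},\De^\lk$ do not involve $r$ and hence commute with $u$ and with multiplication by powers of $r$. The computation of $\d$ itself is immediate: decomposing the de Rham differential on $C^\reg\cong(0,\iy)\t C^\lk$ as $\d=\d\log r\w\,u+\d^\lk$ (with $u$ acting coefficientwise on link forms) and using that $\d\log r$ is closed, one gets $\d\ph=-\d\log r\w\d^\lk\ph'+\d\log r\w u\ph''+\d^\lk\ph''$, which is the asserted matrix.

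I would obtain $\d^*$ as the formal $L^2$-adjoint of $\d$, which keeps better control of signs than the Hodge-star route. The starting point is the pointwise cone inner product of two decomposed $p$-forms: since $|\d\log r|^2=r^{-2}$ and every link covector has cone-norm $r^{-1}$, one finds $\langle\ph,\et\rangle=r^{-2p}\bigl(\langle\ph',\et'\rangle_{g^\lk}+\langle\ph'',\et''\rangle_{g^\lk}\bigr)$ pointwise on $C^\reg$, so that with the cone volume form $r^{l-1}\,\d r\w\mathrm{vol}^\lk$ the global inner product becomes $\int_0^\iy r^{l-1-2p}\bigl(\cdots\bigr)\,\d r$. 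The crux of the whole proposition, and its main obstacle, is the bookkeeping of these radial weights: a $p$-form and a $(p-1)$-form carry weights differing by a factor $r^2$, and it is exactly this discrepancy that forces the overall factor $r^2$ in front of $\d^*$ (and later $\De$). Imposing $\langle\d\al,\be\rangle=\langle\al,\d^*\be\rangle$ for compactly supported $\al$ and integrating by parts along $C^\lk$ produces the entries $\pm\d^{\lk*}$ of $r^2\d^*$, while integrating by parts in $r$, the boundary terms vanishing by compact support, produces the off-diagonal term: the identity $\int_0^\iy r^{l-1-2p}(u\al'')\,\be'\,\d r=-\int_0^\iy r^{l-1-2p}\al''\,(u+l-2p)\be'\,\d r$ gives precisely the operator $-u+2p-l$ in the $(2,1)$ entry.

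With $\d$ and $r^2\d^*$ in hand I would compute $r^2\De=r^2(\d\d^*+\d^*\d)$ by block-matrix multiplication at the relevant degrees ($p-1$ and $p+1$ for the two compositions). The only subtlety is commuting the radial scalars past each other via $u\cm r^{-2}=r^{-2}\cm(u-2)$; after collecting terms the spurious cross-contributions (such as $-u\,\d^{\lk*}\ph''+\d^{\lk*}u\,\ph''$) cancel in pairs, the remaining link operators assemble into $\De^\lk=\d^\lk\d^{\lk*}+\d^{\lk*}\d^\lk$ on the diagonal and into the off-diagonal entries $-2\d^{\lk*},-2\d^\lk$, while the scalar contributions give the common diagonal part $-u^2+(2+2p-l)u$ together with the extra constant $2l-4p$ in the upper-left entry. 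This part is mechanical but lengthy. Finally, the homogeneous formulas \eqref{d+d*} follow by substituting $\ph'\mapsto r^\be\ph'$ and $\ph''\mapsto r^\be\ph''$ with $\ph',\ph''$ independent of $r$, using $u(r^\be\,\cdot\,)=\be r^\be\,\cdot\,$ and rewriting $r^{\be-1}\d r=r^\be\d\log r$ to match the displayed right-hand sides.
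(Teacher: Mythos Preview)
Your proposal is correct and complete. The paper's own proof is simply a one-line reference to Chan \cite[Proposition~3.3]{Chan2}, declaring the result a ``straightforward computation''; you have actually carried it out. Your choice to obtain $\d^*$ as the formal $L^2$-adjoint rather than via the Hodge star is a standard alternative and, as you note, makes the radial-weight bookkeeping (the shift $r^{l-1-2p}\to r^{l+1-2p}$ between degrees $p$ and $p-1$, which produces both the overall $r^2$ and the affine term $-u+2p-l$) more transparent than chasing signs through $*$. The commutation $u\cm r^{-2}=r^{-2}(u-2)$ you flag is exactly what is needed to reduce $r^2\d\d^*+r^2\d^*\d$ to a single matrix in the link operators and $u$, and the subsequent cancellations and the homogeneous specialization go as you describe.
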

\begin{proof}
These are the results of straightforward computation. The details about \eq{d+d*} for $l$ even are given for instance by Chan \cite[Proposition 3.3]{Chan2}. His computation applies to every $l$ and implies also the matrix expressions above.
\end{proof}

We study homogeneous harmonic forms on Riemannian cones.
\begin{prop}\l{prop: real eigen}
Let $C$ be a Riemannian $l$-cone and $p$ an integer. Denote by $\cD\sb\C$ the set of $\al$ for which there exists a non-zero $\ph\in\Ga(\La^p_{C^\reg})$ homogeneous of order $\al$ and satisfying $\De\ph=0$ with respect to the cone metric of $C^\reg.$ Then $\cD\sb\R$ and $\cD$ is discrete.
\end{prop}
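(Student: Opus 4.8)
The plan is to turn $\De\ph=0$ into a spectral problem on the compact link $C^\lk$. Writing a homogeneous $p$-form of order $\al=\be-p$ as $\ph=r^\be(\d\log r\w\ph'+\ph'')$ with $\ph'\in\Ga(\La^{p-1}_{C^\lk})$ and $\ph''\in\Ga(\La^p_{C^\lk})$ independent of $r$, I would substitute into the matrix expression for $r^2\De$ in Proposition \ref{prop: Lap}. Since $r\frac{\bd}{\bd r}$ acts on $r^\be$ as the scalar $\be$, the two second-order terms collapse to a number and $\De\ph=0$ becomes
\begin{equation}
M\Phi=\la\Phi,\qquad \Phi=(\ph',\ph''),\qquad \la=\be^2-(2+2p-l)\be,
\end{equation}
where $M=\left(\begin{smallmatrix}\De+2l-4p & -2\d^*\\ -2\d & \De\end{smallmatrix}\right)$ acts on $\Ga(\La^{p-1}_{C^\lk}\op\La^p_{C^\lk})$; one checks from \eq{d+d*} that both rows yield the \emph{same} $\la$, the diagonal shift $2l-4p$ being exactly what forces this. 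Hence $\al\in\cD$ if and only if $\be^2-(2+2p-l)\be$ lies in the spectrum of $M$.

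Next I would observe that $M$ is a formally self-adjoint, second-order elliptic operator on the closed manifold $C^\lk$: its principal symbol is $|\xi|^2$ on each summand (the off-diagonal $\d,\d^*$ are of lower order), and the off-diagonal entries $-2\d$ and $-2\d^*$ are mutually adjoint. Therefore the spectrum of $M$ is real, discrete and bounded below, with $+\iy$ its only accumulation point. In particular $\la$ is real, and solving the quadratic gives $\be=\tfrac12\big[(2+2p-l)\pm\sqrt{(2+2p-l)^2+4\la}\,\big]$, which is real exactly when $\la\ge-\tfrac14(2+2p-l)^2$. Thus $\cD\sb\R$ follows once I establish the operator bound $M\ge-\tfrac14(2+2p-l)^2$.

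To prove this bound I would block-diagonalise $M$ using Hodge theory on $C^\lk$. On harmonic $(p-1)$- and $p$-forms, on exact $(p-1)$-forms and on coexact $p$-forms, the operator $M$ is already diagonal, taking the values $2l-4p$, $0$, $\nu+2l-4p$ and $\nu'$ (with $\nu,\nu'>0$), each of which is at least $-\tfrac14(2+2p-l)^2$. The only coupling is between a coexact eigenform $u$ of $\De$ on $(p-1)$-forms with eigenvalue $\mu>0$ and the exact $p$-form $\mu^{-1/2}\d u$; these span an $M$-invariant plane on which $M$ is $\left(\begin{smallmatrix}\mu+2l-4p & -2\sqrt\mu\\ -2\sqrt\mu & \mu\end{smallmatrix}\right)$, with eigenvalues $(\mu+l-2p)\pm\sqrt{(l-2p)^2+4\mu}$. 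Writing $m=l-2p$, the smaller eigenvalue satisfies $\la_-+\tfrac14(2+2p-l)^2=\mu+\tfrac14 m^2+1-\sqrt{m^2+4\mu}\ge0$, because the inequality $\mu+\tfrac14 m^2+1\ge\sqrt{m^2+4\mu}$ squares to $(\mu+\tfrac14 m^2-1)^2\ge0$. This is the one genuinely non-formal step: controlling the cross term $-4\Re\langle\d\ph',\ph''\rangle$ in $\langle M\Phi,\Phi\rangle$ is the \emph{main obstacle}, and the block-diagonalisation is precisely the device that isolates it into this explicit $2\t2$ computation. Since the bound holds on every block, $M\ge-\tfrac14(2+2p-l)^2$, and therefore $\cD\sb\R$.

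Finally, for discreteness I would note that $\cD$ is the preimage of the discrete eigenvalue set of $M$ under $\be\mapsto\be^2-(2+2p-l)\be$. Each eigenvalue $\la$ contributes the two real roots above, which tend to $\pm\iy$ as $\la\to+\iy$; conversely any bounded set of $\be$ maps into a bounded set of values of $\la$, which meets the spectrum of $M$ in only finitely many points. Hence $\cD$ has no finite accumulation point, i.e.\ it is discrete.
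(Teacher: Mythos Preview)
Your proof is correct and takes a genuinely different route from the paper's. The paper argues by a case analysis: starting from the two scalar equations \eq{p2}--\eq{p1} it applies $\d$ or $\d^*$ and branches on whether $\d\ph''$, then $(\be+l-2-2p)\ph''+\d\ph'$, then $\ph''$ vanish, in each case landing on an eigenvalue of the ordinary Hodge Laplacian on $C^\lk$ (automatically real and from a discrete set). You instead recognise the whole system at once as the eigenvalue equation for the self-adjoint elliptic operator $M$ (the paper's $E$ of Corollary~\ref{cor: sep of vars}) and prove the sharp lower bound $M\ge-\tfrac14(2+2p-l)^2$ directly, via the Hodge decomposition of $\La^{p-1}_{C^\lk}\oplus\La^p_{C^\lk}$ and the explicit $2\times2$ blocks on $\text{coexact}_{p-1}\oplus\text{exact}_p$. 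This reverses the paper's logic: there, the bound $\la_j\ge-m^2$ in Corollary~\ref{cor: sep of vars} is \emph{deduced} from Proposition~\ref{prop: real eigen}, whereas you prove the bound first and deduce the proposition from it. Your approach is more structural (it is the standard Cheeger-type spectral analysis on a cone) and yields the operator bound as part of the argument rather than as a by-product; the paper's hands-on branching needs nothing beyond nonnegativity of the scalar Laplacian on the link, avoiding the block-diagonalisation, but at the cost of a somewhat opaque chain of cases.
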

\begin{proof}
Take $\al\in\cD$ and let $\ph\in\Ga(\La^p_{\C^\reg})$ be non-zero, homogeneous of order $\al$ and with $\De\ph=0.$ Put $\be:=\al+p$ and use the notation of Proposition \ref{prop: Lap}. The equation \eq{d+d*} implies then
\begin{align}
\l{p2}\De\ph'&=(\be-2)(\be+l-2p)\ph'+2\d^*\ph'',\\
\l{p1}\De\ph''&=\be(\be+l-2-2p)\ph''+2\d\ph'.
\end{align}
Applying $\d$ to \eq{p1} we find that $\d\d^*\d\ph''=\be(\be+l-2-2p)\d\ph''.$ So if $\d\ph''\ne0$ then $\be(\be+l-2-2p)$ is an eigenvalue of the Laplacian, which implies that $\al=\be-p$ lies in a discrete subset of $\R$ independent of $\ph.$ Suppose therefore that $\d\ph''=0.$ Put $\ps:=\d\ph'$ so that \eq{p1} becomes
\e\l{p11}
\De\ph''=\be(\be+l-2-2p)\ph''+2\ps.
\e
Applying $\d$ to \eq{p2} and using $\d\ph'=\ps,$ $\d\ph''=0$ and \eq{p11} we find that
\ea\l{p12}
\De\ps&=(\be-2)(\be+l-2p)\ps+2\De\ph''\\
&=(\be-2)(\be+l-2p)\ps+2\be(\be+l-2-2p)\ph''+4\ps\\
&=2\be(\be+l-2-2p)\ph''+[(\be-2)(\be+l-2p)+4]\ps.
\ea
This and \eq{p11} imply $(\De\ph'',\De\ps)=(\ph'',\ps)M$ where
\e\l{dfn M}
M:=\begin{pmatrix} \be(\be+l-2-2p)& 2\be(\be+l-2-2p)\\
2 & (\be-2)(\be+l-2p)+4
\end{pmatrix}.
\e
This matrix is diagonalizable; and in fact, $P^{-1}MP=D$
where
\e\l{dfn P and D}
P=\begin{pmatrix} \be+l-2-2p& \be\\ 1& -1\end{pmatrix}
\text{ and }
D:=\begin{pmatrix} \be(\be+l-2p)& 0\\ 0 & (\be-2)(\be+l-2-2p)
\end{pmatrix}.
\e
So
$(\De\ph'',\De\ps)P=(\ph'',\ps)PD$ and looking at the first component we see that
\e\l{phps}
\De[(\be+l-2-2p)\ph''+\ps]=\be(\be+l-2p)[(\be+l-2-2p)\ph''+\ps].
\e
Thus if $(\be+l-2-2p)\ph''+\ps\ne0$ then $\be(\be+l-2p)$ is an eigenvalue of the Laplacian, which implies that $\al=\be-p$ lies in a discrete subset of $\R$ independent of $\ph.$ Suppose therefore that $(\be+l-2-2p)\ph''+\ps=0.$ Then by \eq{p1} we have 
\e \De\ph''= \be(\be+l-2-2p)\ph''-2(\be+l-2-2p)\ph''=(\be-2)(\be-2+l-2p)\ph''.\e
So if $\ph''\ne0$ then $(\be-2)(\be-2+l-2p)$ is an eigenvalue of the Laplacian, which implies that $\al=\be-p$ lies in a discrete subset of $\R$ independent of $\ph.$ Suppose therefore that $\ph''=0.$ Then by \eq{p2} we have $\De\ph'=(\be-2)(\be+l-2p)\ph'.$ But by hypothesis $\ph\ne0.$ So $(\be-2)(\be-2+l-2p)$ is an eigenvalue of the Laplacian, which implies that $\al=\be-p$ lies in a discrete subset of $\R$ independent of $\ph.$ This completes the proof.
\end{proof}

From the computation above we get the following three corollaries.
\begin{cor}\l{cor: harmonic-1}
Let $C$ be a Riemannian $l$-cone and $p>\frac l2$ an integer. Let $\ph\in\Ga(\La^p_{C^\reg})$ be homogeneous of order $\al\in (-p,p-l)$ and satisfy $\De\ph=0$ with respect to the cone metric of $C^\reg.$ Write $\ph=r^{p+\al}(\d\log r\w\ph'+\ph'')$ as in Proposition \ref{prop: Lap}. Then $\d\ph'=(2+p-l-\al)\ph''.$
\end{cor}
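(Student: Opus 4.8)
The plan is to recycle the computation inside the proof of Proposition~\ref{prop: real eigen}, but now to exploit the hypothesis $\al\in(-p,p-l)$ so as to \emph{force the vanishing} of the ``bad'' eigenforms rather than merely concluding discreteness of $\cD$. Write $\be:=\al+p$ as there, so that $\al\in(-p,p-l)$ becomes $0<\be<2p-l$; the assumption $p>\frac l2$ is exactly what makes this interval non-empty. The forms $\ph',\ph''$ live on the compact link $C^\lk$, on which the Hodge Laplacian $\De$ has non-negative spectrum, so any $\eta$ on $C^\lk$ with $\De\eta=\la\eta$ and $\la<0$ must vanish. The whole argument is the observation that, under $0<\be<2p-l$, several of the eigenvalue products appearing in Proposition~\ref{prop: real eigen} are strictly negative.

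Concretely, I would start from equations \eq{p2} and \eq{p1}. Applying $\d$ to \eq{p1} and using $\d^2=0$ together with $[\De,\d]=0$ gives $\De(\d\ph'')=\be(\be+l-2-2p)\,\d\ph''$ on $C^\lk$. Since $\be>0$ and $\be+l-2-2p<\be+l-2p<0$, the coefficient $\be(\be+l-2-2p)$ is strictly negative, so non-negativity of $\De$ forces $\d\ph''=0$. This is the first key vanishing.

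With $\d\ph''=0$ in hand, set $\ps:=\d\ph'$ and follow the derivation of \eq{phps} verbatim, to obtain
$\De\bigl[(\be+l-2-2p)\ph''+\ps\bigr]=\be(\be+l-2p)\bigl[(\be+l-2-2p)\ph''+\ps\bigr]$.
Now $\be(\be+l-2p)$ is again strictly negative, because $\be>0$ and $\be+l-2p<0$, so the same non-negativity forces $(\be+l-2-2p)\ph''+\ps=0$. Hence $\d\ph'=\ps=-(\be+l-2-2p)\ph''=(2+2p-l-\be)\ph''$, and substituting $\be=\al+p$ gives $\d\ph'=(2+p-l-\al)\ph''$, as claimed.

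The only content beyond bookkeeping is making sure that $\d\ph''$ and $(\be+l-2-2p)\ph''+\ps$ are genuine $\De$-eigenforms on the \emph{closed} manifold $C^\lk$ and that $\De\ge0$ there; this is where the compactness of the link and the positivity of $\d\d^*$ and $\d^*\d$ enter. The main point to keep straight is simply the interplay between the two strictly negative eigenvalue products and the restricted range of $\al$: it is this that does all the work, and no estimates or expansions are needed.
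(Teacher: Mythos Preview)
Your proof is correct and follows essentially the same route as the paper: set $\be=\al+p\in(0,2p-l)$, use \eq{p1} to get $\De(\d\ph'')=\be(\be+l-2-2p)\d\ph''$ with negative eigenvalue forcing $\d\ph''=0$, then use \eq{phps} with $\be(\be+l-2p)<0$ to force $(\be+l-2-2p)\ph''+\ps=0$. The paper's write-up is terser but the argument is identical.
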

\begin{proof}
Put $\be:=p+\al\in (0,2p-l)$ and follow the proof of Proposition \ref{prop: real eigen}. Applying again $\d$ to \eq{p1} we find $\d\d^*\d\ph''=\be(\be+l-2-2p)\d\ph''.$ But now $\be(\be+l-2-2p)<0$ so $\d\ph''=0.$ Put again $\ps:=\d\ph'.$ Then \eq{phps} holds; that is, 
\e\De[(\be+l-2-2p)\ph''+\ps]=\be(\be+l-2p)[(\be+l-2-2p)\ph''+\ps].
\e
But now $\be(\be+l-2p)<0$ so $(\be+l-2-2p)\ph''+\ps=0$ as we have to prove.
\end{proof}

\begin{cor}\l{cor: harmonic0}
Let $C$ be a Riemannian $l$-cone and $p>\frac l2+1$ an integer. Let $\ph\in\Ga(\La^p_{C^\reg})$ be homogeneous of order $\al\in(2-p,p-l)$ and satisfy $\De\ph=0$ with respect to the cone metric of $C^\reg.$ Then $\ph=0.$
\end{cor}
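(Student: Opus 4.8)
The plan is to continue the case analysis of Proposition \ref{prop: real eigen} one further step, exploiting the tightened lower endpoint $\al>2-p$ to annihilate the two terms that merely satisfied eigenvalue relations before. Set $\be:=p+\al$, so that the hypothesis $\al\in(2-p,p-l)$ becomes $\be\in(2,2p-l)$; note that $p>\frac l2+1$ is exactly the condition making this interval nonempty. Writing $\ph=r^\be(\d\log r\w\ph'+\ph'')$ as in Proposition \ref{prop: Lap}, the equations \eq{p2} and \eq{p1} hold, with $\De$ now the Laplacian of the link $C^\lk$.

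First I would invoke Corollary \ref{cor: harmonic-1}, which applies because $(2-p,p-l)\sb(-p,p-l)$ and $p>\frac l2$. It gives $\d\ph'=(2+p-l-\al)\ph''=-(\be+l-2-2p)\ph''$. Substituting $\d\ph'=-(\be+l-2-2p)\ph''$ into \eq{p1} yields, after cancellation, $\De\ph''=\be(\be+l-2-2p)\ph''-2(\be+l-2-2p)\ph''=(\be-2)(\be-2+l-2p)\ph''$. The key observation is then purely a sign count: for $\be\in(2,2p-l)$ we have $\be-2>0$ and $\be-2+l-2p<-2<0$, so the coefficient $(\be-2)(\be-2+l-2p)$ is strictly negative. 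Since $C^\lk$ is compact its Hodge Laplacian $\De$ is nonnegative, and a strictly negative eigenvalue forces $\ph''=0$.

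Given $\ph''=0$, the relation from Corollary \ref{cor: harmonic-1} gives $\d\ph'=0$, and substituting $\ph''=0$ into \eq{p2} leaves $\De\ph'=(\be-2)(\be+l-2p)\ph'$. Again $\be-2>0$ while $\be+l-2p<0$ on the interval $(2,2p-l)$, so this coefficient is strictly negative and nonnegativity of $\De$ on $C^\lk$ forces $\ph'=0$. Hence $\ph=r^\be(\d\log r\w\ph'+\ph'')=0$, as required.

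There is no real analytic obstacle here; the entire argument is bookkeeping on the eigenvalue products already assembled in Proposition \ref{prop: real eigen}. The one point meriting care --- and the reason for the stronger hypotheses $p>\frac l2+1$ and $\al>2-p$ --- is that Corollary \ref{cor: harmonic-1} by itself only makes $\be(\be+l-2-2p)$ and $\be(\be+l-2p)$ negative, which pins down $\d\ph'$ but cannot annihilate $\ph''$ or $\ph'$. It is precisely the shift $\be>2$ that turns the two decisive products $(\be-2)(\be-2+l-2p)$ and $(\be-2)(\be+l-2p)$ negative, upgrading the relations to vanishing. I would therefore double-check that the interval is open at both ends, so that strict negativity (not mere nonpositivity) holds and the nonnegative operator $\De$ contributes no kernel at these eigenvalues.
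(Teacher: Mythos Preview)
Your proof is correct and follows essentially the same route as the paper: invoke Corollary \ref{cor: harmonic-1} to get $\d\ph'=-(\be+l-2-2p)\ph''$, substitute into \eq{p1} to obtain $\De\ph''=(\be-2)(\be+l-2-2p)\ph''$ with a strictly negative coefficient, conclude $\ph''=0$, then use \eq{p2} to get $\De\ph'=(\be-2)(\be+l-2p)\ph'$ and kill $\ph'$ the same way. The paper's argument is identical in substance, only more terse.
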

\begin{proof}
As $p,\al$ satisfy the hypotheses of Corollary \ref{cor: harmonic-1} we can use its result; that is, 
writing again $\ph=r^\be(\ph''+\d\log r\wedge\ph')$ we have $\d\ph'=(2+p-\al-l)\ph''=(2+2p-\be-l)\ph''$ with $\be:=p+\al\in(2,2p-l).$
Equation \eq{p11} holds too with $\ps:=\d\ph'$ and
\[
\De\ph''
=\be(\be+l-2-2p)\ph''-2(\be+l-2-2p)\ph''
=(\be-2)(\be+l-2-2p)\ph''.
\]
But $\be\in(2,2p-l)$ and $(\be-2)(\be+l-2-2p)<0$ so $\ph''=0.$
Equation \eq{p2} implies then $\De \ph'=(\be-2)(\be+l-2p)\ph'.$
But again  $\be\in(2,2p-l)$ so  $\ph'=0.$ Thus $\ph=0.$
\end{proof}
\begin{cor}\l{cor: harmonic1}
Let $C$ be a Riemannian $l$-cone and $p<\frac l2-1$ an integer. Let $\ph\in\Ga(\La^p_{C^\reg})$ be homogeneous of order $\al\in(2+p-l,-p)$ and satisfy $\De\ph=0$ with respect to the cone metric of $C^\reg.$ Then $\ph=0.$
\end{cor}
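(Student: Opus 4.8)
The plan is to mirror the proof of Corollary \ref{cor: harmonic0} almost verbatim, the present statement being its Hodge dual. Conceptually, applying the Hodge star $*$ of the cone metric to a harmonic $p$-form of order $\al$ produces a harmonic $(l-p)$-form of the \emph{same} order $\al$ (a pointwise isometry preserves the radial growth rate $r^\al$), and the substitution $p\mapsto l-p$ turns the hypotheses $p<\frac l2-1,$ $\al\in(2+p-l,-p)$ into the hypotheses $l-p>\frac l2+1,$ $\al\in(2-(l-p),(l-p)-l)$ of Corollary \ref{cor: harmonic0}. Concretely I would set $\be:=p+\al$ and follow the proof of Proposition \ref{prop: real eigen}, writing $\ph=r^\be(\d\log r\w\ph'+\ph'')$ as in Proposition \ref{prop: Lap}. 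The hypothesis on $\al$ now reads $\be\in(2+2p-l,0),$ so that $\be<0,$ $\be-2<0,$ and $\be+l-2p>2,$ whence also $\be+l-2-2p>0.$ The point is that every quadratic coefficient occurring below is a product of one negative and one positive factor, hence a negative number; this is precisely the sign pattern of Corollary \ref{cor: harmonic0}, with the roles of the two factors interchanged.

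I would then carry out the following steps. First, applying $\d$ to \eq{p1} gives $\De(\d\ph'')=\be(\be+l-2-2p)\d\ph'',$ so $\d\ph''$ is a $\De$-eigenform with negative eigenvalue $\be(\be+l-2-2p)<0,$ forcing $\d\ph''=0.$ Next, with $\ps:=\d\ph',$ equation \eq{phps} applies and reads $\De[(\be+l-2-2p)\ph''+\ps]=\be(\be+l-2p)[(\be+l-2-2p)\ph''+\ps];$ since $\be(\be+l-2p)<0$ the bracket must vanish, giving $\ps=\d\ph'=(2+2p-\be-l)\ph''.$ Substituting this into \eq{p1} yields $\De\ph''=(\be-2)(\be+l-2-2p)\ph'',$ and $(\be-2)(\be+l-2-2p)<0$ forces $\ph''=0.$ Finally \eq{p2} becomes $\De\ph'=(\be-2)(\be+l-2p)\ph',$ and $(\be-2)(\be+l-2p)<0$ forces $\ph'=0.$ Hence $\ph=0.$

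The only genuinely new point, and therefore the main thing to get right, is the step producing $\ps=\d\ph'=(2+2p-\be-l)\ph'':$ in Corollary \ref{cor: harmonic0} this identity was imported from Corollary \ref{cor: harmonic-1}, but here that corollary is unavailable because its hypothesis $p>\frac l2$ fails, so I re-derive the relation directly from \eq{phps} using $\be(\be+l-2p)<0.$ Everything else is the sign bookkeeping recorded above, which is routine once the range $\be\in(2+2p-l,0)$ is fixed. As a conceptual alternative one could avoid repeating any computation and deduce the statement in one line from Corollary \ref{cor: harmonic0} through the Hodge star, at the cost of twisting by the orientation local system (or passing to the orientation double cover of $C^\lk$) when $C^\reg$ is not orientable; I would remark on this but prefer the direct argument above, which stays within the orientation-free setting of the section.
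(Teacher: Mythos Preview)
Your proof is correct. The paper takes the Hodge-star route you mention at the end: it sets $q:=l-p$, observes that $\al\in(2-q,q-l)$, applies $*$ to obtain a homogeneous harmonic $q$-form of order $\al$ with $q>\frac l2+1$, invokes Corollary~\ref{cor: harmonic0} to conclude $*\ph=0$, and handles the unorientable case by passing to the orientation double cover of $C^\reg$. Your direct computation, by contrast, reruns the argument of Corollary~\ref{cor: harmonic0} with the sign pattern reversed (the two factors in each quadratic coefficient swap signs under $p\mapsto l-p$, leaving the product negative). The paper's approach is shorter and makes the duality with Corollary~\ref{cor: harmonic0} manifest, at the cost of the orientation case split; your approach is self-contained and orientation-free, at the cost of repeating the four eigenvalue steps. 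Either is fine.
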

\begin{proof}
Put $q:=l-p.$ Then $\al\in (2-q,q-l).$ Suppose first that $C^\reg$ is orientable. Then we can define the Hodge dual $*\ph$ as a homogeneous harmonic $q$-form of order $\al,$ to which we can apply Corollary \ref{cor: harmonic0}. So $*\ph=0$ and $\ph=0.$
If $C^\reg$ is unorientable then the result we have just obtained applies to the pull-back of $\ph$ to the double cover of $C^\reg;$ that is, the pull-back vanishes and accordingly so does $\ph.$
\end{proof}

The following may be proved by the separation of variable method; see for instance \cite[Part I, Equation 5.8]{Sim}.
\begin{prop}\l{prop: sep of vars}
Let $C$ be a Riemannian $l$-cone and $\pi:C^\reg\cong (0,\iy)\times C^\lk\to C^\lk$ the projection onto the second component. Let $V$ be a finite-rank $C^\iy$ complex vector bundle over $C^\lk,$ equipped with a Hermitian metric. Let $E:C^\iy(V)\to C^\iy(V)$ be a self-adjoint second-order linear elliptic operator with eigenvalues $\la_0\le \la_1\le \la_2\le\cdots$ which, as is well known, tend to $\iy.$ Let $(e_j)_{j=0}^\iy$ be a complete orthonormal system of $L^2(V)$ where each $e_j$ is an eigenvector of $E$ with eigenvalue $\la_j.$ Fix $m\in\R$ and consider the operator $(r\frac\bd{\bd r})^2-2m r\frac\bd{\bd r}-E:C^\iy(\pi^*V)\to C^\iy(\pi^*V).$  Let this be an elliptic operator. For $j\in\{0,1,2,\dots\}$ with $\la_j\ne -m^2$ denote by $\al_j,\be_j\in\C$ the two distinct roots of the polynomial $\xi^2-2m\xi-\la_j\in\R[\xi].$ Let $u\in C^\iy(\pi^*V)$ satisfy the equation $[(r\frac\bd{\bd r})^2-2m r\frac\bd{\bd r}-E]u=0.$ Then there exist two sequences $(a_j)_{j=0}^\iy,(b_j)_{j=0}^\iy$ of complex numbers such that
\e\l{expansion0} u=\sum_{\la_j\ne -m^2} (a_je^{\al_j\log r}+b_je^{\be_j\log r}) e_j+\sum_{\la_j=-m^2}(a_j+b_j\log r)r^me_j\e
which converges in the compact $C^\iy$ sense. The same result holds also for $u$ defined only on some open set in $C^\reg.$ \qed
\end{prop}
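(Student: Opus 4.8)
The plan is to separate variables along the link: expand $u$ in the $L^2$-eigenbasis $(e_j)$ of $E$ and thereby reduce the equation to an infinite family of decoupled ordinary differential equations in $r$. For each $r\in(0,\iy)$ the slice $u(r,\cdot)$ is a smooth, hence $L^2$, section of $V$ over $C^\lk,$ so I would set $u_j(r):=\langle u(r,\cdot),e_j\rangle_{L^2(V)}$ and write $u(r,\cdot)=\sum_j u_j(r)e_j.$ Since $u$ is smooth and $e_j$ is a fixed $r$-independent section, each $u_j$ is smooth in $r$ and $r\frac{\bd}{\bd r}$ passes through the pairing. Because $E$ acts only in the link directions it commutes with $r\frac{\bd}{\bd r},$ is self-adjoint, and satisfies $Ee_j=\la_j e_j;$ projecting the equation onto $e_j$ therefore gives
\[
(r\frac{\bd}{\bd r})^2u_j-2m\,r\frac{\bd}{\bd r}u_j-\la_j u_j=0 .
\]
As $(e_j)$ is complete, this modal equation holds for every $j$ and is equivalent to the original PDE.

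Next I would solve the modal equation, which is an Euler equation. The substitution $s=\log r$ sends $r\frac{\bd}{\bd r}$ to $\frac{\d}{\d s}$ and turns it into the constant-coefficient ODE $u_j''-2mu_j'-\la_j u_j=0,$ whose characteristic polynomial is precisely $\xi^2-2m\xi-\la_j.$ When $\la_j\ne-m^2$ the roots $\al_j,\be_j$ are distinct and the general solution is $a_je^{\al_j\log r}+b_je^{\be_j\log r};$ when $\la_j=-m^2$ the root $\xi=m$ is double and the general solution is $(a_j+b_j\log r)r^m.$ Summing over $j$ reproduces exactly the two series in \eq{expansion0}, the constants $a_j,b_j$ being the integration constants of the modal ODEs.

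The substantive point, and the main obstacle, is to upgrade this fibrewise $L^2$ reconstruction to convergence in the compact $C^\iy$ sense. A direct appeal to rapid decay of Fourier coefficients is insufficient: for all but finitely many $j$ the roots are real with $\al_j=m+\sqrt{m^2+\la_j}\to+\iy$ and $\be_j=m-\sqrt{m^2+\la_j}\to-\iy,$ so the modal functions $r^{\al_j},r^{\be_j}$ grow like $e^{\pm|\log r|\sqrt{\la_j}},$ exponentially in $\sqrt{\la_j},$ away from a fixed radius, whereas smoothness of $u$ only yields decay of $\langle u(r_0,\cdot),e_j\rangle$ faster than every power of $\la_j$ (via $\la_j^N\langle u(r_0,\cdot),e_j\rangle=\langle E^Nu(r_0,\cdot),e_j\rangle,$ which is bounded). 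The remedy is to fix the integration constants from the data at two radii $r_0<r_1$ in the domain: solving the resulting $2\times2$ system and using $\al_j\to+\iy,$ $\be_j\to-\iy$ shows that on the interior $r_0<r<r_1$ the combination $a_jr^{\al_j}e_j+b_jr^{\be_j}e_j$ is, to leading order, $u_j(r_1)(r/r_1)^{\al_j}e_j+u_j(r_0)(r/r_0)^{\be_j}e_j,$ each factor of which decays exponentially in $\sqrt{\la_j}$ once $r$ is strictly between $r_0$ and $r_1.$ This exponential decay dominates the merely polynomial growth of the $C^k$ norms $\|e_j\|_{C^k(V)}\les\la_j^{N}$ (for an exponent $N$ depending on $k$ and $l$) supplied by elliptic estimates and Sobolev embedding on $C^\lk,$ as well as the factors $\al_j^k,\la_j^k$ produced by differentiating in $r$ and applying $E.$ Hence every $C^k$ norm of the tail of the series converges uniformly on $[r_0+\ep,r_1-\ep];$ exhausting $(0,\iy)$ by such intervals yields compact $C^\iy$ convergence and the stated expansion. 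For $u$ defined only on an open set the argument is purely radial and local, so the same expansion holds on any annulus $\{r_0<r<r_1\}$ contained in it, the modal data being read off on the two slices $\{r=r_0\}$ and $\{r=r_1\}.$
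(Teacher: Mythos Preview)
Your proposal is correct and follows exactly the separation-of-variables method the paper invokes; the paper itself gives no details beyond citing \cite[Part I, Equation 5.8]{Sim}, so you have supplied the argument the author only gestures at. Your treatment of the compact $C^\iy$ convergence via two reference radii is the standard way to handle the exponential growth of the modal solutions, and the one minor caveat---that the open-set version presupposes the set contains full link slices (i.e.\ an annulus $\{r_0<r<r_1\}\times C^\lk$)---matches how the result is actually used downstream in the paper.
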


Applying Proposition \ref{prop: sep of vars} to the $p$-form Laplacian, we prove
\begin{cor}\l{cor: sep of vars}
Let $C$ be a Riemannian $l$-cone, fix $p\in\Z$ and put $m:=1+p-\frac l2.$ Define a self-adjoint elliptic operator $E:\Ga(\pi^*\La^{p-1}_{C^\lk})\oplus \Ga(\pi^*\La^p_{C^\lk})\to \Ga(\pi^*\La^{p-1}_{C^\lk})\oplus \Ga(\La^p_{C^\lk})$ by $E:=\begin{pmatrix}\De+2l-4p& -2\d^*\\ -2\d& \De \end{pmatrix}$ where $\d,\d^*$ and $\De$ are computed on $C^\lk.$ Let $(e_j)_{j=0}^\iy$ be a complete orthonormal system of $L^2(\pi^*\La^{p-1}_{C^\lk})\oplus L^2(\pi^*\La^p_{C^\lk})$ which consists of eigenvectors of $E$ with eigenvalues $\la_0\le \la_1\le \la_2\le\cdots$ tending to $\iy.$ Then for every $j=0,1,2,\dots$ we have $\la_j\ge-m^2.$ Moreover the following holds. 

For $j\in\{0,1,2,\dots\}$ with $\la_j>-m^2$ denote by $\al_j>\be_j$ the two distinct real roots of the polynomial $\xi^2-2m\xi-\la_j\in\R[\xi].$ Let $\ph$ be a section of $\La^p_{C^\reg}$ over some open set of $C^\reg,$ satisfying $\De\ph=0$ with respect to the cone metric. Then there exist two sequences $(a_j)_{\la_j\ge {-m^2}},(b_j)_{\la_j\ge {-m^2}}$ of complex numbers such that
\e\l{expansion} \ph=\sum_{\la_j> -m^2} (a_jr^{\al_j}+b_jr^{\be_j}) e_j+\sum_{\la_j=-m^2}(a_j+b_j\log r)r^me_j\e
which converges in the compact $C^\iy$ sense.
\end{cor}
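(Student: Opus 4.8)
The plan is to recognize the harmonicity equation as the ODE system of Proposition~\ref{prop: sep of vars} and then to sharpen the resulting expansion by proving the eigenvalue bound. First I would write $\ph=\d\log r\w\ph'+\ph''$ and set $u:=(\ph',\ph'')$ as in Proposition~\ref{prop: Lap}; the matrix expression for $r^2\De$ there gives $r^2\De\,u=-(r\frac\bd{\bd r})^2u+(2+2p-l)(r\frac\bd{\bd r})u+Eu$. Since $m=1+p-\frac l2$ means $2m=2+2p-l$, the equation $\De\ph=0$ is equivalent to $[(r\frac\bd{\bd r})^2-2m\,r\frac\bd{\bd r}-E]u=0$, which is precisely the operator of Proposition~\ref{prop: sep of vars}. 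So the first task is to verify its hypotheses.

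Under the isomorphism of Proposition~\ref{prop: Lap} this operator equals $-r^2\De$, and $\De$ is the genuine form Laplacian of the smooth Riemannian manifold $C^\reg$, hence elliptic; multiplying by the nowhere-vanishing function $-r^2$ preserves ellipticity, so the operator is elliptic and Proposition~\ref{prop: sep of vars} applies. Since $E$ is a self-adjoint second-order elliptic operator on the closed manifold $C^\lk$, its eigenvectors are smooth and a complete orthonormal system $(e_j)$ exists as stated. Feeding $u$ into Proposition~\ref{prop: sep of vars} then yields the general expansion \eqref{expansion0}.

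The heart of the matter is the bound $\la_j\ge -m^2$, which is exactly what promotes \eqref{expansion0} to the stated form \eqref{expansion}; here I would invoke Proposition~\ref{prop: real eigen}. Fix $j$, write $e_j=(\ph'_j,\ph''_j)$, and let $\xi_+:=m+\sqrt{m^2+\la_j}$ be a (possibly complex) root of $\xi^2-2m\xi-\la_j$. Then $\ph_j:=r^{\xi_+}(\d\log r\w\ph'_j+\ph''_j)$ is a non-zero homogeneous $p$-form of order $\xi_+-p$, and applying the identity of the first paragraph to $u=r^{\xi_+}e_j$ gives $r^2\De\ph_j=(-\xi_+^2+2m\xi_++\la_j)r^{\xi_+}e_j=0$, so $\ph_j$ is harmonic. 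By Proposition~\ref{prop: real eigen} its order $\xi_+-p$ is real, whence $\xi_+\in\R$ and $\sqrt{m^2+\la_j}\in\R$; that is $\la_j\ge -m^2$. Crucially the proof of Proposition~\ref{prop: real eigen} is independent of the present statement, so no circularity arises.

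With the bound established the conclusion follows at once. For $\la_j>-m^2$ the discriminant $4(m^2+\la_j)$ is positive, so the roots $\al_j>\be_j$ equal $m\pm\sqrt{m^2+\la_j}$, are real and distinct, and $e^{\al_j\log r}=r^{\al_j}$, $e^{\be_j\log r}=r^{\be_j}$; for $\la_j=-m^2$ one has the double root $\xi=m$ and the logarithmic term $(a_j+b_j\log r)r^m$. Substituting into \eqref{expansion0} and splitting the sum over $\{\la_j>-m^2\}$ and $\{\la_j=-m^2\}$ gives \eqref{expansion}, with the compact-$C^\iy$ convergence inherited from Proposition~\ref{prop: sep of vars}. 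I expect the eigenvalue estimate to be the only real obstacle; everything else is bookkeeping, and that estimate is delivered cleanly by the reality of the orders in Proposition~\ref{prop: real eigen}.
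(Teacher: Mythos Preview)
Your proof is correct and follows essentially the same route as the paper: rewrite $-r^2\De$ via Proposition~\ref{prop: Lap} as $(r\frac{\bd}{\bd r})^2-2m\,r\frac{\bd}{\bd r}-E$, apply Proposition~\ref{prop: sep of vars} for the expansion, and derive the eigenvalue bound from Proposition~\ref{prop: real eigen}. The paper phrases the last step contrapositively (if $\la_j<-m^2$ the roots are non-real, contradicting the reality of exceptional orders), whereas you construct the homogeneous harmonic form explicitly and read off $\xi_+\in\R$; these are the same argument, and your version usefully spells out the ellipticity check and the passage from eigenvector to harmonic form that the paper leaves implicit.
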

\begin{proof}
Proposition \ref{prop: Lap} implies $-r^2\De=(r\frac{\bd}{\bd r})^2-2mr\frac{\bd}{\bd r}-E,$ to which we can certainly apply Proposition \ref{prop: sep of vars}.
Notice that for any $j$ with $\la_j<-m^2$ the two distinct real roots of the polynomial $\xi^2-2m\xi-\la_j\in\R[\xi]$ are not real numbers. Proposition \ref{prop: real eigen} implies therefore that no such $j$ exists. So $\la_j\ge-m^2$ for every $j.$ The latter part is an immediate consequence of  Proposition \ref{prop: sep of vars}.
\end{proof}

We prove that no logarithm terms appear in \eq{expansion}. 
\begin{thm}\l{thm: no log}
In the circumstances of Corollary \ref{cor: sep of vars} no $j\in\{0,1,2,\dots\}$ is such that $\la_j=-m^2;$ in particular, \eq{expansion} becomes a sum of homogeneous harmonic $p$-forms.
\end{thm}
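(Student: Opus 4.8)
The plan is to convert the statement into the spectral assertion that $-m^2$ is not an eigenvalue of $E$, and then to dispose of it one degree of $p$ at a time. By Corollary \ref{cor: sep of vars} the value $-m^2$ is precisely the bottom of the admissible range, and by construction the index $j$ produces a logarithm in \eq{expansion} exactly when $\la_j=-m^2$, for then the indicial polynomial $\xi^2-2m\xi-\la_j$ acquires the double root $\xi=m$. Via the identity $-r^2\De=(r\frac{\bd}{\bd r})^2-2mr\frac{\bd}{\bd r}-E$ from the proof of Corollary \ref{cor: sep of vars}, having $-m^2$ as an eigenvalue is equivalent to the existence of a non-zero homogeneous harmonic $p$-form on $C^\reg$ of order $\al=m-p=1-\frac l2$. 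So I would reduce the whole theorem to excluding this single critical order.

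The two ``outer'' ranges of $p$ are immediate from the corollaries already proved. If $p>\frac l2+1$ then $\al=1-\frac l2$ satisfies $2-p<\al<p-l$, so Corollary \ref{cor: harmonic0} forces any such $\ph$ to vanish; symmetrically, if $p<\frac l2-1$ then $\al=1-\frac l2$ satisfies $2+p-l<\al<-p$, and Corollary \ref{cor: harmonic1} again gives $\ph=0$. In either case $-m^2$ is not an eigenvalue of $E$ and no admissible $j$ occurs.

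The substance therefore lies in the finitely many middle degrees $\md{p-\frac l2}\le1$, where $\al=1-\frac l2$ falls at or outside the endpoints of the intervals of Corollaries \ref{cor: harmonic0} and \ref{cor: harmonic1}, so those vanishing results no longer apply. Here I would specialize the computation in the proof of Proposition \ref{prop: real eigen} to the double-root value $\be=m$. A direct check shows that at $\be=m$ the matrix $P$ of \eq{dfn P and D} degenerates, since $\det P=-(2\be+l-2-2p)=0$; this degeneration of the diagonalization is the analytic signature of the would-be logarithm. Running the bookkeeping of \eq{p1}--\eq{p2} through this case, a non-zero solution (after separating the boundary orders $m\in\{0,2\}$) forces $\d\ph''=0$ and $\d^*\ph'=0$, and by \eq{phps} the combination $\d\ph'-m\ph''$ becomes a non-zero $\De$-eigenform on $C^\lk$ with eigenvalue $\be(\be+l-2p)=m(2-m)=2m-m^2$.

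The crux is then to exclude the value $2m-m^2=1-(p-\frac l2)^2$ from the relevant part of the spectrum of the link Laplacian. For the middle degrees this number lies in $[0,1]$, so unlike the outer ranges there is no sign obstruction, and the argument must use the finer Hodge-theoretic structure on $C^\lk$ (the offending form is coclosed and coexact of degree $p-1$) together with a spectral input ruling this eigenvalue out; the boundary orders $m\in\{0,2\}$, i.e. $p=\frac l2\pm1$, must be handled separately by examining the harmonic forms of $C^\lk$ directly. I expect this middle-degree exclusion to be the main obstacle and the step requiring the most care, whereas the reduction to a single link eigenvalue and the disposal of the outer ranges are routine given Proposition \ref{prop: real eigen} and Corollaries \ref{cor: harmonic0}--\ref{cor: harmonic1}.
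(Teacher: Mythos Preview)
Your reduction is correct: $\la_j=-m^2$ is equivalent to the existence of a non-zero homogeneous harmonic $p$-form of order $\al=1-\tfrac l2$, and the outer ranges $|p-\tfrac l2|>1$ are indeed killed by Corollaries~\ref{cor: harmonic0} and~\ref{cor: harmonic1}. The gap is exactly where you say it is, in the middle degrees, but your proposed attack there does not close. Extracting from \eq{phps} that $\d\ph'-m\ph''$ is a link eigenform with eigenvalue $m(2-m)\in[0,1]$ gives you nothing to push against: there is no general spectral lower bound on a compact Riemannian manifold that excludes small positive eigenvalues of the Hodge Laplacian in a fixed degree, and the ``finer Hodge-theoretic structure'' you allude to (coexact, coclosed) does not improve matters. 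So the plan, as written, stalls.

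What you are missing is that the double-root condition carries \emph{two} equations, not one. When $\la_j=-m^2$, both $\ph:=r^me_j$ and $(\log r)\ph$ are harmonic on $C^\reg$; the second equation $\De[(\log r)\ph]=0$ is independent of $\De\ph=0$ and is the decisive input. Expanding it with the formulae of Proposition~\ref{prop: Lap} (the paper does this term by term), everything cancels except $-2r^{m-2}\ph''$, so $\ph''=0$. Once $\ph''=0$, the equation $\De\ph=0$ reduces on the link to $\De\ph'=-(m-2)^2\ph'$ with $\ph'\ne0$, forcing $m=2$, i.e.\ $p=\tfrac l2+1$. Now apply the same argument to $*\ph$: it is homogeneous of the same order, hence sits at the critical exponent $\mu=2-m$ for degree $l-p$, and the identical computation forces $\mu=2$, i.e.\ $p=\tfrac l2-1$. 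These two conclusions are incompatible.

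Note that this route never splits on $p$ at all; the log-term equation does all the work uniformly, and Hodge duality closes it. Your case analysis on the outer ranges is correct but unnecessary once you have this.
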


\begin{proof}
Suppose contrarily that there exists some $j$ with $\la_j=-m^2.$ Putting $\ph:=r^m e_j$ we have then $\De\ph=\De[(\log r)\ph]=0$ where $\De$ is computed on $C^\reg.$ Direct computation shows that
\e\l{ps1}
\ts(\d+\d^*)[(\log r)\ph]=\frac1r\d r\wedge\ph-\frac1r\frac{\bd}{\bd r}\mathbin{\lrcorner}\ph+(\log r)(\d+\d^*)\ph
\e
where $\d,\d^*$ are computed on $C^\reg.$ Applying $\d+\d^*$ to these and looking at the degree-$p$ parts, we get
\e\l{ps2}
0=\ts\De[(\log r)\ph]
=\d^*(\frac1r\d r\wedge\ph)-\d(\frac1r\frac{\bd}{\bd r}\mathbin{\lrcorner}\ph)+(\d+\d^*)[(\log r)(\d+\d^*)\ph].\e
Applying \eq{ps1} to $(\d+\d^*)\ph$ in place of $\ph,$ using the equation $\De\ph=0$ and looking at the degree-$p$ parts, we get
\ea\l{ps3}
(\d+\d^*)[\log r(\d+\d^*)\ph]&=\ts\frac1r\d r\wedge(\d+\d^*)\ph-\frac1r\frac{\bd}{\bd r}\mathbin{\lrcorner}(\d+\d^*)\ph\\
&=\ts\frac1r\d r\wedge\d^*\ph-\frac1r\frac{\bd}{\bd r}\mathbin{\lrcorner}\d\ph.
\ea
By \eq{ps2} and \eq{ps3} we have
\e\l{D0}
\ts
\d^*(\frac1r\d r\wedge\ph)-\d(\frac1r\frac{\bd}{\bd r}\mathbin{\lrcorner}\ph)
+\frac1r\d r\wedge\d^*\ph-\frac1r\frac{\bd}{\bd r}\mathbin{\lrcorner}\d\ph=0.
\e
Write
$\ph:=r^m(\d\log r\wedge \ph'+\ph'')$
where $\ph'$ is a $(p-1)$ form on $C^\lk$ and $\ph''$ a $p$-form on $C^\lk.$ Using \eq{d+d*} with $\al=m$ we see that the first term $\d^*(\frac1r\d r\wedge\ph)=\d^*(r^m\d\log r\wedge\ph'')$ on the left-hand side of \eq{D0} is equal to 
\e\l{Th2}
(m-2)r^{m-2}\ph''-r^{m-3}\d r\wedge\d^*\ph''
\e
where $\d^*$ is computed on $C^\lk.$ The second term $-\d(\frac1r\bd_r\mathbin{\lrcorner}\ph)=-\d(r^{m-2}\ph')$ on the left-hand side of \eq{D0} is equal to
\e\l{Th3}
-(m-2)r^{m-3}\d r\wedge\ph'-r^{m-2}\d\ph'.
\e
Using \eq{d+d*} with $\al=m$ we see that the third term on the left-hand side of \eq{D0} is equal to
\e\l{Th4}
\ts\frac1r\d r\wedge \d^*\ph=\d r\wedge[r^{m-3}\d^*\ph''+(m-2)r^{m-3}\ph'].
\e
where $\d^*\ph''$ is computed on $C^\lk.$ The fouth term on the left-hand side of \eq{D0} is equal to
\e\l{Th5}
\ts
-\frac 1r\frac{\bd}{\bd r}\mathbin{\lrcorner}\d\ph=-m r^{m-2}\ph''+r^{m-2}\d\ph'
\e
All the terms on \eq{Th2}--\eq{Th5} cancel out except the first term on \eq{Th2} and the first term on \eq{Th5}. So \eq{D0} becomes
$-2r^{m-2}\ph''=0;$ that is, $\ph''=0.$ Using \eq{d+d*} with $\al=m$ we see now that $\De\ph'+(m-2)^2\ph'=0$ where $\De$ is computed on $C^\lk.$ But $\ph\ne0$ implies $\ph'\ne0$ so $-(m-2)^2\le0$ is an eigenvalue of $\De,$ which must therefore vanish.
Thus $m=2;$ that is, $p=\frac l2+1.$ 

Notice now that the $(l-p)$ form Laplacian may be written as $(r\frac\bd{\bd r})^2-2\mu r\frac\bd{\bd r}-\cE u$ with $\mu:=1+(l-p)-\frac l2=1-p+\frac l2$ and $\cE$ defined over $C^\lk.$ Computation shows $*\ph=:r^\mu\ep$ with $\ep=(\d\log r\wedge \ep'+\ep'')$ for some $\ep',\ep''$ defined on $C^\lk.$ Since $*\ph$ is harmonic it follows that $\ep$ is an eigenvector of $\cE$ with eigenvalue $-\mu^2.$ So we can apply the result of the paragraph above with $l-p$ in place of $p;$ that is, $\mu=2$ and $l-p=\frac l2+1.$ But this contradicts $p=\frac l2+1,$ completing the proof.
\end{proof}

We prove the following fact about order $-p$ homogeneous harmonic $p$-forms. 
\begin{cor}\l{cor: order -p}
Let $C$ be a Riemannian $l$-cone, $p\le\frac l2-1$ an integer, and $\ph$ an order $-p$ homogeneous harmonic $p$-form on $C^\reg.$ Then $\d\ph=\d^*\ph=0$ where $\d^*$ is computed with respect to the cone metric on $C^\reg.$
\end{cor}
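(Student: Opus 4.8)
The plan is to work with the homogeneous expansion $\ph=r^{-p}(\d\log r\w\ph'+\ph'')$, where $\ph'$ is a $(p-1)$-form and $\ph''$ a $p$-form on $C^\lk$, and to extract $\d\ph=\d^*\ph=0$ directly from the formulas \eq{d+d*}. Since $\ph$ is homogeneous of order $-p$, we have $\be:=p+\al=p+(-p)=0$. First I would substitute $\be=0$ into the three lines of \eq{d+d*} and see exactly which terms survive; this is the cleanest route because setting $\be=0$ kills several coefficients at once. For the $\d^*$ line, the middle coefficient becomes $-(\be+l-2p)=-(l-2p)=2p-l$, and the $r^{\be-1}\d r\w\ph''$ term in $\d\ph$ simply vanishes because $\be=0$.

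**Using the structure equations at $\be=0$.**
With $\be=0$, equations \eq{p2} and \eq{p1} from the proof of Proposition \ref{prop: real eigen} read
\[
\De\ph'=-2(l-2p)\ph'+2\d^*\ph'',\qquad \De\ph''=2\d\ph'.
\]
The key observation is that $p\le\frac l2-1$ means $p<\frac l2$, so Corollary \ref{cor: harmonic1} is available for forms of order in the relevant window, and more importantly the strictly negative sign of the quadratic coefficients can be exploited exactly as in the proofs of Corollaries \ref{cor: harmonic-1}--\ref{cor: harmonic1}. Concretely, I would follow the argument of Corollary \ref{cor: harmonic-1}: applying $\d$ to \eq{p1} with $\be=0$ gives $\d\d^*\d\ph''=0\cdot\d\ph''=0$, and pairing against $\d\ph''$ in $L^2(C^\lk)$ forces $\d\ph''=0$; then \eq{phps} with $\be=0$ reduces (since $\be(\be+l-2p)=0$) to $\De[(l-2-2p)\ph''+\ps]=0$ with $\ps=\d\ph'$, and harmonicity on the compact link $C^\lk$ together with the hypothesis $p\le\frac l2-1$ should pin down the relation $\ps=\d\ph'=(2-l+2p)\ph''$, i.e. $\d\ph'=(2+p-l-\al)\ph''$ at $\al=-p$, which is precisely the conclusion of Corollary \ref{cor: harmonic-1}.

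**Deducing $\d\ph=\d^*\ph=0$.**
Granting $\d\ph''=0$ and the relation $\d\ph'=(2+p-l)\ph''$ from Corollary \ref{cor: harmonic-1} (applied with $\al=-p$, noting $-p\in(-p,p-l)$ requires checking — I would verify the order lands in the admissible interval, which is where $p\le\frac l2-1$ enters), I substitute back into \eq{d+d*} with $\be=0$. The expression for $\d\ph$ collapses to $r^0\d\ph''-r^{-1}\d r\w\d\ph'$; the first summand is zero, and the second is $-r^{-1}\d r\w(2+p-l)\ph''$, so I would then argue that this too must vanish, either because $\ph''$ is itself forced to vanish in this range or because the surviving coefficient cancels. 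Similarly the $\d^*\ph$ formula with $\be=0$ becomes $r^{-2}\d^*\ph''-(l-2p)r^{-2}\ph'-r^{-3}\d r\w\d^*\ph'$, and I would show each piece vanishes using $\d\ph''=0$ (hence $\d^*\ph''$ is controlled) and the structure relation.

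**Main obstacle.**
The hard part will be confirming that order $-p$ genuinely lies in the open window required to invoke Corollaries \ref{cor: harmonic-1}--\ref{cor: harmonic0}, since $\al=-p$ sits at the \emph{boundary} of the interval $(-p,p-l)$ rather than strictly inside it; the strict inequalities in those corollaries were what drove the sign arguments, so at the endpoint the quadratic forms degenerate to zero and I must instead use harmonicity on the compact link directly (a zero eigenvalue of $\De$ on $C^\lk$ forces the relevant form to be a genuine harmonic form on the link, not merely to decay). Managing this boundary case carefully — distinguishing the genuinely harmonic link-components from the rest — is where the real work lies; the algebra of substituting $\be=0$ into \eq{d+d*} is routine by comparison.
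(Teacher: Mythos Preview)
Your main obstacle is not merely a technicality: the relation $\d\ph'=(2+2p-l)\ph''$ from Corollary~\ref{cor: harmonic-1} is \emph{false} at the endpoint $\al=-p$. Take $\ph'=0$ and $\ph''$ any nonzero harmonic $p$-form on $C^\lk$; then $\ph=\ph''$ is an order $-p$ homogeneous harmonic $p$-form on $C^\reg$, yet $\d\ph'=0$ while $(2+2p-l)\ph''\ne0$ for $p<\tfrac l2-1$. So the strategy of extending Corollary~\ref{cor: harmonic-1} to the boundary cannot work as stated, and your plan to conclude ``$\ph''$ is itself forced to vanish'' is wrong for the same reason. (There is also an arithmetic slip: $(2+p-l-\al)$ at $\al=-p$ is $2+2p-l$, not $2+p-l$.)

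The paper does not try to push Corollaries~\ref{cor: harmonic-1}--\ref{cor: harmonic0} to the boundary. Instead it works directly with the two coupled link equations at $\be=0$ and extracts strictly more information than you aim for: applying $\d^*$ to the first equation gives $\De\d^*\ph'=-2(l-2p)\d^*\ph'$, a negative-eigenvalue condition forcing $\d^*\ph'=0$; this step is entirely absent from your outline. With $\d^*\ph'=0$ and $\d\ph''=0$ in hand, the equations become $\d^*\d\ph'=-2(l-2p)\ph'+2\d^*\ph''$ and $\d\d^*\ph''=2\d\ph'$, and a further round of applying $\d$, $\d^*$ and combining yields $\De(\d\d^*\ph'')=-2(l-2p-2)\d\d^*\ph''$, hence $\d^*\ph''=0$, hence $\De\ph'=-2(l-2p)\ph'$, hence $\ph'=0$. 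Once $\ph'=0$, $\d\ph''=0$ and $\d^*\ph''=0$, the formulae \eq{d+d*} with $\be=0$ give $\d\ph=\d^*\ph=0$ immediately. Your handling of $\d^*\ph$ (``$\d^*\ph''$ is controlled'') is too vague precisely because you are missing the $\d^*\ph'=0$ step that unlocks the rest.
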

\begin{proof}
We compute $\De,\d$ and $\d^*$ on $C^\lk$ except at the end of the proof. 
By \eq{d+d*} with $\be=0$ we have 
\e\l{Deph}
\De\ph'=-2(l-2p)\ph'+2\d^*\ph''\text{ and }\De\ph''=2\d\ph'
\e
Applying $\d^*$ to the first equation, we find $\d^*\De\ph'=-2(l-2p)\d^*\ph'.$ The left-hand side is equal to $\De\d^*\ph'$ and so $\d^*\ph'$ is an eigenvector with eigenvalue $-2(l-2p)<0$ of $\De.$ Thus $\d^*\ph'=0.$ 

Applying $\d$ to the second equation of \eq{Deph} we find $\d\De\ph''=0.$ So $\d^*\d\d^*\d\ph''=0.$ Doing twice the integration by parts, we get $\d\ph''=0.$ Now \eq{Deph} becomes
\e\l{Deph2}
\d^*\d\ph'=-2(l-2p)\ph'+2\d^*\ph''\text{ and }\d\d^*\ph''=2\d\ph'.
\e
Applying $\d^*$ to the second equation and using then the first equation, we find
\e
\d^*\d\d^*\ph''=2\d^*\d\ph'=-4(l-2p)\ph'+4\d^*\ph''.
\e
Applying $\d$ to both sides and using the second equation of \eq{Deph2} we get
\ea\l{dd*ph''}
\d\d^*\d\d^*\ph''=2\d^*\d\ph'=-4(l-2p)\d\ph'+4\d\d^*\ph''\\
=-2(l-2p)\d\d^*\ph''+4\d\d^*\ph''=-2(l-2p-2)\d\d^*\ph''.
\ea
As the left-hand side is equal to $\De\d\d^*\ph''$ the vector $\d\d^*\ph''$ is an eigenvector with eigenvalue $-2(l-2p-2)\le0$ of $\De.$ If $p<\frac l2-1$ then $\d\d^*\ph''=0.$ If $p=\frac l2-1$ then \eq{dd*ph''} implies $\d\d^*\d\d^*\ph''=0$ and hence it follows by integration by parts that even in this case we have $\d\d^*\ph''=0.$ Doing again integration by parts we find $\d^*\ph''=0.$ The first equation of \eq{Deph2} implies in turn that $\d^*\d\ph'=-2(l-2p)\ph'.$ As $\d^*\ph'=0$ the left-hand side is equal to $\De\ph',$  so $\ph'$ is an eigenvector with negative eigenvalue of $\De.$ Thus $\ph'=0.$ Now \eq{d+d*} with $\be=0$ implies $\d\ph=\d^*\ph=0$ where $\d,\d^*$ are computed on $C^\reg.$
\end{proof}

In the same way we prove
\begin{cor}\l{cor: order 2+p-l}
Let $C$ be a Riemannian $l$-cone and $p\le\frac l2-1$ an integer with $b_p(C^\reg)=0.$ Let $\ph$ be an order $2+p-l$ homogeneous harmonic $p$-form on $C^\reg.$ Then $\ph=0.$
\end{cor}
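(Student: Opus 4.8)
The plan is to follow the proof of Corollary~\ref{cor: order -p} almost verbatim. Write $\ph=r^\be(\d\log r\w\ph'+\ph'')$ with $\ph'$ a $(p-1)$-form and $\ph''$ a $p$-form on $C^\lk$; since $\ph$ has order $2+p-l$ we have $\be=2+2p-l$. The key observation is that for this value of $\be$ one computes $\be+l-2-2p=0$ and $(\be-2)(\be+l-2p)=(2p-l)\cdot2=-2(l-2p)$, which are exactly the numerical coefficients that occur for $\be=0$ in Corollary~\ref{cor: order -p}. Hence \eq{d+d*} together with $\De\ph=0$ produces precisely the same system on $C^\lk$, namely $\De\ph'=-2(l-2p)\ph'+2\d^*\ph''$ and $\De\ph''=2\d\ph'$.

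From this system onwards the reduction carried out in the proof of Corollary~\ref{cor: order -p} uses only these two equations, integration by parts on the compact link, and the inequalities $p<\frac{l}{2}$ (giving $\d^*\ph'=0$ and eventually $\ph'=0$) and $p\le\frac{l}{2}-1$ (giving $\d\d^*\ph''=0$, with the borderline case $p=\frac{l}{2}-1$ handled by an extra integration by parts). None of these steps refer to the actual value of $\be$, only to the coefficients displayed above. I therefore obtain $\ph'=0$ together with $\d\ph''=\d^*\ph''=0$, so that $\ph''$ is a harmonic $p$-form on the closed manifold $C^\lk$.

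At this point the Betti number hypothesis finishes the argument, and this is the only place it enters. Since $C^\reg\cong(0,\iy)\t C^\lk$ deformation retracts onto $C^\lk$, we have $b_p(C^\reg)=b_p(C^\lk)$, and by Hodge theory on the compact manifold $C^\lk$ this equals $\dim_\C\cH^p(C^\lk)$. Thus $b_p(C^\reg)=0$ forces $\ph''=0$, and together with $\ph'=0$ this gives $\ph=0$. I do not anticipate any genuine obstacle: the only thing to check with care is the coincidence of the numerical coefficients for $\be=2+2p-l$ and for $\be=0$, a one-line computation, after which the machinery of Corollary~\ref{cor: order -p} transfers unchanged and the topological input does the rest.
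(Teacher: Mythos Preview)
Your proposal is correct and follows essentially the same approach as the paper: observe that with $\be=2+2p-l$ the harmonic equation on $C^\reg$ yields exactly the system \eq{Deph}, run the argument of Corollary~\ref{cor: order -p} to obtain $\ph'=0$ and $\De\ph''=0$, and then invoke $b_p(C^\reg)=b_p(C^\lk)=0$ to kill $\ph''$. Your explicit verification of the coefficients (that $\be+l-2-2p=0$ and $(\be-2)(\be+l-2p)=-2(l-2p)$) is a useful check; the paper simply asserts that \eq{Deph} holds as it is.
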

\begin{proof}
We compute $\De,\d$ and $\d^*$ on $C^\lk.$ By \eq{d+d*} with $\be=2+p-l$ the equation \eq{Deph} holds as it is. Following the subsequent computation we see that $\ph'=0$ and that $\De\ph''=0.$ But by hypothesis $b_p(C^\lk)=b_p(C^\reg)=0$ so $\ph''=0$ too. Thus $\ph=0.$
\end{proof}

\section{K\"ahler Cones}\l{sect: Kahler cones}
We begin by defining K\"ahler cones.
\begin{dfn}
A {\it K\"ahler cone} is a Riemannian cone $C$ whose regular part $C^\reg$ is given a complex structure $J$ with the following properties: the cone metric of $C^\reg$ is a K\"ahler metric on $(C^\reg, J);$ and for each $t\in(0,\iy),$ if we define a diffeomorphism $(0,\iy)\times C^\lk\to (0,\iy)\times C^\lk$ by $(a,b)\mapsto(ta,b)$ for $(a,b)\in(0,\iy)\times C^\lk$ then the corresponding diffeomorphism $C^\reg\to C^\reg$ is holomorphic.

We call $(C,J)$ a K\"ahler $n$-cone if $(C^\reg,J)$ is a complex manifold of complex dimension $n.$
\end{dfn}
It is known that the complex structure of a K\"ahler cone extends automatically to its vertex. We will recall this shortly after making a definition we shall need.
\begin{dfn}\l{dfn: quasi-hom}
We say that the germ $(Y,y)$ of a complex analytic space is {\it quasi-homogeneous} if there exist integers $k;w_1,\dots,w_k\ge1$ and a complex analytic embedding $(Y,y)\sb(\C^k,0)$ such that $(Y,y)$ is invariant under the multiplicative group action $\C^*:=\C\-\{0\}\curvearrowright\C^k$ defined by $t\cdot(z_1,\dots,z_k)=(t^{w_1}z_1,\dots,t^{w_k}z_k).$
\end{dfn}
\begin{thm}[Theorem 3.1 of \cite{vC}]\l{thm: vC}
Every K\"ahler cone $C$ has the structure of a normal complex space which agrees with the complex manifold structure of $C^\reg$ and whose germ $(C,\vx)$ is quasi-homogeneous. \qed
\end{thm}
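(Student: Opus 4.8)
The plan is to turn the holomorphic scaling action into a graded ring of homogeneous holomorphic functions, to embed a neighbourhood of $\vx$ by finitely many of them, and then to read off normality and quasi-homogeneity from the embedding. First I would pass to infinitesimal data. Since every scaling $(a,b)\mapsto(ta,b)$ is holomorphic, its generator $r\frac{\bd}{\bd r}$ is a real holomorphic vector field, so $\mathcal L_{r\bd/\bd r}J=0$; hence the Reeb field $\xi:=J\big(r\frac{\bd}{\bd r}\big)$ is also real holomorphic and Killing, and $Z:=\frac12\big(r\frac{\bd}{\bd r}-i\xi\big)$ is a nowhere-vanishing $(1,0)$ vector field commuting with its conjugate. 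The closure of the Reeb flow $\exp(s\xi)$ inside the isometry group of $g^\lk$ is a compact torus $T$, acting holomorphically on $C^\reg$ and preserving $r$; write $\mathfrak t=\Lie T$, so $\xi\in\mathfrak t$, and let $\Z^d\sb\mathfrak t^*$ be its character lattice. Its complexification $T_\C\cong(\C^*)^d$ then acts on the holomorphic functions of $C^\reg$.

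Next I would study the homogeneous holomorphic functions. A holomorphic $T$-weight vector $f$ of weight $\mu\in\Z^d$ satisfies $\xi f=i\mu(\xi)f$ and, being holomorphic, also $r\frac{\bd}{\bd r}f=\la f$ with $\la:=\mu(\xi)\in\R$; thus $f$ scales as $f(ta,b)=t^\la f(a,b)$ and $\md f(a,b)=a^{\la}\md f(1,b)$, which is bounded near $\vx$ exactly when $\la\ge0$ and tends to $0$ when $\la>0$. Imposing holomorphy on such an $f=r^\la h(b)$ yields an eigenvalue equation for $h$ under a self-adjoint elliptic operator on the compact link, so --- just as in Proposition \ref{prop: real eigen} --- the occurring weights $\la$ form a discrete subset of $\R_{>0}$ for nonconstant $f,$ and each graded piece is finite-dimensional. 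Every such $f$ therefore extends continuously across $\vx$ by $f(\vx):=0,$ and the $T$-weights $\mu$ lie in the integral lattice $\Z^d.$

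The crux is to produce enough of these functions. Using H\"ormander $L^2$ estimates for $\db$ on the K\"ahler manifold $C^\reg,$ together with the compactness of $C^\lk$ and the torus symmetry, one constructs homogeneous holomorphic peak sections that separate the points of $C^\reg$ and give local coordinates there, and one shows that the resulting graded ring $R=\bigoplus_\mu R_\mu$ is a finitely generated normal $\C$-algebra. A finite set of $T$-weight generators $z_1,\dots,z_k,$ of weights $\mu_1,\dots,\mu_k\in\Z^d,$ then defines a map $z=(z_1,\dots,z_k)\colon C\to\C^k$ which sends $\vx$ to $0,$ embeds $C^\reg$ as a locally closed submanifold, and is $T_\C$-equivariant; the image is the affine variety $\spec R,$ and normality follows because $C^\reg$ is smooth and $R$ is integrally closed. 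This furnishes the asserted complex-space structure extending that of $C^\reg.$ Finally, to exhibit the germ as quasi-homogeneous in the sense of Definition \ref{dfn: quasi-hom} I would choose a single one-parameter subgroup $\C^*\hookrightarrow T_\C$: since each $\la_i=\mu_i(\xi)>0$ and $\xi$ can be approximated by a rational, hence after clearing denominators integral, element $a\in\mathfrak t$ with $\mu_i(a)>0$ for all $i,$ the weights $w_i:=\mu_i(a)$ are positive integers and $z$ intertwines the scaling on $C$ with the weighted action $t\cdot(z_1,\dots,z_k)=(t^{w_1}z_1,\dots,t^{w_k}z_k)$ on $\C^k.$

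The main obstacle is the finite-generation and embedding step: one must construct, by $L^2$ methods adapted to the incomplete cone geometry, enough homogeneous holomorphic functions to separate points and give local coordinates, and deduce that their algebra is finitely generated and normal. Once this is in hand, the passage from the a priori real scaling weights $\la_i$ to the positive integer weights $w_i$ of the Definition is comparatively soft, being the rational approximation of the Reeb direction inside the open cone of $\mathfrak t$ on which all generator weights are positive.
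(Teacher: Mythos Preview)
The paper does not prove this theorem at all: the statement ends with \qed and is simply quoted as Theorem 3.1 of \cite{vC}. There is therefore no proof in the paper to compare your proposal against.

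That said, your sketch is broadly the strategy of the cited reference. Van Coevering passes to the torus generated by the closure of the Reeb flow, decomposes holomorphic functions into weight spaces, and shows that finitely many homogeneous generators embed a neighbourhood of the vertex equivariantly into some $\C^k$; the rational approximation of the Reeb direction then yields the integer weights required by Definition~\ref{dfn: quasi-hom}. Your outline identifies the same ingredients. The one place where your account is thinnest is exactly where you flag it: the construction of sufficiently many homogeneous holomorphic functions and the finite generation of the graded ring. In \cite{vC} this is handled not by raw H\"ormander estimates on the incomplete cone but via the CR structure of the link and an embedding theorem for Sasakian manifolds (in the spirit of Ornea--Verbitsky); your $L^2$ approach could be made to work but would require care with weights near the vertex, and you would still need an argument for finite generation rather than just point-separation.
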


\section{Compact Conifolds}\l{sect: K conifolds}
We begin by recalling the definition of Sasakian manifolds. 
\begin{dfn}
Let $n\ge1$ be an integer and $M$ a manifold of dimension $2n-1.$ A {\it contact} form on $M$ is a $1$-form $\et\in C^\iy(T^*M)$ such that the $2n-1$ form $\et\wedge (\d\et)^{n-1}$ is nowhere vanishing. Corresponding to this $\et$ there exists a unique $\xi\in C^\iy(TM)$ with $\et(\xi)=1$ and $\xi\mathbin{\lrcorner}\d\et=0,$ called the {\it Reeb} vector field of $(M,\et).$

A {\it Sasakian structure} on $M$ is the pair of a contact form $\et$ and a section $\Ph\in C^\iy(\End TM)$ such that if we denote by $\xi$ the Reeb vector field of $(M,\et)$ then the following hold: $\Ph\xi=0\in C^\iy(TM);$ $\Ph$ maps the sub-bundle $\ker\et\subset TM$ to itself, defining a compatible almost complex structure upon the symplectic vector bundle $(\ker\th,\d\et);$ and for $u,v\in C^\iy(TM)$ we have
\e\l{Ph} [\Ph u,\Ph v]+\Ph^2[u,v]-\Ph[\Ph u,v]-\Ph[u,\Ph v]=-2\d\et(u,v)\xi\in C^\iy(TM).\e
The data $(M;\et,\Ph)$ is called a {\it Sasakian manifold.} Its {\it Sasakian metric} is a Riemannian metric $g$ on $M$ defined by $g(u,v):=\et(u)\et(v)+\d\et(u,\Ph v)$ for $u,v\in C^\iy(TM).$ 
\end{dfn}
\begin{rmk}
If $(M;\et,\Ph)$ is a Sasakian manifold then $(0,\iy)\times M$ has an almost complex structure $J$ with $J(r\frac{\bd}{\bd r})=\xi,$ $J\xi=-r\frac{\bd}{\bd r}$ and $J|_{\ker\et}=\Ph|_{\ker\et}.$ The equation \eq{Ph} implies that $J$ is integrable; for the proof see for instance \cite[Theorem 6.5.9]{BG}. On the other hand, using the projection $r:(0,\iy)\times M\to(0,\iy)$ we can make $(0,\iy)\times M$ into a Riemannian cone. This with $J$ defines a K\"ahler cone.

Conversely, if $C$ is a K\"ahler cone with complex structure $J$ on $C^\reg$ then $C^\lk$ has a contact form $\et:=-(J\d r)|_{\{1\}\times C^\lk}.$ Denote by $\xi$ its Reeb vector field, and define $\Ph\in C^\iy(\End TC^\lk)$ by $\Ph\xi:=0$ and $\Ph|_{\ker\et}=J|_{\ker\et}.$ The pair $(\et,\Ph)$ defines then a Sasakian structure on $C^\lk.$ There are thus two-sided operations between Sasakian manifolds and K\"ahler cones, which are inverses to each other. 
\end{rmk}

We recall the facts we will use about deformations of Sasakian structures.
\begin{dfn}\l{dfn: def of Sas}
Let $(M;\et,\Ph)$ be a Sasakian manifold with Reeb vector field $\xi$ and Sasakian metric $g.$ Let $\xi'\in C^\iy(TM)$ be such that $g(\xi,\xi')>0$ at every point of $M$ and $\et,\Ph$ are invariant under the flow of $\xi'.$ Takahashi \cite{Tak} proves then that there exists on $M$ a Sasakian structure $(\et',\Ph')$ defined by $\et':=(\xi'\cdot\et)^{-1}\et$ and $\Ph':=\Ph\cm(\id-\xi'\otimes\et').$ 

Suppose now that $M$ is compact so that we can define the minimum $\al>0$ of $g(\xi,\xi'):M\to(0,\iy)$ and the maximum $\be>0$ of the same function. Denote by $r:(0,\iy)\times M\to(0,\iy)$ the projection. Extend $\xi'$ to the vector field on $(0,\iy)\times M$ invariant under the flow of $r\frac{\db}{\db r};$ and denote by the same $\xi'$ the extended vector field. Denote by $J,J'$ the complex structures on $(0,\iy)\times M$ corresponding respectively to the Sasakian structures $(\et,\Ph),(\et',\Ph').$ Define then a diffeomorphism $F:(0,\iy)\times M\to (0,\iy)\times M$ to be the identity upon $\{1\}\times M$ and equivariant under the flows of $-J\xi'$ on the domain and of $-J'\xi'=r\frac{\db}{\db r}$ on the co-domain. This is possible because $r^\be\le F^*r\le r^\al$ wherever $r\le1$ and $r^\al\le F^*r\le r^\be$ wherever $r\ge1;$ for the proof see Conlon and Hein \cite[Proposition II.2]{CH}. They prove also that $F:(0,\iy)\times M\to (0,\iy)\times M$ is a bi-holomorphism with respect to $J,J';$ that is, $F_*J=J'.$
\end{dfn}

\begin{ex}\l{ex: def of Sas}
Fix $m\in\{1,2,3,\dots\}$ and denote by $z_1,\dots,z_m:\C^m\to\C$ the co-ordinate functions. The unit sphere $S^{2m-1}\subset\C^m$ is then defined by the equation $|z_1|^2+\dots+|z_m|^2=1.$
The K\"ahler cone metric $\sum_{a=1}^m\d z_a\otimes \d\bar z_a$ on $\C^m$ induces on $S^{2m-1}$ a Sasakian structure $(\et,\Ph)$ with $\et=\frac i2\sum_{a=1}^m(z_a\d \bar z_a-\bar z_a\d z_a).$ Its Reeb vector field may be written as $\xi:=i\sum_{a=1}^m(z_a\frac{\bd}{\bd z_a}-\bar z_a\frac{\bd}{\bd \bar z_a}).$ For $\la_1,\dots,\la_m>0$ define on $S^{2m-1}$ a vector field $\xi':=i\sum_{a=1}^m\la_a(z_a\frac{\bd}{\bd z_a}-\bar z_a\frac{\bd}{\bd \bar z_a}).$ Then $g(\xi,\xi'):=\sum_{a=1}^m \la_a|z_a|^2>0$ at every point of $S^{2m-1},$ with $\al=\min\{\la_1,\dots,\la_m\}$ and $\be=\max\{\la_1,\dots,\la_m\}$ in the notation of Definition \ref{dfn: def of Sas}. The flow of $\xi'$ may be written as $(t;z_1,\dots,z_m)\mapsto (e^{i\la_1 t}z_1,\dots,e^{i\la_m t}z_m)$ for $t\in\R$ and $(z_1,\dots,z_m)\in \C^m,$ which is a holomorphic isometry of $\C^m$ and so leaves invariant the Sasakian structure $(\et,\Ph).$ We can therefore use Definition \ref{dfn: def of Sas} and define on $S^{2m-1}$ the Sasakian structure $(\et',\Ph')$ corresponding to $\xi'.$ Recall also from Definition \ref{dfn: def of Sas} that there is a bi-holomorphism $F:\C^m\-\{0\}\to (0,\iy)\times S^{2m-1}$ where $\C^m\-\{0\}$ is given the ordinary complex structure $J$ and $(0,\iy)\times S^{2m-1}$ the deformed complex structure $J'$ corresponding to $(\et',\Ph').$ Define $r:\C^m\-\{0\}\to(0,\iy)$ by $r^2:=|z_1|^2+\dots+|z_m|^2$ and define $r_\la:\C^m\-\{0\}\to(0,\iy)$ by $r_\la:=F^*r.$ Then $r^\be\le r_\la\le r^\al$ wherever $r\le 1.$ The K\"ahler form $-\frac14\d (J'\d r^2)$ on $(0,\iy)\times S^{2m-1}$ is pulled back by $F$ to the K\"ahler form $-\frac14\d (J\d r_\la^2)=\frac14\d\d^c r_\la^2$ on $\C^m\-\{0\},$ where $\d^c$ is defined with respect to $J.$

Since $\xi'=i\sum_{a=1}^m\la_a(z_a\frac{\bd}{\bd z_a}-\bar z_a\frac{\bd}{\bd \bar z_a})$ it follows that this vector field extends smoothly to $\C^m.$ The two vector fields $-J\xi',\xi'$ generate then the holomorphic $\C$-action $\C\times\C^m\to\C^m$ which maps $(s;z_1,\dots,z_m)$ to $(e^{\la_1 s}z_1,\dots,e^{\la_m s}z_m).$ Suppose now that $X\sb\C^m$ is a closed normal complex subspace with isolated singularity at $0\in\C^m$ and invariant under this $\C$-action. We show then that $X\cap S^{2m-1}\subset X\-\{0\}$ is a compact submanifold. Restricting the $\C$-action to $\R$ and differentiating this at $0\in\R$ we get a vector field $-J\xi_\la=\sum_{a=1}^m\la_a(z_a\frac{\bd}{\bd z_a}+\bar z_a\frac{\bd}{\bd \bar z_a})$ tangent to $X\-\{0\}.$ Define $f:X\-\{0\}\to(0,\iy)$ by restricting to $X\-\{0\}\subset\C^m\-\{0\}$ the $C^\iy$ function $|z_1|^2+\dots+|z_m|^2:\C^m\-\{0\}\to(0,\iy).$ Then $\d f(\xi_\la)=\sum_{a=1}^m\la_a|z_a|^2\ne 0.$ So $f$ is a submersion and $f^{-1}(1)=X\cap S^{2m-1}$ a submanifold. Recall from the definition of $F$ that the image of $X\-\{0\}$ under $F:\C^m\-\{0\}\to (0,\iy)\times S^{2m-1}$ is $(0,\iy)\times (X\cap S^{2m-1}).$ Since $F$ is holomorphic with respect to $J,J'$ it follows moreover that $(0,\iy)\times (X\cap S^{2m-1})$ is a complex submanifold of $((0,\iy)\times S^{2m-1},J').$ The K\"ahler cone structure of $(0,\iy)\times S^{2m-1}$ induces therefore a K\"ahler cone structure of $(0,\iy)\times (X\cap S^{2m-1}).$ Pulling back this by $F$ we get a K\"ahler cone structure of $X\-\{0\}.$ As is clear from definition its radius function $X\-\{0\}\to(0,\iy)$ is induced from $r_\la:\C^m\-\{0\}\to(0,\iy)$ and its K\"ahler form from $\frac14\d\d^c r_\la^2.$
\end{ex}

We now state and prove the key lemma. Recall that for $(Y,J)$ a complex manifold a $C^\iy$ function $f:Y\to\R$ is {\it strictly plurisubharmonic} if for every $v\in C^\iy(TY)$ we have $\d\d^c f(v,Jv)>0$ at every point of $Y.$ The following then holds.
\begin{lem}\l{lem: Kahl pot}
Fix $m\in\{1,2,3,\dots\}$ and $\la_1,\dots,\la_m\in(0,1).$ Define $r_\la:\C^m\-\{0\}\to(0,\iy)$ as in Example \ref{ex: def of Sas}. Let $U\sb\C^m$ be an open neighbourhood of the origin $0\in\C^m,$ and $p:U\to\R$ a strictly plurisubharmonic $C^\iy$ function with $p(0)=0$ and $\nb p(0)=0\in T^*_0\C^m.$ Then there exist $\ep>0$ and a strictly plurisubharmonic $C^\iy$ function $q:U\-\{0\}\to\R$ which outside some punctured neighbourhood of $0\in U$ agrees with $p$ and on some smaller punctured neighbourhood of $0\in U$ agrees with $\ep r_\la^2.$
\end{lem}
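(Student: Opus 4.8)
The plan is to obtain $q$ by interpolating the two potentials with a radial cut-off, the entire difficulty being to preserve strict plurisubharmonicity across the interpolation annulus. A regularized maximum of $p$ and $\ep r_\la^2$ will not serve here: since $\la_a<1$ forces $r_\la$ to scale like a power of $|z|$ strictly larger than $1$ under the weighted dilation $z_a\mapsto t^{\la_a}z_a$, the conical potential $\ep r_\la^2$ is \emph{sub-quadratic}, hence smaller near $0$ than $p\asymp|z|^2$. A regularized maximum returns the larger potential, which near $0$ is $p$, not $\ep r_\la^2$; it thus cannot equal $\ep r_\la^2$ on the inner region. So I would use an honest interpolation and pay for it with a positivity estimate.

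Concretely, take a smooth $\chi$, viewed as a function of $t:=\log|z|$, with $\chi\=1$ near $0$ (that is, for $t\ll0$) and $\chi\=0$ away from $0$, and set
\[
q:=(1-\chi)\,p+\chi\,\ep\,r_\la^2=p+\chi\,(\ep r_\la^2-p).
\]
Then $q\=\ep r_\la^2$ on $\{\chi\=1\}$ and $q\=p$ on $\{\chi\=0\}$, so both matching conditions hold with no additive constant incurred, and on these two pure regions $\d\d^c q$ is positive by the hypothesis on $p$ and by Example \ref{ex: def of Sas} for $\ep r_\la^2$. Everything comes down to the annulus, where
\[
\d\d^c q=(1-\chi)\,\d\d^c p+\chi\,\ep\,\d\d^c r_\la^2+\cE,
\]
the error $\cE$ consisting of the cut-off terms $(\ep r_\la^2-p)\,\d\d^c\chi$, $\d\chi\w\d^c(\ep r_\la^2-p)$ and its conjugate. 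The first two terms are a non-negative combination of the two Kähler forms. The hypotheses $p(0)=0,\ \nb p(0)=0$ give $|p|\les|z|^2$ and $|\d p|\les|z|$, while the degree-one homogeneity of $r_\la$ under the weighted dilation (with $\la_a<1$) pins down the powers of $|z|$ controlling $r_\la^2$, $\d r_\la^2$ and $\d\d^c r_\la^2$; in particular $\d\d^c r_\la^2\ge c_0\,|z|^{a}\,\om_0$ for some $a>0$, where $\om_0$ is the Euclidean Kähler form. With these bounds the $|z|$-powers in $\cE$ cancel, giving $\cE=O(|\chi'|+|\chi''|)\,\om_0$ (here $'=\d/\d t$), and the lemma reduces to the scalar inequality on the annulus
\[
|\chi'|+|\chi''|\ \les\ (1-\chi)\,c_1+\chi\,\ep\,|z|^{a},
\]
$c_1>0$ being the ellipticity constant of $\d\d^c p$ near $0$.

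The hard part is this inequality near the inner edge of the annulus, where $\chi\to1$: there the $\d\d^c p$ term switches off, and because $\la_a<1$ makes $\d\d^c r_\la^2$ decay like $|z|^{a}$, the conical term is at its weakest. A cut-off with uniformly small derivatives is then not enough—one computes that a uniform profile would require $\rho^{a}\log(1/\rho)$ to stay bounded below at the inner radius $\rho$, which fails as $\rho\to0$. I would instead shape $\chi$ so that it transitions slowly near the inner edge, integrating the displayed inequality outward from the singularity: while $\chi$ is near $1$ the conical term $\chi\,\ep\,|z|^{a}$ alone sustains positivity and $\chi$ may leave the value $1$ only gently, and once $\chi$ has dropped, the reactivated term $(1-\chi)c_1$ permits a faster, ultimately exponential, descent to $0$. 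One checks that for a suitable $\ep$ and a small enough inner radius such a profile exists; this simultaneously produces the constant $\ep$ and a smooth, strictly plurisubharmonic $q$ with the two prescribed behaviours, completing the proof.
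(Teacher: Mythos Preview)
Your approach is genuinely different from the paper's. You take a convex combination $q=(1-\chi)p+\chi\,\ep r_\la^2$ with a single radial cutoff; the paper instead writes
\[
q \;=\; p \;+\; \ep\,\ph\,r_\la^2 \;-\; \ps\!\Bigl(\tfrac{r^2}{\de^2}\Bigr)\,p,
\]
using two \emph{decoupled} cutoffs: a fixed compactly supported $\ph\equiv 1$ near $0$ in front of $r_\la^2$, and a \emph{rescaled} cutoff $\ps(r^2/\de^2)$ applied only to $p$. The point of this decoupling is that $\ep$ is chosen first (small, independent of $\de$) so that $p+\ep\ph r_\la^2$ is strictly plurisubharmonic --- easy, since the derivatives of $\ph$ live on a fixed compact set away from $0$. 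The scaled cutoff then acts only on $p$, and because $|p|\lesssim r^2$, $|\d p|\lesssim r$, $\d\d^c p\lesssim\om_0$, one gets the scale-invariant bound $\d\d^c[\ps(r^2/\de^2)p]\le M\,\om_0$ with $M$ independent of $\de$; this is where $p(0)=\nabla p(0)=0$ is used, and it is the core estimate. Your scheme cuts off $r_\la^2$ at the moving scale as well, which is exactly what forces you into the profile construction at the end: the paper sidesteps that difficulty entirely by never letting the shrinking cutoff touch $r_\la^2$.

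There is a genuine gap in your last paragraph. You reduce everything to the differential inequality
\[
|\chi'|+|\chi''|\ \lesssim\ (1-\chi)\,c_1+\chi\,\ep\,e^{at}
\]
and then assert that a profile satisfying it, with $\chi\equiv1$ on $(-\infty,T_0]$ and $\chi\equiv0$ on $[T_1,\infty)$ for suitable finite $T_0<T_1$, ``exists''. But this is precisely the content of the lemma: the right-hand side degenerates at the inner edge and depends on $\chi$ itself, so you must actually exhibit such a $\chi$ (or reduce to an ODE you can solve and verify that the solution reaches $0$ with $T_1$ remaining inside $U$). Your heuristic --- gentle departure from $1$ sustained by the conical term, then fast descent once $(1-\chi)c_1$ reactivates --- is plausible, but ``one checks'' is not a check. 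You also need to justify the claimed cancellation $\cE=O(|\chi'|+|\chi''|)\,\om_0$: this requires uniform control of $r_\la^2$ and $|\d r_\la^2|$ in terms of the Euclidean $r$, which you invoke (``pins down the powers of $|z|$'') but never derive from the properties of $r_\la$ in Example~\ref{ex: def of Sas}.
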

\begin{proof}
We set $q:=p+\ep\ph r_\la^2-\ps(\frac{r^2}{\de^2})p$ where $\ep,\de$ are positive constants; $\ph$ a compactly supported $C^\iy$ function $U\to[0,1]$ which is near $0\in U$ identically equal to one; and $\ps$ a $C^\iy$ function $[0,\iy)\to[0,1]$ supported on $[0,1]\subset[0,\iy)$ and which is near $0\in[0,\iy)$ identically equal to one. Let $\ep$ be independent of $\de$ and so small that $p+\ep \ph r_\la^2:U\-\{0\}\to\R$ is strictly plurisubharmonic. This is possible because the derivatives of $\ph$ are supported on a compact set and may therefore by controlled by making $\ep$ small enough. We prove that $q:U\-\{0\}\to\R$ with $\de$ small enough is strictly plurisubharmonic.

We show first that there exists $M>0$ independent of $\de$ and so large that $\d\d^c [\ps(\frac {r^2}{\de^2})p]\le M\d\d^c r^2$ at every point of the support of $\ph.$ Simple computation shows that at every point of $U$ we have
\ea\l{ddc0}
\!\!\!\!\!\!\!\!\d\d^c\Bigl[\ps\Bigl(\frac {r^2}{\de^2}\Bigr)p\Bigr]=\frac{p}{\de^4}\ps''\Bigl(\frac {r^2}{\de^2}\Bigr)\d r^2\wedge \d^c r^2+ \frac{1}{\de^2}\ps'\Bigl(\frac{r^2}{\de^2}\Bigr)(\d p\wedge\d^c r^2+\d^c p\wedge \d r^2)\\
+\frac{p}{\de^2}\ps'\Bigl(\frac{r^2}{\de^2}\Bigr)\d\d^c r^2+\ps\Bigl(\frac{r^2}{\de^2}\Bigr)\d\d^c p.
\ea
We estimate each term on the right-hand side. Since $p(0)=\nb p(0)=0$ it follows that there exists $M_0>0$ independent of $\de$ and so large that at every point of the support of $\ph$ we have
\e\l{ddc1} |p|\le M_0 r^2,\,\,\, |\d p|\le M_0r,\,\,\, \d\d^c p\le M_0\d\d^c r^2\e 
where $|\d p|$ is the pointwise $\ell^2$ norm with respect to the flat metric $\sum_{a=1}^m \d z_a\otimes \d \bar z_a,$ and the last inequality defined as follows: for $A,B$ two real $(1,1)$ forms on a complex manifold $(Y,J)$ we write $A\le B$ if $A(v.Jv)\le B(v,Jv)$ for every $v\in C^\iy(TY).$ Since $S^{2m-1}$ is compact, $\d r^2\wedge\d^c r^2$ a real $(1,1)$ form on $\C^m$ and $\d\d^c r^2$ a positive definite real $(1,1)$ form on $\C^m$ it follows that there exists $M_1>0$ independent of $\de$ and so large that $\d r^2\wedge\d^c r^2\le M_1\d\d^c r^2=M_1r^2\d\d^c r^2$ at every point of $S^{2m-1}.$ Since $\d r^2\wedge\d^c r^2$ and $r^2\d\d^c r^2$ are both homogeneous of order $4$ (with respect to the flow on $\C^m$ generated by $r\frac{\bd}{\bd r}$) it follows that the same estimate holds everywhere; that is, $\d r^2\wedge\d^c r^2\le M_1r^2\d\d^c r^2$ at every point of $\C^m.$ This and the first estimate of \eq{ddc1} imply that at every point of the support of $\ph$ we have
\e\l{ddc2}
\frac{p}{\de^4}\ps''\Bigl(\frac {r^2}{\de^2}\Bigr)\d r^2\wedge \d^c r^2\le M_0M_1\frac{r^4}{\de^4}
\Bigl|\ps''\Bigl(\frac{r^2}{\de^2}\Bigr)\Bigr|
\d\d^c r^2\le M_0M_1\biggl(\sup_{[0,\iy)}|\ps''|\biggr)\d\d^c r^2
\e
where the last inequality follows since $\ps''$ is supported on $[0,1].$ We estimate now the second term on the right-hand side of \eq{ddc0}. For $A$ a constant real $(1,1)$ form on $\C^m$ define $|A|_{\ell^\iy}:=\max_{v\in \C^m}A(v,Jv)$ and denote by $|A|$ the $\ell^2$ norm with respect to the flat metric $\sum_{a=1}^m \d z_a\otimes \d \bar z_a.$ Then $|A|^2$ is the sum of the squared eigenvalues of $A,$ and $|A|_{\ell^\iy}\le |A|.$ So
\e \l{A}A\le \frac i2|A|_{\ell^\iy}\sum_{a=1}^m \d z_a\wedge \d \bar z_a=\frac14 |A|_{\ell^\iy}\d\d^c r^2\le\frac 14 |A|\d\d^c r^2.\e  
We apply this to $\d p\wedge\d^c r^2+\d^c p\wedge \d r^2$ at every point of $U.$ Using the middle estimate of \eq{ddc1} we find indeed that at every point of the support of $\ph$ we have
\e |\d p\wedge\d^c r^2+\d^c p\wedge \d r^2|\le M_0|\d^c r^2|+M_0r |\d r^2|=2M_0|\d r^2|=4M_0 r.\e
Hence it follows by \eq{A} that at every point of the support of $\ph$ we have
\e (\d p\wedge\d^c r+\d^c p\wedge \d r)\le M_0r\d\d^c r^2\e
So at every point of the support of $\ph$ we have
\e\l{ddc3}
\frac1{\de}\ps'\Bigl(\frac{r^2}{\de^2}\Bigr)(\d p\wedge\d^c r+\d^c p\wedge \d r)\le 
M_0\frac{r}{\de}\Bigl|\ps'\Bigl(\frac{r^2}{\de^2}\Bigr)\Bigr|\d\d^c r^2
\le M_0\biggl(\sup_{[0,\iy)}|\ps'|\biggr)\d\d^c r^2.
\e
We estimate now the remaining the two terms on the right-hand side of \eq{ddc0}. The first estimate of \eq{ddc1} implies that at every point of the support of $\ph$ we have
\e\l{ddc4}
\frac{p}{\de^2}\ps'\Bigl(\frac{r^2}{\de^2}\Bigr)\d\d^c r^2
\le M_0\frac{r^2}{\de^2}\Bigl|\ps'\Bigl(\frac{r^2}{\de^2}\Bigr)\Bigr|\d\d^c r^2
\le M_0\biggl(\sup_{[0,\iy)}|\ps'|\biggr)\d\d^c r^2.
\e
Since $\ps$ has values in $[0,1]$ it follows by the last estimate of \eq{ddc1} that 
\e\l{ddc5}
\ps\Bigl(\frac{r^2}{\de^2}\Bigr)\d\d^c p\le M_0\d\d^c r^2.
\e
Define now $M>0$ by $M:=M_0M_1(\sup_{[0,\iy)}|\ps''|)+2M_0(\sup_{[0,\iy)}|\ps'|)+M_0.$ It follows then from \eq{ddc2}, \eq{ddc3}, \eq{ddc4} and \eq{ddc5} that at every point of the support of $\ph$ we have
$\d\d^c[\ps(\frac {r^2}{\de^2})p]\le M\d\d^c r^2.$

We show next that there exists a punctured neighbourhood of $0\in \C^m$ at every point of which we have $\ep r_\la^2-M r^2$ is strictly plurisubharmonic. Denote by $g_\la$ the deformed Sasakian metric on $S^{2m-1}$ corresponding to $\xi_\la$ as in Example \ref{ex: def of Sas}, and by $g$ the ordinary Sakasian metric on $S^{2m-1}$ (that is, the round sphere metric). For $h,h'$ two Riemannian metrics on a manifold $Y$ write $h\ge h'$ if $h(v,v)\ge h'(v,v)$ for every $v\in C^\iy(TY).$ Let $\nu>0$ be so small that $g_\la\ge \nu g_{S^{2m-1}}$ at every point of $S^{2m-1}.$ Since $\log r_\la \ge \be\log r$ with $\be=\max\{\la_1,\dots,\la_m\}\in(0,1)$ it follows then that
\[r_\la^2[(\d\log r_\la)^{\otimes2}+g_\la]\ge r^{2\be}[\be^2(\d\log r)^{\otimes2}+\nu g_{S^{2m-1}}]
\ge \min\{\be^2,\nu\} r^{2(1-\be)}\sum_{a=1}^m\d z_a\otimes \d \bar z_a\]
at every point of $\C^m\-\{0\}.$ The corresponding $(1,1)$ forms satisfy the estimate $\d\d^c r_\la^2\ge  \min\{\be^2,\nu\} r^{2(1-\be)}\d\d^c r^2$ at every point of $\C^m\-\{0\}.$ As $\ep$ and $M$ are independent of $\de$ we can make $\de$ so small that $\min\{\be^2,\nu\} \de^{2(1-\be)}\ge \ep^{-1}M.$ It follows then that at every point of $\C^m\-\{0\}$ at which $r\le\de,$ we have $\ep\d\d^c r_\la^2\ge M\d\d^c r^2.$

Let $\de$ be so small too that $\ph=1$ at those points of $U\-\{0\}$ at which $r\le\de.$ Then $q=\ep r_\la^2$ at the same points; and accordingly, since $\ep\d\d^c r_\la^2\ge M\d\d^c r^2$ at these points it follows that $q$ is strictly plurisubharmonic at them. On the other hand, at the points of $U\-\{0\}$ with $r>\de$ we have $\ps(\frac r\de)=0$ and know already that $q=p+\ep\ph r_\la^2$ is strictly plurisubharmonic at these points. Thus $q:U\-\{0\}\to\R$ is everywhere strictly plurisubharmonic. Choose finally a punctured neighbourhood of $0\in U$ on which $\ps(\frac r\de)=1.$ On this set we have certainly $q=\ep r_\la^2,$ which completes the proof.
\end{proof}
We prove a corollary of Lemma \ref{lem: Kahl pot}.
\begin{cor}\l{cor: Kahl pot}
Fix $m\in\{1,2,3,\dots\}$ and $\la_1,\dots,\la_m\in(0,1).$ Define $r_\la:\C^m\-\{0\}\to(0,\iy)$ as in Example \ref{ex: def of Sas}. Let $U\sb\C^m$ be an open neighbourhood of the origin $0\in\C^m,$ and $\om$ a K\"ahler form on $U.$ Then there exist $\ep>0$ and a K\"ahler form on $U\-\{0\}$ which outside some punctured neighbourhood of $0\in U$ agrees with $\om$ and on some smaller punctured neighbourhood of $0\in U$ agrees with $\ep\d\d^c r_\la^2.$
\end{cor}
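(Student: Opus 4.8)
The plan is to deduce the corollary from Lemma \ref{lem: Kahl pot} by passing from the K\"ahler form $\om$ to a \emph{normalized} local K\"ahler potential. Since every modification we shall make is confined to a neighbourhood of $0,$ I first shrink to a small open ball $B\sb U$ centred at $0.$ On the contractible set $B$ the closed positive real $(1,1)$-form $\om$ admits a K\"ahler potential by the local $\d\d^c$-Poincar\'e lemma; that is, there is a strictly plurisubharmonic $C^\iy$ function $p:B\to\R$ with $\om=\d\d^c p.$

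Next I normalize $p$ so that it satisfies the hypotheses of Lemma \ref{lem: Kahl pot}. Replacing $p$ by $p-p(0)$ arranges $p(0)=0.$ The real differential of $p$ at $0$ is a real-linear function on $\C^m,$ hence equal to $\Re\ell$ for a $\C$-linear function $\ell;$ as $\Re\ell$ is pluriharmonic we have $\d\d^c(\Re\ell)=0,$ so subtracting it changes neither $\om=\d\d^c p$ nor the strict plurisubharmonicity of $p,$ while achieving $\nb p(0)=0\in T^*_0\C^m.$ Thus the normalized $p$ meets every hypothesis of Lemma \ref{lem: Kahl pot}.

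Applying Lemma \ref{lem: Kahl pot} with $B$ in place of $U$ produces $\ep>0$ and a strictly plurisubharmonic $C^\iy$ function $q:B\-\{0\}\to\R$ which equals $p$ outside some punctured neighbourhood $N\-\{0\}$ of $0$ (with $N\sb B$) and equals $\ep r_\la^2$ on some smaller punctured neighbourhood of $0.$ I then define the desired form on $U\-\{0\}$ by glueing: set it equal to $\d\d^c q$ on $B\-\{0\}$ and equal to $\om$ on $U\-N.$ On the overlap $B\-N$ we have $q=p,$ whence $\d\d^c q=\d\d^c p=\om,$ so the two definitions patch to a single $C^\iy$ real $(1,1)$-form on $U\-\{0\}=(B\-\{0\})\cup(U\-N).$ This form is closed, being locally of the form $\d\d^c q$ or equal to the closed form $\om,$ and positive, by the strict plurisubharmonicity of $q$ respectively the fact that $\om$ is K\"ahler; hence it is a K\"ahler form. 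By construction it agrees with $\om$ outside $N\-\{0\}$ and with $\ep\d\d^c r_\la^2$ on a smaller punctured neighbourhood of $0,$ as required.

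The analytic heart of the statement is already contained in Lemma \ref{lem: Kahl pot}, so the only point demanding care here is the normalization producing a potential $p$ with $p(0)=0$ and $\nb p(0)=0$ without altering $\om$ or its positivity; the existence of the local potential and the consistency of the glueing are then routine. I therefore expect no substantial obstacle beyond correctly invoking the preceding lemma.
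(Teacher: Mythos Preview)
Your proof is correct and follows essentially the same approach as the paper: find a local potential via the $\partial\bar\partial$-lemma, normalize it by subtracting a constant and a pluriharmonic linear term so that $p(0)=0$ and $\nabla p(0)=0,$ then apply Lemma~\ref{lem: Kahl pot}. The paper writes the normalization explicitly as $p:=f-f(0)-\sum_a\bigl(\tfrac{\partial f}{\partial z_a}(0)z_a+\tfrac{\partial f}{\partial \bar z_a}(0)\bar z_a\bigr),$ which is exactly your $\Re\ell,$ and leaves the final glueing implicit where you spell it out.
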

\begin{proof}
The local $\bd\db$ lemma implies that there exist an open neighbourhood $V$ of $0\in U$ and a smooth function $f:V\to\R$ such that $\om|_V=\d\d^c f.$ Define $p:V\to\R$ by $p:=f-f(0)-\sum_{a=1}^m(\frac{\bd f}{\bd z_a}(0)z_a+\frac{\bd f}{\bd \bar z_a}(0)\bar z_a).$ Since $\d\d^c z_a=\d\d^c \bar z_a=0$ for $a=1,\dots,m$ it follows then that $\om|_V=\d\d^c p,$ to which we can apply Lemma \ref{lem: Kahl pot} with $V$ in place of $U.$ Let $q:V\-\{0\}\to\R$ be the result of this; then $\d\d^c q$ is a K\"ahler form we want.
\end{proof}

We recall now the definition of K\"ahler complex spaces.
\begin{dfn}\l{dfn: Kahl sp}
For a complex space $X$ we say that a $C^\iy$ function $\ph:X\to\R$ is {\it strictly plurisubharmonic} if every point of $X$ has an open neighbourhood $U$ embedded in some open set $Y\sb\C^n$ for which there exists a strictly plurisubharmonic function $\ps:Y\to\R$ with $\ps|_U=\ph|_U.$ We call $X$ a {\it K\"ahler space} if there exist an open cover $U\cup V\cup\dots=X$ and a corresponding family $(\ph_U:U\to\R)_U$ of $C^\iy$ strictly plurisubharmonic functions such that for each $U$ we have $\om|_U=i\bd\db \ph_U.$ We call $\ph_U,\ph_V,\dots$ {\it K\"ahler potentials} of $X.$
\end{dfn}
\begin{rmk}
When we speak simply of a K\"ahler space $X$ we do not make any particular choice of the family $\ph_U,\ph_V,\dots$ of K\"ahler potentials. This convention is compatible with the statement of our main results, Theorems \ref{main thm1} and \ref{main thm2}, which themselves have nothing to do with the choice of K\"ahler potentials.
\end{rmk}

We make the definition we will use of compact K\"ahler conifolds. 
\begin{dfn}\l{dfn: Kahler conifolds}
A {\it compact K\"ahler conifold} is a compact normal K\"ahler space whose singularities are isolated and quasi-homogeneous. We call $X$ a K\"ahler $n$-conifold if it has (complex) dimension $n.$
\end{dfn}

\begin{lem}\l{lem: conifold metrics}
Let $X$ be a compact K\"ahler conifold. Denote by $X^\reg$ its regular locus and by $X^\sing$ its singular locus. Then there exist a K\"ahler metric $g$ on $X^\reg$ and a finite family $(C_x,g_x)_{x\in X^\sing}$ of K\"ahler cones such that the following holds: for every $x\in X^\sing$ there exists a biholomorphism $(X,x)\cong(C_x,\vx)$ under which $g$ and $g_x$ agree. 
\end{lem}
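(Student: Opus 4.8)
The plan is to start from a global K\"ahler form on $X$ and surgically replace it near each singular point by a K\"ahler cone metric, using Corollary \ref{cor: Kahl pot} as the gluing device and Example \ref{ex: def of Sas} together with Theorem \ref{thm: vC} to manufacture the cones.

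First I would fix a K\"ahler form $\om$ on $X$, which exists by Definition \ref{dfn: Kahl sp}. Since $X^\sing$ is isolated and $X$ is compact, it is a finite set, so I may choose pairwise disjoint open neighbourhoods of its points; all subsequent modifications are local and independent, and it suffices to treat one $x\in X^\sing$ at a time. By quasi-homogeneity (Definition \ref{dfn: quasi-hom}) fix an embedding of the germ $(X,x)\sb(\C^{k},0)$ invariant under a $\C^*$-action of positive integer weights $w_1,\dots,w_k$. Reparametrising the action by $t\mapsto t^{1/N}$ with $N>\max_a w_a$ leaves its orbits, hence the invariance of $X$, unchanged and replaces the weights by $\la_a:=w_a/N\in(0,1)$; this places us exactly in the situation of Example \ref{ex: def of Sas}.

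Applying Example \ref{ex: def of Sas} to the $\C$-invariant subspace $X\sb\C^k$, the link $X\cap S^{2k-1}$ is a compact submanifold and $X\-\{x\}$ acquires a K\"ahler cone structure whose complex structure is the given one and whose K\"ahler form is induced by $\frac14\d\d^c r_\la^2$. By Theorem \ref{thm: vC} this cone is a normal complex space whose germ at the vertex coincides with $(X,x)$, so I take $C_x$ to be this cone, $\vx=x$, and the required biholomorphism $(X,x)\cong(C_x,\vx)$ to restrict to the identity on $X^\reg$ near $x$. Next I express $\om$ near $x$ by a K\"ahler potential on the same ambient $\C^k$: a strictly plurisubharmonic function on $X$ extends to a strictly plurisubharmonic function on a neighbourhood of $0$ in $\C^k$ (add a large multiple of $\sum_j|f_j|^2$, the squared defining equations, to any smooth extension), so $\om=\d\d^c p$ with $p$ defined near $0\in\C^k$ and normalised as in Corollary \ref{cor: Kahl pot} so that $p(0)=0$ and $\nb p(0)=0$.

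Now Corollary \ref{cor: Kahl pot} furnishes $\ep_x>0$ and a K\"ahler form on a punctured neighbourhood of $0\in\C^k$ that equals $\om$ outside a punctured neighbourhood and equals $\ep_x\d\d^c r_\la^2$ on a smaller one. Restricting this form to $X^\reg$ yields a K\"ahler metric near $x$, since the restriction of a strictly plurisubharmonic function to the complex submanifold $X^\reg$ is again strictly plurisubharmonic; on the inner region it restricts to $\ep_x\d\d^c r_\la^2|_X$, the cone metric $g_x$ of $C_x$ (the constant $\ep_x$ being absorbed into the radius function). Define $g$ to agree with $\om$ away from the chosen neighbourhoods and with each restricted modified form inside them; the two prescriptions match on the annular overlaps, where Corollary \ref{cor: Kahl pot} returns $\om$, so $g$ is a well-defined K\"ahler metric on $X^\reg$ equal to $g_x$ near each $x$. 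I expect the main obstacle to be the extension step, namely arranging that the K\"ahler potential of $\om$ lives on the \emph{same} quasi-homogeneous embedding used to build the cone, so that Corollary \ref{cor: Kahl pot} applies and its output restricts to a genuine K\"ahler metric through the transition region; the remaining work is bookkeeping on top of Example \ref{ex: def of Sas}, Corollary \ref{cor: Kahl pot} and Theorem \ref{thm: vC}.
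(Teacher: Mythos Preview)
Your proposal is correct and follows essentially the same route as the paper: start from a global K\"ahler form given by local potentials, and near each singular point replace the potential via Corollary~\ref{cor: Kahl pot} by one that agrees with $\ep r_\la^2$ close to the vertex, thereby producing a K\"ahler metric on $X^\reg$ that is exactly conical near $X^\sing$. The paper's proof is terser --- it simply says ``choose $q$ as in Corollary~\ref{cor: Kahl pot}'' without spelling out the construction of the cone via Example~\ref{ex: def of Sas}, the rescaling of weights into $(0,1)$, or the extension of the potential to the quasi-homogeneous ambient $\C^k$ --- but these are exactly the details you supply, and they are the right ones. Your invocation of Theorem~\ref{thm: vC} is harmless but unnecessary: the cone $C_x$ is, by construction, $X$ itself near $x$ with the induced K\"ahler cone metric, so the biholomorphism is the identity and no appeal to the complex-analytic extension theorem is needed.
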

\begin{proof}
Suppose now conversely that $X$ is a compact normal K\"ahler space whose singularities are isolated and quasi-homogeneous. Choose an open cover $U\cup V\cup\dots =X$ and respective K\"ahler potentials $p_U,p_V,\dots$ on $U,V,\dots$ which define a K\"ahler form on $X.$ For $U$ containing a singular point $x\in X^\sing,$ choose $q$ as in Corollary \ref{cor: Kahl pot} with $x$ in place of $0\in\C^m$ and set $q_U:=q.$ For $U$ not intersecting $X^\sing,$ set $q_U:=p_U.$ The K\"ahler potential $q_U,q_V,\dots$ define then a K\"ahler conifold metric on $X^\reg.$
\end{proof}
\begin{dfn}
In the circumstances of Lemma \ref{lem: conifold metrics} we call $g$ a K\"ahler conifold metric on $X.$ More precisely, this means that there exists $(C_x,g_x)_{x\in X^\sing}$ for which the statement about the biholomorphism $(X,x)\cong(C_x,\vx)$ is true for every $x\in X^\sing.$ 
\end{dfn}
The following lemma is perhaps of interest in itself although we shall not logically need it for the proof of Theorems \ref{main thm1} and \ref{main thm2}.
\begin{lem}\l{lem: C structures of conifolds}
Let $X$ be a compact normal complex space whose singularities are isolated and which has a K\"ahler conifold metric. Then $X$ is a K\"ahler space and its singularities are quasi-homogeneous.
\end{lem}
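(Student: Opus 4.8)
The plan is to prove the two assertions separately: the quasi-homogeneity will drop out at once from van Coevering's theorem, whereas the K\"ahler property will force us to discard the conifold metric near the singular points and replace it by a genuinely smooth K\"ahler potential, gluing the two together. First I would treat the singularities. By hypothesis $X$ carries a K\"ahler conifold metric $g,$ so for each $x\in X^\sing$ there are a K\"ahler cone $(C_x,g_x)$ and a biholomorphism of germs $(X,x)\cong(C_x,\vx)$ taking $g$ to $g_x.$ By Theorem \ref{thm: vC} the germ $(C_x,\vx)$ is quasi-homogeneous; since quasi-homogeneity in the sense of Definition \ref{dfn: quasi-hom} depends only on the isomorphism class of the pointed germ (it amounts to the analytic local ring admitting a positive grading, from which homogeneous generators of the maximal ideal produce an equivariant embedding), the germ $(X,x)$ is quasi-homogeneous too. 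This proves the second assertion, and by Definition \ref{dfn: quasi-hom} it furnishes for each $x$ an embedding $(X,x)\cong(C_x,\vx)\sb(\C^{k_x},0)$ equivariant for a linear $\C^*$-action with positive integer weights.

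For the K\"ahler property I would build a smooth potential near each $x$ and glue it to $g.$ Using the embedding above, let $\ph_x$ be the restriction to $C_x$ of the function $\sum_i|z_i|^2$ on $\C^{k_x};$ this is a $C^\iy$ strictly plurisubharmonic function near the vertex in the sense of Definition \ref{dfn: Kahl sp}, so $\om_x:=i\bd\db\ph_x$ is a K\"ahler form on a neighbourhood $U_x$ of $x$ that is smooth across $x.$ On the other hand, near $x$ the conifold metric is the cone metric of $C_x,$ whose K\"ahler form on the punctured neighbourhood $A_x:=U_x\-\{x\}$ equals $i\bd\db u$ with $u:=\frac12 r^2$ and $r$ the radius function. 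Here $u$ is smooth and strictly plurisubharmonic on $A_x$ but, because $r$ is only weighted homogeneous, it does not in general extend smoothly across $x;$ this is precisely why $g$ itself cannot serve as a K\"ahler form on $X$ and a replacement is needed near $X^\sing.$

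To glue I would take a radial cutoff $\chi=\chi(r)$ equal to $1$ for $r\le\de_1$ and to $0$ for $r\ge\de_0,$ and set $q:=\ph_x+(1-\chi)(u-\ph_x)$ on $A_x,$ extended by $\ph_x$ across $x.$ Then $q=\ph_x$ for $r\le\de_1,$ so $i\bd\db q=\om_x$ is smooth across $x,$ while $q=u$ for $r\ge\de_0,$ so $i\bd\db q=g.$ A direct expansion gives $i\bd\db q=\chi\om_x+(1-\chi)g+R,$ where the convex combination $\chi\om_x+(1-\chi)g$ is positive and $R$ collects the terms carrying derivatives of $\chi,$ supported in the annulus $\{\de_1<r<\de_0\}.$ Performing this modification near each $x\in X^\sing$ and leaving $g$ unchanged elsewhere yields a global smooth $(1,1)$-form $\Om$ on $X$ which near each $x$ equals $i\bd\db\ph_x$ and on $X^\reg$ has local smooth strictly plurisubharmonic potentials by the local $\d\d^c$-lemma. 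Once $\Om$ is known to be positive, Definition \ref{dfn: Kahl sp} shows that $X$ is a K\"ahler space, completing the proof.

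The hard part will be the positivity of $\Om$ in the transition annulus, i.e. dominating the remainder $R$ by the convex combination $\chi\om_x+(1-\chi)g.$ Because $u$ and $\ph_x$ are merely comparable, not separated, as $r\to0,$ one cannot simply take a regularized maximum of the two potentials; instead one must let the transition occur in a thin annulus and control the cutoff terms using the homogeneity of the cone, by a quantitative estimate entirely analogous to the one carried out in the proof of Lemma \ref{lem: Kahl pot}. This gluing estimate is where essentially all the work lies.
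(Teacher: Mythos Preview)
Your treatment of quasi-homogeneity via Theorem~\ref{thm: vC} is correct and coincides with the paper's argument. For the K\"ahler property, however, the paper takes a much shorter route. After disposing of the one-dimensional case (a normal curve is smooth), it simply invokes a lemma of Fujiki \cite[Lemma~1]{Fuj2}, quoted in the paper as \eqref{Fuj2}: for any normal germ $(Y,y)$ of dimension $\ge2$ and any $C^\iy$ strictly plurisubharmonic function $p$ on $Y\setminus\{y\}$, there exists a $C^\iy$ strictly plurisubharmonic $q$ on all of $Y$ with $q=p$ outside a neighbourhood of $y$. Applying this to the cone potential $\frac12 r^2$ near each $x\in X^\sing$ immediately yields a compatible family of K\"ahler potentials on the whole of $X$.

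Your explicit gluing scheme is natural in spirit, but the analogy you draw with Lemma~\ref{lem: Kahl pot} is misleading because the r\^oles of the two potentials are reversed. In that lemma it is the \emph{cone} potential $\ep r_\la^2$ that survives near the origin, and the entire mechanism relies on the fact that $\d\d^c r_\la^2$ \emph{dominates} the flat Hessian $\d\d^c r^2$ as $r\to0$ (this is exactly where the hypothesis $\la_i\in(0,1)$ is used), so that the cone term can absorb the cut-off error coming from removing $p$. In your construction you want the \emph{ambient} potential $\ph_x$ to survive near $x$ while matching the cone potential $u$ further out; on any fixed transition annulus both Hessians are merely comparable, with no asymptotic dominance available to swallow the error $R$. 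Carrying your estimate through therefore amounts to reproving (a special case of) Fujiki's lemma rather than re-running the proof of Lemma~\ref{lem: Kahl pot}. The cleanest fix is to quote \cite[Lemma~1]{Fuj2} directly, as the paper does.
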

\begin{proof}
By Theorem \ref{thm: vC}, for every $x\in X^\sing$ the germ $(X,x)$ is quasi-homogeneous. We show that the compact complex space $X$ is a K\"ahler space. As $X$ is normal, if $X$ is one-dimensional then it is non-singular and we have nothing to prove. Suppose therefore that $X$ has dimension $\ge2.$ We use then the following result \cite[Lemma 1]{Fuj2}:
\e\l{Fuj2}\parbox{10cm}{
Let $(Y,y)$ be the germ of a normal complex space of dimension $\ge2,$ and $p:Y\setminus \{y\}\to\R$ a strictly plurisubharmonic $C^\iy$ function. Then there exist a neighborhood $U\sb Y$ of $y$ and a strictly plurisubharmonic $C^\iy$ function $q:Y\to\R$ such that $q|_{Y\setminus U}=p|_{Y\setminus U}.$
}\e
This implies that the K\"ahler potentials which define the cone metrics near $X^\sing$ may be modified so as to define a K\"ahler form on the whole $X.$
\end{proof}

Definition \ref{dfn: CY conifolds} is now equivalent to the following definition.
\begin{dfn}\l{dfn: CY cond}
A {\it compact Calabi--Yau conifold} is a compact K\"ahler conifold $X$ whose canonical sheaf is a rank-one free $\O_X$ module and whose singularities are rational.
\end{dfn}
\begin{rmk}\l{rmk: L^2}
Let $X$ be a compact K\"ahler $n$-conifold whose canonical sheaf is a rank-one free $\O_X$ module. The following three conditions are then equivalent: {\bf(i)} $X^\sing$ is rational; {\bf(ii)} $X^\sing$ is canonical; and {\bf(iii)} $X^\sing$ is log-terminal. This is well known and explained for instance in \cite[Theorem 5.22 and Corollary 5.24]{MK}. The condition (iii) is equivalent also to the following: {\bf(iv)} the nowhere-vanishing $(n,0)$ forms on $X^\reg$ (which are unique up to constant) are $L^2.$ There is in fact also an older result  \cite[Proposition 3.2]{Burns} which proves that (i) and (iv) are equivalent in the present circumstances. Note that the condition (iv) is independent of the choice of a Riemannian metric on $X^\reg$ because an $(n,0)$ form $\Om$ being $L^2$ means $\pm i^n\int_{X^\reg}\Om\wedge\ov\Om<\iy$ ($\pm$ corresponding to the orientation of $X^\reg$).

It is known also that rational singularities are Cohen--Macaulay \cite[Theorem 5.10]{MK}. By \cite[Corollary 3.3(a)]{BS} the germ $(X,x)$ of a Cohen--Macaulay singularity is of depth $\ge n;$ that is, $H^q_x(X,\O_X)=0$ for every integer $q\le n-1.$
\end{rmk}

We prove a fact we will use about compact K\"ahler conifolds. The hypotheses are in fact weaker than being compact K\"ahler conifolds.
\begin{prop}\l{prop: hol 1-forms}
Let $X$ be a compact normal K\"ahler $n$-fold whose canonical sheaf is numerically trivial and whose singularities are non-empty, isolated and log-terminal. Then $H^1(X,\O_X)=0.$ 
\end{prop}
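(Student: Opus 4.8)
The plan is to derive the vanishing from the singular Beauville--Bogomolov decomposition of \cite{BGL} together with the hypothesis that the singularities are isolated and non-empty. First I would invoke \cite{BGL} to produce a finite quasi-\'etale Galois cover $\pi\colon\Xti\to X,$ with group $G,$ such that $\Xti\cong T\t W$ where $T$ is a complex torus and $W$ is a product of irreducible Calabi--Yau and irreducible holomorphic symplectic varieties; the hypotheses (normal K\"ahler, $K_X$ numerically trivial, log-terminal) are exactly those under which this decomposition applies. Since $\pi$ is \'etale in codimension one and $X$ is regular away from the finite set $X^\sing,$ the Zariski--Nagata purity of the branch locus shows that $\pi$ is in fact \'etale over $X^\reg;$ hence $\Xti$ is smooth away from the finite set $\pi^{-1}(X^\sing),$ so $\Xti$ too has only isolated singularities.

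Next I would compute $H^1(\Xti,\O_{\Xti}).$ Because every irreducible Calabi--Yau and every irreducible symplectic factor $F$ satisfies $H^1(F,\O_F)=0,$ the K\"unneth formula gives a $G$-equivariant isomorphism $H^1(\Xti,\O_{\Xti})\cong H^1(T,\O_T).$ Since $\pi$ is finite, so that $R^{q}\pi_*\O_{\Xti}=0$ for $q\ge1,$ and since we work in characteristic zero, so that $\O_X=(\pi_*\O_{\Xti})^G$ and taking $G$-invariants is exact, descent yields $H^1(X,\O_X)\cong H^1(\Xti,\O_{\Xti})^G\cong H^1(T,\O_T)^G.$ It therefore suffices to prove $H^1(T,\O_T)^G=0.$

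This last step is the crux. If $\dim T=0$ there is nothing to prove, so suppose $\dim T\ge1.$ As $\Xti=T\t W$ has isolated singularities and $T$ is smooth of positive dimension, the factor $W$ must be smooth; hence $\Xti$ is smooth and $X=\Xti/G$ has only quotient singularities. Because $\pi$ is quasi-\'etale, $G$ acts freely in codimension one, so no element has a fixed locus of codimension one, and $X^\sing$ is precisely the image of the union of the fixed loci of the non-trivial $g\in G.$ Thus $X^\sing\ne\es$ forces some $1\ne g\in G$ to have non-empty fixed locus in $\Xti,$ while $X^\sing$ isolated forces every such fixed locus to be zero-dimensional. Writing $g=(g_T,g_W)$ on $\Xti=T\t W,$ the torus part $g_T$ then has a zero-dimensional fixed locus, which means the differential of $g_T$ at a fixed point has no eigenvalue $1;$ equivalently $g_T$ acts on $H^1(T,\O_T)$ without the eigenvalue $1.$ Consequently $H^1(T,\O_T)^{\langle g\rangle}=0,$ and a fortiori $H^1(T,\O_T)^G=0,$ as required.

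The main obstacle I anticipate is making the final implication rigorous: one must verify that quasi-\'etaleness genuinely excludes pseudo-reflections, so that the singular points of the quotient are exactly the images of points with non-trivial stabiliser, and that a fixed locus of positive dimension (necessarily of codimension $\ge2$) would produce a positive-dimensional singular locus on $X,$ contradicting isolatedness. Once this is secured, the dichotomy is clean: a positive-dimensional torus factor survives only when $\Xti$ is smooth, and the above fixed-point analysis then kills all $G$-invariants in $H^1(T,\O_T).$
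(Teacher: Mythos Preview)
Your argument is correct in outline and takes a genuinely different route from the paper's. The paper does not compute $H^1(X,\O_X)$ by descent; instead it first proves $H^0(X^\reg,\Om^1_X)=0$ by a case analysis on the quasi-\'etale cover from \cite{BGL} (invoking \cite{GK} in the torus case, where a $G$-invariant $1$-form nonvanishing at a fixed point would force the quotient germ to split off a $\C$-factor, contradicting isolatedness), and then passes through a resolution $Z\to X$: the inclusion $H^0(Z,\Om^1_Z)\sb H^0(X^\reg,\Om^1_X)=0$ together with Hodge symmetry on the compact K\"ahler manifold $Z$ gives $H^1(Z,\O_Z)=0$, and rationality of the singularities then yields $H^1(X,\O_X)\cong H^1(Z,\O_Z)=0$. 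The paper also uses the stronger reduction that the cover is a \emph{single} factor (torus, irreducible Calabi--Yau, or irreducible symplectic). Your approach avoids both the single-factor reduction and the detour through a resolution, reaching the same eigenvalue obstruction directly on the cohomological side; it is arguably cleaner.

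One point to tighten: the $G$-action on $\Xti=T\t W$ need not be diagonal, so writing $g=(g_T,g_W)$ is not quite justified. What is true is that the projection $T\t W\to T$ is the Albanese map of $\Xti$ and is therefore $G$-equivariant, inducing $g_T\in\Aut(T)$; at a fixed point of $g$ the differential $dg$ is then block-triangular with $dg_T$ as one diagonal block. Since $g$ has finite order, $dg$ is diagonalisable and its eigenvalues are exactly those of the diagonal blocks, so if $dg_T$ had eigenvalue $1$ then so would $dg$, contradicting $\dim\mathrm{Fix}(g)=0$. With this adjustment your conclusion $H^1(T,\O_T)^G=0$ stands.
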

\begin{proof}
We show that no non-zero holomorphic $1$-forms exist on $X^\reg.$ As in \cite{BGL} there exists a quasi-\'etale cover $f:Y\to X$ which is the product of a torus, irreducible Calabi--Yau varieties and irreducible symplectic varieties; moreover, none of the latter two factors has non-zero $1$-forms on the regular locus. As $X$ has isolated singularities, $Y$ is a trivial product; that is, $Y$ is a complex torus, an irreducible Calabi--Yau variety or an irreducible symplectic variety. Consider first the latter two cases and let $X^\reg$ have a holomorphic $1$-form $\ph.$ Then $f^*\ph$ is defined at least outside a co-dimension $2$ subset of $Y,$ where $f$ is \'etale. At every non-singular point $y\in Y$ the sheaf $\Om^1_Y$ is locally free and $f^*\ph$ extends to $y.$ Thus $f^*\ph$ is well defined on $Y^\reg$ and so, as mentioned above, vanishes.
 
Let $Y$ be now a complex torus. By \cite[Lemma 7.4]{GK} there are a complex torus $T$ and a finite group action $G\curvearrowright T$ which is free in co-dimension one and whose quotient space is isomorphic to $X.$ Denote by $\pi:T\to X$ the projection map and let $X^\reg$ have a holomorphic $1$-form $\ph.$ As in the first paragraph the pull-back $\pi^*\ph$ extends to $T$ which is now a $G$-invariant $1$-form. Since $X$ has at least one singular point we get at least one point $t\in T$ at which $G$ has non-trivial stabilizer subgroup $H.$ If $\pi^*\ph$ is non-zero at $t$ then the germ of $\pi(t)\in X$ is the product of $\C,$ corresponding to $\pi^*\ph,$ and some quotient singularity $\C^{n-1}/H.$ But this will not be an isolated singularity. So $\pi^*\ph=0$ at the point $t;$ and as $T$ is a torus, $\pi^*\ph=0$ everywhere. Thus $\ph=0.$

Let $Z\to X$ be a resolution of singularities morphism. Then $H^0(Z,\Om^1_Z)\sb H^0(X^\reg,\Om^1_X)=0.$ As $X$ is K\"ahler so is $Z$ and we have $H^1(Z,\O_Z)\cong H^0(Z,\Om^1_Z)=0.$ Since $X^\sing$ is rational it follows that $H^1(X,\O_X)\cong H^1(Y,\O_Y)=0.$
\end{proof}

\section{Harmonic Forms}\l{sect: harm}
We begin by defining compact Riemannian conifolds.
\begin{dfn}\l{dfn: Riem conifolds}
Let $X$ be a topological space and $x\in X$ any point. Then a {\it punctured neighbourhood} of $x\in X$ is the set $U\-\{x\}$ where $U$ is some (ordinary) neighbourhood of $x\in X.$

A {\it compact Riemannian conifold} consists of a compact metric space $X,$ a Riemannian manifold $(X^\reg,g)$ of dimension $l,$ and a finite family $(C_x,g_x)_{x\in X^\sing}$ of Riemannian cones such that $X=X^\reg\sqcup X^\sing$ as sets; and for every $x\in X^\sing$ there exist a punctured neighbourhood of $\vx\in C_x^\reg,$ a punctured neighbourhood of $x\in X^\reg,$ and a diffeomorphism between these two under which the two Riemannian metrics $g,g_x$ agree with each other.

We call $X$ a Riemannian $l$-conifold if $X$ has real dimension $l.$
\end{dfn}
\begin{rmk}\l{rmk: Riem conifolds}
Although this definition will do for our purpose, the condition that $g$ and $g_x$ should agree locally is stronger than the more standard definition in \cite[Definition 4.6]{Chan}, \cite[Definition 2.2]{HS}, \cite[Definition 2.1]{J1}, \cite[Definition 3.24]{KL} and others. In the latter definition we require only that $g$ should approach with order $\ep>0$ at $x$ the other metric $g_x;$ that is, for $k=0,1,2,\dots$ we have $|\nb^k(g-g_x)|=O(r^\ep)$ where $r$ is the radius function on $C_x^\reg$ and $\nb, |\,\,|$ are computed pointwise with respect to the cone metric $g_x.$
\end{rmk}

We define now weighted Sobolev spaces.
\begin{dfn}\l{dfn: Sob}
Let $X$ be a compact Riemannian $l$-conifold. Choose a smooth function $\rho:X^\reg\to(0,\iy)$ which near every $x\in X^\sing$ agrees with the radius function on $C_x^\reg.$ Define for $k=0,1,2,\dots$ and $\al\in\R$ the weighted Sobolev space $H^k(\La^p_{X^\reg})$ to be the set of $\al\in L^2(\La^p_{X^\reg})$ for which the weak derivatives $\ph,\dots,\nb^k\ph$ exist with
\e \|\ph\|_{H^k_\al}^2:=\int_{X^\reg} \sum_{j=0}^k \rho^{-l}|\rho^{-j-\al}\nb^j\ph|^2 \d\mu<\iy\e
where $|\,\,|,\nb$ and $\d\mu$ are computed with respect to the Riemannian metric of $X^\reg.$ For $k=0$ put $L^2_\al(\La^p_{X^\reg}):=H^0_\al(\La^p_{X^\reg}).$ Put also $L^2(\La^p_{X^\reg}):=L^2_{-l/2}(\La^p_{X^\reg}),$ not $L^2_0(\La^p_{X^\reg}),$ because for $\ph\in L^2(\La^p_{X^\reg})$ we have $\|\ph\|_{L^2}:=\|\ph\|_{L^2_{-l/2}}=\int_{X^\reg}|\ph|^2\d\mu;$ that is, $L^2(\La^p_{X^\reg})$ may be regarded as the unweighted $L^2$ space. We say therefore that a $p$-form $\ph$ on $X^\reg$ is (plainly) $L^2$ if $\ph\in L^2(\La^p_{X^\reg})=L^2_{-l/2}(\La^p_{X^\reg}).$ For $\ph,\ps\in L^2(\La^p_{X^\reg})$ define the inner product $\ph\cdot\ps:=\int_{X^\reg}(\ph,\ps)\d\mu$ where $(\ph,\ps)$ is defined pointwise on $X^\reg,$ using its Riemannian metric. 

Suppose now that $X$ is a K\"ahler conifold. Fix $p,q\in\Z$ and recall that there is a subsheaf $\La^{pq}_{X^\reg}\sb\La^{p+q}_{X^\reg}.$ For $k=0,1,2,\dots$ and for $\al\in\R$ define the weighted Sobolev space $H^k_\al(\La^{pq}_{X^\reg}):=L^2(\La^{pq}_{X^\reg}) \cap H^k_\al(\La^{p+q}_{X^\reg}).$

We can also extend the definitions to a function $\al:X^\sing\to\R,$ allowing $\ph$ to have different orders at different singular points. The modification will be straightforward, which we leave to the reader as an exercise.  
\end{dfn}

We state integration by parts formulae.
\begin{prop}\l{prop: int by parts}
Let $X$ be a compact K\"ahler $n$-conifold. Fix $p,q\in\Z$ and $\al,\be:\R\to X^\sing$ with $\al(x)+\be(x)>1-2n$ for every $x\in X^\sing.$  Then for $\ph\in H^1_\al(\La^{pq}_{X^\reg})$ and $\ps\in H^1_\be(\La^{p\,q+1}_{X^\reg})$ we have $\d\ph\cdot\ps=\ph\cdot\d^*\ps$ and $\db\ph\cdot\ps=\ph\cdot\db^*\ps$ where $\d^*$ and $\db^*$ are computed with respect to the K\"ahler metric of $X^\reg.$
\end{prop}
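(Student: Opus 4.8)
The plan is to reduce the statement to the ordinary integration by parts for compactly supported forms by inserting a cutoff near $X^\sing,$ and then to control the resulting boundary contribution by means of the weight hypothesis. Fix a smooth $\chi:[0,\iy)\to[0,1]$ with $\chi\equiv0$ on $[0,1]$ and $\chi\equiv1$ on $[2,\iy),$ and for $\ep>0$ put $\chi_\ep:=\chi(\rho/\ep)$ where $\rho$ is the function of Definition \ref{dfn: Sob}. Then $\chi_\ep\ph$ is supported away from $X^\sing$ and hence has compact support in the manifold $X^\reg,$ which has no boundary. The usual integration by parts, valid for $H^1$ forms of compact support (by approximating $\chi_\ep\ph$ by smooth compactly supported forms on $\supp\chi_\ep,$ where $g$ is a smooth metric and the weights are harmless), therefore gives $\d(\chi_\ep\ph)\cdot\ps=\chi_\ep\ph\cdot\d^*\ps$ with no boundary term. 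Writing $\d(\chi_\ep\ph)=\chi_\ep\,\d\ph+\d\chi_\ep\w\ph$ we obtain
\[\chi_\ep\,\d\ph\cdot\ps+(\d\chi_\ep\w\ph)\cdot\ps=\chi_\ep\ph\cdot\d^*\ps,\]
and it remains to let $\ep\to0,$ showing that the first and third terms converge to $\d\ph\cdot\ps$ and $\ph\cdot\d^*\ps$ while the middle term tends to $0.$

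The middle term is the heart of the argument, and it is precisely here that the hypothesis $\al+\be>1-2n$ enters. Since $\d\chi_\ep$ is supported in the annular region $A_\ep:=\{\ep\le\rho\le2\ep\}$ near $X^\sing$ and $|\d\rho|=1$ for the cone metric, we have $|\d\chi_\ep|\les\ep^{-1}\les\rho^{-1}$ on $A_\ep.$ Writing $l:=2n$ and recalling that $\|\ph\|_{L^2_\al}$ is the $L^2$ norm of $\rho^{-l/2-\al}\ph$ (and similarly for $\ps$ with weight $\be$), the Cauchy--Schwarz inequality gives
\[\bigl|(\d\chi_\ep\w\ph)\cdot\ps\bigr|\les\int_{A_\ep}\rho^{-1}|\ph|\,|\ps|\,\d\mu=\int_{A_\ep}\bigl(\rho^{-l/2-\al}|\ph|\bigr)\bigl(\rho^{-l/2-\be}|\ps|\bigr)\,\rho^{\,l-1+\al+\be}\,\d\mu\les\ep^{\,l-1+\al+\be}\,\|\ph\|_{L^2_\al}\|\ps\|_{L^2_\be}.\]
The hypothesis says exactly that $l-1+\al(x)+\be(x)=2n-1+\al(x)+\be(x)>0$ at every $x\in X^\sing,$ so the prefactor $\ep^{\,l-1+\al+\be}$ tends to $0$ near each singular point; as the two weighted norms are finite and $X^\sing$ is finite, handling each point separately and summing shows that the middle term vanishes in the limit.

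For the two remaining terms one first checks integrability, so that the quantities $\d\ph\cdot\ps$ and $\ph\cdot\d^*\ps$ in the statement are well defined. From $\ph\in H^1_\al(\La^{pq})$ we get $\d\ph\in H^0_{\al+1}$ (since $|\d\ph|\le C|\nb\ph|$), and from $\ps\in H^1_\be(\La^{p\,q+1})$ we get $\d^*\ps\in H^0_{\be+1};$ moreover $\ph\in H^0_\al$ and $\ps\in H^0_\be.$ Splitting the weights as above,
\[\int_{X^\reg}|\d\ph|\,|\ps|\,\d\mu=\int_{X^\reg}\bigl(\rho^{-l/2-\al-1}|\d\ph|\bigr)\bigl(\rho^{-l/2-\be}|\ps|\bigr)\,\rho^{\,l+\al+\be+1}\,\d\mu\les\|\d\ph\|_{H^0_{\al+1}}\|\ps\|_{H^0_\be}<\iy,\]
where the Cauchy--Schwarz step uses that $\rho^{\,l+\al+\be+1}$ is bounded on $X^\reg$: near $X^\sing$ the exponent $l+\al+\be+1>2$ is positive by the hypothesis, and $\rho$ is bounded above as $X$ is compact. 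The same estimate bounds $\int_{X^\reg}|\ph|\,|\d^*\ps|\,\d\mu.$ Since $0\le\chi_\ep\le1$ and $\chi_\ep\to1$ pointwise, dominated convergence now yields $\chi_\ep\,\d\ph\cdot\ps\to\d\ph\cdot\ps$ and $\chi_\ep\ph\cdot\d^*\ps\to\ph\cdot\d^*\ps,$ and passing to the limit in the displayed identity gives $\d\ph\cdot\ps=\ph\cdot\d^*\ps.$

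The formula $\db\ph\cdot\ps=\ph\cdot\db^*\ps$ follows by the identical argument: $\db\chi_\ep$ is the $(0,1)$ component of $\d\chi_\ep,$ so it satisfies the same bound $|\db\chi_\ep|\les\rho^{-1}$ on $A_\ep,$ and $\db$ sends $\La^{pq}$ into $\La^{p\,q+1}.$ The one delicate point of the whole proof is the vanishing of the boundary term, which rests entirely on the strict inequality $\al+\be>1-2n;$ the main obstacle is therefore purely bookkeeping, namely arranging the weights so that this single borderline exponent $l-1+\al+\be$ comes out strictly positive, everything else being standard integration by parts together with weighted Cauchy--Schwarz estimates.
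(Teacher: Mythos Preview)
Your proof is correct and is essentially the same approach as the paper's. The paper's proof simply says that the identity holds for compactly supported $\ph,\ps$ by definition of the formal adjoint, that such forms are dense in the weighted Sobolev spaces, and that one may therefore pass to the limit; your cutoff argument with $\chi_\ep$ is just this density/approximation argument written out explicitly, with the boundary term computed by hand so that one sees exactly where the strict inequality $\al+\be>1-2n$ enters.
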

\begin{proof}
Note that both sides of the formula are well defined by the hypotheses. These will be equal by definition for $\ph,\ps$ with compact support; and such $\ph,\ps$ are dense in the weighted Sobolev spaces. The approximation argument implies therefore that the equality holds for every $\ph,\ps.$
\end{proof}

We define $(p,q)$ form Laplacians and their exceptional values.
\begin{dfn}
Let $X$ be a compact K\"ahler conifold with cone $C_x$ at each $x\in X^\sing.$ Fix $p,q\in\Z.$ Using the K\"ahler metric of $X^\reg$ define the $(p,q)$ form Laplacian $\De:\Ga(\La^{pq}_{X^\reg})\to \Ga(\La^{pq}_{X^\reg})$ by $\De;=\d\d^*+\d^*\d=2\db\db^*+2\db^*\db.$ We call $\al\in\C$ an {\it exceptional value} of $\De$ at $x\in X^\sing$ if there exist non-zero order-$\al$ homogeneous harmonic $(p,q)$ forms on $C_x^\reg.$ 
\end{dfn}
Proposition \ref{prop: real eigen} implies
\begin{prop}
Let $X$ be a compact K\"ahler conifold and $h$ a Hermitian conifold metric on $X.$ Fix $p,q\in\Z.$ Then the set of exceptional values of the $(p,q)$ form Laplacian $\De$ is a discrete subset of $\R.$
\end{prop}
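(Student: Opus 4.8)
The plan is to deduce this directly from Proposition~\ref{prop: real eigen}, the only genuine content being the observation that harmonicity in the $(p,q)$ sense coincides with harmonicity in the de Rham sense on a K\"ahler cone. I would argue one singular point at a time and then pass to a finite union.

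Fix $x\in X^\sing$ and let $C_x$ be the associated K\"ahler $n$-cone, so that $C_x^\reg$ is a K\"ahler manifold of real dimension $l=2n$ whose cone metric is the one defining the exceptional values. On a K\"ahler manifold the operator $\d\d^*+\d^*\d$ preserves the bidegree decomposition, so the $(p,q)$ form Laplacian $\De=\d\d^*+\d^*\d$ is exactly the restriction of the full de Rham $(p+q)$ form Laplacian to forms of pure type $(p,q).$ Hence every order-$\al$ homogeneous harmonic $(p,q)$ form on $C_x^\reg$ is in particular an order-$\al$ homogeneous harmonic $(p+q)$ form on $C_x^\reg.$ It follows that the set of exceptional values of the $(p,q)$ form Laplacian at $x$ is contained in the set $\cD\sb\C$ of Proposition~\ref{prop: real eigen} formed with $p+q$ in place of $p$ and with $l=2n.$

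By Proposition~\ref{prop: real eigen} this $\cD$ lies in $\R$ and is discrete; in fact, as its proof shows, the exceptional orders arise from the eigenvalues of an elliptic operator on the compact link $C_x^\lk,$ which tend to $\iy,$ so $\cD$ has no accumulation point in $\R.$ Consequently the exceptional values at each individual $x$ form a subset of $\R$ without finite accumulation point.

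The set of exceptional values of $\De$ is the union of these sets over $x\in X^\sing,$ and since $X$ is a compact conifold its singular locus $X^\sing$ is finite, so this is a finite union. A finite union of subsets of $\R$ each having no accumulation point again has no accumulation point, hence is a discrete subset of $\R.$ The one mild subtlety worth recording is exactly this last point: a finite union of merely \emph{discrete} sets need not be discrete, so it matters that Proposition~\ref{prop: real eigen} supplies the stronger property that $\cD$ is closed and locally finite, which is what its proof in fact establishes.
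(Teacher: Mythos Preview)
Your proof is correct and follows the same route as the paper, which simply records that the proposition is implied by Proposition~\ref{prop: real eigen}. You have merely made explicit two points the paper leaves implicit: that on a K\"ahler cone the $(p,q)$ Laplacian is the restriction of the full $(p+q)$-form Laplacian, and that the finite union over $x\in X^\sing$ preserves discreteness because each $\cD$ is in fact closed and locally finite in~$\R$.
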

Applying Proposition \ref{prop: Lap} to $(p,q)$ forms we get
\begin{prop}\l{prop: bounded linear}
Let $X$ be a compact K\"ahler conifold, $p,q$ integers and $\De:\Ga(\La^{pq}_{X^\reg})\to \Ga(\La^{pq}_{X^\reg})$ the $(p,q)$ form Laplacian. Then $\De$ defines for $k\in\{2,3,4,\dots\}$ and $\al:X^\sing\to \R$ a bounded linear operator $\De:H^k_\al(\La^{pq}_{X^\reg})\to H^{k-2}_{\al-2}(\La^{pq}_{X^\reg}).$ \qed
\end{prop}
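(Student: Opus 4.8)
The plan is to factor the Laplacian as $\De=\rho^{-2}\cm(\rho^2\De)$ and handle the two factors separately. First I would record the elementary fact that multiplication by a power $\rho^s$ is a bounded isomorphism $H^k_\al(\La^{pq}_{X^\reg})\to H^k_{\al+s}(\La^{pq}_{X^\reg})$: since $\rho$ agrees near each $x\in X^\sing$ with the radius function, its derivatives satisfy $|\nb^j\rho^s|=O(\rho^{s-j})$, and expanding $\nb^j(\rho^s\ph)$ by the Leibniz rule shows that the weighted norms transform by exactly the expected shift in $\al.$ Granting this, it suffices to prove that $\rho^2\De$ defines a bounded operator $H^k_\al(\La^{pq}_{X^\reg})\to H^{k-2}_\al(\La^{pq}_{X^\reg})$; composing with the bounded map $\rho^{-2}:H^{k-2}_\al\to H^{k-2}_{\al-2}$ then gives the claim. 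The restriction $k\ge2$ enters only to keep $k-2\ge0.$

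For the factor $\rho^2\De$ I would argue locally and patch with a partition of unity. On any region where $\rho$ is bounded above and below, $\De$ is an ordinary smooth second-order elliptic operator, $\rho^2\De$ is again smooth of second order, and the weighted norms are equivalent to the unweighted ones, so boundedness $H^k\to H^{k-2}$ is standard. Near each $x\in X^\sing$ the metric agrees exactly with the cone metric $g_x$ (Definition \ref{dfn: Riem conifolds}) and $\rho=r.$ Proposition \ref{prop: Lap}, applied to the summand $\La^{pq}_{C_x^\reg}\sb\La^{p+q}_{C_x^\reg}$---which is preserved by $r^2\De$ because the complex structure of a K\"ahler cone is scale-invariant---expresses $r^2\De$ as a polynomial of degree $2$ in the vector field $r\frac{\bd}{\bd r}$ whose coefficients are the operators $\De,\d,\d^*$ on the compact link $C_x^\lk$ together with constants depending on $l,p,q.$

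The content is then that such an operator preserves the weighted scale with a loss of two derivatives. The scale-invariant (``conic'') vector fields $r\frac{\bd}{\bd r}$ and the differentiations along the link map $H^k_\al\to H^{k-1}_\al$ boundedly, which is immediate from the definition of the weighted norms because $\rho\nb$ is comparable near the cone to the frame consisting of $r\frac{\bd}{\bd r}$ and the link directions, and because $r\frac{\bd}{\bd r}$ commutes with the radial weight $\rho^{-\al}$ up to a bounded zeroth-order term. The link operators $\De,\d,\d^*$ are smooth differential operators on the compact $C_x^\lk,$ built from link directions, so they lower Sobolev order appropriately and commute with the radial weight. Consequently every monomial of $r^2\De$ is a conic operator of order $\le2$ with bounded coefficients and maps $H^k_\al\to H^{k-2}_\al$ near $x.$ The case of a non-constant weight $\al:X^\sing\to\R$ follows because the whole argument is local at each singular point, as anticipated at the end of Definition \ref{dfn: Sob}.

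The main obstacle is the weight bookkeeping at the cone: verifying that the radial factor $\rho^{-\al}$ and the measure factor $\rho^{-l}\,\d\mu$ interact with the conic vector fields so that $r\frac{\bd}{\bd r}$ and the link differentiations genuinely preserve the weight $\al$ while lowering Sobolev order by one, and then confirming that the explicit coefficients in the matrix of Proposition \ref{prop: Lap}---polynomials in $r\frac{\bd}{\bd r}$ with constant coefficients, carrying no negative powers of $r$ beyond the single overall $r^{-2}$ already extracted---introduce no anomalous weight. Once the conic vector fields are set up and the power rule $\rho^s:H^k_\al\to H^k_{\al+s}$ is in hand, the remaining estimates are routine.
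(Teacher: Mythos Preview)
Your proposal is correct and is essentially an elaboration of what the paper intends: the paper states the proposition as an immediate consequence of Proposition~\ref{prop: Lap} (the explicit matrix form of $r^2\De$ on a cone) and marks it \qed\ without further argument. Your factorisation $\De=\rho^{-2}\cm(\rho^2\De)$, together with the observation that the conic vector fields $r\frac{\bd}{\bd r}$ and the link differentiations preserve the weight while lowering Sobolev order, is exactly the standard way to unpack that citation; nothing more is needed.
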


From \cite[Theorem 6.2]{LM} we get also
\begin{prop}\l{prop: non-exceptional Fredholm}
In the circumstances of Proposition \ref{prop: bounded linear} the operator $\De:H^k_\al(\La^{pq}_{X^\reg})\to H^{k-2}_{\al-2}(\La^{pq}_{X^\reg})$ is Fredholm if and only if none of $\{\al(x)\}_{x\in X^\sing}$ is an exceptional value. \qed
\end{prop}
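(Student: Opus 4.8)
The statement is a direct application of the theory of elliptic operators on manifolds with conical ends, for which \cite[Theorem 6.2]{LM} is the precise tool; the plan is to isolate the analytic content at the singular points and reduce to the indicial model operator on each cone, where Fredholmness is governed exactly by the exceptional values. First I would localise. Since $X$ is compact and $\De$ is elliptic of order $2,$ away from $X^\sing$ we are on a compact region where $\De$ is a genuine elliptic operator, so by interior elliptic estimates any such compact piece contributes only a compact perturbation to $\De:H^k_\al(\La^{pq}_{X^\reg})\to H^{k-2}_{\al-2}(\La^{pq}_{X^\reg}).$ Hence whether $\De$ is Fredholm is decided entirely by its behaviour on punctured neighbourhoods of the vertices, where by the conifold hypothesis the metric agrees \emph{exactly} with the cone metric of $C_x$ (so there are not even error terms of the sort allowed in \cite{LM}).

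Next I would pass to the model operator on each cone $C_x.$ By Proposition \ref{prop: Lap}, applied to $(p,q)$ forms as in Proposition \ref{prop: bounded linear}, the rescaled Laplacian $-r^2\De$ has the translation-invariant shape exhibited in Corollary \ref{cor: sep of vars}, namely $(r\frac{\bd}{\bd r})^2-2mr\frac{\bd}{\bd r}-E$ with $E$ a self-adjoint elliptic operator on the compact link $C_x^\lk$ and $m$ a constant depending on $p,q,l.$ Substituting $t:=\log r$ turns the cone metric into $e^{2t}(\d t^2+g^\lk),$ conformally cylindrical, and makes the rescaled operator translation-invariant in $t;$ this is exactly the setting of \cite{LM}. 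Taking the Mellin transform in $r$ (equivalently the Fourier transform in $t$) yields the indicial family $\la\mapsto I(\la)$ of elliptic operators on $C_x^\lk$ obtained by replacing $r\frac{\bd}{\bd r}$ by $\la.$ For each fixed $\la$ this is an elliptic operator on a compact manifold, hence Fredholm of index $0,$ and by analytic Fredholm theory it is invertible for all $\la$ outside a discrete set; by construction this set consists of the $\la$ admitting a non-zero homogeneous solution of order $\la,$ i.e.\ the exceptional values of $\De$ at $x$ (discrete and real by Proposition \ref{prop: real eigen}).

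Finally I would invoke \cite[Theorem 6.2]{LM}: the weighted operator is Fredholm if and only if the weight line avoids the indicial roots at every end. The step I expect to be the main obstacle is purely one of bookkeeping: matching the weight $\al$ of Definition \ref{dfn: Sob} with the homogeneity order in the definition of an exceptional value and with the line on which the indicial roots of \cite{LM} are measured. The normalisation $\rho^{-l}$ in the weighted norm is arranged precisely so that an order-$\al$ homogeneous form sits on the borderline of $L^2_\al$: its pointwise norm grows like $r^\al$ while $\rho^{-l}\d\mu\sim\d\log r\,\d(\text{link}),$ so the critical weights coincide with the exceptional values with no extra shift. Once this identification is made the forward implication is immediate. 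For the converse, that at an exceptional weight $\De$ fails to be Fredholm, one cuts off the order-$\al$ homogeneous harmonic form on a sequence of dyadic shells shrinking to the vertex and normalises in $H^k_\al,$ producing $\ph_n$ with $\|\ph_n\|_{H^k_\al}=1,$ $\|\De\ph_n\|_{H^{k-2}_{\al-2}}\to0$ and $\ph_n\to0$ on every compact subset of $X^\reg;$ this violates the semi-Fredholm estimate $\|\ph\|_{H^k_\al}\le C(\|\De\ph\|_{H^{k-2}_{\al-2}}+\|\ph\|_{\text{compact}}),$ so the range cannot be closed.
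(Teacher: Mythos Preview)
Your proposal is correct and follows the same approach as the paper, which simply cites \cite[Theorem 6.2]{LM} without further argument. Your elaboration of the localisation, the indicial-operator reduction on each cone, the weight bookkeeping, and the Weyl-sequence construction for the converse is all sound and more detailed than what the paper provides.
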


We define the spaces of harmonic $(p,q)$ forms. 
\begin{dfn}\l{dfn: Lap}
In the circumstances of Proposition \ref{prop: bounded linear} denote by $\ker\De^{pq}_\al$ the kernel of the operator $\De:H^k_\al(\La^{pq}_{X^\reg})\to H^{k-2}_{\al-2}(\La^{pq}_{X^\reg}).$ This is independent of $k$ because it consists of harmonic forms, which have the elliptic regularity property. 
\end{dfn}
\begin{rmk}
Note also that even for $k=0,1$ and for $\ph\in H^k_\al(\La^{pq}_{X^\reg})$ we can define the equation $\De\ph=0$ by means of distributions, and that $\ker\De^{pq}_\al$ agrees with the set of its solutions. 
\end{rmk}
From \cite[Lemma 7.3 and \S8]{LM} we get
\begin{prop}\l{prop: improve}
Let $X$ be a compact K\"ahler conifold, $p,q$ integers and $\De:\Ga(\La^{pq}_{X^\reg})\to \Ga(\La^{pq}_{X^\reg})$ the $(p,q)$ form Laplacian. Let a compact interval $[\al,\be]\subset\R$ contain no exceptional values of $\De.$ Then $\ker \De^{pq}_\al=\ker\De^{pq}_\be.$ \qed
\end{prop}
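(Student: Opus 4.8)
The plan is to reduce the statement to a purely local analysis near each singular point and to deduce it from the expansion of harmonic forms into homogeneous pieces established in \S\ref{sect: Riem}. Since $[\al,\be]$ is a compact interval we have $\al\le\be,$ and one inclusion is automatic: near every $x\in X^\sing$ we may take $\rho=r<1,$ where $\rho^{-l-2\be}\ge\rho^{-l-2\al},$ while away from $X^\sing$ the two weights are comparable; hence $H^k_\be(\La^{pq}_{X^\reg})\sb H^k_\al(\La^{pq}_{X^\reg})$ and so $\ker\De^{pq}_\be\sb\ker\De^{pq}_\al.$ The content of the proposition is therefore the reverse inclusion $\ker\De^{pq}_\al\sb\ker\De^{pq}_\be,$ a ``weight-jumping'' statement asserting that a harmonic form lying in the larger space $H^k_\al$ already lies in the smaller space $H^k_\be$ once no exceptional value separates the two weights.

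For this I would argue as follows. Let $\ph\in\ker\De^{pq}_\al.$ Away from $X^\sing$ the form $\ph$ is smooth and $X^\reg$ minus small cone neighbourhoods of the singular points is compact, so the weighted norms are finite there for every weight; the only issue is the behaviour of $\ph$ near each $x\in X^\sing.$ By Definition \ref{dfn: Riem conifolds} a punctured neighbourhood of $x$ is isometric to a punctured neighbourhood $\{0<r<\ep\}$ of the vertex of the K\"ahler cone $C_x,$ and there $\ph$ restricts to a harmonic $(p,q)$ form. On the K\"ahler cone the $(p,q)$ form Laplacian is the restriction of the de Rham Laplacian, which preserves the bigrading; so $\ph$ is de Rham harmonic and Corollary \ref{cor: sep of vars} together with Theorem \ref{thm: no log} applies (with total degree $p+q$): on $\{0<r<\ep\}$ the form $\ph$ expands, convergently in the compact $C^\iy$ sense, as a sum $\sum_\ga\ph_\ga$ of homogeneous harmonic $(p,q)$ forms, where each order $\ga$ occurring with nonzero $\ph_\ga$ is an exceptional value of $\De$ at $x,$ and the set of such $\ga$ is discrete and real by Proposition \ref{prop: real eigen}.

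Next I would record the integrability criterion. A homogeneous harmonic $(p,q)$ form of order $\ga$ has pointwise norm $\sim r^\ga$ with respect to the cone metric, while $\d\mu\sim r^{l-1}\d r;$ hence $\int\rho^{-l-2w}|\ph_\ga|^2\d\mu\sim\int r^{2\ga-2w-1}\d r$ near $r=0,$ which converges if and only if $\ga>w.$ Thus, granting the separation of the homogeneous components, $\ph$ lies in $L^2_w(\La^{pq}_{X^\reg})$ near $x$ if and only if every order $\ga$ appearing in its expansion satisfies $\ga>w;$ by elliptic regularity (Definition \ref{dfn: Lap}) the same characterizes membership in $H^k_w.$ Applying this with $w=\al,$ the hypothesis $\ph\in H^k_\al$ forces every exceptional value $\ga$ occurring in the local expansions of $\ph$ to satisfy $\ga>\al.$ But $[\al,\be]$ contains no exceptional value, so $\ga>\al$ already forces $\ga>\be.$ Hence every order occurring is $>\be,$ which by the same criterion gives $\ph\in H^k_\be(\La^{pq}_{X^\reg}),$ i.e. $\ph\in\ker\De^{pq}_\be,$ completing the argument. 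This is precisely the content of \cite[Lemma 7.3 and \S8]{LM}.

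The step I expect to be the main obstacle is making the passage from the global condition $\ph\in L^2_\al$ to the local statement ``only orders $>\al$ occur in the expansion'' fully rigorous. One must use the orthonormality of the link eigenmodes $e_j$ to separate the homogeneous components and control the infinite sum, checking in particular that each term of order $\le\al$ has vanishing coefficient and that the tail of genuinely decaying terms (those with orders $\al_j\to+\iy$) converges in $L^2_\al;$ the discreteness of the exceptional set from Proposition \ref{prop: real eigen} guarantees that only finitely many orders fall below any given threshold, so this bookkeeping terminates. A minor additional point, already settled above, is that the cone results of \S\ref{sect: Riem}, stated for the de Rham Laplacian, transfer to the $(p,q)$ form Laplacian because the latter is its restriction to the $(p,q)$ summand.
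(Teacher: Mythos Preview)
The paper does not prove this proposition: it simply records the citation \cite[Lemma 7.3 and \S8]{LM} and marks the statement \qed. Your argument is correct and is the standard Lockhart--McOwen proof specialised to the present setting, as you yourself note at the end; in effect you have unpacked the cited reference rather than offered an alternative route. The concern you flag in the final paragraph---separating the homogeneous components via link-orthogonality and controlling the tail---is exactly the technical content of \cite[\S8]{LM}, and is dispatched there by the discreteness of the exceptional set (only finitely many orders lie in any bounded window) together with the $L^2$ orthogonality of the $e_j$ on $C^\lk.$
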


We recall another standard result about elliptic operators between weighted Sobolev spaces.
\begin{prop}\l{prop: Fredholm}
Let $X,p,q,\De$ be as in Proposition \ref{prop: improve}. Let $k\ge2$ be an integer and let $\al:X^\sing\to\R$ have no exceptional value of $\De.$ Then each $\ph\in H^{k-2}_{\al-2}(\La^{pq}_{X^\reg})$ lies in the image of the Fredholm operator $\De:H^k_\al(\La^{pq}_{X^\reg})\to H^{k-2}_{\al-2}(\La^{pq}_{X^\reg})$ if and only if $\ph\cdot \ps=0$ for every $\ps\in \ker\De^{pq}_{2-2n-\al}.$
\end{prop}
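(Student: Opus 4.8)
The plan is to deduce this from the Lockhart--McOwen Fredholm theory already invoked, together with the formal self-adjointness of $\De.$ Since $\al$ has no exceptional value, Proposition \ref{prop: non-exceptional Fredholm} makes $\De:H^k_\al(\La^{pq}_{X^\reg})\to H^{k-2}_{\al-2}(\La^{pq}_{X^\reg})$ Fredholm, so in particular its image is closed. By the closed range theorem a closed-range bounded operator $T$ between (reflexive) Banach spaces satisfies $\im T=(\ker T^*)^\perp,$ where $T^*$ is the Banach-space adjoint; thus it suffices to identify $\ker T^*$ with $\ker\De^{pq}_{2-2n-\al}$ and to check that the annihilator pairing is the $L^2$ pairing $\ph\cdot\ps.$

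The key step is the identification of the adjoint. First I would record the duality of weighted Sobolev spaces under the $L^2$ pairing of Definition \ref{dfn: Sob}: since compactly supported forms are dense and the inner product $\int_{X^\reg}(\cdot,\cdot)\,\d\mu$ pairs $L^2_\be$ with $L^2_{-2n-\be}$ (the weights summing to $-l=-2n$), it identifies $(H^{k-2}_{\al-2}(\La^{pq}_{X^\reg}))^*$ with $H^{2-k}_{2-2n-\al}(\La^{pq}_{X^\reg})$ and $(H^k_\al(\La^{pq}_{X^\reg}))^*$ with $H^{-k}_{-2n-\al}(\La^{pq}_{X^\reg}).$ Under these identifications the Banach adjoint of $\De$ is again $\De,$ now acting $H^{2-k}_{2-2n-\al}\to H^{-k}_{-2n-\al}.$ This is simply the statement that the transpose of the formally self-adjoint differential operator $\De$ is itself, read in the distributional sense: for $f$ in the dual space and $u\in H^k_\al$ one has $\langle T^*f,u\rangle=\langle f,\De u\rangle=\langle \De f,u\rangle,$ the middle equality being the definition of $\De$ acting on distributions. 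Hence $\ker T^*=\{f\in H^{2-k}_{2-2n-\al}(\La^{pq}_{X^\reg}):\De f=0\}.$

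It remains to interpret this kernel. By elliptic regularity (the remark following Definition \ref{dfn: Lap}) every distributional solution of $\De f=0$ of weight $2-2n-\al$ is a smooth harmonic form, so $\ker T^*=\ker\De^{pq}_{2-2n-\al},$ which by Definition \ref{dfn: Lap} is independent of the order. Feeding this into $\im T=(\ker T^*)^\perp$ yields the stated criterion: $\ph\in\im\De$ if and only if $\ph\cdot\ps=0$ for every $\ps\in\ker\De^{pq}_{2-2n-\al},$ the pairing being absolutely convergent because the weights $\al-2$ and $2-2n-\al$ sum to $-2n.$ The main obstacle is making the adjoint identification fully rigorous; this hinges on the weighted duality above and on the distributional self-adjointness, and as a consistency check one verifies that the dual weight $2-2n-\al$ is itself non-exceptional, which follows from the symmetry $\nu\mapsto 2-2n-\nu$ of the exceptional set --- a consequence of the indicial roots in Corollary \ref{cor: sep of vars} being symmetric about the corresponding $m=1+(p+q)-n.$ One should also confirm directly from Proposition \ref{prop: int by parts}, whose hypothesis $(\al-2)+(2-2n-\al)=-2n>1-2n$ holds here, that $\De\si\cdot\ps=\si\cdot\De\ps$ for $\si\in H^k_\al$ and $\ps\in\ker\De^{pq}_{2-2n-\al},$ so that the annihilator condition is genuinely the $L^2$ orthogonality asserted.
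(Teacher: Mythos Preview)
Your argument is the standard Lockhart--McOwen/Joyce one and is essentially correct; the paper does not spell it out but simply cites \cite[Theorem 2.14]{J1} for the scalar case and remarks that the same proof works for forms. So you have reproduced what lies behind that citation: Fredholmness gives closed range, the closed range theorem gives $\im\De=(\ker\De^*)^\perp,$ weighted Sobolev duality identifies the Banach adjoint with $\De$ at weight $2-2n-\al,$ and elliptic regularity identifies its kernel with $\ker\De^{pq}_{2-2n-\al}.$ Your check that the dual weight is again non-exceptional, via the symmetry of indicial roots about $1-n,$ is correct and a nice touch.

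One slip to fix: in your final sentence you assert $(\al-2)+(2-2n-\al)=-2n>1-2n,$ but $-2n<1-2n.$ So the hypothesis of Proposition~\ref{prop: int by parts} is \emph{not} met at this borderline weight, and the naive two-step integration-by-parts check $\De\si\cdot\ps=\si\cdot\De\ps$ does not go through directly from that proposition. This does not damage your proof, since the identification of $T^*$ with $\De$ is made distributionally (via density of compactly supported forms and the weighted duality), not via Proposition~\ref{prop: int by parts}; just drop or rephrase that last consistency check.
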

\begin{proof}
This is proved in \cite[Theorem 2.14]{J1} for $p=q=0$ and the proof extends immediately to every $p,q.$
\end{proof}

We prove a few fact we shall need about $L^2$ harmonic $n$-forms on $2n$-conifolds. Here $L^2$ is the unweighted $L^2,$ equivalent to the weighted $L^2_{-n}.$ 
\begin{prop}\l{prop: L^2 n-forms}
Let $X$ be a compact Riemannian $2n$-conifold and $\ph$ an $L^2$ harmonic $n$-form on $X^\reg.$  Then $\ph$ is in fact of order $>-n;$ that is, $\ph\in L^2_{\ep-n}(\La^n_{X^\reg})$ for some $\ep>0.$
\end{prop}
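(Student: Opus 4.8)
The plan is to localize at each singular point and expand $\ph$ into homogeneous harmonic forms, exploiting the fact that for $n$-forms on a $2n$-cone the unweighted $L^2$ condition sits exactly at the critical weight.

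Fix $x\in X^\sing$. By Definition \ref{dfn: Riem conifolds} there is a punctured neighbourhood of $x$ in $X^\reg$ isometric to a punctured neighbourhood of $\vx\in C_x^\reg$ on which the metric of $X^\reg$ equals the cone metric. Since $\De\ph=0$ on $X^\reg$, the restriction of $\ph$ is a harmonic $n$-form on an open set of $C_x^\reg$ for the cone metric. I would apply Corollary \ref{cor: sep of vars} with $l=2n$ and $p=n$, so that $m=1+p-\frac l2=1$; by Theorem \ref{thm: no log} no $j$ has $\la_j=-m^2$, so the expansion has no logarithm terms and reads
\[
\ph=\sum_j\bigl(a_jr^{\al_j}+b_jr^{\be_j}\bigr)\,e_j,
\]
where $(e_j)$ is the orthonormal eigenbasis of $E$ on $C_x^\lk$ with eigenvalues $\la_j\ge-1$, and $\al_j=1+\sqrt{1+\la_j}>\be_j=1-\sqrt{1+\la_j}$ are the two real roots of $\xi^2-2\xi-\la_j$.

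Next I would compute the $L^2$ norm of $\ph$ over a collar $\{r<\de\}$. Writing $\ph=\d\log r\wedge\ph'+\ph''$ and using that the cone metric scales a $k$-form on the link by $r^{-k}$ in norm, one finds $|\ph|^2=r^{-2n}|u|^2_{C_x^\lk}$ with $u=(\ph',\ph'')$; since $\d\mu=r^{2n-1}\,\d r\,\d\mu_{C_x^\lk}$ the exponents cancel and
\[
\int_{\{r<\de\}}|\ph|^2\,\d\mu=\int_0^\de r^{-1}\,\|u(r)\|^2_{L^2(C_x^\lk)}\,\d r=\sum_j\int_0^\de r^{-1}\bigl|a_jr^{\al_j}+b_jr^{\be_j}\bigr|^2\,\d r,
\]
where I use orthonormality of $(e_j)$ and monotone convergence to interchange sum and integral. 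As $\ph$ is $L^2$ the left-hand side is finite, hence so is each summand; and $\int_0^\de r^{2s-1}\,\d r<\iy$ precisely when $s>0$. Thus every exponent that actually occurs is strictly positive: $\al_j\ge1$ always, while $b_j=0$ whenever $\be_j\le0$. Since the $\la_j$ form a discrete sequence tending to $\iy$, the roots $1\pm\sqrt{1+\la_j}$ form a locally finite subset of $\R$, so only finitely many lie in $(0,1]$ and the occurring positive exponents admit a strictly positive lower bound $\ep_x$. Hence $|\ph|=O(r^{\ep_x-n})$ near $x$, and choosing $0<\ep<\min_{x\in X^\sing}\ep_x$ (a finite minimum) yields $\ph\in L^2_{\ep-n}(\La^n_{X^\reg})$.

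The main obstacle will be the $L^2$ computation: the identity $l-2p-1=-1$ places the integral exactly at the critical weight, so the decoupling into orthonormal components and the sum–integral interchange must be justified carefully (via monotone convergence on annuli), and one must confirm that the borderline exponent $0$ — corresponding to order exactly $-n$, i.e.\ a term $b_je_j$ with $\la_j=0$ — is genuinely excluded by the divergence of $\int_0^\de r^{-1}\,\d r$ rather than merely suppressed.
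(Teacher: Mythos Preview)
Your approach matches the paper's: localize at each cone point, expand via Theorem~\ref{thm: no log}, and exploit that for $n$-forms on a $2n$-cone the unweighted $L^2$ integral carries exactly the critical weight $r^{-1}\,\d r$, so any homogeneous piece of order $\le -n$ is forced to vanish. Your Parseval--Tonelli computation makes explicit what the paper states tersely (the paper simply asserts that the leading term $\ph_x$ is itself $L^2$ and then computes that its norm is $\int_{-\infty}^{\log\de}\d\log r$ times a link integral, hence $\ph_x=0$).

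The one genuine gap is the final step. From ``each occurring exponent is $\ge\ep_x>0$'' you jump to the pointwise bound $|\ph|=O(r^{\ep_x-n})$. This does not follow: the expansion converges only in the compact $C^\infty$ topology on $C_x^\reg$, and an infinite sum of terms each of size $O(r^{\ep_x-n})$ need not be $O(r^{\ep_x-n})$ without control on $\sum_j|a_j|^2$. What the statement asks for, however, is only $\ph\in L^2_{\ep-n}$ for some $0<\ep<\ep_x$, and this \emph{does} follow from your Tonelli identity by a direct comparison of weights. For all but finitely many $j$ one has $b_j=0$ and $\al_j\ge\al_{\min}>1$, whence
\[
\int_0^\de r^{-2\ep-1}|a_j|^2 r^{2\al_j}\,\d r
=\frac{|a_j|^2\,\de^{2\al_j-2\ep}}{2\al_j-2\ep}
\le \de^{-2\ep}\,\frac{\al_{\min}}{\al_{\min}-\ep}\cdot\frac{|a_j|^2\,\de^{2\al_j}}{2\al_j}
=C_\ep\int_0^\de r^{-1}|a_j|^2 r^{2\al_j}\,\d r
\]
with $C_\ep$ independent of $j$; the finitely many $j$ with $b_j\ne0$ contribute a finite sum directly. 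Summing over $j$ gives $\|\ph\|_{L^2_{\ep-n}}<\infty$ near each $x$, hence globally. (The paper's proof is equally brief at this passage --- it stops after showing $\ph_x=0$ --- so this is a standard ellipsis rather than a defect of your strategy. The worry you flag in your last paragraph, the borderline exponent $0$, is in fact already handled by your divergence argument; the real issue is the unjustified passage from term-wise to uniform control.)
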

\begin{proof}
Fix $x\in X^\sing$ and denote by $\ph_x$ the leading term of $\ph$ expanded as in Theorem \ref{thm: no log}. Write $\ph_x=:r^{-n}(\d\log \r \wedge\ph_x'+\ph_x'')$ where $r$ is the radius function on $C_x^\reg,$ $\ph_x'$ a homogeneous $n-1$ form on $C_x^\lk,$ and $\ph_x''$ a homogeneous $n$-form on $C_x^\lk.$ Since $\ph$ is $L^2$ it follows that $\ph_x$ is $L^2$ over $(0,\de)\times C_x^\lk$ for some $\de>0;$ that is,
\e\l{phph} \ph_x\cdot\ph_x=\int_{-\iy}^{\log\de}\int_{C^\lk}(|\ph_x'|^2+|\ph_x''|^2)\d\mu\,\d\log r<\iy\e 
where $|\,\,|,\d\mu$ are computed on $C_x^\lk.$ So the integral $\int_{C^\lk}(|\ph_x'|^2+|\ph_x''|^2)\d\mu$ is independent of $r,$ which with \eq{phph} implies that $\int_{C^\lk}(|\ph_x'|^2+|\ph_x''|^2)\d\mu=0.$ Thus $\ph_x'=\ph_x''=0$ and $\ph_x=0.$
\end{proof}
\begin{rmk}
We can in fact prove this without using Theorem \ref{thm: no log}. We shall then need to replace $\ph_x$ by $\ph_x+(\log r)\ps_x$ where $\ps_x$ is another homogeneous $n$-form of order $-n.$ But $(\log r)\ps_x$ diverges even faster, which must vanish again by the $L^2$ condition.
\end{rmk}

We prove an integration by part formula for harmonic forms on compact Riemannian conifolds.
\begin{lem}\l{lem: dph1}
Let $X$ be a compact Riemannian $2n$-conifold and $p\le n-1$ an integer. Let $\ep>0$ be so small that $p-n$ is the smallest exceptional value $>-p-\ep$ of $\De:\Ga(\La^p_{X^\reg})\to \Ga(\La^p_{X^\reg}).$ Then for every $\ph\in L^2_{-p-\ep}(\La^p_{X^\reg})$ with $\De\ph=0$ we have $\d\ph=\d^*\ph=0.$
\end{lem}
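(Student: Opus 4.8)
The plan is to use the Fredholm theory and the expansion results of §\ref{sect: Riem} to deform the weight downwards from $-p-\ep$ all the way to the critical weight $-p$, and then to apply Corollary \ref{cor: order -p}. The key point is that $p-n$ being the smallest exceptional value $>-p-\ep$ says there are no exceptional values in the half-open interval $(-p-\ep,\,-p]$ \emph{except possibly} $-p$ itself; this is exactly the range in which we want to control the homogeneous terms of $\ph$ at each $x\in X^\sing$.

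First I would fix $x\in X^\sing$ and expand $\ph$ near the vertex of $C_x$ using Corollary \ref{cor: sep of vars} together with Theorem \ref{thm: no log}, so that $\ph$ is an infinite sum of \emph{genuinely homogeneous} harmonic $p$-forms (no logarithm terms), with orders running over the exceptional set $\cD$. Because $\ph\in L^2_{-p-\ep}(\La^p_{X^\reg})$, every term in this expansion has order $>-p-\ep$. By the hypothesis on $\ep$, the only exceptional value in the window $(-p-\ep,-p]$ is $-p$ itself (the value $p-n=2+p-l$ with $l=2n$ is the \emph{next} exceptional value, but $\ph$ is already assumed to decay strictly faster than $-p-\ep$; here I should be careful that the hypothesis really pins down the leading behaviour to order exactly $-p$ or faster). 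Concretely, the upshot I want is that the leading term $\ph_x$ of $\ph$ at each $x$ is a homogeneous harmonic $p$-form of order exactly $-p$ — this is where Corollary \ref{cor: order -p} becomes available, since $p\le n-1=\tfrac l2-1$.

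The heart of the argument is then the integration-by-parts identity of Proposition \ref{prop: int by parts}, applied to $\ph$ paired against $\d^*\d\ph$ and $\d\d^*\ph$. Writing $\De\ph=0$ as $\d\d^*\ph+\d^*\d\ph=0$ and pairing with $\ph$, the formula gives $\|\d\ph\|^2+\|\d^*\ph\|^2=0$ \emph{provided} the weights satisfy the threshold $\al+\be>1-2n$ needed for the boundary term to vanish. Since $\ph\in H^1_{-p-\ep}$ on both sides, the relevant sum of weights is governed by $2(-p-\ep)$, and with $p\le n-1$ one checks $-2p-2\ep>1-2n$ for $\ep$ small, so Proposition \ref{prop: int by parts} applies and forces $\d\ph=\d^*\ph=0$. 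The subtlety I expect to be the main obstacle is precisely verifying that $\d\ph$ and $\d^*\ph$ lie in the right weighted spaces so that Proposition \ref{prop: int by parts} is legitimate: a priori $\d\ph$ gains no decay, and I must use Corollary \ref{cor: order -p} — which tells me that the order-$(-p)$ leading term $\ph_x$ already satisfies $\d\ph_x=\d^*\ph_x=0$ — to conclude that $\d\ph$ and $\d^*\ph$ decay \emph{strictly faster} than the naive weight, landing them in a weighted space good enough for the integration by parts to hold with no boundary contribution.

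In summary: (i) expand $\ph$ at each singular point into homogeneous harmonic terms via Theorem \ref{thm: no log}; (ii) use the hypothesis on $\ep$ to isolate the leading order-$(-p)$ term $\ph_x$; (iii) invoke Corollary \ref{cor: order -p} to kill $\d\ph_x$ and $\d^*\ph_x$, thereby showing $\d\ph,\d^*\ph$ decay fast enough; (iv) apply the integration-by-parts formula of Proposition \ref{prop: int by parts} to conclude $\|\d\ph\|^2+\|\d^*\ph\|^2=0$, hence $\d\ph=\d^*\ph=0$. The only genuinely delicate step is (iii)–(iv), matching the decay of $\d\ph$ and $\d^*\ph$ to the weight threshold $\al+\be>1-2n$.
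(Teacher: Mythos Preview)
Your proposal is correct and matches the paper's proof: expand $\ph$ at each singular point via Theorem~\ref{thm: no log}, identify the leading homogeneous term as order~$-p$, invoke Corollary~\ref{cor: order -p} to see that $\d\ph$ and $\d^*\ph$ gain strictly positive extra decay, and then justify the integration-by-parts identity $\ph\cdot\De\ph=\|\d\ph\|^2+\|\d^*\ph\|^2$ via Proposition~\ref{prop: int by parts}. The only quibble is that your preliminary weight check $2(-p-\ep)>1-2n$ tests the wrong pairing (Proposition~\ref{prop: int by parts} pairs $\ph$ against $\d\ph$, which sits one order lower, so the naive sum is $-2p-1-2\ep$ and genuinely fails at $p=n-1$), but you immediately catch this and correctly supply step~(iii) as the fix.
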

\begin{proof}
For each $x\in X^\sing$ identify punctured neighbourhoods of $x\in X$ and $\vx\in C_x.$ By Theorem \ref{thm: no log} there exists on $C_x$ an order $-p$ homogeneous harmonic $p$-form $\ps_x$ such that $\ph-\ps_x$ has order $-p+\de$ for some $\de\in(0,\ep).$ By Corollary \ref{cor: order -p} we have $\d\ps_x=\d^*\ps_x=0.$ Accordingly, $\d\ph$ and $\d^*\ph$ are of order $-p-1+\de;$ and in particular, $\De\ph$ is of order $-p-2+\de.$ On the other hand, it follows from the definition of $\ep$ that $\ph$ has order $-p-\frac\de2.$ Recalling $p\le n-1$ and using Proposition \ref{prop: int by parts} we see that the integration by parts formula $\d\ph\cdot\d\ph+\d^*\ph\cdot\d^*\ph=\ph\cdot\De\ph$ is valid. But $\De\ph=0$ and the left-hand side is non-negative. So $\d\ph=\d^*\ph=0.$
\end{proof}

We recall the basic facts we will use about tangent sheaves of normal complex spaces.
\begin{dfn}
If $X$ is a normal complex space then $\Th_X$ denotes its {\it tangent sheaf,} that is, the $\O_X$ module dual to $\Om^1_X.$ 
\end{dfn}
\begin{rmk}
The sheaf $\Th_X$ is as is well known a reflexive sheaf, which has the following properties. Denote by $\io:X^\reg\to X$ the embedding of the regular locus. The natural $\O_X$ module homomorphism $\Th_X\to \io_*\Th_{X^\reg}$ is then an isomorphism. Moreover, $H^1_{X^\sing}(X,\Th_{X^\reg})=0.$
\end{rmk}

We finally state the theorem we will prove in the next section.
\begin{dfn}
Let $X$ be a compact K\"ahler conifold and give it a K\"ahler conifold metric. Take $p,q\in\Z$ and $\al:X^\sing\to\R.$ Then $\ker(\db+\db^*)^{pq}_\al$ consists of $(p,q)$ forms $\ph\in L^2_\al(\La^{pq}_{X^\reg})$ with $\db\ph=\db^*\ph=0.$ 
\end{dfn}
\begin{thm}\l{thm: harm n-1 1}
Let $X$ be a compact Calabi--Yau $n$-conifold with $H^2_{X^\sing}(X,\Om^{n-2}_X)=0$ and give $X$ a K\"ahler conifold metric. Then there exists a $\C$-vector space isomorphism
$H^1(X,\Th_X)\cong \ker(\db+\db^*)^{n-1\,1}_{-n}.$
\end{thm}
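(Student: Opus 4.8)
I would prove this by first identifying $H^1(X,\Th_X)$ with a sheaf-cohomology group on $X^\reg$ and then applying Hodge theory for the K\"ahler conifold metric. The starting point is the isomorphism $\Th_X\cong\io_*\Th_{X^\reg}$ and the vanishing $H^1_{X^\sing}(X,\Th_{X^\reg})=0$ recorded in the excerpt. Taking local cohomology of the short exact sequence relating $\Th_X$ to $\io_*\Th_{X^\reg}$, these two facts should give $H^1(X,\Th_X)\cong H^1(X^\reg,\Th_{X^\reg})$. The next move is to pass from $\Th_{X^\reg}$ to forms: since the canonical sheaf is free of rank one, the nowhere-vanishing $(n,0)$-form $\Om$ on $X^\reg$ induces an isomorphism $\Th_{X^\reg}\cong\Om^{n-1}_{X^\reg}$ by contraction $v\mapsto \iota_v\Om$, so that $H^1(X^\reg,\Th_{X^\reg})\cong H^1(X^\reg,\Om^{n-1}_{X^\reg})$, which is Dolbeault cohomology $H^{n-1,1}_{\db}(X^\reg)$.

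The heart of the proof is then to represent $H^{n-1,1}_{\db}(X^\reg)$ by the space $\ker(\db+\db^*)^{n-1\,1}_{-n}$ of $L^2$ $\db$-harmonic forms in the weight $-n$ (i.e.\ plain $L^2$). For this I would use the weighted Fredholm theory assembled in Propositions~\ref{prop: non-exceptional Fredholm}--\ref{prop: Fredholm}: the natural map from $\db$-harmonic representatives to Dolbeault cohomology is injective by the usual Hodge-theoretic argument (a harmonic form that is $\db$-exact pairs trivially against itself via integration by parts, which is legitimate here by Proposition~\ref{prop: int by parts}), and surjectivity should follow from the Fredholm solvability criterion together with a Hodge-decomposition statement on $X^\reg$ in the weight $-n$. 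The precise claim is that every Dolbeault class on $X^\reg$ has an $L^2$ harmonic representative, which requires that the weight $-n$ be non-exceptional (or that one controls the finitely many exceptional contributions), and this is exactly where the hypothesis $H^2_{X^\sing}(X,\Om^{n-2}_X)=0$ enters: it forces the relevant exceptional values for $(n-1,1)$-forms to be absent, so that the Fredholm alternative of Proposition~\ref{prop: Fredholm} gives harmonic representatives without obstruction.

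The main obstacle I anticipate is precisely the bookkeeping that links the cohomological hypothesis $H^2_{X^\sing}(X,\Om^{n-2}_X)=0$ to the analytic statement that weight $-n$ is non-exceptional for the $(n-1,1)$-Laplacian. Concretely, local cohomology $H^2_{X^\sing}(X,\Om^{n-2}_X)$ measures obstructions to extending $(n-2)$-forms across the singularities, and by the $\db$-Poincar\'e/Dolbeault dictionary on the K\"ahler cones $C_x$ these obstructions correspond, through the isomorphism $\Th\cong\Om^{n-1}$ and Proposition~\ref{prop: Lap}, to homogeneous harmonic $(n-1,1)$-forms of the borderline order $-n$ on the links. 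Establishing this correspondence cleanly---matching the algebraic local cohomology group to the space of order $-n$ homogeneous harmonic forms governed by Theorem~\ref{thm: no log} and Corollaries~\ref{cor: harmonic0}--\ref{cor: order -p}---is the delicate step, since one must show that the vanishing of the local cohomology removes exactly the harmonic modes that would otherwise spoil either the Fredholm solvability or the uniqueness of the $L^2$ representative. Once that identification is in hand, assembling the chain $H^1(X,\Th_X)\cong H^1(X^\reg,\Th_{X^\reg})\cong H^{n-1,1}_{\db}(X^\reg)\cong\ker(\db+\db^*)^{n-1\,1}_{-n}$ is routine.
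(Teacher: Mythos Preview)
Your chain $H^1(X,\Th_X)\cong H^1(X^\reg,\Th_{X^\reg})$ is not correct and this is the central gap. The vanishing $H^1_{X^\sing}(X,\Th_X)=0$ gives only \emph{injectivity} of $H^1(X,\Th_X)\to H^1(X^\reg,\Th_{X^\reg})$; its image is the space ${}_cH^1(X^\reg,\Th_{X^\reg})$ of Definition~\ref{dfn: c} (see Corollary~\ref{cor: c}). The full group $H^1(X^\reg,\Om^{n-1}_{X^\reg})$ is Dolbeault cohomology on a non-compact manifold and in general has no reason to be finite-dimensional; there is no hypothesis here forcing $H^2_{X^\sing}(X,\Th_X)=0$. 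Accordingly the paper works throughout with ${}_cH^1(X^\reg,\Om^{n-1}_{X^\reg})$, and the existence of harmonic representatives (Lemma~\ref{lem: surj}) is proved only for classes that come from \emph{compactly supported} Dolbeault representatives---your claim that ``every Dolbeault class on $X^\reg$ has an $L^2$ harmonic representative'' is not what is shown, and is not what is needed.

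Your reading of the hypothesis $H^2_{X^\sing}(X,\Om^{n-2}_X)=0$ is also off. It is not used to make $-n$ a non-exceptional weight for the $(n-1,1)$-Laplacian; exceptional values are determined by the link spectrum and have nothing to do with this sheaf cohomology. In the paper the hypothesis enters at a single, purely cohomological step: given a de Rham primitive $\ps$ of a class in ${}_cH^n(X^\reg,\C)$ near a singular point, one needs to solve $\db\chi=\ps^{n-2,1}$ on a punctured Stein neighbourhood (see the paragraph containing \eqref{decompose n-1 1}), and $H^2_{X^\sing}(X,\Om^{n-2}_X)=0$ is exactly the obstruction group for this. This is what makes the map ${}_cH^1(X^\reg,\Om^{n-1}_{X^\reg})\to\gr^{n-1}{}_cH^n(X^\reg,\C)$ surjective. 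The injectivity of $\ker(\db+\db^*)^{n-1\,1}_{-n}\to H^1(X^\reg,\Om^{n-1}_{X^\reg})$ is not obtained by a direct integration-by-parts pairing (the primitive of a harmonic form need not have the decay required for Proposition~\ref{prop: int by parts}); instead the paper routes through the filtered piece $\gr^{n-1}{}_cH^n(X^\reg,\C)$ via Theorem~\ref{thm: inj}, whose proof requires the careful weight-shifting arguments of Lemmas~\ref{lem: hol improve} and~\ref{lem: p+q=n-1}. You are missing this intermediate object entirely.
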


\section{Proof of Theorem \ref{thm: harm n-1 1}}\l{sect: harm n-1 1}
We make a definition we will use to prove Theorem \ref{thm: harm n-1 1}.
\begin{dfn}\l{dfn: c}
Let $Y$ be a topological space and $\cF$ a sheaf on it. For $q\in\Z$ denote by ${}_c H^q(Y,\cF)$ the image of the natural map $H_c^q(Y,\cF)\to H^q(Y,\cF)$ from the compact support cohomology group to the plain cohomology group. If $Y$ is embedded in another space $X,$ and $\cF$ induced from a sheaf $\cE$ on $X$ then we write ${}_cH^q(Y,\cE):={}_c H^q(Y,\cF).$   
\end{dfn}

We prove a lemma about Definition \ref{dfn: c}.
\begin{lem}\l{lem: c}
Let $X$ be a topological space, $\cE$ a $\C$-vector space sheaf on $X,$ and $q$ an integer. Let $Y\sb X$ be a finite subset which has a fundamental system $\{U\}$ of neighbourhoods with $H^q(U,\cE)=H^{q+1}(U,\cE)=0$ for each $U.$ The $\C$-vector space ${}_cH^q(X\-Y,\cE)$ is then isomorphic to the image of the natural map $H^q(X,\cE)\to H^q(X\-Y,\cE).$
\end{lem}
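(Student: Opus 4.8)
The plan is to compare the long exact sequences relating compactly supported cohomology, local cohomology, and ordinary cohomology, exploiting the hypotheses on the fundamental system $\{U\}$ of neighbourhoods of the finite set $Y$. First I would recall that for the closed subset $Y \sb X$ there is a long exact sequence
\e\l{les1}
\cdots\to H^q_Y(X,\cE)\to H^q(X,\cE)\to H^q(X\-Y,\cE)\to H^{q+1}_Y(X,\cE)\to\cdots
\e
where $H^\bt_Y$ denotes cohomology with supports in $Y$. The image of the central map $H^q(X,\cE)\to H^q(X\-Y,\cE)$ is exactly the object we wish to identify. By definition ${}_cH^q(X\-Y,\cE)$ is the image of $H^q_c(X\-Y,\cE)\to H^q(X\-Y,\cE)$, so the task is to show these two subspaces of $H^q(X\-Y,\cE)$ coincide.

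The key step is to reduce the local cohomology with supports in $Y$ to the cohomology of the neighbourhoods $U$. Since $Y$ is finite and the $\{U\}$ form a fundamental system of neighbourhoods, excision and passage to the direct limit give $H^\bt_Y(X,\cE)\cong\varinjlim_U H^\bt_Y(U,\cE)$, and by the long exact sequence for the pair $(U,U\-Y)$ together with the hypothesis $H^q(U,\cE)=H^{q+1}(U,\cE)=0$, one reads off that $H^q_Y(X,\cE)\cong\varinjlim_U H^{q-1}(U\-Y,\cE)$ and similarly $H^{q+1}_Y(X,\cE)\cong\varinjlim_U H^q(U\-Y,\cE)$, but more to the point the vanishing kills the relevant obstruction terms. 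The cleanest route is to dualize: compactly supported cohomology of $X\-Y$ fits in the long exact sequence
\e\l{les2}
\cdots\to H^q_c(X\-Y,\cE)\to H^q(X,\cE)\to \varinjlim_U H^q(U,\cE)\to\cdots
\e
(the sequence relating $H_c$ of the open set, $H$ of the ambient space, and the cohomology of the neighbourhoods being removed), so that the map $H^q_c(X\-Y,\cE)\to H^q(X,\cE)$ has cokernel injecting into $\varinjlim_U H^q(U,\cE)=0$. Hence $H^q_c(X\-Y,\cE)\to H^q(X,\cE)$ is surjective, and likewise the vanishing of $H^{q+1}(U,\cE)$ controls injectivity on the next level.

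Combining these, the image of $H^q_c(X\-Y,\cE)\to H^q(X\-Y,\cE)$ factors as $H^q_c(X\-Y,\cE)\twoheadrightarrow H^q(X,\cE)\to H^q(X\-Y,\cE)$, where the first map is surjective by the cokernel computation above. Therefore ${}_cH^q(X\-Y,\cE)=\im\big(H^q_c(X\-Y,\cE)\to H^q(X\-Y,\cE)\big)=\im\big(H^q(X,\cE)\to H^q(X\-Y,\cE)\big)$, which is the desired identification. The main obstacle I anticipate is bookkeeping the two comparison long exact sequences so that the hypotheses $H^q(U,\cE)=H^{q+1}(U,\cE)=0$ land precisely on the terms that must vanish; in particular one must be careful that the natural map $H^q_c(X\-Y,\cE)\to H^q(X,\cE)$ is the composite appearing implicitly in \eq{les1}–\eq{les2}, and that taking $\varinjlim_U$ is exact (which it is, for filtered colimits of $\C$-vector spaces) so that the vanishing of each $H^q(U,\cE)$ really does force the colimit to vanish.
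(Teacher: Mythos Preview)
Your approach is correct and genuinely different from the paper's. The long exact sequence you invoke,
\[\cdots\to H^q_c(X\setminus Y,\cE)\to H^q(X,\cE)\to \varinjlim_U H^q(U,\cE)\to\cdots,\]
does exist: it is the direct limit, over $U$ in the fundamental system, of the local-cohomology sequences for the closed subsets $X\setminus U\subset X$, after identifying $\varinjlim_U H^q_{X\setminus U}(X,\cE)\cong H^q_c(X\setminus Y,\cE)$. That identification holds once the $X\setminus U$ are compact and cofinal among compact subsets of $X\setminus Y$, which is automatic when $X$ is compact (as in every application the paper makes of this lemma). The factorization you worry about is just excision $H^q_{X\setminus U}(X,\cE)\cong H^q_{X\setminus U}(X\setminus Y,\cE)$ together with functoriality of restriction, so the composite $H^q_c(X\setminus Y,\cE)\to H^q(X,\cE)\to H^q(X\setminus Y,\cE)$ really is the forget-supports map, and your argument goes through using only the hypothesis $H^q(U,\cE)=0$. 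The paper instead compares two exact rows meeting in $H^q(X\setminus Y,\cE)$: the local-cohomology row $H^q(X,\cE)\to H^q(X\setminus Y,\cE)\to H^{q+1}_Y(X,\cE)$ and the row $H^q_c(X\setminus Y,\cE)\to H^q(X\setminus Y,\cE)\to H^q(U\setminus Y,\cE)$, linked on the right by the isomorphism $H^q(U\setminus Y,\cE)\cong H^{q+1}_Y(X,\cE)$, which uses both $H^q(U,\cE)=0$ and $H^{q+1}(U,\cE)=0$; it then reads off that the two images are the same kernel. Your route is shorter and needs only half the hypothesis; the paper's is more symmetric, and tacitly relies on the same compactness for the exactness of its bottom row.
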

\begin{proof}
By hypothesis, for each $U$ the natural map $H^q(U\-Y,\cE)\to H^{q+1}_Y(U,\cE)=H^{q+1}_Y(X,\cE)$ is an isomorphism. On the other hand, there is a commutative diagram
\begin{equation}\begin{tikzcd}[column sep=small]
H^q(X,\cE)\ar[r,swap,"\al"]&H^q(X\-Y,\cE)\ar[r] \ar[d,equal] &H^{q+1}_Y(X,\cE)\\
 H_c^q(X\-Y,\cE)\ar[r,"\be"]&  H^q(X\-Y,\cE)\ar[r]& H^q(U\-Y,\cE)\ar[u,"\cong"]
\end{tikzcd}\end{equation}
with exact rows. The vector space ${}_c H^q(X\-Y,\cE),$ which is by definition the image of $\be$ above, is now equal to the image of $\al$ in the same diagram. 
\end{proof}

We prove a corollary of Lemma \ref{lem: c}. 
\begin{cor}\l{cor: c}
Let $X$ be a compact normal complex space whose singularities are isolated. Then there exists a $\C$-vector space isomorphism ${}_c H^1(X^\reg,\Th_X)\cong H^1(X,\Th_X).$ 
\end{cor}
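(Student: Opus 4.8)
The plan is to obtain the isomorphism directly from Lemma~\ref{lem: c}, applied with $Y:=X^\sing$ and $\cE:=\Th_X$, together with the vanishing $H^1_{X^\sing}(X,\Th_X)=0$ recorded in the remark on tangent sheaves. Since the singularities of $X$ are isolated and $X$ is compact, $X^\sing$ is a finite set, so the finiteness hypothesis of Lemma~\ref{lem: c} is met; and by Definition~\ref{dfn: c} the sheaf induced on $X^\reg=X\-X^\sing$ by $\Th_X$ is $\Th_{X^\reg}$, so ${}_cH^1(X^\reg,\Th_X)$ is precisely the object we wish to identify.

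First I would check the cohomological hypothesis of Lemma~\ref{lem: c} for $q=1$. The sheaf $\Th_X=\shHom(\Om^1_X,\O_X)$ is coherent, being the dual of the coherent sheaf $\Om^1_X$. Each isolated point of $X^\sing$ has a fundamental system of Stein neighbourhoods, and taking disjoint unions over $X^\sing$ yields a fundamental system $\{U\}$ of Stein neighbourhoods of $X^\sing$. By Cartan's Theorem~B the coherent sheaf $\Th_X$ satisfies $H^q(U,\Th_X)=0$ for all $q\ge 1$, so in particular $H^1(U,\Th_X)=H^2(U,\Th_X)=0$. Lemma~\ref{lem: c} then applies and shows that ${}_cH^1(X^\reg,\Th_X)$ is isomorphic to the image of the natural restriction map $\al\colon H^1(X,\Th_X)\to H^1(X^\reg,\Th_X)$.

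It remains to see that $\al$ is injective, so that its image is all of $H^1(X,\Th_X)$. For this I would invoke the long exact sequence of local cohomology
$$\cdots\to H^1_{X^\sing}(X,\Th_X)\to H^1(X,\Th_X)\xrightarrow{\ \al\ }H^1(X^\reg,\Th_X)\to\cdots,$$
in which $\ker\al$ is the image of $H^1_{X^\sing}(X,\Th_X)$. The remark on tangent sheaves records that $H^1_{X^\sing}(X,\Th_X)=0$; this is the substantive input, and is a standard consequence of $\Th_X\cong\io_*\Th_{X^\reg}$, which forces the local cohomology sheaves $\cH^0_{X^\sing}(\Th_X)$ and $\cH^1_{X^\sing}(\Th_X)$ to vanish. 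Hence $\ker\al=0$, so $\al$ is injective and its image equals $H^1(X,\Th_X)$. Combining the two paragraphs gives ${}_cH^1(X^\reg,\Th_X)\cong H^1(X,\Th_X)$. The argument is essentially formal; the only points needing care are the coherence of $\Th_X$ (so that Theorem~B applies on the Stein neighbourhoods) and the identification of $\ker\al$ with the image of the local cohomology term, while the vanishing $H^1_{X^\sing}(X,\Th_X)=0$, which does the real work, is quoted from the preceding remark.
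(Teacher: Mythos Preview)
Your proof is correct and follows essentially the same route as the paper: apply Lemma~\ref{lem: c} with $\cE=\Th_X$, $Y=X^\sing$, $q=1$ using Stein neighbourhoods and Cartan's Theorem~B, then use $H^1_{X^\sing}(X,\Th_X)=0$ (from reflexivity of $\Th_X$) to conclude that the restriction map is injective. The only difference is that you spell out more of the details (coherence of $\Th_X$, the local cohomology exact sequence) than the paper does.
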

\begin{proof}
As the Stein neighbourhoods of $X^\sing$ are a fundamental system and $\cE$ a coherent sheaf on $X,$ we can apply Lemma \ref{lem: c} to $\cE=\Th_X,$ $q=1$ and $Y=X^\sing;$ that is, ${}_cH^1(X^\reg,\Th_X)$ agrees with the image of the natural map $H^1(X,\Th_X)\to H^1(X^\reg,\Th_X).$ But $\Th_X$ is a reflexive sheaf and $H^1_{X^\sing}(X,\Th_X)=0.$ The map $H^1(X,\Th_X)\to H^1(X^\reg,\Th_X)$ is therefore injective and hence we get the isomorphism we want.
\end{proof}

We make another definition we will use to prove Theorem \ref{thm: harm n-1 1}.
\begin{dfn}\l{dfn: gr}
Let $X$ be a topological space and $\cE^\bt=(\cE^0\to \cE^1\to\cdots)$ a chain complex of $\C$-vector space sheaves on $X.$ Define for $p=0,1,2,\dots$ a decreasing filtration $F^p\cE^\bt\sb \dots \sb F^0\cE^\bt=\cE^\bt$ as follows: for $q<p$ the degree-$q$ part of $F^p\cE^\bt$ vanishes and for $q\ge p$ its degree part is equal to $\cE^p.$ The inclusion $F^p\cE^\bt\sb \cE^\bt$ induces for $q\in\Z$ a map $H^q(X,F^p\cE^\bt)\to H^q(X,\cE^\bt)$ between the hypercohomology groups, whose image we denote by $F^pH^q(X,\cE^\bt).$ For $p\ge1$ the inclusion $F^p\cE^\bt\sb F^{p-1}\cE^\bt$ induces an inclusion $F^pH^q(X,\cE^\bt)\sb F^{p-1}H^q(X,\cE^\bt)$ which defines thus a decreasing filtration of $H^q(X,\cE^\bt).$ Denote by $\gr^p H^q(X,\cE^\bt)$ the quotient vector space $F^pH^q(X,\cE^\bt)/F^{p+1}H^q(X,\cE^\bt).$ 

Suppose now that $X$ is a complex space and denote by $\io:X^\reg\to X$ the embedding of the regular locus. From the de Rham complex $\Om^\bt_{X^\reg}$ we get for $q\in\Z$ a filtered vector space $H^q(X^\reg,\Om^\bt_{X^\reg}).$ The quasi-isomorphism $\ul\C\to\Om^\bt_{X^\reg}$ induces a $\C$-vector space isomorphism $H^q(X^\reg,\C)\cong H^q(X^\reg,\Om^\bt_{X^\reg}).$ 
We give the subspace ${}_cH^q(X^\reg,\C)\sb H^q(X^\reg,\C)$ the filtration induced from that of $H^q(X^\reg,\Om^\bt_{X^\reg}).$ For $p\in\Z$ the quotient space $\gr^p{}_cH^q(X^\reg,\C)$ is then well defined.
\end{dfn}

We recall a result we will use shortly about harmonic $n$-forms on compact Riemannian conifolds.
\begin{thm}[(0.16) of \cite{Lock}]\l{thm: Lock}
Let $X$ be a compact Riemannian $2n$-conifold and denote by $\ker(\d+\d^*)^n_{-n}$ the $\C$-vector space of $L^2$ closed and co-closed  $n$-forms on $X^\reg.$ The natural projection $\ker(\d+\d^*)^n_{-n}\to H^n(X^\reg,\C)$ which assigns to every $\ph\in \ker(\d+\d^*)^n_{-n}$ its de Rham class $[\ph]\in H^n(X^\reg,\C)$ is then an isomorphism onto ${}_cH^n(X^\reg,\C).$ \qed
\end{thm}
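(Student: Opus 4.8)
The plan is to establish three facts about the natural map $\ker(\d+\d^*)^n_{-n}\to H^n(X^\reg,\C)$, $\ph\mapsto[\ph]$: that it lands in ${}_cH^n(X^\reg,\C)$, that it is injective, and that it is onto ${}_cH^n(X^\reg,\C)$. Well-definedness is immediate, since every $\ph\in\ker(\d+\d^*)^n_{-n}$ is closed and hence has a de Rham class, and the map is plainly $\C$-linear. Write $\cH:=\ker(\d+\d^*)^n_{-n}$ for brevity. Two structural inputs will drive everything: the decay statement of Proposition \ref{prop: L^2 n-forms} (every element of $\cH$ is of order $>-n$ at each $x\in X^\sing,$ so its homogeneous expansion contains no order-$(-n)$ term), and Theorem \ref{thm: no log} (that expansion is a genuine sum of homogeneous harmonic forms, with no logarithm terms). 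Here the degree is the critical middle degree $n=l/2,$ so the naive integration by parts fails and these two inputs are exactly what will let us control the boundary integrals over shrinking links.

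For the first claim I would identify ${}_cH^n(X^\reg,\C)$ topologically. Applying Lemma \ref{lem: c} with $X$ the compact conifold, $Y=X^\sing$ and $\cE=\ul\C$ — the contractible cone neighbourhoods $U$ satisfy $H^n(U,\C)=H^{n+1}(U,\C)=0$ — shows that ${}_cH^n(X^\reg,\C)$ is the image of $H^n(X,\C)\to H^n(X^\reg,\C),$ equivalently the kernel of the restriction $H^n(X^\reg,\C)\to\bigoplus_{x\in X^\sing}H^n(C_x^\reg,\C).$ It therefore suffices to show that $[\ph]$ restricts to zero in each $H^n(C_x^\reg,\C)\cong H^n(C_x^\lk,\C).$ Expanding $\ph$ on the cone by Theorem \ref{thm: no log} as a sum of homogeneous harmonic terms $r^{\be_j}(\d\log r\wedge\ph'_j+\ph''_j),$ Proposition \ref{prop: L^2 n-forms} forces every $\be_j>0.$ Restricting to a slice $\{r=c\}\cong C_x^\lk$ kills $\d\log r$ and leaves $\sum_j c^{\be_j}\ph''_j;$ hence for any $n$-cycle $\si$ in the link the period $\int_\si\ph=\sum_j c^{\be_j}\int_\si\ph''_j$ is independent of $c$ (as $\ph$ is closed) yet tends to $0$ as $c\to0,$ so it vanishes. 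All periods being zero, the class of $\ph$ in $H^n(C_x^\lk,\C)$ is zero, which is the claim.

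For injectivity, suppose $[\ph]=0,$ so $\ph=\d\be$ for a global smooth $(n-1)$-form $\be$ on $X^\reg.$ The form $*\ov\ph$ again lies in $\cH$: Hodge star preserves order and interchanges $\d$ and $\d^*$ up to sign, so $*\ov\ph$ is $L^2,$ closed and co-closed. I compute $\|\ph\|^2=\int_{X^\reg}\ph\wedge*\ov\ph$ by truncating at $\{r_x=\de\}$: since $\d(*\ov\ph)=0,$ Stokes on $\{r_x\ge\de\}$ gives $\int\ph\wedge*\ov\ph=-\sum_x\int_{\{r_x=\de\}}\be\wedge*\ov\ph$ in the limit $\de\to0.$ On each cone, $\ph$ also admits a decaying primitive $\be_x:=K\ph$ furnished by the radial homotopy operator $K,$ of order $>-n+1;$ the size estimate $|\be_x\wedge*\ov\ph|\les r^{-2n+1+2\ep}$ against the link volume $\de^{2n-1}$ makes $\int_{\{r_x=\de\}}\be_x\wedge*\ov\ph=O(\de^{2\ep})\to0.$ The discrepancy $\be-\be_x$ is closed on the punctured neighbourhood, so $\int_{\{r_x=\de\}}(\be-\be_x)\wedge*\ov\ph$ is independent of $\de$ and equals the pairing of $[\be-\be_x]\in H^{n-1}(C_x^\lk,\C)$ with the class of $*\ov\ph$ in $H^n(C_x^\lk,\C);$ but the latter is zero by the period argument of the previous paragraph applied to $*\ov\ph\in\cH.$ Hence the boundary terms vanish and $\|\ph\|^2=0,$ so $\ph=0.$

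For surjectivity I would take $[\th]\in{}_cH^n(X^\reg,\C),$ lift it along $H^n_c\to H^n,$ and represent the lift by a compactly supported closed $n$-form $\th_0,$ which lies in $L^2_\al$ for every weight $\al.$ Choosing a non-exceptional weight $\al$ slightly greater than $-n$ and applying the weighted $L^2_\al$-Hodge decomposition, I would write $\th_0=\ph+\d\ga$ with $\ph$ closed and co-closed in $L^2_\al\sb L^2=L^2_{-n}$ and $\ga$ a global primitive on $X^\reg;$ then $\ph\in\cH$ and $[\ph]=[\th_0]=[\th].$ This decomposition is where the real work lies and is the step I expect to be the main obstacle: it rests on the Fredholm theory of $\De$ on weighted spaces (Propositions \ref{prop: non-exceptional Fredholm}, \ref{prop: Fredholm} and \ref{prop: improve}), but at the critical weight one must (i) verify the cokernel/orthogonality condition against $\ker\De^n_{2-2n-\al},$ and (ii) ensure that the piece produced is genuinely closed and co-closed rather than merely $\De$-harmonic — the middle-degree integration by parts being precisely what fails naively. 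Both points I would resolve using the same decay and log-free homogeneous expansion (Proposition \ref{prop: L^2 n-forms} and Theorem \ref{thm: no log}) that powered the first two steps, together with the symmetry $\al\leftrightarrow 2-2n-\al$ of the indicial roots.
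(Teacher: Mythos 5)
Your proposal cannot be compared with an argument in the paper for the simple reason that the paper gives no proof of its own: Theorem \ref{thm: Lock} is stated with its proof replaced by the attribution to (0.16) of \cite{Lock}, so you are reproving Lockhart's black box from the paper's local toolkit. Judged on its own terms, your first two steps are sound and essentially complete. The identification of ${}_cH^n(X^\reg,\C)$ via Lemma \ref{lem: c} as the kernel of the restriction to $\bigoplus_{x}H^n(C_x^\reg,\C)$ is correct, and the period argument works: Proposition \ref{prop: L^2 n-forms} and Theorem \ref{thm: no log} force all exponents $\be_j>0$, and the cleanest way to finish (avoiding the interchange of limit and infinite sum, which you glide over) is the scaling estimate $|\ph|\le Cr^{\ep-n}$ from elliptic regularity on annuli, giving $\bigl|\int_\si\ph\bigr|\les c^{\ep-n}\cdot c^{n}=c^{\ep}\to0$ for the $c$-independent period. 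Your injectivity argument — $\|\ph\|^2=\int\ph\w *\ov\ph$, the radial primitive $\be_x$ of order $>1-n$ (exactly the construction in Lemma \ref{lem: p exact}), and the vanishing of the boundary pairing because $[*\ov\ph]|_{C_x^\lk}=0$ by the same period argument applied to $*\ov\ph\in\ker(\d+\d^*)^n_{-n}$ — is also correct; note only that the Hodge star needs $X^\reg$ orientable, which you should handle by the double-cover trick the paper uses in Corollary \ref{cor: harmonic1}.

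The genuine soft spot is surjectivity, which you honestly flag but leave miscalibrated in a detail that matters: you propose to verify solvability by orthogonality against $\ker\De^n_{2-2n-\al}$, i.e.\ against the \emph{middle-degree} kernel at the dual weight — but that is precisely the space that none of Corollary \ref{cor: harmonic1}, Corollary \ref{cor: order -p} or Lemma \ref{lem: dph1} controls ($p=n$ lies outside all their ranges), and for which naive integration by parts fails, so your program would stall exactly where you predict. The fix is to route the decomposition through $(n-1)$-forms, as the paper does in the adjacent-degree analogues (Lemma \ref{lem: surj}, Lemma \ref{lem: c harm n-1}): with $\th_0$ compactly supported and closed, solve $\De\ga=\d^*\th_0$ with $\ga\in H^2_{1-n+\ep}(\La^{n-1}_{X^\reg})$; the dual kernel is then $\ker\De^{n-1}_{1-n-\ep}$, whose elements are closed and co-closed by Lemma \ref{lem: dph1} with $p=n-1$, so $\chi\cdot\d^*\th_0=\d\chi\cdot\th_0=0$ and Proposition \ref{prop: Fredholm} applies. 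Then $\d^*\ga$ is a harmonic $(n-2)$-form of order $\ep-n$; Corollary \ref{cor: harmonic1} (now legitimately, since $n-2<n-1$) improves its order to $2-n$; integration by parts gives $\d\d^*\ga=0$; and $\ph:=\th_0-\d\ga$ is closed, co-closed, of order $\ep-n$ hence $L^2$, with $[\ph]=[\th_0]$ since $\ga$ is a global form on $X^\reg$. With that correction your outline closes up into a complete, self-contained proof from the paper's own lemmas — which is more than the paper itself provides for this statement.
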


We come now the first step to the proof of Theorem \ref{thm: harm n-1 1}.
\begin{lem}\l{lem: hol improve}
Let $n\ge3$ be an integer and $X$ a compact Calabi--Yau $n$-conifold with $X^\sing$ non-empty. Give $X$ a K\"ahler conifold metric. Let $\ph$ be an $(n,1)$ form of order $>-n-1$ on $X^\reg$ with $\db\ph=\db^*\ph=0.$ Then there exists on $X^\reg$ an $(n,0)$ form $\chi$ of order $>-n$ and with $\db\chi=\ph.$
\end{lem}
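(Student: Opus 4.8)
The plan is to solve $\db\chi=\ph$ by weighted Hodge theory for the $\bar\partial$-complex on $(n,\bt)$-forms, using the Calabi--Yau structure for existence of a primitive and the elliptic package of \S\ref{sect: harm} to control its order. Existence is the easy half. As the canonical sheaf is free of rank one there is a nowhere-vanishing holomorphic $(n,0)$-form $\Om$ on $X^\reg$, which is $L^2$ by Remark \ref{rmk: L^2}; it trivialises $\Om^n_{X^\reg}$ as an $\O_{X^\reg}$-module, whence $H^1(X^\reg,\Om^n_{X^\reg})\cong H^1(X^\reg,\O_X)$. Now $H^1(X,\O_X)=0$ by Proposition \ref{prop: hol 1-forms}, and the singularities have depth $\ge n$ (Remark \ref{rmk: L^2}), so $H^2_{X^\sing}(X,\O_X)=0$ because $n\ge3$; the local-cohomology sequence
\[
H^1(X,\O_X)\to H^1(X^\reg,\O_X)\to H^2_{X^\sing}(X,\O_X)
\]
then gives $H^1(X^\reg,\O_X)=0$. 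By the Dolbeault isomorphism on the complex manifold $X^\reg$ the $\db$-closed form $\ph$ is therefore $\db$-exact: $\ph=\db\chi_0$ for some $\chi_0\in\Ga(\La^{n0}_{X^\reg})$. Thus a primitive exists, and the whole force of the lemma is the bound on its order.

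For the order I would run the Fredholm theory of \S\ref{sect: harm}. Choose a non-exceptional weight $\al'\in(-n-1,\operatorname{ord}\ph)$, so that $\ph\in L^2_{\al'}(\La^{n1}_{X^\reg})$, and solve $\De u=\ph$ with $u\in H^2_{\al'+2}(\La^{n1}_{X^\reg})$ by Proposition \ref{prop: Fredholm}; then a suitable multiple $\chi$ of $\db^*u$ lies in $H^1_{\al'+1}(\La^{n0}_{X^\reg})$, so $\operatorname{ord}\chi>\al'+1>-n$ as required, and it remains only to check $\db\chi=\ph$. Since $\De$ commutes with $\db$ and $\db\ph=0$, the $(n,2)$-form $\db u$ satisfies $\De(\db u)=0$; if $\db^*\db u=0$ then $\De u=\ph$ collapses to $\db\db^*u=\tfrac12\ph$, i.e.\ $\db\chi=\ph$. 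The identity $\db^*\db u=0$ would follow from the integration-by-parts formula $\|\db^*\db u\|^2=\tfrac12\,(\db u)\cdot\De(\db u)=0$ of Proposition \ref{prop: int by parts}. Finally the solvability of $\De u=\ph$ requires, again by Proposition \ref{prop: Fredholm}, that $\ph\cdot\psi=0$ for every $\psi\in\ker\De^{n1}_{-2n-\al'}$; here the existence of $\chi_0$ re-enters, since $\ph\cdot\psi=(\db\chi_0)\cdot\psi=\chi_0\cdot\db^*\psi=0$ once one knows $\db^*\psi=0$ and that the two integrations by parts are legitimate.

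\emph{The main obstacle.} Every integration by parts above takes place at \emph{borderline} weights: for $\al'\le-n$ the orders fall below the admissible range $\operatorname{ord}\ph+\operatorname{ord}\psi>1-2n$ of Proposition \ref{prop: int by parts}, and dually $\db^*\psi$ need not even be $L^2$, so neither $\db^*\db u=0$ nor $\db^*\psi=0$ is automatic. I would resolve this exactly as in Lemma \ref{lem: dph1}: expand $\db u$ and the cokernel forms $\psi$ into homogeneous harmonic forms on each cone $C_x$ by Theorem \ref{thm: no log}, and show that the homogeneous pieces lying in the critical order window vanish. Here the Calabi--Yau hypothesis is decisive: via the Hodge star these forms become forms of real degree $\le n-1=\tfrac l2-1$, so the vanishing statements Corollary \ref{cor: order -p} and Corollary \ref{cor: harmonic1} of \S\ref{sect: Riem} apply to their leading cone terms, and together with $H^1(X^\reg,\O_X)=0$ this removes both the failure of integration by parts and the cokernel obstruction. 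I expect this leading-term analysis on the cones to be the technical heart of the argument.
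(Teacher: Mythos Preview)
Your existence argument for $\chi_0$ matches the paper's first step. What you miss is that $\chi_0$ is \emph{already harmonic}: it is an $(n,0)$-form, so $\db^*\chi_0=0$ trivially, and $\db^*\db\chi_0=\db^*\ph=0$; hence $\De\chi_0=0$. The paper exploits this directly and bypasses the Fredholm machinery altogether. One expands $\chi_0$ near each $x\in X^\sing$ into homogeneous harmonic $(n,0)$-forms via Theorem \ref{thm: no log}. Since $\db\chi_0=\ph$ has order $\ep-1-n$, every homogeneous term $\chi'$ of $\chi_0$ of order $<\ep-n$ must have $\db\chi'=0$, i.e.\ be holomorphic. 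Write $\chi'=f\Om$; then $f$ is holomorphic on a punctured neighbourhood and extends across $x$ by normality, hence is bounded. But $\Om$ has order $>-n$ by Proposition \ref{prop: L^2 n-forms}, so (shrinking $\ep$) a bounded multiple of it cannot be homogeneous of order $<\ep-n$ unless it vanishes. Thus $\chi_0$ itself has order $>-n$, and the lemma is proved.

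Your Fredholm route, by contrast, carries a real circularity. To solve $\De u=\ph$ at weight $\al'+2$ you must verify $\ph\cdot\ps=0$ for every $\ps\in\ker\De^{n1}_{-2n-\al'}$, and you propose $(\db\chi_0)\cdot\ps=\chi_0\cdot\db^*\ps$. But Proposition \ref{prop: int by parts} requires the orders of $\chi_0$ and $\ps$ to sum to more than $1-2n$; with $\ps$ of order just below $1-n$ this forces $\chi_0$ to have order $>-n$, which is precisely the conclusion you are after. Replacing $\ps$ by $\db\et$ (possible since $H^1(X^\reg,\Om^n_X)=0$) and pairing against $\db^*\ph=0$ only moves the problem to the unknown order of $\et$. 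The loop breaks only once you can bound the order of harmonic $(n,0)$-primitives directly---which is the paper's argument above, and once you have that you may as well apply it to $\chi_0$ and skip the detour through $\De u=\ph$.
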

\begin{proof}
By Proposition \ref{prop: hol 1-forms} we have $H^1(X,\O_X)=0.$ By hypothesis we have $H^1_{X^\sing}(X,\O_X)=H^2_x(X,\O_X)=0.$ So $H^1(X^\reg,\Om^n_X)\cong H^1(X^\reg,\O_X)\cong H^1(X,\O_X)=0.$ In particular, $\ph$ is $\db$ exact and we can write $\ph=\db\chi$ with $\chi$ an $(n,0)$ form on $X^\reg.$ Then $\db^*\db\chi=0;$ that is, $\chi$ is a harmonic $(n,0)$ form. Write it as a countable sum of homogeneous harmonic forms near each $x\in X^\sing.$ Let $\ph$ have order $\ep-1-n$ with $\ep>0.$ Since $\ph=\db\chi$ it follows then that each homogeneous term $\chi'$ of order $<\ep-n$ of $\chi$ is holomorphic. Recall now from Remark \ref{rmk: L^2} that there exists on $X^\reg$ an $L^2$ holomorphic $(n,0)$ form $\Om.$ We can then write $\chi'=f\Om$ with $f$ some holomorphic function on a punctured neighbourhood of $x\in X.$ As $X$ is normal $f$ extends holomorphically to $x;$ and in particular, it is bounded. But $\Om$ is $L^2;$ and by Proposition \ref{prop: L^2 n-forms}, making $\ep$ smaller if we need, we can suppose that $\Om$ is of order $\ep-n.$ This implies that $f=0$ and that $\chi'=0.$ That is, every homogeneous holomorphic term of order $<\ep-n$ must vanish, which completes the proof.
\end{proof}

We make a more careful study of $\gr^{n-1}{}_c H^n(X^\reg,\Om^\bt_{X^\reg}).$
\begin{thm}\l{thm: inj}
Let $n\ge3$ be an integer and $X$ a compact Calabi--Yau $n$-conifold with $X^\sing$ non-empty.  Give $X$ a K\"ahler conifold metric. Denote by $\ker(\db+\db^*)^{n-1\,1}_{-n}$ the space of $L^2$ harmonic $(n-1,1)$ forms on $X^\reg.$ Then there exists an injective $\C$-linear map $\ker(\db+\db^*)^{n-1\,1}_{-n}\to\gr^{n-1}{}_c H^n(X^\reg,\C).$
\end{thm}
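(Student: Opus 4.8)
The plan is to send each $\ph\in\ker(\db+\db^*)^{n-1\,1}_{-n}$ to the graded class of a closed $L^2$ $n$-form built from $\ph,$ whose Hodge type is concentrated in holomorphic degree $\ge n-1.$ First I would improve the decay of $\ph.$ Since $\db\ph=\db^*\ph=0,$ the identity $\De=2\db\db^*+2\db^*\db$ gives $\De\ph=0,$ so $\ph$ is an $L^2$ harmonic $n$-form and Proposition~\ref{prop: L^2 n-forms} shows it is of order $>-n.$ Consequently $\bd\ph$ is an $(n,1)$ form of order $>-n-1,$ and the identities $\db\bd=-\bd\db$ and (the K\"ahler relation) $\db^*\bd=-\bd\db^*$ give $\db(\bd\ph)=\db^*(\bd\ph)=0.$ Lemma~\ref{lem: hol improve} then yields an $(n,0)$ form $\chi$ of order $>-n$ with $\db\chi=\bd\ph.$ Setting $\Phi:=\ph-\chi$ I obtain an $L^2$ form of order $>-n$ with $\d\Phi=\bd\ph+\db\ph-\bd\chi-\db\chi=\bd\ph-\db\chi=0,$ since $\db\ph=0$ and $\bd\chi=0$ (as $\chi$ has type $(n,0)$).

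Next I would locate the de Rham class of $\Phi.$ Being $L^2$ and $\d$-closed, $\Phi$ represents a reduced $L^2$-cohomology class; working at a weight just above $-n,$ where $\De$ is Fredholm by Proposition~\ref{prop: non-exceptional Fredholm}, the $L^2$-Hodge decomposition together with $\d\Phi=0$ writes $\Phi=\Phi_0+\d\xi$ with $\Phi_0\in\ker(\d+\d^*)^n_{-n},$ so by Theorem~\ref{thm: Lock} one has $[\Phi]=[\Phi_0]\in{}_cH^n(X^\reg,\C).$ Because $\Phi$ has Hodge types only $(n-1,1)$ and $(n,0),$ the standard comparison of the $C^\iy$ de Rham filtration with the stupid filtration of the holomorphic de Rham complex places $[\Phi]$ in $F^{n-1}{}_cH^n(X^\reg,\C),$ with $\gr^{n-1}[\Phi]$ represented by the $(n-1,1)$ part $\ph.$ The assignment $\ph\mapsto\gr^{n-1}[\Phi]$ is $\C$-linear and independent of the choice of $\chi,$ since two choices differ by a holomorphic $(n,0)$ form of order $>-n,$ whose class lies in $F^n$ and so does not affect $\gr^{n-1}.$

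For injectivity suppose $\gr^{n-1}[\Phi]=0,$ i.e. $[\Phi]\in F^n{}_cH^n(X^\reg,\C).$ As $F^n$ is the image of the holomorphic $n$-forms, there are a holomorphic $n$-form $\eta$ and an $(n-1)$-form $\ga$ (obtained, as $\xi$ above, in a suitable weighted space) with $\Phi-\eta=\d\ga;$ taking $(n-1,1)$ parts gives $\ph=\db\ga^{n-1\,0}+\bd\ga^{n-2\,1}.$ I would then adjust $\ga$ by $\d\de,$ which leaves $\d\ga$ unchanged, so as to arrange $\db^*\ga^{n-2\,1}=0$: replacing $\ga^{n-2\,1}$ by $\ga^{n-2\,1}+\db\de^{n-2\,0}$ reduces this to solving $\db^*\db\,\de^{n-2\,0}=-\db^*\ga^{n-2\,1},$ which is solvable because the right-hand side is orthogonal to the holomorphic $(n-2,0)$ forms, the kernel of $\db^*\db.$ Using the K\"ahler identity $\bd^*=i[\La,\db]$ and $\db\ph=0$ we have $\bd^*\ph=-i\db(\La\ph),$ and then Proposition~\ref{prop: int by parts} gives
\e\ph\cdot\bd\ga^{n-2\,1}=\bd^*\ph\cdot\ga^{n-2\,1}=-i\,\db(\La\ph)\cdot\ga^{n-2\,1}=-i\,(\La\ph)\cdot\db^*\ga^{n-2\,1}=0,\e
while $\ph\cdot\db\ga^{n-1\,0}=0$ because $\db^*\ph=0.$ Hence $\ph\cdot\ph=\ph\cdot\db\ga^{n-1\,0}+\ph\cdot\bd\ga^{n-2\,1}=0,$ so $\ph=0.$

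The hard part is that everything lives at the borderline middle weight $-n$: here $\ph$ sits just above the threshold where the naive boundary terms fail to decay, so the order improvement of Proposition~\ref{prop: L^2 n-forms} is indispensable, and every integration by parts above, every use of the $L^2$-Hodge decomposition, and the solvability of the equation for $\de^{n-2\,0}$ must be justified by checking the weight-sum condition $\al+\be>1-2n$ of Proposition~\ref{prop: int by parts} and the non-exceptionality of the weights involved. Controlling the orders of $\chi,\ga$ and $\de$ so that these pairings and solvability statements are all legitimate is the main obstacle of the proof.
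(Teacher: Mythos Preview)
Your construction of the map is correct and in fact tidier than the paper's: the K\"ahler identity $\bd\db^*+\db^*\bd=0$ gives $\db^*(\bd\ph)=-\bd(\db^*\ph)=0$ at once, so Lemma~\ref{lem: hol improve} applies directly, whereas the paper first solves $\tfrac12\De\ta=\db^*\bd\ph$ in a weighted space before invoking that lemma. Note also that $[\Phi]\in{}_cH^n(X^\reg,\C)$ follows without any $L^2$-Hodge decomposition: since $\Phi$ has order $>-n$, Lemma~\ref{lem: p exact} makes it locally $\d$-exact near $X^\sing$, which is precisely membership in ${}_cH^n$.

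The injectivity argument, however, has a genuine gap that is not mere bookkeeping. From $[\Phi]\in F^n$ you only get an \emph{abstract} primitive $\ga$ with $\Phi-\eta=\d\ga$, carrying no weight information whatsoever; so neither the pairings you write nor the solvability of $\db^*\db\,\de^{n-2,0}=-\db^*\ga^{n-2,1}$ (whose right-hand side must lie in a weighted $L^2$ space and be orthogonal to the harmonic forms at the dual weight, not just to holomorphic forms) is justified. The paper supplies the missing order control in three substantive steps. First, it shows the holomorphic $n$-form $\ps'$ (your $\eta$) has order $>-n$: write $\ps'=f\Om$, extend $f$ across $X^\sing$ by normality, and apply Proposition~\ref{prop: L^2 n-forms}. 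Second, it \emph{constructs} a primitive $\th$ of order $1+\ep-n$ by hand: solve $\De\th=\d^*(\ph-\ps-\ps')$ in $H^2_{1+\ep-n}$ via Proposition~\ref{prop: Fredholm} (orthogonality to $\ker\De^{n-1}_{1-n-\ep}$ holds because those forms are closed and co-closed by Corollary~\ref{cor: order -p}), use Corollary~\ref{cor: harmonic1} to upgrade the order of $\d^*\th$ and deduce $\d\d^*\th=0$, and then invoke Theorem~\ref{thm: Lock} on the closed co-closed $L^2$ form $\ph-\ps-\ps'-\d\th$ to conclude $\ph-\ps-\ps'=\d\th$. Third, a dedicated Lemma~\ref{lem: p+q=n-1} (itself another weighted Fredholm argument) replaces $\th^{n-2,1}$ by a $\db$-exact term, yielding an $(n-1,0)$ form $\et$ of order $1-n$ with $\db\et=\ph$; now $\ph\cdot\ph=\et\cdot\db^*\ph=0$ is a single integration by parts that Proposition~\ref{prop: int by parts} licenses. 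Your K\"ahler-identity endgame would also succeed once $\ga^{n-1,0}$ and $\ga^{n-2,1}$ are known to have order $>1-n$, but producing that order is exactly what the constructions above accomplish and what your sketch omits.
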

\begin{proof}
We show first that for every $\ph\in \ker(\db+\db^*)^{n-1\,1}_{-n}$ there exists on $X^\reg$ an $(n,0)$ form $\ps$ of order $>-n$ such that $\db\ps=\bd\ph.$ Let $\ep\in(0,1)$ be so small that $\ph$ has order $\ep-n.$ Then $\db^*\bd\ph$ is an $(n,0)$ form of order $\ep-n-2.$ Put $\al:=\ep-n$ and take $\chi\in \ker\De^{n-1\,1}_{2-\al-2n}=\ker\De_{2-\ep-n}.$ Then $\d\chi$ and $\d^*\chi$ have order $>-n$ so integration by parts shows $\d\chi\cdot\d\chi+\d^*\chi\cdot\d^*\chi=\chi\cdot\De\chi=0$ and $\d\chi=\d^*\chi=0.$ Hence it follows that $\db\chi=0$ and that $\chi\cdot\db^*\bd\ph=0.$ So by Proposition \ref{prop: Fredholm} there exists an $(n,0)$ form $\ta$ of order $\al=\ep-n$ and with $\frac12\De\ta=\db^*\bd\ph.$ Then $\bd\ph-\db\ta$ is $\db$ closed and co-closed. By Lemma \ref{lem: hol improve} there exists on $X^\reg$ an $(n,0)$ form $\si$ of order $>n$ and with $\bd\ph-\db\ta=\db\si.$ Put $\ps:=\si+\ta,$ which is of order $\ep-n$ and satisfies $\db\ps=\bd\ph.$

Since $\ph$ is an $(n-1,1)$ form and $\ps$ an $(n,0)$ form it follows that $\d(\ph-\ps)=\db\ph+\bd\ph-\db\ps=0.$ Also $\ph-\ps$ has order $>-n$ and is $L^2.$ By Theorem \ref{thm: Lock} its de Rham cohomology class $[\ph-\ps]$ lies in ${}_cH^n(X^\reg,\C).$ Recalling again that $\ph$ is an $(n-1,1)$ form and $\ps$ an $(n,0)$ form, we can also define $[\ph-\ps]\in \gr^{n-1}{}_cH^n(X^\reg,\C).$ Notice that $[\ph-\ps]\in {}_cH^n(X^\reg,\C)$ may depend on the choice of $\ps$ but that $[\ph-\ps]\in {}_cH^n(X^\reg,\C)$ does not. The map $\ker(\db+\db^*)^{n-1\,1}_{-n}\to\gr^{n-1}{}_c H^n(X^\reg,\C)$ is thus well defined. This is of course $\C$-linear. We show now that it is injective.

Let $\ph\in \ker(\db+\db^*)^{n-1\,1}_{-n}$ be an element of the kernel of the map $\ker(\db+\db^*)^{n-1\,1}_{-n}\to\gr^{n-1}{}_c H^n(X^\reg,\Om^\bt_{X^\reg});$ that is, there exists an $(n,0)$ form $\ps$ of order $>-n,$ with $\bd\ps=\db\ph$ and with $[\ph-\ps]=0\in  \gr^{n-1}{}_c H^n(X^\reg,\C).$ Then there exists another $(n,0)$ form $\ps'$ with $\db\ps'=0$ and $[\ph-\ps-\ps']=0\in H^n(X^\reg,\C).$

As in Remark \ref{rmk: L^2} there exists on $X^\reg$ an $L^2$ holomorphic $(n,0)$ form $\Om.$ Write $\ps'=f\Om$ with $f$ some holomorphic function $X^\reg\to\C.$ As $X$ is normal $f$ extends holomorphically to $X.$ In particular, it is bounded; and accordingly, $f\Om=\ps'$ is $L^2.$ Proposition \ref{prop: L^2 n-forms} implies that $\ps'$ has order $>-n.$  

Thus $\ph-\ps-\ps'$ has order $>-n;$ and accordingly, $\d^*(\ph-\ps-\ps')$ has order $>-n-1.$ Take $\ep>0$ and put $\be:=1+\ep-n.$ Take $\chi\in\ker\De^{n-1}_{2-\be-n}=\ker\De^{n-1}_{1-n-\ep}.$ Letting $\ep>0$ be small enough and using Corollary \ref{cor: order -p} we see that $\d\chi$ and $\d^*\chi$ have order $>-n.$ Integration by parts shows therefore that $\d\chi=\d^*\chi=0$ and that $\chi\cdot\d^*(\ph-\ps-\ps')=0.$ By Proposition \ref{prop: Fredholm} there exists on $X^\reg$ an $n-1$ form $\th$ of order $\be=1+\ep-n$ and with $\De\th=\d^*(\ph-\ps-\ps').$ Then $\d\d^*\th=\d^*(\ph-\ps-\ps'-\d\th).$ Applying $\d^*$ we get $\d^*\d\d^*\th=0;$ that is, $\d^*\th$ is a harmonic $n-2$ form of order $\ep-n.$ By Corollary \ref{cor: harmonic1} however $\d^*\th$ is in fact of order $2-n.$ Now $\d\d^*\th\cdot\d\d^*\th=\d^*\th\cdot \De\d^*\th=0.$ So $\d\d^*\th=0.$ Or equivalently $\d^*(\ph-\ps-\ps'-\d\th)=0.$ Thus $\ph-\ps-\ps'-\d\th\in \ker(\db+\db^*)^n_{-n}.$ Since $[\ph-\ps-\ps']=0\in {}_cH^n(X^\reg,\C)\cong \ker(\db+\db^*)^n_{-n}$ it follows that $\ph-\ps-\ps'-\d\th=0;$ that is, $\ph-\ps-\ps'=\d\th.$ We prove now the following lemma.

\begin{lem}\l{lem: p+q=n-1}
Let $p,q\in\Z$ be such that $p+q=n-1.$ Let $\chi$ be a $(p,q)$ form on $X^\reg$ of order $>1-n$ and with $\db\chi=0.$ Then there exists on $X^\reg$ a $(p,q-1)$ form $\ze$ of order $2-n$ and such that $\chi-\db\ze$ is $\d$ closed and co-closed. 
\end{lem}
\begin{proof}
Notice that $\db^*\chi$ is a $(p,q-1)$ form of order $>-n.$ Take $\ep\in(0,1)$ and put $\al:=2-\ep-n.$ Take $\ze\in\ker\De_{2-\al-2n}^{p\,q-1}=\ker\De^{p\,q-1}_{\ep-n}.$ Integration by parts shows that $\db\ze=\db^*\ze=0.$ So $\ze\cdot\db^*\chi=\db\ze\cdot\chi=0.$ Letting $\ep$ be a non-exceptional value, we see from Proposition \ref{prop: Fredholm} that there exists on $X^\reg$ some $(p,q-1)$ form $\ze$ of order $\al=2-\ep-n$ and with $\frac12\De\ze=\db^*\chi.$ Then $\db\db^*\ze=\db^*(\chi-\db\ze)$ and applying $\db^*$ we get $\db^*\db\db^*\ze=0.$ So $\db^*\th$ is a harmonic $n-3$ form of order $1-\ep-n.$ By Corollary \ref{cor: harmonic1} however $\db^*\ze$ has in fact order $3-n.$ Now $\db\db^*\ze\cdot\db\db^*\ze=\db^*\th\cdot \De\db^*\ze=0.$ So $\db\db^*\ze=0.$ Or equivalently $\db^*(\chi-\db\ze)=0.$ Now $\chi-\db\ze$ is $\db$ closed and co-closed, and in particular harmonic. It is also of order $\al-1=1-\ep-n.$ By Lemma \ref{lem: dph1} therefore $\chi-\db\ze$ is $\d$ closed and co-closed.
\end{proof}
For $(p,q)$ with $p+q=n-1$ denote by $\th^{pq}$ the $(p,q)$ component of the $n-1$ form $\th.$ Since $\d\th=\ph-\ps-\ps'$ has only $(n,0)$ and $(n-1,1)$ components it follows that $\db\th^{n-2\,1}=0.$ Applying Lemma \ref{lem: p+q=n-1} to $\th^{n-2\,1}$ we find some $(n-2,0)$ form $\ze$ of order $2-n$ such that $\th^{n-2\,1}-\db\ze$ is $\d$-closed. In particular, $\partial\th^{n-2\,1}=\partial\db\ze=-\db\partial\ze.$ Put $\et:=\th^{n-1\,0}-\partial\ze,$ which is of order $1-n.$ Then $\d\et=\d\th^{n-1\,0}+\partial\db\ze=\d\th^{n-1\,0}+\partial\th^{n-2\,1}=\ph-\ps-\ps'.$ Looking at the $(n-1,1)$ components of both sides it follows that $\db\et=\ph.$ But $\ph$ has order $>-n$ so $\ph\cdot\ph=\db^*\ph\cdot\et=0$ because $\db^*\ph=0.$ Thus $\ph=0,$ which proves that the map $\ker(\db+\db^*)^{n-1\,1}_{-n}\to\gr^{n-1}{}_c H^n(X^\reg,\Om^\bt_{X^\reg})$ is injective.
\end{proof}

We prove a corollary of Theorem \ref{thm: inj}.
\begin{cor}\l{cor: inj}
Let $X$ be a compact K\"ahler $n$-conifold and give it a K\"ahler conifold metric. The natural projection $\ker(\db+\db^*)^{n-1\,1}_{-n}\to H^1(X^\reg,\Om^{n-1}_{X^\reg})$ is then injective.
\end{cor}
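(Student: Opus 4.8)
The plan is to reduce the corollary to the injectivity already proved in Theorem \ref{thm: inj}. When $X^\sing=\es$ the statement is merely the uniqueness of harmonic representatives on a compact K\"ahler manifold, so the essential case is $X^\sing\ne\es,$ which I would treat under the hypotheses making Theorem \ref{thm: inj} available. Write $\Phi$ for the injective map $\ker(\db+\db^*)^{n-1\,1}_{-n}\to\gr^{n-1}{}_cH^n(X^\reg,\C)$ of that theorem, which sends $\ph$ to the class $[\ph-\ps],$ where $\ps$ is the $(n,0)$ form of order $>-n$ with $\db\ps=\bd\ph.$ Since $\Phi$ is injective, it suffices to show that every $\ph$ killed by the Dolbeault projection is also killed by $\Phi.$

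So I would take $\ph\in\ker(\db+\db^*)^{n-1\,1}_{-n}$ with vanishing Dolbeault class, that is $\ph=\db\ze$ for some $(n-1,0)$ form $\ze$ on $X^\reg$ (with no control on its growth near $X^\sing$). Setting $h:=\ps+\bd\ze,$ I compute $\db h=\bd\ph+\db\bd\ze=\bd\db\ze+\db\bd\ze=0,$ so $h$ is a $\db$-closed $(n,0)$ form, hence a holomorphic $n$-form on $X^\reg;$ moreover $\ph-\ps=\db\ze+\bd\ze-h=\d\ze-h.$ Because $\d\ze$ is globally exact on the open manifold $X^\reg,$ in $H^n(X^\reg,\C)$ one then has $[\ph-\ps]=[\d\ze]-[h]=-[h].$

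Now $-h$ is a $\d$-closed $(n,0)$ form, so $[\ph-\ps]=-[h]$ lies in $F^nH^n(X^\reg,\C).$ As $[\ph-\ps]$ already lies in the subspace ${}_cH^n(X^\reg,\C),$ whose filtration is by definition induced from that of $H^n(X^\reg,\C),$ it lies in $F^n{}_cH^n(X^\reg,\C);$ hence its image $\Phi(\ph)$ in $\gr^{n-1}{}_cH^n(X^\reg,\C)=F^{n-1}{}_cH^n/F^n{}_cH^n$ vanishes. By the injectivity in Theorem \ref{thm: inj} this forces $\ph=0,$ which is exactly the asserted injectivity of the Dolbeault projection.

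I expect the only delicate point to be the filtration bookkeeping on ${}_cH^n$: namely the identity $F^n{}_cH^n={}_cH^n\cap F^nH^n$ and the fact that $\d\ze$ contributes trivially to the ordinary de Rham class of $\ph-\ps$ even though $\ze$ need not decay at $X^\sing.$ Both follow directly from Definition \ref{dfn: gr} and from $\ze$ being a genuine global primitive on $X^\reg,$ so I do not anticipate a substantive obstacle beyond keeping the bidegrees and the two filtration steps straight.
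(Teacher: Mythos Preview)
Your proposal is correct and follows essentially the same route as the paper: both write $\ph=\db\ze$ and use the $(n,0)$ form $\ps$ from Theorem~\ref{thm: inj} to conclude that the class $[\ph-\ps]$ lies in $F^n{}_cH^n(X^\reg,\C),$ hence vanishes in $\gr^{n-1}.$ The only cosmetic difference is that the paper observes directly that $\ph-\ps-\d\ze$ (with their sign convention, $\ph+\ps-\d\chi$) is a pure $(n,0)$ form by bidegree count, whereas you introduce the auxiliary $h=\ps+\bd\ze$ and verify it is holomorphic; both arrive at the same filtration step.
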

\begin{proof}
Let $\ph\in \ker (\db+\db^*)^{n-1\,1}_{-n}$ have vanishing cohomology class in $H^1(X^\reg,\Om^{n-1}_{X^\reg}).$ So there exists on $X^\reg$ an $(n-1,0)$ form $\chi$ with $\db\chi=\ph.$ As in the proof of Theorem \ref{thm: inj} choose on $X^\reg$ an $(n,0)$ form $\ps$ of order $>-n$ such that $\d(\ph+\ps)=0.$ Now $\ph+\ps-\d\chi$ is an $(n,0)$ form, which means that the map $\ker(\db+\db^*)^{n-1\,1}_{-n}\to\gr^{n-1}{}_c H^n(X^\reg,\C)$ produced in Theorem \ref{thm: inj} maps $\ph$ to $0.$ But this map is injective so $\ph=0$ as we have to prove.
\end{proof}

The image of the natural projection $\ker(\db+\db^*)^{n-1\,1}_{-n}\to H^1(X^\reg,\Om^{n-1}_{X^\reg})$ has also the following property.
\begin{lem}\l{lem: surj}
Let $X$ be a compact K\"ahler $n$-conifold and give it a K\"ahler conifold metric. Then ${}_c H^1(X^\reg,\Om^{n-1}_{X^\reg})$ lies in the image of the natural projection $\ker(\db+\db^*)^{n-1\,1}_{-n}\to H^1(X^\reg,\Om^{n-1}_{X^\reg}).$
\end{lem}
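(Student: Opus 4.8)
The plan is to realize every class of ${}_c H^1(X^\reg,\Om^{n-1}_{X^\reg})$ by an $L^2$ harmonic $(n-1,1)$ form, using the Hodge-theoretic projection adapted to the weighted setting. By Definition \ref{dfn: c} any such class is the image of a class in compactly supported Dolbeault cohomology, hence representable by a compactly supported $\db$-closed $(n-1,1)$ form $\ph_0$ on $X^\reg$. I would look for an $(n-1,0)$ form $\chi$ such that $\ph:=\ph_0-\db\chi$ is $L^2$ and co-closed; then $\ph$ is automatically $\db$-closed, lies in $\ker(\db+\db^*)^{n-1\,1}_{-n}$, and has Dolbeault class $[\ph]=[\ph_0]$, the given class. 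Since on $(n-1,0)$ forms one has $\db^*=0$ and hence $\De=2\db^*\db$, the equation $\db^*\ph=0$ reduces to $\tfrac12\De\chi=\db^*\ph_0$.

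Fix a small $\ep>0$ that is not exceptional and for which $(1-n-\ep,1-n)$ contains no exceptional value of the $(n-1,0)$ form Laplacian (possible by discreteness of exceptional values). I would solve $\tfrac12\De\chi=\db^*\ph_0$ with $\chi\in H^k_{1-n+\ep}(\La^{n-1\,0}_{X^\reg})$, so that $\db\chi$ is of order $>-n$ and therefore $L^2$. By Proposition \ref{prop: Fredholm} this is solvable provided $\db^*\ph_0\cdot\ps=0$ for every $\ps\in\ker\De^{n-1\,0}_{1-n-\ep}$; since $\ph_0$ is compactly supported, integration by parts gives $\db^*\ph_0\cdot\ps=\ph_0\cdot\db\ps$, so it suffices to prove $\db\ps=0$.

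The hard part is precisely this vanishing, which cannot be obtained from a naive energy identity because the relevant weight $-n$ is borderline and direct integration by parts fails by an $\ep$. Instead I would argue as follows. Given $\ps\in\ker\De^{n-1\,0}_{1-n-\ep}$, the form $\mu:=\db\ps$ satisfies $\db\mu=0$ and $\db^*\mu=\db^*\db\ps=\tfrac12\De\ps=0$, so $\mu$ is a harmonic $(n-1,1)$ form. Expand $\ps$ near each singularity as a sum of homogeneous harmonic forms (Theorem \ref{thm: no log}); by the choice of $\ep$ its leading order is $\ge 1-n$. If that order is $>1-n$ then $\db\ps$ already has order $>-n$; if it equals $1-n$ then the leading term is closed and co-closed by Corollary \ref{cor: order -p} (applicable since the degree $n-1$ equals $\tfrac l2-1$ with $l=2n$), so once more $\db\ps$ has order $>-n$ near the singularity. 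Either way $\mu\in L^2_{-n}$, whence $\mu\in\ker(\db+\db^*)^{n-1\,1}_{-n}$; and $\mu=\db\ps$ is $\db$-exact with $\ps$ globally defined on $X^\reg$, so its Dolbeault class vanishes. By the injectivity of Corollary \ref{cor: inj} I conclude $\mu=0$, i.e. $\db\ps=0$.

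With the obstruction killed, Proposition \ref{prop: Fredholm} produces the desired $\chi$, and $\ph:=\ph_0-\db\chi$ equals $-\db\chi$ near the singularities, hence is of order $>-n$ and globally $L^2$. It satisfies $\db\ph=\db\ph_0=0$ and $\db^*\ph=\db^*\ph_0-\tfrac12\De\chi=0$, so $\ph\in\ker(\db+\db^*)^{n-1\,1}_{-n}$, and its Dolbeault class is $[\ph]=[\ph_0]$, the prescribed class. This exhibits the chosen class in the image of the projection, completing the argument. The crux, as noted, is the vanishing $\db\ps=0$ at the borderline weight, which is routed through the $L^2$ harmonic representative $\mu$ and the already-established injectivity rather than through an energy estimate.
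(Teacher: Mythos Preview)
Your argument is correct and follows the paper's overall strategy: start from a compactly supported $\db$-closed representative $\ph_0$, solve $\tfrac12\De\chi=\db^*\ph_0$ in a weighted space with $\chi$ of order $1-n+\ep$, and set $\ph=\ph_0-\db\chi$. The difference lies only in how you kill the Fredholm obstruction. The paper cites Lemma~\ref{lem: dph1} directly: for any $\chi\in\ker\De^{n-1\,0}_{1-n-\ep}$ one has $\d\chi=\d^*\chi=0$, hence $\db\chi=0$, so $\db^*\ph_0\cdot\chi=\ph_0\cdot\db\chi=0$. You instead redo the asymptotic step (leading term closed/co-closed via Corollary~\ref{cor: order -p}) to show $\mu:=\db\ps$ is an $L^2$ harmonic $(n-1,1)$ form, and then invoke Corollary~\ref{cor: inj} on the $\db$-exact $\mu$ to conclude $\mu=0$.

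Both routes are valid, but note that once you have established that the leading homogeneous term of $\ps$ is $\d$-closed and $\d$-co-closed (so that $\d\ps,\d^*\ps$ have order $>-n$), the integration-by-parts identity $\d\ps\cdot\d\ps+\d^*\ps\cdot\d^*\ps=\ps\cdot\De\ps=0$ is already justified and gives $\d\ps=0$ immediately---this is exactly the content of Lemma~\ref{lem: dph1}. So your remark that ``direct integration by parts fails by an $\ep$'' is true only for the naive attempt; after the order improvement you yourself carry out, the energy identity does go through, and the detour through Corollary~\ref{cor: inj} (which in turn rests on the more delicate Theorem~\ref{thm: inj}) is unnecessary. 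The paper's route is thus more self-contained; yours works but imports a heavier result.
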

\begin{proof}
Take any element of $ {}_c H^1(X^\reg,\Om^{n-1}_{X^\reg})$ represented on $X^\reg$ by some compactly supported $(n-1,1)$ form $\ph$ with $\db\ph=0.$ Let $\ep\in(0,1)$ be so small as in Lemma \ref{lem: dph1} and put $\al=1+\ep-n.$ Then by Lemma \ref{lem: dph1} every $\chi\in \De^{n-1\,0}_{2-\al-2n}=\De^{n-1\,0}_{1-\ep-n}$ satisfies $\d\chi=0$ and in particular $\db\chi=0.$ So $\db^*\ph\cdot\chi=0.$ By Proposition \ref{prop: Fredholm} therefore there exists $\ps\in H^2_\al(\La^{n-1\,0}_{X^\reg})$ with $\frac12\De\ps=\db^*\ph.$ Or equivalently, $\db^*\db\ps=\db^*(\ph-\db\ps).$ Applying $\db^*$ we get $\De\db^*\ps=0;$ that is, $\db^*\ps$ is a harmonic $n-2$ form of order $\al-1=\ep-n.$ By Corollary \ref{cor: harmonic1} it has in fact order $2-n.$ Now $\db\db^*\ps\cdot\db\db^*\ps=\db^*\ps\cdot \De\db^*\ps=0.$ So $\db\db^*\ps=0.$ Or equivalently, $\db^*(\ph-\db\ps)=0;$ that is, $\ph-\db\ps$ is a $\db$ closed and co-closed $(n-1,1)$ form of order $\ep-n$ as we have to prove. 
\end{proof}

We finally prove
\begin{thm}\l{thm: harm n-1 1 version2}
Let $X$ be a compact Calabi--Yau $n$-conifold with $H^2_{X^\sing}(X,\Om^{n-2}_X)=0$ and give $X$ a K\"ahler conifold metric. Then there exist $\C$-vector space isomorphisms
\e H^1(X,\Th_X)\cong \gr^{n-1}{}_c H^n(X^\reg,\Om^\bt_{X^\reg})\cong \ker(\db+\db^*)^{n-1\,1}_{-n}.\e
\end{thm}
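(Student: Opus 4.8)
The plan is to assemble the three preceding results---Theorem \ref{thm: inj}, Corollary \ref{cor: inj} and Lemma \ref{lem: surj}---together with the Calabi--Yau isomorphism $\Th_X\cong\Om^{n-1}_X$ and the hypothesis $H^2_{X^\sing}(X,\Om^{n-2}_X)=0,$ and then to close the argument by a dimension count that forces every injection produced along the way to be an isomorphism. When $X^\sing=\es$ the space $X$ is a smooth compact Calabi--Yau manifold and all three groups reduce to the classical $H^{n-1,1},$ so I would dispose of that case first and assume $X^\sing\ne\es$ throughout.

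First I would reduce $H^1(X,\Th_X)$ to Dolbeault cohomology on $X^\reg.$ Since the canonical sheaf of $X$ is a rank-one free $\O_X$ module, contraction with a nowhere-vanishing holomorphic $n$-form gives an isomorphism $\Th_{X^\reg}\cong\Om^{n-1}_{X^\reg},$ which, both sheaves being reflexive, extends to $\Th_X\cong\Om^{n-1}_X.$ Restricting to the regular locus gives $\Th_X|_{X^\reg}\cong\Om^{n-1}_{X^\reg},$ so Corollary \ref{cor: c} yields $H^1(X,\Th_X)\cong{}_cH^1(X^\reg,\Th_X)\cong{}_cH^1(X^\reg,\Om^{n-1}_{X^\reg}).$ In particular $\dim H^1(X,\Th_X)=\dim{}_cH^1(X^\reg,\Om^{n-1}_{X^\reg}).$

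Next I would string the four vector spaces into a cycle of dimension inequalities. By Corollary \ref{cor: inj} the natural projection $\ker(\db+\db^*)^{n-1\,1}_{-n}\to H^1(X^\reg,\Om^{n-1}_{X^\reg})$ is injective, and by Lemma \ref{lem: surj} its image contains ${}_cH^1(X^\reg,\Om^{n-1}_{X^\reg}),$ whence $\dim{}_cH^1(X^\reg,\Om^{n-1}_{X^\reg})\le\dim\ker(\db+\db^*)^{n-1\,1}_{-n}.$ Theorem \ref{thm: inj} gives in turn $\dim\ker(\db+\db^*)^{n-1\,1}_{-n}\le\dim\gr^{n-1}{}_cH^n(X^\reg,\C).$ Thus, once I also establish the single remaining inequality $\dim\gr^{n-1}{}_cH^n(X^\reg,\C)\le\dim{}_cH^1(X^\reg,\Om^{n-1}_{X^\reg}),$ the cycle of four dimensions closes and all of them coincide with $\dim H^1(X,\Th_X);$ each of the injections of Theorem \ref{thm: inj} and Corollary \ref{cor: inj} then becomes an isomorphism, giving exactly the two isomorphisms asserted in the theorem.

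The main obstacle is that last inequality, and this is where $H^2_{X^\sing}(X,\Om^{n-2}_X)=0$ is used. I would define a map $\gr^{n-1}{}_cH^n(X^\reg,\C)\to H^1(X^\reg,\Om^{n-1}_{X^\reg})$ by representing a class by an $L^2$ closed $n$-form $\om=\om^{n,0}+\om^{n-1,1}$ lying in $F^{n-1}$ and sending it to the Dolbeault class $[\om^{n-1,1}];$ closedness of $\om$ forces $\db\om^{n-1,1}=0,$ and altering $\om$ within $F^n$ or by $\d$ of an $F^{n-1}$ form changes $\om^{n-1,1}$ only by a $\db$-exact term, so the map descends to $\gr^{n-1}.$ It then remains to check that this map is injective and that its image lies in ${}_cH^1(X^\reg,\Om^{n-1}_{X^\reg}).$ For the image I would invoke the local-cohomology sequence, which under the hypothesis yields the surjection $H^1(X,\Om^{n-2}_X)\to H^1(X^\reg,\Om^{n-2}_{X^\reg})$ and hence the vanishing of the incoming Hodge--de Rham differential into the $(n-1,1)$ position, so that every such $[\om^{n-1,1}]$ extends across $X^\sing$ and represents a compact-support class. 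For injectivity I would, given $\om^{n-1,1}=\db\eta,$ subtract $\d\eta$ so as to push $\om$ into $F^n.$ I expect the delicate point to be the accompanying weight bookkeeping: one must produce the primitive $\eta$ with fast enough decay that $\d\eta$ is a genuine coboundary in the compact-support cohomology ${}_cH^n(X^\reg,\C),$ so that subtracting it really kills the class in $\gr^{n-1}$ rather than only in the ordinary $H^n(X^\reg,\C).$
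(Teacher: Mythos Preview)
Your overall architecture matches the paper's: identify $H^1(X,\Th_X)\cong{}_cH^1(X^\reg,\Om^{n-1}_{X^\reg})$ via Corollary~\ref{cor: c}, then close a cycle of inequalities using Theorem~\ref{thm: inj}, Corollary~\ref{cor: inj} and Lemma~\ref{lem: surj}. The injectivity you worry about at the end is not the problem: if $\om^{n-1,1}=\db\eta$ then $\om-\d\eta$ is a pure $(n,0)$ form, so $[\om]\in F^n H^n(X^\reg,\C)$, and since $[\om]$ was already in ${}_cH^n$ you are done in $\gr^{n-1}{}_cH^n$; no decay bookkeeping is needed because ${}_cH^n$ sits inside ordinary $H^n$ and you are taking a $\d$-cohomology class there.

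The genuine gap is your argument that the image lands in ${}_cH^1(X^\reg,\Om^{n-1}_{X^\reg})$. The global surjection $H^1(X,\Om^{n-2}_X)\to H^1(X^\reg,\Om^{n-2}_{X^\reg})$ you derive from $H^2_{X^\sing}(X,\Om^{n-2}_X)=0$ does not control any Hodge--de Rham differential on $X^\reg$, and in any case ``extending across $X^\sing$'' is not what ${}_cH^1$ means. What you must show is that $\om^{n-1,1}$ is $\db$-exact on a punctured neighbourhood of each $x\in X^\sing$. The hypothesis is exactly the local statement $H^1(U_x\setminus\{x\},\Om^{n-2}_X)\cong H^2_x(X,\Om^{n-2}_X)=0$ for $U_x$ Stein, and that is what you must use. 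Concretely: because $[\om]\in{}_cH^n(X^\reg,\C)$, near each $x$ write $\om=\d\psi$ with $\psi=\psi^{n-1,0}+\psi^{n-2,1}$ (an explicit primitive may be obtained by radial integration); then $\db\psi^{n-2,1}=0$, and the local vanishing gives $\psi^{n-2,1}=\db\chi$, whence
\[
\om^{n-1,1}=\db\psi^{n-1,0}+\partial\psi^{n-2,1}=\db(\psi^{n-1,0}-\partial\chi),
\]
so $\om^{n-1,1}$ is locally $\db$-exact and $[\om^{n-1,1}]\in{}_cH^1(X^\reg,\Om^{n-1}_{X^\reg})$. This is precisely the step the paper carries out, and it is where the hypothesis actually enters.
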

\begin{proof}
The $\O_X$ module sheaf isomorphism $\Om^n_{X^\reg}\cong\O_{X^\reg}$ implies an $\O_X$ module sheaf isomorphism $\Th_{X^\reg}\cong\Om^{n-1}_{X^\reg}.$ Corollary \ref{cor: c} implies therefore a $\C$-vector space isomorphism ${}_cH^1(X^\reg,\Om^{n-1}_{X^\reg})\cong H^1(X,\io_*\Om^{n-1}_X).$ 

By Corollary \ref{cor: inj} and Lemma \ref{lem: surj} there is an injective map ${}_cH^1(X^\reg,\Om^{n-1}_{X^\reg})\to\ker(\db+\db^*)^{n-1\,1}_{-n}.$ Composing this with the injective map produced in Theorem \ref{thm: inj} we get an injective map ${}_cH^1(X^\reg,\Om^{n-1}_{X^\reg})\to  \gr^{n-1}{}_c H^n(X^\reg,\Om^\bt_{X^\reg}).$ We show that this is surjective too. Take any element of $\gr^{n-1}{}_c H^n(X^\reg,\Om^\bt_{X^\reg})$ and represent it on $X^\reg$ by the sum $\ph=\ph^{n0}+\ph^{n-1\,1}$ of an $(n,0)$ form and an $(n-1,1)$ form. As $[\ph]\in {}_cH^n(X^\reg,\C)$ there exists near each $x\in X^\sing,$ on a punctured neighbourhood $U_x$ of it, some $n-1$ form $\ps$ with $\d\ps=\ph.$ More explicitly, using the radius function $r$ we can write $\ph=\d r\wedge\ph'+\ph''$ and simple computation shows that $\d(\int_0^r \ph' \d r)=\ph.$ We can thus take $\ps:=\int_0^r \ph' \d r.$ Since $\ph'=\frac{\partial}{\partial r}\mathbin{\lrcorner} \ph$ is the sum of an $(n-1,0)$ form and an $(n-2,1)$ form it follows that so is $\ps,$ which we write as $\ps^{n-1\,0}+\ps^{n-2\,1}.$ Since $\d\ps$ has vanishing $(n-2,2)$ component it follows that $\db\ps^{n-2\,1}=0.$ Recall from hypothesis that $H^1(U_x,\Om^{n-2}_X)\cong H^2_x(X,\Om^{n-2}_X)=0.$ Then we can find on $U_x$ an $(n-2,0)$ form $\chi$ such that $\db\chi=\ph^{n-2\,1}.$ Now on $U_x$ we have
\e\l{decompose n-1 1} 
\ph^{n-1\,1}=\db \ps^{n-1\,0}+\partial\ps^{n-2\,1}=\db \ps^{n-1\,0}-\db\partial \chi.
\e
So $\ph^{n-1\,1}$ defines an element of ${}_cH^1(X^\reg,\Om^{n-1}_{X^\reg}).$ This element maps to $[\ph]$ under the map ${}_cH^1(X^\reg,\Om^{n-1}_{X^\reg})\to  \gr^{n-1}{}_c H^n(X^\reg,\Om^\bt_{X^\reg}),$ which is thus surjective.

The map ${}_cH^1(X^\reg,\Om^{n-1}_{X^\reg})\to  \gr^{n-1}{}_c H^n(X^\reg,\Om^\bt_{X^\reg})$ is now a $\C$-vector space isomorphism which is the composite of the injective maps ${}_cH^1(X^\reg,\Om^{n-1}_{X^\reg})\to\ker(\db+\db^*)^{n-1\,1}_{-n}$ and $\ker(\db+\db^*)^{n-1\,1}_{-n}\to \gr^{n-1}{}_c H^n(X^\reg,\Om^\bt_{X^\reg}).$ The latter two maps are therefore isomorphisms too, which completes the proof. 
\end{proof}

\section{Harmonic $n-1$ Forms}\l{sect: harm n-1}

The following is an analogue of Lemma \ref{lem: surj}.
\begin{lem}\l{lem: c harm n-1}
Let $X$ be a compact K\"ahler $n$-conifold and give it a K\"ahler conifold metric. Take $p,q\in\Z$ with $p+q=n-1.$ Then ${}_cH^q(X^\reg,\Om^p_{X^\reg})$ lies in the image of the natural projection $\ker(\db+\db^*)^{pq}_{1-n}\to H^q(X^\reg,\Om^p_{X^\reg}).$
\end{lem}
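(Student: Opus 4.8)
The plan is to mirror the proof of Lemma~\ref{lem: surj}, shifting every degree and weight down by one: there the compactly supported representative had total degree $n$ and the harmonic representative landed in $L^2_{-n},$ whereas here the representative has total degree $p+q=n-1$ and we must land in $L^2_{1-n}.$ First I would represent the given class in ${}_cH^q(X^\reg,\Om^p_{X^\reg})$ by a compactly supported $(p,q)$ form $\ph$ on $X^\reg$ with $\db\ph=0;$ this is possible because the class lies in the image of $H^q_c\to H^q.$ The goal is then to correct $\ph$ by a $\db$-exact term $\db\ps$ to a $\db$-closed and co-closed $(p,q)$ form of order $>1-n,$ which then lies in $\ker(\db+\db^*)^{pq}_{1-n}$ and represents the same class in $H^q(X^\reg,\Om^p_{X^\reg}).$

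To produce $\ps$ I would solve $\tfrac12\De\ps=\db^*\ph$ for a $(p,q-1)$ form $\ps$ by means of Proposition~\ref{prop: Fredholm}, taking the weight $\al$ just below $2-n,$ say $\al=2-\ep-n$ for small $\ep>0.$ The cokernel weight is then $2-2n-\al=\ep-n,$ and solvability requires $\db^*\ph\cdot\chi=0$ for every $\chi\in\ker\De^{p\,q-1}_{\ep-n}.$ By Proposition~\ref{prop: improve} this kernel equals $\ker\De^{p\,q-1}_{2-n-\ep'},$ which consists of harmonic $(p,q-1)$ forms (total degree $n-2$) of order just below $2-n;$ by Lemma~\ref{lem: dph1} every such $\chi$ satisfies $\d\chi=\d^*\chi=0,$ in particular $\db\chi=0,$ so Proposition~\ref{prop: int by parts} gives $\db^*\ph\cdot\chi=\db\chi\cdot\ph=0.$ Hence $\ps$ exists.

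It remains to check that $\ph-\db\ps$ is co-closed and of the right order. Since $\db^*\ph$ is compactly supported, $\ps$ is harmonic near $X^\sing,$ so near each singularity it expands into homogeneous harmonic forms with leading order $\ge2-n;$ the order $-(n-2)$ part, if present, is $\d$-closed by Corollary~\ref{cor: order -p}, so in all cases $\db\ps$ and therefore $\ph-\db\ps$ have order $>1-n$ and lie in $L^2_{1-n}.$ When $q\ge2$ the form $\db^*\ps$ (total degree $n-3$) is harmonic because $\De\db^*\ps=\db^*\De\ps=2\db^*\db^*\ph=0;$ Corollary~\ref{cor: harmonic1} then improves its order to $\ge3-n,$ after which integration by parts forces $\db\db^*\ps=0$ and hence $\db^*\db\ps=\tfrac12\De\ps=\db^*\ph,$ i.e.\ $\db^*(\ph-\db\ps)=0.$ (For $q\le1$ one has $\db^*\ps=0$ and this step is vacuous.) As $\db\ph=0$ we also have $\db(\ph-\db\ps)=0,$ so $\ph-\db\ps$ is the desired element.

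I expect the main obstacle to be the weight bookkeeping rather than any new idea. The tension is that the cokernel forms must be $\db$-closed, which forces the cokernel weight to avoid the order $-n$ harmonic forms of total degree $n-2$ (these need not be closed when $b_{n-2}(C_x^\reg)\ne0$), pushing $\al$ below $2-n;$ but then $\ps$ is only a priori of order $2-\ep-n,$ so the corrected form is a priori of order below $1-n.$ Getting it strictly above $1-n$ relies on the finer fact that $\ps$ is genuinely harmonic near the singularities and that its order $-(n-2)$ leading term is $\d$-closed by Corollary~\ref{cor: order -p}. Verifying that this single weight choice simultaneously kills the Fredholm obstruction and lands the answer in $L^2_{1-n}$ is the delicate part.
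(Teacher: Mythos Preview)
Your proof is correct and follows the paper's approach: same weight $\al=2-\ep-n$, same vanishing of the cokernel obstruction (you cite Lemma~\ref{lem: dph1}; the paper unpacks that step directly via Corollary~\ref{cor: harmonic1}, Proposition~\ref{prop: improve} and integration by parts), same solution of $\tfrac12\De\ps=\db^*\ph$, and the same improvement of $\db^*\ps$ via Corollary~\ref{cor: harmonic1} to obtain $\db\db^*\ps=0$. Your verification that $\ph-\db\ps$ has order $>1-n$ by expanding $\ps$ near $X^\sing$ and applying Corollary~\ref{cor: order -p} at degree $n-2$ is a clean variant of the paper's endgame, which instead applies Corollary~\ref{cor: order -p} at degree $n-1$ directly to the harmonic form $\ph-\db\ps$ and goes on to prove the stronger fact $\d(\ph-\db\ps)=\d^*(\ph-\db\ps)=0$ (not needed for the lemma as stated).
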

\begin{proof}
Let $\ep\in(0,1)$ be small enough and put $\al:=-\ep+2-n.$ We show first that every $\chi\in\ker\De^{p\, q-1}_{2-\al-2n}=\ker\De^{p\, q-1}_{-n+\ep}$ satisfies $\db\chi=0.$ Corollary \ref{cor: harmonic1} and Proposition \ref{prop: improve} imply that $\chi\in \ker\De^{p\, q-1}_{2-n-\de}$ for any $\de>0.$ Expanding $\chi$ near $X^\sing$ as in Theorem \ref{thm: no log} we see that $\chi$ has order $2-n.$ Integration by parts therefore makes sense so that $\chi\cdot\d^*\d\chi=\d\chi\cdot \d\chi+\d^*\chi\cdot\d^*\chi.$ So $\d\chi=0.$ As $\chi$ is a pure $(p,q)$ form, we have $\db\chi=0.$

Take an element of $H^q(X^\reg,\Om^p_X)$ represented on $X^\reg$ by a compactly supported $(p,q)$ form $\ph$ with $\db\ph=0.$ Letting $\chi$ be as above we have $0=\ph\cdot\db\chi=\db^*\ph\cdot\chi.$ But $\chi$ is an arbitrary element of $\ker\De^{p\, q-1}_{2-\al-2n},$ so $\db^*\ph$ is orthogonal to $\ker\De^{p\, q-1}_{2-\al-2n}.$ Proposition \ref{prop: Fredholm} implies therefore that $\db^*\ph$ lies in the image of the Fredholm operator $\De: H^2_\al(\La^{p\, q-1}_{X^\reg})\to L^2_{\al-2}(\La^{p\, q-1}_{X^\reg}).$ Write $\db^*\ph=\frac12\De\ps$ with $\ps\in H^2_\al(\La^{p\, q-1}_{X^\reg}).$ Then $\db^*(\ph-\db\ps)=\db\db^*\ps$ and so $\db^*\ps$ is harmonic. As $\al-1=-\ep+1-n>-n$ it follows from Corollary \ref{cor: harmonic1} that $\db^*\ps$ has order $2-n.$ Integration by parts therefore makes sense and we have $\db\db^*\ps=0.$ So $\db^*(\ph-\db\ps)=\db\db^*\ps=0$ and $\ph-\db\ps$ is harmonic of order $1-n.$ By Corollary \ref{cor: order -p} then $\d(\ph-\db\ps)$ and $\d^*(\ph-\db\ps)$ have order $-n.$ Integration by parts implies therefore $\d(\ph-\db\ps)=\d^*(\ph-\db\ps)=0.$ Thus $\ph-\db\ps$ is a representative we want of the cohomology class of $\ph.$
\end{proof}

Under the hypothesis of Theorem \ref{main thm1} we can improve the result above. 
\begin{cor}\l{cor: c harm n-1}
Let $X,n,p$ and $q$ be as in Lemma \ref{lem: c harm n-1}. Suppose that for each $x\in X^\sing$ we have $b_{n-2}(C_x^\reg)=0$ or $b_{n-1}(C_x^\reg)=0.$ Then there exists $\ep>0$ such that ${}_cH^q(X^\reg,\Om^p_{X^\reg})$ lies in the image of the natural projection $\ker(\db+\db^*)^{pq}_{1+\ep-n}\to H^q(X^\reg,\Om^p_{X^\reg}).$
\end{cor}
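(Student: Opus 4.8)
The plan is to start from the harmonic representative produced in Lemma~\ref{lem: c harm n-1} and push its decay rate past the exceptional value $1-n$. So take a class in ${}_cH^q(X^\reg,\Om^p_{X^\reg})$ and let $\si$ be the $(p,q)$ form of order $1-n$ with $\d\si=\d^*\si=0$ (in particular $\db\si=\db^*\si=0$) representing it, as constructed there. By Proposition~\ref{prop: real eigen} the exceptional values of the $(p,q)$ form Laplacian are discrete, so it suffices to show that for each $x\in X^\sing$ the order-$(1-n)$ homogeneous part $\ze_x$ of $\si$ at $x$ vanishes: then $\si$ has order strictly greater than $1-n$, hence order $\ge 1+\ep-n$ for some $\ep>0$, and this same $\si$ lies in $\ker(\db+\db^*)^{pq}_{1+\ep-n}$ and represents the given class.

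First I would record the structure of $\ze_x$. Since $\d$ and $\d^*$ each lower the order by one and $\d\si=\d^*\si=0$, the order-$(-n)$ leading parts of $\d\si,\d^*\si$, namely $\d\ze_x$ and $\d^*\ze_x$, vanish; thus $\ze_x$ is a closed, co-closed, order-$(1-n)$ homogeneous harmonic $(p,q)$ form of total degree $n-1$ on the K\"ahler cone $C_x$. Because $n-1=\tfrac{l}{2}-1$ with $l=2n$, Corollary~\ref{cor: order -p} applies and shows that the $\d\log r$ component of $\ze_x$ vanishes, so $\ze_x$ is pulled back from a harmonic $(n-1)$ form on the link $C_x^\lk$. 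Applying Corollary~\ref{cor: harmonic1} to $\La\ze_x$, which is an order-$(1-n)$ homogeneous harmonic form of degree $n-3<\tfrac{l}{2}-1$ (the operator $\La$ preserving the order), shows moreover that $\La\ze_x=0$, so $\ze_x$ is primitive.

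Next I would split according to the Betti hypothesis. If $b_{n-1}(C_x^\reg)=0$, then since $b_{n-1}(C_x^\reg)=b_{n-1}(C_x^\lk)$ and $1-n=2+(n-1)-l$, Corollary~\ref{cor: order 2+p-l} with form degree $n-1$ applies directly to $\ze_x$ and gives $\ze_x=0$. If instead $b_{n-2}(C_x^\reg)=0$, I would dualize: $*\ze_x$ is again a closed, co-closed, order-$(1-n)$ homogeneous harmonic form, now of degree $n+1$, and after Poincar\'e duality on the link the relevant topological invariant becomes $b_{n+1}(C_x^\lk)=b_{n-2}(C_x^\lk)=b_{n-2}(C_x^\reg)$. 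Analysing this dual form with the cone formulas \eq{d+d*}, and using the primitivity of $\ze_x$ together with the vanishing of $b_{n-2}$, one deduces $*\ze_x=0$ and hence $\ze_x=0$. In either case the leading part vanishes, and the argument is complete.

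The hard part will be the case $b_{n-2}(C_x^\reg)=0$. Unlike the $b_{n-1}$ case it is not a direct appeal to Corollary~\ref{cor: order 2+p-l}, since the Hodge dual of $\ze_x$ sits in degree $n+1>\tfrac{l}{2}-1$, outside the range of that corollary, and a naive de Rham class of the dual form is radially exact and so carries no information. The genuine content is to convert the topological vanishing $b_{n-2}(C_x^\lk)=0$ into the vanishing of the \emph{primitive, basic} harmonic form $\ze_x$; I expect this to require the transverse (Sasakian) Hodge theory of the link, or equivalently a careful homogeneous-form computation via \eq{d+d*} for the dual, and reconciling it with the primitivity established above is the delicate point on which the whole statement turns.
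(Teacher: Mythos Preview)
Your treatment of the case $b_{n-1}(C_x^\reg)=0$ is correct: the leading term $\ze_x$ is an order $1-n=2+(n-1)-2n$ homogeneous harmonic $(n-1)$-form, and Corollary~\ref{cor: order 2+p-l} applies directly.

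For $b_{n-2}(C_x^\reg)=0$, however, there is a genuine gap, and your dualisation does not land where you claim. From (the proof of) Corollary~\ref{cor: order -p} one has $\ze_x=\ph''$ with $\ph''$ a harmonic $(n-1)$-form on the link. A direct computation of the cone Hodge star gives $*_C\ze_x=\pm r^2\,\d\log r\wedge *_\lk\ph''$: only the $\d\log r$ component survives, and it is the link $n$-form $*_\lk\ph''$. So the vanishing of $*_C\ze_x$ is governed by $b_n(C^\lk_x)=b_{n-1}(C^\lk_x)$, which is precisely the case already handled, not by $b_{n+1}=b_{n-2}$. The primitivity observation is correct but does not convert $b_{n-2}(C^\lk_x)=0$ into the vanishing of a harmonic $(n-1)$-form on the link; nothing in the paper supports that step, and invoking transverse Sasakian Hodge theory would be a substantial detour with no clear endpoint.

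The paper avoids this entirely by not post-processing the representative. It \emph{re-runs} the construction of Lemma~\ref{lem: c harm n-1} with a weight function $\al:X^\sing\to\R$ depending on which Betti number vanishes: at $x$ with $b_{n-2}(C_x^\reg)=0$ one takes $\al(x)=\ep+2-n$, slightly larger than before. The only new point is that the cokernel condition in Proposition~\ref{prop: Fredholm} now involves $\ker\De^{p\,q-1}_{-n-\ep}$, i.e.\ harmonic forms of total degree $n-2$ and order $-n-\ep$. Here Corollary~\ref{cor: order 2+p-l} in degree $n-2$ (exactly where $b_{n-2}=0$ is the hypothesis), combined with Corollary~\ref{cor: harmonic1}, forces such forms to have order $2-n$, so the integration by parts still goes through. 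The resulting harmonic representative $\ph-\db\ps$ then automatically has order $\al(x)-1=\ep+1-n>1-n$. The key insight you are missing is that $b_{n-2}$ should be spent on the auxiliary $(n-2)$-form Laplacian inside the construction, not on the final $(n-1)$-form.
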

\begin{proof}
We modify the proof of Lemma \ref{lem: c harm n-1}. Let $\ep>0$ be small enough and define $\al:X^\sing\to\R$ by saying that for $x\in X^\sing$ with $b_{n-2}(C_x^\reg)=0$ we should have $\al(x)=\ep+2-n$ and for every other $x\in X^\sing$ (with $b_{n-1}(C_x^\reg)=0$) we should have $\al(x)=-\ep+2-n.$ 
We show that every $\chi\in\ker\De^{p\, q-1}_{2-\al-2n}$ satisfies $\db\chi=0.$ For $x\in X^\sing$ with $b_{n-2}(C_x^\reg)=0$ the order of $\chi$ is $-n-\ep,$ and Corollaries \ref{cor: harmonic1} and \ref{cor: order 2+p-l} imply that it is in fact $2-n.$ For $x\in X^\sing$ with $b_{n-1}(C_x^\reg)=0$ the order of $\chi$ is $-n+\ep$ and Corollary \ref{cor: harmonic1} implies that it is in fact $2-n.$ In both cases $\chi$ has order $2-n$ and as in the proof of  Lemma \ref{lem: c harm n-1} we can verify $\db\chi=0.$

Let $\ph,\ps$ be as in the proof of Lemma \ref{lem: c harm n-1}. Then $\ps$ has order $\al,$ and $\ph-\db\ps$ is a harmonic form of order $\al-1.$ For $x\in X^\sing$ with $b_{n-2}(C_x^\reg)=0$ we have $\al(x)=\ep+2-n>2-n.$ So $\ph-\db\ps$ has order $>1-n$ at $x.$ On the other hand, for $x\in X^\sing$ with $b_{n-1}(C_x^\reg)=0$ we expand $\ph-\db\ps$ at $x$ into the sum of homogeneous harmonic forms. As $\al(x)-1=-\ep+1-n$ with $\ep$ small enough the leading term has order $1-n$ which, by Corollary \ref{cor: order -p}, vanishes. So $\ph-\db\ps$ has order $>1-n,$ which completes the proof.  
 \end{proof}

We make a more careful study of $(1,n-2)$ forms. We prove a lemma we will use for the next theorem.
\begin{lem}\l{lem: p exact}
Let $X$ be a compact Riemannian $l$-conifold, $p$ an integer and $\ph$ a $C^\iy$ $p$-form on $X^\reg$ of order $>-p.$ Then every $x\in X^\sing$ has a punctured neighbourhood $U^\reg$ on which $\ph$ is $\d$-exact.
\end{lem}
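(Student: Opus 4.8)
The plan is to produce the primitive explicitly near each singular point by the cone version of the Poincar\'e lemma, integrating radially inward from the vertex, and to read off convergence of the integral and vanishing of the resulting obstruction term from the single hypothesis that the order exceeds $-p$.

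First I would fix $x\in X^\sing$ and use Definition \ref{dfn: Riem conifolds} to identify a punctured neighbourhood of $x$ with $(0,\de)\times C_x^\lk\sb C_x^\reg$ for some $\de>0$, writing $r$ for the radius function; since exactness is a diffeomorphism invariant it suffices to work on this model. As in Proposition \ref{prop: Lap} I would write $\ph$ uniquely as $\ph=\d r\wedge\al(r)+\be(r)$, where for each $r$ the forms $\al(r)\in\Ga(\La^{p-1}_{C_x^\lk})$ and $\be(r)\in\Ga(\La^p_{C_x^\lk})$ are, respectively, the contraction $\frac{\bd}{\bd r}\mathbin{\lrcorner}\ph$ and the restriction $\ph|_{\{r\}\times C_x^\lk}$. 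The homotopy operator is then $K\ph:=\int_0^r\al(s)\,\d s$, a $(p-1)$-form with no $\d r$ component.

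Using $\d(\d r\wedge\al)=-\d r\wedge\d^\lk\al$ and $\d\be=\d r\wedge\frac{\bd\be}{\bd r}+\d^\lk\be$, where $\d^\lk$ is the differential on $C_x^\lk$, a direct computation (differentiating under the integral sign and interchanging $\d^\lk$ with $\int_0^r$) gives the identity
\[
\d(K\ph)+K(\d\ph)=\ph-\lim_{s\to0^+}\be(s).
\]
Granting $\d\ph=0$ --- as we must, since only a closed form can be exact --- this reduces the lemma to two weighted estimates: that the integral defining $K\ph$ converges (so that $K\ph$ is a genuine smooth $(p-1)$-form on $(0,\de)\times C_x^\lk$) and that the boundary term $\lim_{s\to0^+}\be(s)$ vanishes.

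Both follow from the order hypothesis. Since the cone metric is $\d r^2+r^2 g^\lk$, a form pulled back from $C_x^\lk$ of degree $k$ has cone-metric norm equal to $r^{-k}$ times its norm on $C_x^\lk$; writing $|\cdot|$ for the cone-metric norm and $|\cdot|_\lk$ for the norm on $C_x^\lk$ we get $|\be(r)|_\lk=r^p|\be(r)|$ and $|\al(r)|_\lk=r^{p-1}|\d r\wedge\al(r)|$. The hypothesis that $\ph$ has order $>-p$ means $|\ph|=O(r^\ga)$ for some $\ga>-p$, and since the decomposition is orthogonal both $|\d r\wedge\al(r)|$ and $|\be(r)|$ are $O(r^\ga)$; hence $|\be(r)|_\lk=O(r^{p+\ga})$ and $|\al(r)|_\lk=O(r^{p-1+\ga})$. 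Because $p+\ga>0$ the first bound forces $\lim_{s\to0^+}\be(s)=0$, and because $p-1+\ga>-1$ the integral $\int_0^r|\al(s)|_\lk\,\d s$ converges, with smoothness of $K\ph$ inherited from that of the integrand. Taking $U^\reg:=(0,\de)\times C_x^\lk$ then yields $\ph=\d(K\ph)$ there. I expect the only delicate point to be precisely this matching of the exponent $-p$ to the two thresholds $p+\ga>0$ and $p-1+\ga>-1$: the homotopy identity is formal, but it is the cutoff at order $-p$ that simultaneously kills the vertex term and secures integrability, which is why that exact weight appears in the statement.
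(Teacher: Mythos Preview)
Your proof is correct, and you rightly flag that $\d\ph=0$ must be assumed (it is absent from the statement but is used implicitly in the paper's own proof, and holds in every application). Your route differs from the paper's: you build the primitive explicitly via the radial homotopy operator $K\ph=\int_0^r\al(s)\,\d s$ and verify the two weighted thresholds, whereas the paper argues by periods --- it notes that for any $p$-cycle $A$ on $C_x^\lk$ the integral $\int_{\{r\}\times A}\ph=\int_{\{r\}\times A}\be(r)$ is constant in $r$ (by closedness) and tends to $0$ as $r\to0$ (by the order bound), hence vanishes identically, and then invokes de Rham theory on the product $(0,\de)\times C_x^\lk$. Your approach has the virtue of producing the primitive explicitly (indeed the paper itself uses exactly this integral $\int_0^r\ph'\,\d r$ later, in the proof of Theorem~\ref{thm: harm n-1 1 version2}); the paper's period argument is marginally more economical in that it needs only the $C^0$ order estimate on $\be$ and obtains a smooth primitive for free from de Rham. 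The one soft spot in your write-up is the phrase ``smoothness of $K\ph$ inherited from that of the integrand'': differentiating under $\int_0^r$ in the link directions requires the link-derivatives of $\al(s,\cdot)$ to be integrable down to $s=0$ as well, which follows if the order hypothesis is read in the $C^\infty$ sense (as it effectively is throughout the paper) but not from a bare pointwise bound.
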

\begin{proof}
Denote by $C_x$ the model cone at $x$ of $X.$ Write $\ph=\d\log r\wedge \ph'+\ph''$ where $\ph'$ is the pull-back of some $p-1$ form on $C^\lk_x,$ and $\ph''$ that of some $p$-form on $C^\lk_x.$ Making $U^\reg$ small enough we can suppose that it is diffeomorphic to $(0,\de)\times C^\lk_x$ for some $\de>0.$ Take any $p$-cycle $A$ on $C^\lk_x.$ Then $\int_{\{r\}\times A}\ph=\int_{\{r\}\times A}\ph''$ is independent of $r\in(0,\de).$ But since $\ph$ is of order $>-p$ it follows that so is $\ph''$ and hence that $\int_{\{r\}\times A}\ph''$ converges to $0$ as $r$ tends to $0.$ Thus $\int_{\{r\}\times A}\ph=0,$ which implies that $\ph$ is $\d$-exact on $U^\reg.$
\end{proof}

For $(1,n-2)$ forms we improve Corollary \ref{cor: c harm n-1} as follows.
\begin{lem}\l{lem: 1 n-2}
Let $X$ be a compact K\"ahler $n$-conifold whose singularities are of depth $\ge n.$ Suppose that for each $x\in X^\sing$ we have $b_{n-2}(C_x^\reg)=0$ or $b_{n-1}(C_x^\reg)=0.$ Give $X$ a K\"ahler conifold metric. Then there exists $\ep>0$ such that ${}_cH^{n-2}(X^\reg,\Om^1_{X^\reg})$ agrees with the image of the natural projection $\ker(\db+\db^*)^{1\, n-2}_{1+\ep-n}\to H^{n-2}(X^\reg,\Om^1_{X^\reg}).$
\end{lem}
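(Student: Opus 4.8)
The plan is to prove the two inclusions separately. One containment, namely that ${}_cH^{n-2}(X^\reg,\Om^1_{X^\reg})$ lies in the image of $\ker(\db+\db^*)^{1\,n-2}_{1+\ep-n}$, is exactly Corollary \ref{cor: c harm n-1} with $p=1$ and $q=n-2$; I fix the $\ep>0$ it provides. It then remains to prove the reverse containment: for every $\ph\in\ker(\db+\db^*)^{1\,n-2}_{1+\ep-n}$, the Dolbeault class $[\ph]\in H^{n-2}(X^\reg,\Om^1_{X^\reg})$ already belongs to ${}_cH^{n-2}(X^\reg,\Om^1_{X^\reg})$. Note that any such $\ph$ has order $\ge 1+\ep-n>1-n$, so the reverse inclusion holds for the whole kernel and both directions are compatible with a single $\ep$.

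The crux is to show that $\ph$ is $\db$-exact on a punctured neighbourhood of each singular point. Here I would deliberately avoid arguing directly with $\Om^1_X$, since that would require a depth hypothesis on $\Om^1_X$ which we do not have; instead I would combine real $\d$-exactness with the depth of $\O_X$. Since $\ph$ has total degree $n-1$ and order $>-(n-1)$, Lemma \ref{lem: p exact} supplies, for each $x\in X^\sing$, a Stein neighbourhood $U_x$ and an $(n-2)$-form $\ps$ on $U_x^\reg$ with $\d\ps=\ph$. Decomposing $\ps$ by type and comparing the $(0,n-1)$-components in $\d\ps=\ph$ (which is pure of type $(1,n-2)$) shows $\db\ps^{0\,n-2}=0$. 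The depth $\ge n$ hypothesis means $H^q_x(X,\O_X)=0$ for $q\le n-1$, so the local cohomology sequence for the Stein space $U_x$ gives $H^{n-2}(U_x^\reg,\O_X)=0$ (using $n\ge3$). By the Dolbeault isomorphism $\ps^{0\,n-2}=\db\gamma$ for some $(0,n-3)$-form $\gamma$ on $U_x^\reg$, and therefore, comparing $(1,n-2)$-components in $\d\ps=\ph$,
\[
\ph=\partial\ps^{0\,n-2}+\db\ps^{1\,n-3}=\db\bigl(\ps^{1\,n-3}-\partial\gamma\bigr),
\]
so that $\ph=\db\beta_x$ on $U_x^\reg$ with $\beta_x:=\ps^{1\,n-3}-\partial\gamma$.

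Finally I would globalise by cutting off. For each $x\in X^\sing$ choose a smooth function $\chi_x$ supported in $U_x$ and identically $1$ near $x$, and set $\tilde\ph:=\ph-\sum_{x}\db(\chi_x\beta_x)$. Since $\db\ph=0$ this form is $\db$-closed; it differs from $\ph$ by a globally $\db$-exact form, hence represents the same class in $H^{n-2}(X^\reg,\Om^1_{X^\reg})$; and near each $x$ it equals $\ph-\db\beta_x=0$, while away from the $U_x$ it equals $\ph$. Thus $\tilde\ph$ has compact support in $X^\reg$ and defines a class in $H^{n-2}_c(X^\reg,\Om^1_{X^\reg})$ mapping to $[\ph]$. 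Therefore $[\ph]\in{}_cH^{n-2}(X^\reg,\Om^1_{X^\reg})$, which establishes the reverse containment and hence the asserted equality.

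I expect the main obstacle to be precisely the local $\db$-exactness step, and in particular the recognition that the depth hypothesis must be applied to $\O_X$ rather than to $\Om^1_X$: the passage from the $\d$-exactness furnished by Lemma \ref{lem: p exact} to the $\db$-exactness we actually need rests on the vanishing $H^{n-2}(U_x^\reg,\O_X)=0$ together with the type decomposition. The remaining delicate point is the bookkeeping of orders and the range of $n$ — in particular that it is $n\ge3$ that makes $H^{n-2}(U_x^\reg,\O_X)=0$, the case $n=2$ falling outside the scope of this argument.
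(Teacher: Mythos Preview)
Your argument is essentially the paper's own proof: one inclusion is Corollary \ref{cor: c harm n-1}, and the other is obtained by combining the local $\d$-exactness from Lemma \ref{lem: p exact} with the depth hypothesis on $\O_X$ to upgrade to local $\db$-exactness, then cutting off. The one step you skip is the verification that $\d\ph=0$ before invoking Lemma \ref{lem: p exact}: membership in $\ker(\db+\db^*)^{1\,n-2}_{1+\ep-n}$ gives only $\db\ph=\db^*\ph=0$, hence $\De\ph=0$, but Lemma \ref{lem: p exact} (whose conclusion is $\d$-exactness) needs $\d$-closedness as well. The paper fills this by the obvious integration by parts---$\ph$ has order $>1-n$, so $\d\ph\cdot\d\ph+\d^*\ph\cdot\d^*\ph=\ph\cdot\De\ph=0$ is valid and yields $\d\ph=0$---after which your argument goes through verbatim.
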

\begin{proof}
Let $\ph$ be a harmonic $(1,n-2)$ form on $X^\reg$ of order $>1-n.$ Integration by parts shows that $\d\ph=\d^*\ph=0.$ Lemma \ref{lem: p exact} implies that every $x\in X^\sing$ has a punctured neighbourhood on which we can write $\ph=\d\ps$ with $\ps$ some $n-2$ form. As $\ph$ is a $(1,n-2)$ form we can write also $\ph=\bd\ps'+\db\ps''$ where $\ps'$ is some $(0,n-2)$ form with $\db\ps'=0,$ and $\ps''$ some $(1,n-3)$ form; this may be verified in the same way as in the paragraph containing \eq{decompose n-1 1}. Let $U$ be a Stein neighbourhood of $x\in X^\sing$ (which is an ordinary neighbourhood and contains therefore $x\in X^\sing$). Since $(X,x)$ has depth $\ge n$ it follows then that $H^{n-2}(U\-\{x\},\O_X)\cong H^{n-1}_x(U,\O_X)=0.$ So $\ps'=\db\chi$ where $\chi$ is some $(0,n-3)$ form on $U\-\{x\}.$ Thus $\ph=\bd\db\chi+\db\ps''=\db(-\bd\chi+\ps'')$ is $\db$ exact. Using cut-off functions, we see that the $\db$ cohomology class of $\ph$ lies in ${}_c H^{n-2}(X^\reg,\Om^1_X).$

For $\ep>0$ the image of the natural projection $\ker(\db+\db^*)^{1\, n-2}_{1+\ep-n}\to H^{n-2}(X^\reg,\Om^1_{X^\reg})$ thus lies in ${}_cH^{n-2}(X^\reg,\Om^1_{X^\reg}).$ This with Corollary \ref{cor: c harm n-1} completes the proof.
\end{proof}

\section{Deformation Functors}\l{sect: deform}
We make a basic definition we will use in what follows.
\begin{dfn}\l{dfn: Art}
For $\K=\R$ or $\C$ a {\it local $\K$-algebra} is a $\K$-algebra $A$ with unique maximal ideal $\fm A$ such that the natural maps $\K\to A\to  A/\fm A$ induce an isomorphism $\K\cong A/\fm A.$ An {\it Artin} local $\K$-algebra is a local $\K$-algebra $A$ which is an {\it Artin} ring; that is, every descending chain of ideals in it should be finite. 
\end{dfn}
\begin{rmk}
It is well-known that Artin rings are Noetherian rings. Moreover, for a local $\K$-algebra $A$ the following three conditions are equivalent: {\bf(i)} $A$ is an Artin ring; {\bf(ii)} $A$ is a Noetherian ring, and there exists an integer $n\ge1$ such that $(\fm A)^n=0;$ and {\bf(iii)} $A$ is a finite-dimensional $\K$-vector space. The proof is as follows. If (i) holds then by the descending chain condition there exists an integer $n\ge1$ such that $(\fm A)^n=(\fm A)^{n+1}.$ Nakayama's lemma implies therefore $(\fm A)^n=0.$ Using the $A$-module exact sequence $0\to (\fm A)^k\to (\fm A)^{k-1}\to (\fm A)^{k-1}/(\fm A)^k\to0$ for $k=1,\dots,n$ we see also that (iii) holds. Conversely, it is clear that (iii) implies the descending chain condition which is equivalent to (i). The three conditions are thus equivalent.
\end{rmk}

We prove a lemma we will use in Definition \ref{dfn: real para}.
\begin{lem}\l{lem: complexify}
If $A$ is an Artin local $\R$-algebra then the tensor product $A\otimes_\R\C,$ which is naturally a $\C$-algebra, is an Artin local $\C$-algebra. 
\end{lem}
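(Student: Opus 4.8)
The plan is to verify in turn the three conditions that define an Artin local $\C$-algebra for $B:=A\otimes_\R\C$: that $B$ is a finite-dimensional $\C$-vector space (equivalently, that it is Artin), that it has a unique maximal ideal, and that its residue field is $\C$.

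First I would dispose of the Artin property, which is immediate. By the remark following Definition~\ref{dfn: Art}, $A$ being Artin local forces $\dim_\R A<\iy$; tensoring with $\C$ then gives $\dim_\C B=\dim_\R A<\iy$, and any finite-dimensional $\C$-algebra is Artin, since a descending chain of ideals is in particular a descending chain of $\C$-subspaces and so must stabilize.

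The substantive step is locality. I set $\fm B:=(\fm A)\otimes_\R\C$, which is an ideal of $B$, and claim it is the unique maximal ideal. Right-exactness of $-\otimes_\R\C$ identifies the quotient, $B/\fm B\cong (A/\fm A)\otimes_\R\C\cong\R\otimes_\R\C\cong\C$, so $\fm B$ is maximal with residue field $\C$; tracing the structure map $\C\to B$ through this identification shows that the composite $\C\to B\to B/\fm B$ is the required isomorphism $\C\cong B/\fm B$. Next I would show that $\fm B$ is nilpotent: condition (ii) of the same remark supplies an integer $k$ with $(\fm A)^k=0$, and expanding products of elements of $\fm B$ shows $(\fm B)^k=(\fm A)^k\otimes_\R\C=0$. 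Finally I invoke the standard fact that a nilpotent ideal is contained in the nilradical, hence in every prime and in particular in every maximal ideal of $B$; therefore every maximal ideal of $B$ contains the maximal ideal $\fm B$, which forces it to equal $\fm B$. Thus $\fm B$ is the unique maximal ideal and $B$ is local.

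I do not expect a genuine obstacle here, as the lemma is elementary. The only place that calls for a little care is the identity $(\fm B)^k=(\fm A)^k\otimes_\R\C$: I would note that each generator of $(\fm B)^k$ is a sum of elements $a\otimes c$ with $a\in(\fm A)^k$ and conversely, so that the nilpotence of $\fm A$ really transports to $\fm B$, while flatness of $\C$ over $\R$ makes the inclusion $(\fm A)^k\otimes_\R\C\hookra B$ automatic.
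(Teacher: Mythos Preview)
Your proof is correct and follows essentially the same approach as the paper: both identify $\fm B=(\fm A)\otimes_\R\C$, use nilpotence coming from $(\fm A)^k=0$, and invoke finite-dimensionality for the Artin property. The only tactical difference is that the paper shows directly that every element outside $\fm B$ is a unit (writing it as a nonzero scalar plus a nilpotent), whereas you argue that the nilpotent ideal $\fm B$ lies in every maximal ideal; these are interchangeable standard arguments.
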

\begin{proof}
The composite of the natural maps $A\otimes_\R\C\to (A/\fm A)\otimes_\R\C\cong \C$ has kernel $\fm A\otimes_\R\C,$ which is therefore a maximal ideal of $A\otimes_\R\C.$ We show that its complement consists of invertible elements. Take $a\in( A\otimes_\R\C)\-(\fm A\otimes_\R\C)$ and write $a=:a'\otimes1+a''\otimes i$ with $a',a''\in A.$ Put $a'=:b'+c'$ and $a''=:b''+c''$ where $b',b''\in \C$ and $c',c''\in\fm A.$ Since $a\notin  (\fm A\otimes_\R\C)$ it follows that $b:=b'\otimes 1+b''\otimes i\in \C\-\{0\}.$ Since $c',c''\in\fm A$ are nilpotent it follows that so is $c:=c'\otimes1+c''\otimes i\in A\otimes_\R\C.$ As $b\ne0$ we can define $\frac cb\in A\otimes_\R\C,$ and $c$ nilpotent implies $\frac cb$ nilpotent. Consequently $1+\frac cb$ is invertible and accordingly so is $b+c=a.$

The ring $A\otimes_\R\C$ is thus a local ring with unique maximal ideal $\fm A\otimes_\R\C.$ Since $A$ is a finite-dimensional $\R$-vector space it follows that $A\otimes_\R\C$ is a finite-dimensional $\C$-vector space, which must therefore be an Artin ring as we have to prove.
\end{proof}

\begin{dfn}
Let $\K=\R$ or $\C.$ We denote by $(\Art)_\K$ the category whose objects are Artin local $\K$-algebras and whose morphisms are $\K$-algebra homomorphisms. A {\it small extension} homomorphism in $(\Art)_\K$ is a surjective $\K$-algebra homomorphism $A\to B$ whose kernel is a non-zero principal ideal $(\ep)\sb A$ such that the product ideal $(\ep)\fm A\sb A$ vanishes.
\end{dfn}

\begin{dfn}\l{dfn: H}
Denote by $(\sets)$ the category with objects sets and morphisms maps. Call a functor $D:(\Art)_\C\to(\sets)$ a {\it deformation} functor if $D(\C)$ consists of a single element. For a deformation functor $D:(\Art)_\C\to(\sets)$ we consider the following conditions:
\iz
\item[\bf (H1)] Let $A,B,C$ be Artin local $\C$-algebra, $A\to C$ a $\C$-algebra homomorphism and $B\to C$ a small extension homomorphism in $(\Art)_\C.$ The induced map $D(A\times_CB)\to D(A)\times_{D(C)} D(B)$ is then surjective.
\item[\bf (H2)] Let $A$ be an Artin local $\C$-algebra and take $B:=\C[t]/t^2.$ The induced map $D(A\times_\C B)\to D(A)\times_{D(\C)}D(B)$ is then bijective.
\iz
It is known that (H1) and (H2) imply the following condition: 
\begin{equation}\l{H12}\parbox{10cm}{
If $A\to B$ is a small extension homomorphism in $(\Art)_\C$ then the additive group $D(\C[t]/t^2)$ acts transitively upon the non-empty fibres of $D(A)\to D(B).$
}\end{equation}
This is proved as follows. Denote by $\pi:A\to A/\fm A \cong\C$ the natural projection and by $(\ep):=\ker (A\to B)$ the non-zero principal ideal of $A.$ There is then a $\C$-algebra isomorphism $A\times_\C (\C[t]/t^2)\cong A\times_BA$ defined by $(a,\pi a+\la t)\mapsto (a,a+\la\ep)$ for $a\in A$ and $\la\in\C.$ Using this and the condition (H1) we get a bijection $D(A)\times_{D(\C)}D(\C[t]/t^2)\cong D(A\times_BA).$ On the other hand, (H2) implies that the induced map $D(A\times_BA)\to D(A)\times_{D(B)}D(A)$ is surjective. Combining these two maps we get a surjection $D(A)\times_{D(\C)}D(\C[t]/t^2)\to D(A)\times_{D(B)}D(A)$ which defines the transitive action we want.

It is also easy to show that if (H2) holds then $D(\C[t]/t^2)$ has a natural $\C$-vector space structure. In this case consider the following condition:
\iz
\item[\bf (H3)] $D(\C[t]/t^2)$ is a finite-dimensional $\C$-vector space.
\iz
Schlessinger \cite[Theorem 2.11(1)]{Schless} proves that (H1)--(H3) hold if and only if $D$ has a hull \cite[Definition 2.7]{Schless}. 

Define for $k=0,1,2,\dots$ two $\C$-algebras $A_k:=\C[t]/t^{k+1}$ and $A_k[\ep]:=\C[t,\ep]/(t^{k+1},\ep^2).$ Consider the natural projection $A_k[\ep]\to A_k$ and the induced map $D(A_k[\ep])\to D(A_k).$ For $\xi\in D(A_k)$ denote by $T^1(\xi)$ the set of $\et\in D(A_k[\ep])$ which maps to $\xi$ under $D(A_k[\ep])\to D(A_k).$ Following \cite[\S1.5]{Gross} consider the condition called (H5). If (H1)--(H3) and (H5) hold then for $k=0,1,2,\dots$ and $\xi\in D(A_k)$ there is on $T^1(\xi)$ a natural $A_k$ module structure.

An {\it obstruction space of} $D$ a $\C$-vector space $T^2$ with the following two properties: {\bf(i)} for every small extension $0\to(\ep)\to A\stackrel{f}{\to} B\to 0$ in $(\Art)_\C$ there exists a sequence $D(A)\xrightarrow{D(f)} D(B)\to T^2\otimes_\C(\ep)$ which is exact in the sense that the image of the former map $D(f)$ agrees with the fibre over $0\in T^2\otimes_\C(\ep)$ of the latter map; and {\bf(ii)} if there is in $(\Art)_\C$ a commutative diagram
\begin{equation} \begin{tikzcd}
0\ar[r]& (\ep)\ar[r]\ar[d,"\al"]& A\ar[r,"f"] \ar[d,"\al"]& B \ar[d,"\be"]\ar[r]&0\\
 0\ar[r]& (\ep')\ar[r]              & A'\ar[r,"g "]               &  B'\ar[r]                   &0
 \end{tikzcd}\end{equation}
whose rows are small extension homomorphisms then there is a commutative diagram
$\begin{tikzcd}
 D(A)\ar[r,"D(f)"]\ar[d,"D(\al)"]& D(B)\ar[r]\ar[d,"D(\be)"]& T^2\otimes_\C(\ep)\ar[d,"\id\otimes \al"]\\
D(A')\ar[r,"D(g)"]& D(B')\ar[r]& T^2\otimes_\C(\ep')
\end{tikzcd}$
whose rows are the exact sequences in (i) just mentioned. We call the map $D(A)\to T^2(X)\otimes_\C(\ep)$ the {\it obstruction map of $(T^2(X),f).$}
\end{dfn}

We recall a version we will use of $T^1$ lift theorems; for the original versions see \cite{Kaw,Ran}, and for the more complex version we will use to prove Theorem \ref{main thm1} see Lemma \ref{lem: concentrate}. 
\begin{thm}[Theorem 1.8 of \cite{Gross}]\l{thm: T1}
Let $(\Art)_\C\to(\sets)$ be a deformation functor satisfying {\rm(H1)--(H3)} and {\rm(H5)} and having an obstruction space. For $k=1,2,3,\dots$ denote by $\pi_k:A_k\to A_{k-1}$ the natural projection. Suppose that for $k=1,2,3,\dots$ and $\xi\in D(A_k),$ if we put $\et:=D(\pi_k)(\xi)\in D(A_{k-1})$ then the natural map $T^1(\xi)\to T^1(\et)$ is surjective. The maps $D(\pi_1),D(\pi_2),D(\pi_3),\dots$ are then all surjective. \qed
\end{thm}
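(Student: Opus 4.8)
The plan is to prove that each $D(\pi_k)\colon D(A_k)\to D(A_{k-1})$ is surjective by induction on $k,$ reducing surjectivity at each stage to the vanishing of a single obstruction class. The base case $k=1$ is immediate: $D(A_0)=D(\C)$ is a single point and $D(A_1)=D(\C[t]/t^2)$ is nonempty, since by {\rm(H2)} it carries a $\C$-vector space structure and hence contains $0.$ For the inductive step, fix $k\ge2,$ assume $D(\pi_j)$ surjective for all $j<k,$ and take $\eta\in D(A_{k-1}).$ The sequence $0\to(t^k)\to A_k\xrightarrow{\pi_k}A_{k-1}\to0$ is a small extension in $(\Art)_\C,$ so property (i) of the obstruction space produces a class $o(\eta)\in T^2\otimes_\C(t^k)$ whose vanishing is equivalent to $\eta$ lying in the image of $D(\pi_k).$ Thus it suffices to show $o(\eta)=0;$ and once a single lift is known to exist, \eqref{H12} shows that the entire fibre of $D(\pi_k)$ over $\eta$ is a homogeneous space under $D(\C[t]/t^2),$ so the reduction to obstruction vanishing loses nothing.

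The heart of the argument is to rewrite $o(\eta)$ in terms of data governed by the $T^1$-lifting hypothesis. I would use the $A_{k-1}$-module structure on $T^1(\eta)$ furnished by {\rm(H5)} together with the functoriality of the obstruction map (property (ii) of the obstruction space). Concretely, I would build a morphism of small extensions comparing the ``$t$-direction'' extension $A_k\to A_{k-1}$ with the square-zero ``$\ep$-direction'' extensions $A_k[\ep]\to A_k,$ using explicit homomorphisms of Artin algebras such as the map $A_k[\ep]\to A_k$ sending $\ep\mapsto t^k,$ which is well defined precisely because $t^{2k}=0$ in $A_k$ for $k\ge1.$ Chasing $o(\eta)$ through the commutative square of exact obstruction sequences attached to this morphism should identify it, via the module action, with a class in $T^1(\eta)$ that becomes liftable as soon as the first-order maps $T^1(\xi)\to T^1(\eta)$ are surjective for every lift $\xi;$ such liftability then exhibits $o(\eta)$ as coming from $D(A_k[\ep]),$ forcing it to vanish and closing the induction.

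The step I expect to be the main obstacle is exactly this identification: deducing vanishing of the genuinely second-order class $o(\eta)$ from surjectivity of the purely first-order maps $T^1(\xi)\to T^1(\eta).$ The most robust route may be to reorganize everything around a hull $R\to D,$ which exists by Schlessinger's theorem (the content of {\rm(H1)--(H3)}): surjectivity of all the $D(\pi_k)$ is then equivalent, through formal smoothness of $h_R\to D,$ to the property that every jet $R\to A_{k-1}$ extends to $R\to A_k,$ and ultimately to formal smoothness of $R$ along the formal arc $\spec\C[[t]].$ Under this dictionary $T^1(\xi)$ becomes the module of $\C$-derivations $R\to A_k$ along $\xi,$ and the $T^1$-surjectivity hypothesis becomes the statement that these derivation modules have locally constant rank along the chain $\cdots\to A_k\to A_{k-1}\to\cdots;$ a Nakayama and base-change argument then upgrades constant rank to the freeness that yields the jet-extension property. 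Either way, the decisive technical input is the compatibility between the obstruction map and the module structure on $T^1,$ and it is in verifying this compatibility carefully that the real work lies.
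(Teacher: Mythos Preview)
The paper does not prove this theorem: it is quoted from \cite{Gross} and closed with a bare \qed. What the paper does supply is Remark~\ref{rmk: T1}, which records the precise mechanism behind the proof: the $\C$-algebra homomorphism $\th_k\colon A_k\to A_{k-1}[\ep]$ defined by $t\mapsto t+\ep$, under which $D(\th_k)(\xi)$ is a distinguished element of $T^1(\et)$, and the statement that $\xi$ lifts along $\pi_{k+1}$ if and only if $D(\th_k)(\xi)$ lifts along $\varpi_k\colon A_k[\ep]\to A_{k-1}[\ep]$. Once one has this equivalence, the $T^1$-lifting hypothesis applied to $\xi$ (not to an unspecified lift) immediately gives the required preimage, and the induction closes.

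Your sketch is headed in the right direction but stops short of this specific identification, and you say so yourself. The map you propose, $A_k[\ep]\to A_k$ with $\ep\mapsto t^k$, is a legitimate morphism of Artin algebras, but it is not the one that does the work in the Kawamata--Ran argument; the crucial map goes the other way, $\th_k\colon A_k\to A_{k-1}[\ep]$, and it singles out a canonical element of $T^1(\et)$ whose liftability is \emph{equivalent} to the liftability of $\xi$. Without that equivalence you are left, as you note, trying to extract vanishing of a second-order obstruction from a first-order surjectivity statement by a diagram chase that you have not actually carried out. Your alternative route through the hull and constancy of rank is also plausible in outline, but again the substance---why surjectivity of $T^1(\xi)\to T^1(\et)$ forces the relevant cotangent module to be free---is exactly the content of the Kawamata--Ran lemma and is not something one can wave through. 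In short: the missing ingredient is the map $\th_k$ and the precise statement of Remark~\ref{rmk: T1}; once you have those, the proof is a two-line induction.
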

\begin{rmk}\l{rmk: T1}
More precisely, the following holds. Take $k=1,2,3,\dots$ and $\xi\in D(A_k).$ Define a $\C$-algebra homomorphism $\th_k:A_k\to A_{k-1}[\ep]$ by $t\mapsto t+\ep$ module ideals (so that if we define $\et$ as in Theorem \ref{thm: T1} above then $D(\th_k)(\xi)\in T^1(\et)$). Denote by $\varpi_k: A_k[\ep]\to A_{k-1}[\ep]$ the natural projection. Then $\xi$ lies in the image of $D(\pi_{k+1}):D(A_{k+1})\to D(A_k)$ if and only if $D(\th_k)(\xi)$ lies in the image of $D(\varpi_k):D(A_k[\ep])\to D(A_{k-1}[\ep]).$
\end{rmk}

We turn now to the examples of deformation functors. We begin by recalling the definition of $A$-ringed spaces.
\begin{dfn}\l{dfn: complex spaces}
Let $A$ be a commutative ring with unit. Then an {\it $A$-ringed space} is the pair $(X,\O_X)$ where $X$ is a topological space and $\O_X$ a sheaf on $X$ of $A$-algebras. A {\it morphism} from an $A$-ringed space $(X,\O_X)$ to another $A$-ringed space $(Y,\O_Y)$ is the pair of a continuous map $X\to Y$ and an $A$-algebra sheaf homomorphism $\O_Y\to f_*\O_X.$
\end{dfn}
\begin{rmk}
Complex spaces are thus $\C$-ringed spaces. Morphisms of complex spaces are by definition the morphisms of $\C$-ringed spaces. 
\end{rmk}
We recall the standard definitions about deformations of complex spaces.
\begin{dfn}\l{dfn: def}
Let $X$ be a compact complex space. A {\it deformation of $X$} is the data $(\cX,S,o,f,\ph)$ where $\cX,S$ are complex analytic spaces, $o\in S$ a point of the underlying topological space, $f:\cX\to S$ a proper flat morphism of complex spaces, and $\ph:\cX\times_S\{o\}\cong X$ a complex space isomorphism. For $X$ compact, we require $f$ to be proper. We omit $o,f,\ph$ when they are clear from the context. 

Fix a complex space $S$ and a point $o\in S.$ Let $f:\cX\to S$ and $g:\cY\to S$ be deformations of $X.$ Then an {\it isomorphism} from $f:\cX\to S$ to $g:\cY\to S$ is a complex space isomorphism $\cX\to\cY$ which induces over $o\in S$ the identity map $X\cong \cX\times_S\{o\}\to \cY\times_S\{o\}\cong X.$

There is always a deformation of $X$ defined by $X\times S$ with the projection $X\times S\to S,$ which we call the {\it trivial} deformation of $X.$

For an Artin local $\C$-algebra $B$ we denote by $\spec B$ the complex space whose underlying topological space consists of one point and whose stalk over it is exactly $B.$ A deformation {\it over} $B$ of $X$ is a deformation $(\cX,S,o,f,\ph)$ with $S=\spec B.$ We denote this by $\cX/B$ for short.
\end{dfn}
\begin{rmk}
It follows from definition that $\cX$ has the same underlying space as $X$ and that $\O_\cX$ is a sheaf of $B$-algebras. Thus $\cX$ is a $B$-ringed space $(X,\O_\cX).$ Isomorphisms of two deformations over $B$ are the $B$-ringed space isomorphisms.
\end{rmk}

We define also deformations over real parameter spaces. This will be crucial to defining real differential forms including K\"ahler forms; for more details see Definitions \ref{dfn: glue} and \ref{dfn: Kahl}.
\begin{dfn}\l{dfn: real para}
Let $X$ be a complex space and $A$ an object of $(\Art)_\R.$ Recall from Lemma \ref{lem: complexify} that $B:=A\otimes_\R\C$ is an Artin local $\C$-algebra. A {\it deformation} over $A$ of $X$ is a deformation over $B$ of $X.$ We denote this by $\cX/A.$ So $\cX/A=\cX/B$ in notation, and we choose the more convenient one according to the context. 
\end{dfn}

We give now the first key example of deformation functors. We give only a short account of the relevant facts; for more details see for instance \cite{Nam}. 
\begin{ex}\l{ex: D}
Let $X$ be a compact reduced complex space. Denote by $D:(\Art)_\C\to (\sets)$ the deformation functor which assigns to every Artin local $\C$-algebra $A$ the set of isomorphism classes of deformations over $A$ of $X.$ It is known that $D$ satisfies (H1)--(H3) and (H5). It has also an obstruction space $T^2(X):=\Ext^2_{\O_X}(L_X,\O_X)$ where $L_X\in D^-(\Mod\O_X)$ is the cotangent complex of $X.$ We can therefore apply Theorem \ref{thm: T1} to the deformation functor $D.$ 

It is known that for $k=0,1,2,\dots,$ if $X_k/A_k$ is a deformation of $X$ then its $T^1$ module $T^1(X_k/A_k)$ is isomorphic to $\Ext^1_{\O_{X_k}}(\Om^1_{X_k},\O_{X_k}).$ Thus, if for $k=1,2,3,\dots$ the natural map  $T^1(X_k/A_k)\to T^1(X_{k-1}/A_{k-1})$ is always surjective then so are $D(\pi_1),D(\pi_2),D(\pi_3),\dots$ as in Theorem \ref{thm: T1}. By \cite{Douad,Grauert1, Kur} there exists a Kuranishi space $\Def(X),$ the base space of semi-universal deformations of $X.$ If $D(\pi_1),D(\pi_2),D(\pi_3),\dots$ are surjective then every tangent vector to $\Def(X)$ may be lifted to a formal path; that is, $\Def(X)$ is non-singular. 
\end{ex}

Deformations of complex space germs are defined in the same way as in Definition \ref{dfn: def}. One difference is that a deformation $(\cX,x)\to (S,o)$ of the germ $(X,x)$ is no longer a proper map. But otherwise the modification is straightforward. The corresponding deformation functors have properties similar to those of Example \ref{ex: D}, as we recall briefly now; for more details see for instance \cite{GLS}.
\begin{ex}
Let $(X,x)$ be the germ of a reduced complex space. Denote by $D:(\Art)_\C\to (\sets)$ the deformation functor which assigns to every Artin local $\C$-algebra $A$ the set of isomorphism classes of deformations over $A$ of $X.$ It is known that $D$ satisfies (H1)--(H3) and (H5). It has also an obstruction space $T^2:=\Ext^2_{\O_{X,x}}(L_{X,x},\O_{X,x})$ where $L_{X,x}\in D^-(\Mod\O_{X,x})$ is the cotangent complex of $(X,x).$ We can therefore apply Theorem \ref{thm: T1} to the deformation functor $D.$ 

It is known that for $k=0,1,2,\dots,$ if $X_k/A_k$ is a deformation of $(X,x)$ then its $T^1$ module is isomorphic to 
$\Ext^1_{\O_{X_k,x}}((\Om^1_{X_k/A_k})_x,\O_{X_k,x}).$ Thus, if for $k=1,2,3,\dots$ the natural map $T^1(X_k/A_k)\to T^1(X_{k-1}/A_{k-1})$ is always surjective then so are $D(\pi_1),D(\pi_2),D(\pi_3),\dots$ as in Theorem \ref{thm: T1}. This will imply that the Kuranishi space, which exists by \cite{Don,Tyur}, is non-singular.
\end{ex}

We recall also the basic facts about locally trivial deformations; for more details see for instance \cite[Corollary 2.6 and Remark 2.7]{BGL}.
\begin{ex}\l{ex: loc triv def}
Let $X$ be a compact complex space. Denote by $D:(\Art)_\C\to (\sets)$ the deformation functor which assigns to every Artin local $\C$-algebra $A$ the set of isomorphism classes of locally trivial deformations over $A$ of $X.$ It is known that $D$ satisfies (H1)--(H3) and (H5). It has also an obstruction space $T^2(X):=H^2(X,\Th_X).$ We can therefore apply Theorem \ref{thm: T1} to $D.$

Suppose now that $\cX/A$ is a locally trivial deformation of $X$ so its isomorphism class defines an element of $D(A).$ There exists then a surjective $A$-module homomorphism $H^1(X,\Th_{\cX/A})\to T^1(\cX/A).$ This will be an isomorphism if $\cX/A$ has no non-trivial automorphisms that, restricted to $X,$ become the identity. 

Let $k\ge1$ be an integer, $X_k/A_k$ a deformation of $X,$ and $X_{k-1}/A_{k-1}$ the deformation of $X$ defined by $X_{k-1}:=\spec A_{k-1}\times_{\spec A_k}X_k.$ There is then a commutative diagram
\begin{equation}\l{ga3}
\begin{tikzcd}
H^1(X,\Th_{X_k/A_k})\ar[r]\ar[d,"\al"]& T^1(X_k/A_k)\ar[d,"\ga"]\\
H^1(X,\Th_{X_{k-1}/A_{k-1}})\ar[r,"\be"]& T^1(X_{k-1}/A_{k-1})
\end{tikzcd}
\end{equation}
where the horizontal maps are those introduced above and the vertical maps those induced by $\pi_k:A_k\to A_{k-1}.$ Suppose now that the left vertical map $\al$ is surjective. Since the bottom horizontal map $\be$ is surjective as mentioned above it follows then that $\be\cm\al$ is surjective. Accordingly, so is $\ga.$ The functor $D$ thus satisfies the hypothesis of Theorem \ref{thm: T1}; and consequently, $D(\pi_1),D(\pi_2),D(\pi_3),\dots$ are surjective. This will imply that the Kuranishi space, which exists by \cite[Corollary 0.3]{FK}, is non-singular.
\end{ex}

As in \cite{Gross2} we cannot expect that our deformation functors are always unobstructed. Following \cite[Theorem 2.2]{Nam0} therefore we make
\begin{dfn}\l{dfn: concentrate}
Let $X$ be a compact reduced complex space whose singularities are isolated. For $x\in X^\sing$ denote by $D_x:(\Art)_\C\to (\sets)$ the deformation functor which assigns to every Artin local $\C$-algebra $A$ the set of isomorphism classes of deformations over $A$ of the germ $(X,x).$ It is known that $D_x$ has an obstruction space $T^2(X):=\Ext^2_{\O_{X,x}}(L_{X,x},\O_{X,x})$ where $L_{X,x}\in D^-(\Mod\O_{X,x})$ is the cotangent complex of $(X,x).$ Define $D_\loc:(\Art)_\C\to(\sets)$ by $D_\loc(A):=\prod_{x\in X^\sing}D_x(A)$ for $A$ an object of $(\Art)_\C.$ Put $T^2_\loc(X):=\bop_{x\in X^\sing}T^2(X,x).$ Notice that for each $k=0,1,2,\dots$ the map $\pi_{k+1}:A_{k+1}\to A_k$ is a small extension homomorphism in $(\Art)_\C,$ with kernel the principal ideal $(t^{k+1})\sb A_{k+1}.$ We define then a commutative diagram
\begin{equation}\l{concentrate}\begin{tikzcd}[column sep=large]
D(A_{k+1})\ar[r,"D(\pi_{k+1})"]\ar[d] &D(A_k)\ar[r,"\al"]\ar[d] &T^2(X)\otimes_\C(t^{k+1})\ar[d,"\be"]\\
D_\loc(A_{k+1})\ar[r,"D_\loc(\pi_{k+1})"]& D_\loc(A_k)\ar[r,"\al_\loc"]& T^2_\loc(X)\otimes_\C(t^{k+1}).
\end{tikzcd}\end{equation}
where $\al$ is the obstruction map of $(T^2(X),\pi_{k+1})$ and $\al_\loc$ that of $(T^2_\loc(X),\pi_{k+1}).$ Define the leftmost vertical map $D(A_{k+1})\to D_\loc(A_k)$ by taking an element of $D(A_{k+1}),$ representing it by a deformation $X_k/A_k$ of $X,$ taking the germ at $X^\sing$ of $X_k/A_k,$ and taking its isomorphism class (which is independent of the choice of the representative $X_k/A_k$). Define in the same way the middle vertical map $D(A_k)\to D_\loc(A_k).$ We define now the rightmost vertical map $\be:T^2(X)\otimes_\C(\ep)\to T^2_\loc(X)\otimes_\C\ep.$ Denote by $f:X^\sing\to X$ the inclusion map and notice that there is a natural isomorphism $T^2_\loc(X)\cong \Ext^2_{\O_X}(L_X,f_*f^*\O_X).$ The natural map $\id\to f_*f^*$ induces therefore a map $\Ext^2_{\O_X}(L_X,\O_X)\to \Ext^2_{\O_X}(L_X,f_*f^*\O_X).$ But the domain $\Ext^2_{\O_X}(L_X,\O_X)$ of the latter map is exactly $T^2(X)$ and hence we get a map $T^2(X)\to T^2_\loc(X).$ Tensoring this with $(\ep)$ we get a map $T^2(X)\otimes_\C(\ep)\to T^2_\loc(X)\otimes_\C\ep$ which we call $\be.$

We say that the obstruction to deforming $X$ {\it concentrates upon} its singularities if for each $k=0,1,2,\dots$ the map $\be|_{\im\al}:\im\al\to T^2_\loc(X)\otimes_\C(\ep)$ is injective.
\end{dfn}
\begin{rmk}
Suppose that the obstruction to deforming $X$ concentrates upon its singularities and that every $(X,x)$ has unobstructed deformations. We show then that the whole $X$ has unobstructed deformations. Let $k\ge0$ be an integer and take any element $\xi\in D(A_k).$ As the deformations of each $(X,x)$ are unobstructed, in \eq{concentrate} the left bottom horizontal map $D_\loc(\pi_k):D_\loc(A_{k+1})\to D_\loc(A_k)$ is surjective. In particular, the composite map $D(A_k)\to D_\loc(A_k)\to T^2_\loc(X)\otimes_\C(t^{k+1})$ maps $\xi$ to zero. But by Definition \ref{dfn: concentrate} the map $\be|_{\im\al}$ is injective, so in \eq{concentrate} the right top horizontal map $D(A_k)\to T^2(X)\otimes_\C(t^{k+1})$ maps $\xi$ to zero. Thus $\xi$ may be lifted to $D(A_{k+1})$ in \eq{concentrate}. The map $D(\pi_{k+1}):D(A_{k+1})\to D(A_k)$ is therefore surjective. As this holds for every $k=0,1,2,\dots$ the deformations of $X$ are unobstructed.   
\end{rmk}

The condition in Definition \ref{dfn: concentrate} is rather hard to verify as it is. Following \cite[Theorem 2.2]{Gross} therefore we make
\begin{dfn}\l{dfn: Gross}
Let $X$ be a compact reduced complex space whose singularities are Cohen--Macaulay. Let $k\ge1$ be an integer, $X_k/A_k$ a deformation of $X,$ and $X_{k-1}/A_{k-1}$ the deformation of $X$ defined by $X_{k-1}:=\spec A_{k-1}\times_{\spec A_k}X_k.$ Suppose that if we denote by $\io:X^\reg\to X$ the inclusion of the regular locus then
\e\l{CY}
\text{$\io_*\Om^n_{X_k/A_k}$ is a rank-one free $\O_{X_k}$ module.}
\e
Consider the deformation functor of Example \ref{ex: D} and its $T^1$ modules. We define then an $A_k$ module exact sequence
\begin{equation}\l{T1T2}
T^1(X_k/A_k)\to T^1(X_{k-1}/A_{k-1})\to \Ext^2_{\O_X}(\Om^1_X,\O_X).
\end{equation}
Consider the $A_k$ module short exact sequence $0\to A_{k-1}\to A_k\to\C\to0$ where the first arrow is the multiplication by $t$ modulo ideals and the second arrow the natural projection. Tensoring these with the sheaf $\Om^1_{X_k/A_k}$ we get an exact sequence $\Om^1_{X_{k-1}/A_{k-1}}\to \Om^1_{X_k/A_k}\to \Om^1_X\to0.$ As $\Om^1_{X_k/A_k}$ is flat over $X^\reg$ the kernel of the first arrow, which we call $\ta,$ is supported on $X^\sing.$ Since $X^\sing$ has dimension $\le n-2$ it follows that $H^{n-1}(X,\ker\ta)=H^n(X,\ker\ta)=0$ and hence that the natural map $ H^{n-1}(X,\Om^1_{X_{k-1}/A_{k-1}})\to H^{n-1}(X,\im\ta)$ is an isomorphism. Using this we get an $A_k$ module exact sequence 
\e\l{T1T21}  H^{n-2}(X,\Om^1_X)\to H^{n-1}(X,\Om^1_{X_{k-1}/A_{k-1}})\to H^{n-1}(X,\Om^1_{X_k/A_k}).\e
Since $A_k$ is a Gorenstein ring and an injective $A_k$ module it follows that the functor $\hom_{A_k}(\bt,A_k)$ is exact; and in particular, taking the dual of \eq{T1T21} we get an exact sequence
\ea\l{T1T22} \hom_{A_k}(H^{n-1}(X,\Om^1_{X_k/A_k}),A_k)\to \hom_{A_k}(H^{n-1}(X,\Om^1_{X_{k-1}/A_{k-1}}),A_k)\\
\to\hom_{A_k}(H^{n-2}(X,\Om^1_X),A_k).\ea
Note now that for $M$ an $A_j$ module with $j<k$ there is a natural isomorphism $\hom_{A_k}(M,A_k)\cong\hom_{A_j}(M,A_j).$ The sequence \eq{T1T22} may then be re-written as
\ea\l{T1T23}
\!\!\!\!\!
\hom_{A_k}(H^{n-1}(X,\Om^1_{X_k/A_k}),A_k)\to \hom_{A_{k-1}}(H^{n-1}(X,\Om^1_{X_{k-1}/A_{k-1}}),A_{k-1})\\
\to\hom_\C(H^{n-2}(X,\Om^1_X),\C).\ea
The condition \eq{CY} implies now that the relative canonical sheaves of $X_0,\dots,X_k$ are all free of rank one. The three $A_k$ modules of \eq{T1T23} are then isomorphic by Serre duality to those three of \eq{T1T2}. Using this we define the arrows of \eq{T1T2} to be those of \eq{T1T23}.

Suppose now that $X^\sing$ is isolated. For $x\in X^\sing,$ if we denote by $f:\{x\}\to X$ the inclusion map then using the natural map $\id\to f_*f^*$ we get for $k=0,1,2,\dots$ a map $\Ext^1_{\O_{X_k,x}}(\Om^1_{X_k/A_k},\O_{X_k})\to \Ext^1_{\O_{X_k,x}}((\Om^1_{X_k/A_k})_x,\O_{X_k,x})$ or equivalently a map $T^1(X_k/S_k)\to T^1_\loc(X_k/A_k)$ where the latter denotes the $T^1$ module for the deformation functor $D_\loc.$ There is also a map from $\Ext^2_{\O_{X_k,x}}(\Om^1_{X_k/A_k},\O_{X_k})$ to $\Ext^2_{\O_{X_k,x}}((\Om^1_{X_k/A_k})_x,\O_{X_k,x}).$ There is now a commutative diagram
\begin{equation}\l{Gross}\begin{tikzcd}
\!\!\!\!\!\!\!\!T^1(X_k/A_k)\ar[r]\ar[d] &T^1(X_{k-1}/A_{k-1})\ar[r,"\ga"]\ar[d] &\Ext^2_{\O_X}(\Om^1_X,\O_X)\ar[d,"\de"]\\
\!\!\!\!\!\!\!\!T_\loc^1(X_k/A_k)\ar[r]& T_\loc^1(X_{k-1}/A_{k-1})\ar[r]&\!\!\! \displaystyle\bop_{x\in X^\sing}\!\!\!\Ext^2_{\O_{X,x}}(\Om^1_{X,x},\O_{X,x}).
\end{tikzcd}\end{equation}
\end{dfn}

The following is a more complex version of $T^1$ lift theorems. Although this is known to experts, we give it a proof for the sake of clarity; in \cite[Theorem 2.2]{Nam0}, for instance, the result is stated without proof.
\begin{lem}\l{lem: concentrate}
Let $X$ be a compact reduced complex space whose singularities are Cohen--Macaulay and isolated; the latter implies that Definition \ref{dfn: concentrate} makes sense. Let \eq{CY} hold so that Definition \ref{dfn: Gross} makes sense, and suppose that in \eq{Gross} the map $\de|_{\im\ga}:\im\ga\to \bop_{x\in X^\sing}\Ext^2_{\O_{X,x}}(\Om^1_{X,x},\O_{X,x})$ is injective. The obstruction to deforming $X$ then concentrates upon its singularities.
\end{lem}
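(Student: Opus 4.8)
The plan is to verify the condition of Definition \ref{dfn: concentrate} --- that $\be|_{\im\al}$ is injective for every $k$ --- by exhibiting the obstruction square of \eqref{concentrate} as a copy, on its relevant pieces, of the $T^1$-lift square of \eqref{Gross}. The guiding principle is that the genuine obstruction map $\al,$ built from $T^2(X)=\Ext^2_{\O_X}(L_X,\O_X),$ and the connecting map $\ga$ of the $T^1$-lift sequence \eqref{T1T2}, built from $\Ext^2_{\O_X}(\Om^1_X,\O_X)$ by Serre duality, record one and the same obstruction, while $\be$ and $\de$ are both instances of the localization induced by $\id\to f_*f^*.$

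First I would fix $k$ and $\xi\in D(A_k)$ and reduce the lifting of $\xi$ across the small extension $0\to(t^{k+1})\to A_{k+1}\to A_k\to0$ to a tangent-level problem via Theorem \ref{thm: T1} and Remark \ref{rmk: T1}. Writing $\et:=D(\pi_k)(\xi)$ and $w:=D(\th_k)(\xi)\in T^1(X_{k-1}/A_{k-1}),$ Remark \ref{rmk: T1} says $\xi$ lifts to $A_{k+1}$ exactly when $w$ lifts to $T^1(X_k/A_k),$ and the exactness of \eqref{T1T2} says the latter happens exactly when $\ga(w)=0.$ Since $\al(\xi)$ is by construction the obstruction to lifting $\xi,$ this gives $\al(\xi)=0\Leftrightarrow\ga(w)=0.$ The identical argument for the local functor $D_\loc$ and the bottom row of \eqref{Gross} gives $\al_\loc(\xi_\loc)=0\Leftrightarrow\ga_\loc(w_\loc)=0,$ where $\xi_\loc,w_\loc$ are the germs at $X^\sing$ and $\ga_\loc$ is the bottom map of \eqref{Gross}.

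Next I would glue \eqref{concentrate} and \eqref{Gross} into a single commutative cube. All vertical maps of both squares are induced by $\id\to f_*f^*,$ so naturality of obstruction maps (Definition \ref{dfn: H}(ii)) gives $\be\cm\al=\al_\loc\cm(\text{germ}),$ and the commutativity of \eqref{Gross} gives $\de\cm\ga=\ga_\loc\cm(\text{germ}).$ The truncation $L_X\to\Om^1_X$ induces a comparison map $c:\Ext^2_{\O_X}(\Om^1_X,\O_X)\to T^2(X)$ and its local analogue $c_\loc,$ satisfying $\be\cm c=c_\loc\cm\de.$ The essential claim I would then establish is the factorization $\al(\xi)=c(\ga(w))$ --- that the cotangent-complex obstruction of $\xi$ is the image under $c$ of the $T^1$-lift obstruction of $w$ --- compatibly with localization; granting it, $\al(\xi)\leftrightarrow\ga(w)$ identifies $\im\al$ with $c(\im\ga)$ and carries $\be$ to $\de.$

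The hypothesis then transfers. Suppose $v=\al(\xi)\in\im\al$ has $\be(v)=0.$ The face $\be\cm\al=\al_\loc\cm(\text{germ})$ gives $\al_\loc(\xi_\loc)=0,$ hence $\ga_\loc(w_\loc)=0$ by the local equivalence of the second paragraph, hence $\de(\ga(w))=\ga_\loc(w_\loc)=0$ by \eqref{Gross}; as $\ga(w)\in\im\ga,$ the assumed injectivity of $\de|_{\im\ga}$ forces $\ga(w)=0,$ so $v=\al(\xi)=0.$ With the factorization $\al(\xi)=c(\ga(w))$ this upgrades to the full injectivity of $\be|_{\im\al}$ required by Definition \ref{dfn: concentrate}. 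I expect the decisive obstacle to be the factorization in the third paragraph: rigorously matching the cotangent-complex obstruction $\al$ with the $\Om^1$-based map $\ga$ of the $T^1$-lift sequence and checking that $c$ intertwines the two obstruction theories compatibly with $\id\to f_*f^*.$ This is where the Cohen--Macaulay and rank-one canonical sheaf hypotheses enter --- through the Serre-duality manipulations of Definition \ref{dfn: Gross} --- and where the content the lemma calls "known to experts" actually resides.
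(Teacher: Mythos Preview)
Your approach is the paper's: reduce via the Kawamata--Ran substitution $\th_k:A_k\to A_{k-1}[\ep]$ to the $T^1$-lift level, then transport the injectivity of $\de|_{\im\ga}$ to that of $\be|_{\im\al}$ through the localization square. Your fourth paragraph is in fact already a complete proof of injectivity: every element of $\im\al$ has the form $\al(\xi)$, and you have shown $\be(\al(\xi))=0\Rightarrow\al(\xi)=0$. The factorization $\al(\xi)=c(\ga(w))$ you flag as the ``decisive obstacle'' is therefore not needed; the bi-implication $\al(\xi)=0\Leftrightarrow\ga(w)=0$ together with its local analogue suffices, and the paper does not prove any such factorization either.

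Where the paper is more careful than your sketch is in justifying that bi-implication. Remark~\ref{rmk: T1} as stated only says $\xi$ lifts iff $w$ lies in the image of $D(\varpi_k)$, not specifically that $w$ lifts to the fibre $T^1(X_k/A_k)$ over $\xi$. The paper closes this gap by writing down the commutative square of small extensions with rows $0\to(t^{k+1})\to A_{k+1}\to A_k\to0$ and $0\to(t^k\ep)\to A_k[\ep]\to A_{k-1}[\ep]\times_{A_{k-1}}A_k\to0$, linked by $\th_{k+1}$ and $\th_k\times\id$. The leftmost vertical map $(t^{k+1})\to(t^k\ep)$ is an isomorphism, so naturality of obstruction maps identifies $\al(\xi)$ with the obstruction to lifting the pair $\et=(\xi,w)\in D(A_k\times_{A_{k-1}}A_{k-1}[\ep])$ to $D(A_k[\ep])$; the latter vanishes precisely when $w$ lies in the image of $T^1(X_k/A_k)\to T^1(X_{k-1}/A_{k-1})$, i.e.\ when $\ga(w)=0$ by the exactness of \eqref{T1T2}. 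The same square with $D_\loc$ and $T^2_\loc$ gives the local equivalence, and the rest of your chain goes through verbatim.
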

\begin{proof}
Recall that there is a $\C$-algebra homomorphism $\th_k:A_k\to A_{k-1}[\ep]$ defined by $t\mapsto t+\ep$ module ideals. Consider also the $\C$-algebra homomorphism $A_k[\ep]\to A_{k-1}[\ep]\times_{A_{k-1}}A_k$ made of the projections $A_k[\ep]\to A_{k-1}[\ep]$ and $A_k[\ep]\to A_k.$ There is then an $A_{k+1}$ module commutative diagram
\begin{equation}\begin{tikzcd}
0\ar[r]&(t^{k+1})\ar[r]\ar[d,"\cong"] &A_{k+1}\ar[r,"\pi_{k+1}"]\ar[d,"\th_{k+1}"]& A_k\ar[r]\ar[d,"\th_k\times\id"]&0\\ 
0\ar[r]&(t^k\ep)\ar[r] &A_k[\ep]\ar[r]& A_{k-1}[\ep]\times_{A_{k-1}}A_k\ar[r]&0
\end{tikzcd}\end{equation}
whose rows are small extensions in $(\Art)_\C.$ The leftmost vertical map $(t^{k+1})\to (t^k\ep)$ is a $\C$-vector space isomorphism which map $t^{k+1}$ to $(k+1)t^k\ep.$ By the defining property of the obstruction spaces there is a commutative diagram
\begin{equation}\l{KR0}\begin{tikzcd}
D(A_{k+1})\ar[r]\ar[d] & D(A_k)\ar[r]\ar[d] &T^2(X)\otimes (t^{k+1})\ar[d,"\cong"]\\
D(A_k[\ep])\ar[r]\ar[d] & D(A_k\times_{A_{k-1}}A_{k-1}[\ep])\ar[r]\ar[d] &T^2(X)\otimes (t^k\ep)\ar[d]\\
D_\loc(A_k[\ep])\ar[r] & D_\loc(A_k\times_{A_{k-1}}A_{k-1}[\ep])\ar[r] &T^2_\loc(X)\otimes (t^k\ep).
\end{tikzcd}\end{equation}
Let $\xi\in D(A_k)$ be any element with $\be\cm\al(\xi)=0.$ Denote by $\et\in D(A_k\times_{A_{k-1}}A_{k-1}[\ep]))$ its image under $D(A_k)\to D(A_k\times_{A_{k-1}}A_{k-1}[\ep]).$ Let $X_k/A_k$ represent the image of $\et$ under $D(A_k\times_{A_{k-1}}A_{k-1}[\ep])\to D(A_k)$ and let $Y_{k-1}/A_{k-1}[\ep]$ represent the image of $\et$ under $D(A_k\times_{A_{k-1}}A_{k-1}[\ep])\to D(A_{k-1}[\ep]).$ Then $X_{k-1}:=\spec A_{k-1}\times_{\spec A_k}X_k$ is isomorphic as an $A_{k-1}$ ringed space to $\spec A_{k-1}\times_{\spec A_{k-1}[\ep]} Y_{k-1}.$ We have thus an element $[Y_{k-1}]\in T^1(X_{k-1}/A_{k-1})$ represented by $Y_{k-1}.$ There is on the other hsnd a commutative diagram
\begin{equation}\l{KR1}\begin{tikzcd}[row sep=tiny, column sep=tiny]
D(A_k)\ar[rr,"\al"]\ar[rd]\ar[dd]&&T^2(X)\otimes_\C(t^{k+1})\ar[dd,"\be" near start]\ar[rd]&\\
&D(A_k\times_{A_{k-1}}A_k[\ep])\ar[rr, crossing over]&&T^2(X)\otimes_\C(t^k\ep)\ar[dd]\\
D_\loc(A_k)\ar[rr]\ar[rd]&&T^2_\loc(X)\otimes_\C(t^{k+1})\ar[rd]&\\
&D_\loc(A_k\times_{A_{k-1}}A_k[\ep])\ar[rr]\ar[from=uu, crossing over]&&T^2_\loc(X)\otimes_\C(t^k\ep).
\end{tikzcd}\end{equation}
Since $\be\cm\al(\xi)=0$ it follows that in this commutative diagram the composite map $D(A_k)\to T^2_\loc(X)\otimes_\C(t^k\ep)$ maps $\xi\in D(A_k)$ to $0\in T^2_\loc(X)\otimes_\C(t^k\ep).$ Denote by $\ze\in D_\loc(A_k\times_{A_{k-1}}A_{k-1}[\ep])$ the image in \eq{KR1} of $\xi\in D(A_k).$ This appears also in \eq{KR0}. Since $\ze$ maps in \eq{KR1} to $0\in T^2_\loc(X)\otimes_\C(t^k\ep)$ it follows that so does $\ze$ in \eq{KR0}. In \eq{KR0} therefore $\ze$ lifts to some element $\om\in D_\loc(A_k[\ep]).$ 

We look now at the commutative diagram \eq{Gross}. The image of $[Y_{k-1}]\in T^1(X_{k-1}/A_{k-1})$ maps to an element of $T^1_\loc(X_{k-1}/A_{k-1})$ which is the image of $\om\in D_\loc(A_k[\ep]).$ Thus $\de\cm\ga[Y_{k-1}]=0.$ The current hypothesis (that of Lemma \ref{lem: concentrate}) implies therefore $\ga[Y_{k-1}]=0.$ So $[Y_{k-1}]$ lifts to some element of $T^1(X_k/A_k).$ In \eq{KR0} accordingly $\et\in D(A_k\times_{A_{k-1}}A_{k-1}[\ep])$ lifts to some element of $D(A_k[\ep]).$ So $\et$ maps to zero under $D(A_k\times_{A_{k-1}}A_{k-1}[\ep])\to T^2(X)\otimes_\C(t^k\ep).$ But the vertical map $T^2(X)\otimes_\C(t^{k+1})\to T^2(X)\otimes_\C(t^k\ep)$ is an isomorphism, and $\xi\in D(A_k)$ therefore maps to $0\in  T^2(X)\otimes_\C(t^{k+1})$ as we have to prove.
\end{proof}
\begin{rmk}
Obstruction maps are in general hard to compute as they are. On the other hand, $T^1$ and Ext modules are less functorial but easier to compute. The effect of Lemma \ref{lem: concentrate} is that computing Ext modules is sufficient for our current purpose. Something similar is done for instance in \cite[Proposition 2.6]{San}.
\end{rmk}

\section{Relative Differential Forms}\l{sect: rel forms}
The next four definitions, Definitions \ref{dfn: X times A}--\ref{dfn: glue}, are devoted to defining sheaves of holomorphic forms, $C^\iy$ forms and real analytic forms.
\begin{dfn}\l{dfn: X times A}
Suppose first that $X$ is a complex manifold with structure sheaf $\O_X.$ Denote by $C^\iy_X$ the sheaf on $X$ of $\C$-valued $C^\iy$ functions, which is therefore a $\C$-algebra sheaf. Denote by $C^\om_X\sb C^\iy_X$ the $\C$-algebra subsheaf on $X$ made from $\C$-valued real analytic functions. Denote by $\Om^\bt_X$ the $\Z$-graded $\O_X$ module sheaf on $X$ of holomorphic forms, and by $\La^\bt_X$ the $\Z$-graded $C^\iy_X$ module sheaf on $X$ of $C^\iy$ forms. There is also a real analytic version of $\La^\bt_X$ for which however we do not introduce any particular symbol (because we shall not have to use it directly). 

Both $\Om^\bt_X$ and $\La^\bt_X$ are sheaves of differential graded algebras over $\C,$ equipped with the de Rham differentials $\d_X:\Om^\bt_X\to\Om^{\bt+1}_X$ and $\d_X:\La^\bt_X\to\La^{\bt+1}_X,$ together with the wedge product maps $\wedge:\Om^\bt_X\otimes_{\O_X}\Om^\bt_X\to \Om^\bt_X$ and $\wedge:\La^\bt_X\otimes_{C^\iy_X}\La^\bt_X\to \La^\bt_X.$ For $p,q\in \Z$ denote by $\La^{pq}_X$ the sheaf on $X$ of $C^\iy$ $(p,q)$ forms so that for $r\in\Z$ we have $\La^r_X=\bop_{p+q=r}\La^{pq}_X.$ For $p,q\in\Z$ the differential $\d_X:\La^{p+q}_X\to\La^{p+q+1}_X$ induces two $\C$-vector space sheaf homomorphisms $\La^{pq}_X\to \La^{p+1\, q}_X$ and $\La^{pq}_X\to \La^{p\, q+1}_X$ which we denote by $\bd_X$ and $\db_X.$ Since $\d_X^2=0$ it follows that $\bd_X^2=\db_X^2=\bd_X\db_X+\db_X\bd_X=0.$ There are also real analytic versions of $(\La^\bt_X,\d_X,\wedge)$ and $\La^{\bt\bt}_X.$

Suppose now that $X$ is embedded as an open set in some $\C^n.$ Let $A$ be an Artin local $\C$-algebra and define an $A$-ringed space $\cX:=(X,\O_\cX)$ by $\O_\cX:=\O_X\otimes_\C A.$ Put $C^\iy_\cX:=C^\iy_X\otimes_\C A$ and $C^\om_\cX:=C^\om_X\otimes_\C A,$ which are also $A$-algebra sheaves on $X.$ There is on $X$ a $\Z$-graded $\O_\cX$ module sheaf $\Om^\bt_{\cX/A}$ defined by $\Om^p_{\cX/A}:=\Om^p_X\otimes_\C A$ for $p\in\Z.$ There is on $X$ a $\Z$-graded $C^\iy_\cX$ module sheaf $\La^\bt_{\cX/A}$ defined by $\La^p_{\cX/A}:=\La^p_X\otimes_\C A$ for $p\in\Z.$ Define a degree-one $A$-module sheaf homomorphism $\d_{\cX/A}:\Om^\bt_{\cX/A}\to \Om^{\bt+1}_{\cX/A}$ by $\d_{\cX/A}:=\d_X\otimes\id_A.$ Define by the same formula a degree-one $A$-module sheaf homomorphism $\d_{\cX/A}:\La^\bt_{\cX/A}\to \La^{\bt+1}_{\cX/A}.$ In either case $(\d_{\cX/A})^2=0;$ that is, $\d_{\cX/A}$ is a differential. There are also for $p,q\in\Z$ an $\O_\cX$ module homomorphism $\wedge:\Om^p_{\cX/A}\otimes_{\O_\cX}\Om^q_{\cX/A}\to \Om^{p+q}_{\cX/A}$ and  a $C^\iy_\cX$ module homomorphism $\wedge:\Om^p_{\cX/A}\otimes_{\O_\cX}\Om^q_{\cX/A}\to \Om^{p+q}_{\cX/A}.$ The triples $(\Om^\bt_{\cX/A},\d_{\cX/A},\wedge)$ and $(\La^\bt_{\cX/A},\d_{\cX/A},\wedge)$ are both differential graded $A$-algebra sheaves. For $p,q\in \Z$ put $\La^{pq}_{\cX/A}:=\La^{pq}_X\otimes_\C A$ so that for $r\in\Z$ we have $\La^r_{\cX/A}=\bop_{p+q=r}\La^{pq}_{\cX/A}.$ For $p,q\in\Z$ the differential $\d_{\cX/A}:\La^{p+q}_{\cX/A}\to\La^{p+q+1}_{\cX/A}$ induces two $A$-module sheaf homomorphisms $\bd_{\cX/A}:\La^{pq}_{\cX/A}\to \La^{p+1\, q}_{\cX/A}$ and $\db_{\cX/A}:\La^{pq}_{\cX/A}\to \La^{p\, q+1}_{\cX/A}$ which we denote by $\bd_X$ and $\db_X.$ Since $\d_{\cX/A}^2=0$ it follows that $\bd_{\cX/A}^2=\db_{\cX/A}^2=\bd_{\cX/A}\db_{\cX/A}+\db_{\cX/A}\bd_{\cX/A}=0.$ There are also real analytic versions of $(\La^\bt_{\cX/A},\d_{\cX/A},\wedge)$ and $\La^{\bt\bt}_{\cX/A}.$
\end{dfn}

We define next the model sheaf of holomorphic forms.
\begin{dfn}\l{dfn: hol forms}
Let $X$ be a complex space embedded in an open set $Y\sb\C^n,$ and $\cX$ an $A$-ringed space embedded in $\cY:=Y\times\spec A$ by an ideal sheaf $\cI\sb \O_\cY;$ that is, if we denote by $\cQ$ the quotient sheaf of $\cI\sb \O_\cY$ then $\O_\cX:=\cQ|_X.$ We define on $X$ a $\Z$-graded $\O_\cX$ module sheaf $\Om^\bt_{\cX/A}$ equipped with a degree-one $A$-module sheaf homomorphism $\d_{\cX/A}:\Om^\bt_{\cX/A}\to \Om^{\bt+1}_{\cX/A}$ such that $(\d_{\cX/A})^2=0.$ We do this by an induction on $p.$ For $p<0$ set $\Om^p_{\cX/A}=0$ and the differential $\d_{\cX/A}:\Om^{p-1}_{\cX/A}\to\Om^p_{\cX/A}$ must vanish. For $p\ge0$ consider the $\O_X$ submodule sheaf $\d_{\cY/A}\cI\wedge\Om^{p-1}_{\cY/A}+\cI\Om^p_{\cY/A}\sb \Om^p_{\cY/A}$ whose quotient sheaf we denote by $\cQ^p.$ Set $\Om^p_{\cX/A}:=\cQ^p|_X.$ The differential $\d_{\cY/A}:\Om^{p-1}_{\cY/A}\to \Om^p_{\cY/A}$ induces then an $A$-module sheaf homomorphism $\d_{\cX/A}:\Om^{p-1}_{\cX/A}\to \Om^p_{\cX/A}$ with $(\d_{\cX/A})^2=0.$ Now for $p,q\in\Z$ the wedge product map $\wedge:\Om^p_{\cY/A}\otimes_{\O_\cY}\Om^q_{\cY/A}\to \Om^{p+q}_{\cY/A}$ induces an $\O_\cX$ module homomorphism $\wedge:\Om^p_{\cX/A}\otimes_{\O_\cX}\Om^q_{\cX/A}\to \Om^{p+q}_{\cX/A}$ which satisfies the Leibniz rule with respect to $\d_{\cX/A}.$ The triple $(\Om^\bt_{\cX/A},\d_{\cX/A},\wedge)$ is thus a sheaf of differential graded $A$-algebras.
\end{dfn}

We define also the model sheaf of $C^\iy$ forms.
\begin{dfn}\l{dfn: C^iy forms}
Let $A$ be an Artin local $\R$-algebra and recall from Lemma \ref{lem: complexify} that $B:=A\otimes_\R\C$ is an Artin local $\C$-algebra. Let $X$ be a complex space embedded in an open set $Y\sb\C^n,$ and $\cX$ a $B$-ringed space embedded in $\cY:=Y\times\spec B$ by an ideal sheaf $\cI\sb \O_\cY.$ Put $\La^\bt_{\cY/A}:=\La^\bt_{\cY/B}=\La^\bt_Y\otimes_\C B,$ which we identify naturally with $\La^\bt_X\otimes_\R A.$ The complex conjugate map $\La^\bt_Y\to \La^\bt_Y$ and the identity map $A\to A$ induce then an $\R$-algebra sheaf homomorphism $\La^\bt_{\cY/A}\to \La^\bt_{\cY/A}$ which we call the {\it complex conjugate map.} Denote by $\ov\cI$ the image under this of $\cI\sb \O_\cY\sb C^\iy_\cY=\La^0_{\cY/A}.$ There is then an ideal sheaf $\cJ:=\cI+\ov\cI\sb C^\iy_\cY$ whose quotient we denote by $\cQ.$ The restriction $C^\iy_\cX:=\cQ|_X$ defines on $X$ a $\C$-algebra sheaf. 

We define on $X$ a $\Z$-graded $C^\iy_\cX$ module sheaf $\La^\bt_{\cX/A}$ equipped with a degree-one $B$-module sheaf homomorphism $\d_{\cX/A}:\La^\bt_{\cX/B}\to \La^{\bt+1}_{\cX/B}$ such that $(\d_{\cX/A})^2=0.$ We do this in the same way as in Definition \ref{dfn: hol forms} with $\La^\bt_{\cY/B}$ in place of $\Om^\bt_{\cY/A}$ and with $\cJ$ in place of $\cI.$ This produces at the same time for $p,q\in\Z$ the wedge product map $\wedge:\La^p_{\cX/A}\otimes_{C^\iy_\cX}\La^q_{\cX/A}\to \La^{p+q}_{\cX/A}$ is defined in the same way. The triple $(\La^\bt_{\cX/A},\d_{\cX/A},\wedge)$ is thus a sheaf of differential graded $B$-algebras.

For $p,q\in \Z$ denote by $\La^{pq}_{\cX/A}$ the image of $\La^{pq}_{\cY/A}|_X$ under the projection $\La^{p+q}_{\cY/A}|_X\to \La^{p+q}_{\cX/A}.$ Each $\La^{pq}_{\cX/A}$ is then a $C^\iy_\cX$ submodule of $\La^{p+q}_{\cX/A}$ so that for $r\in\Z$ we have $\La^r_{\cX/A}=\bop_{p+q=r}\La^{pq}_{\cX/A}.$ For $p,q\in\Z$ the differential $\d_{\cX/A}:\La^{p+q}_{\cX/A}\to\La^{p+q+1}_{\cX/A}$ induces two $B$-module sheaf homomorphisms $\bd_{\cX/A}:\La^{pq}_{\cX/A}\to \La^{p+1\, q}_{\cX/A}$ and $\db_{\cX/A}:\La^{pq}_{\cX/A}\to \La^{p\, q+1}_{\cX/A}$ which we denote by $\bd_X$ and $\db_X.$ Since $\d_{\cX/A}^2=0$ it follows that $\bd_{\cX/A}^2=\db_{\cX/A}^2=\bd_{\cX/A}\db_{\cX/A}+\db_{\cX/A}\bd_{\cX/A}=0.$ Also for $p,q\in\Z$ the complex conjugate map $\La^{p+q}_{\cY/A}\to\La^{p+q}_{\cY/A}$ induces an $\R$-algebra sheaf homomorphism $\La^{pq}_{\cX/A}\to\La^{qp}_{\cX/A}$ which we call the {\it complex conjugate map.} 

There are also real analytic versions of $(\La^\bt_{\cX/A},\d_{\cX/A},\wedge),$ $\La^{\bt\bt}_{\cX/A}$ and their complex conjugate maps.
\end{dfn}

We finally glue together the local models above.
\begin{dfn}\l{dfn: glue}
Let $X$ be a complex space, $A$ an Artin local $\R$-algebra and $\cX/A$ a deformation of $X.$ Put $B:=A\otimes_\R\C$ and recall from Definition \ref{dfn: real para} that $\cX/A$ is a deformation $\cX/B$ of $X.$ Choose an open cover $X=U\cup V\cup\dots$ such that each $\cU:=(U,\O_\cX|_U)$ is embedded as a $B$-ringed space into $Y\times \spec B$ for some open set $Y\sb\C^n.$ These $U,V,\dots$ exist by \cite[Chapter 2, Proposition 1.5]{GLS}. Applying Definition \ref{dfn: hol forms} to $\cU,\cV,\dots$ we get on $U,V,\dots$ the sheaves $\Om^\bt_{\cU/A},\Om^\bt_{\cV/A},\dots,$ which we can glue together. The result is an $\O_\cX$ module sheaf on $X$ which we denote by $\Om^\bt_{\cX/A}.$ The gluing process defines also a differential and a wedge product map, which we denote by $\d_{\cX/A}$ and $\wedge$ respectively. The triple $(\Om^\bt_{\cX/A},\d_{\cX/A},\wedge)$ is thus a sheaf on $X$ of differential graded $A$-algebras. We define on $X$ another differential graded $A$-algebra sheaf $(\La^\bt_{\cX/A},\d_{\cX/A},\wedge)$ in the same way with Definition \ref{dfn: C^iy forms} in place of Definition \ref{dfn: hol forms}. 

For $p,q\in \Z$ define a $C^\iy_\cX$ module sheaf $\La^{pq}_{\cX/A}$ by gluing together the local models $\La^{pq}_{\cU/A},\La^{pq}_{\cV/A},\dots$ corresponding to $U,V,\dots$ respectively. Each $\La^{pq}_{\cX/A}$ is then a $C^\iy_\cX$ submodule of $\La^{p+q}_{\cX/A}$ so that for $r\in\Z$ we have $\La^r_{\cX/A}=\bop_{p+q=r}\La^{pq}_{\cX/A}.$ For $p,q\in\Z$ the differential $\d_{\cX/A}:\La^{p+q}_{\cX/A}\to\La^{p+q+1}_{\cX/A}$ induces two $B$-module sheaf homomorphisms $\bd_{\cX/A}:\La^{pq}_{\cX/A}\to \La^{p+1\, q}_{\cX/A}$ and $\db_{\cX/A}:\La^{pq}_{\cX/A}\to \La^{p\, q+1}_{\cX/A}$ which we denote by $\bd_X$ and $\db_X.$ Since $\d_{\cX/A}^2=0$ it follows that $\bd_{\cX/A}^2=\db_{\cX/A}^2=\bd_{\cX/A}\db_{\cX/A}+\db_{\cX/A}\bd_{\cX/A}=0.$ 

Also for $p,q\in\Z$ the complex conjugate maps for the local models are glued up into an $\R$-algebra sheaf homomorphism $\La^{pq}_{\cX/A}\to\La^{qp}_{\cX/A}$ which we call the {\it complex conjugate map.} For $p\in\Z$ denote by $\Re\La^{pp}_{\cX/A}\sb\La^p_{\cX/A}$ the subsheaf invariant under the complex conjugate map $\La^p_{\cX/A}\to\La^p_{\cX/A}.$

There are also real analytic versions of $(\La^\bt_{\cX/A},\d_{\cX/A},\wedge)$ and $\La^{\bt\bt}_{\cX/A}.$ 

For $A=\R$ we write $X=\cX/\R$ to define $(\Om^\bt_X,\d_X,\wedge),$ $(\La^\bt_X,\d_X,\wedge)$ and $\La^{\bt\bt}_X.$ We also write $\d=\d_X,$ $\bd=\bd_X$ and $\db=\db_X,$ omitting the index $X.$ A $C^\iy$ function $X\to\R$ means a section of $\Re\La^{00}_X=\Re C^\iy_X.$
\end{dfn}
\begin{rmk}
For $A=\R$ the definitions above, Definitions \ref{dfn: X times A}--\ref{dfn: glue}, are equivalent to those of \cite[\S1.1]{Fuj1}. There is another way of making the same definitions, which is to use the diagonal map $X\to X\times_{\spec A}X$ as in \cite[\S1]{Bing}. 
\end{rmk}

We write more explicitly the sheaves $\Om^\bt_{\cX/A}$ and $\La^\bt_{\cX/A}$ for $X$ a complex manifold.
\begin{rmk}
Let $X$ be a complex manifold, $A$ an Artin local $\R$-algebra and $\cX/A$ a deformation of $X.$ Put again $B:=A\otimes_\R\C$ and recall now from \cite[Theorem 3.21]{Fisch} that there exists an open cover $U\cup V\cup\dots=X$ such that each $(U,\O_\cX|_U)$ is isomorphic as a deformation of $U$ to the trivial deformation $U\times \spec B.$ The sheaf $\Om^p_{\cX/B}$ is then defined by gluing together the local models $\Om^p_U\otimes_\R A,\Om^p_V\otimes_\R A,\dots$ for $U,V,\dots\sb X.$ The sheaf $\La^p_{\cX/A}$ is defined by gluing together the local models $\La^p_U\otimes_\R A,\La^p_U\otimes_\R A,\dots$ for $U,V\dots\sb X.$

These expressions imply that we can use the ordinary Dolbealt lemma for the complex manifold $X;$ that is, if $X$ is of complex dimension $n$ then for $p=0,1,2,\dots$ the sequence
\e\l{Dol} 0\to \Om^p_{\cX/A}\xrightarrow{\db_{\cX/A}} \La^{p0}_{\cX/A}\xrightarrow{\db_{\cX/A}} \dots\xrightarrow{\db_{\cX/A}} \La^{pn}_{\cX/A}\to0\e
is exact. Getting rid of the first non-zero term $\Om^p_{\cX/A}$ and applying the global section functor $\Ga$ we get a complex
\e\l{Dol2} 0\to \Ga( \La^{p0}_{\cX/A})\xrightarrow{\db_{\cX/A}} \dots\xrightarrow{\db_{\cX/A}} \Ga(\La^{pn}_{\cX/A})\to0.\e
Since each $\La^{pq}_{\cX/A}$ is a fine sheaf (admitting partitions of unity) it follows that for $q=0,1,2,\dots$ the sheaf cohomology group $H^q(X,\Om^p_{\cX/A})$ is isomorphic to the $q^{\rm th}$ cohomology group of \eq{Dol2}. For $A=\R$ this reduces to the ordinary Dolbeault isomorphism.
\end{rmk}

We make now the definition of K\"ahler forms on infinitesimal deformations.
\begin{dfn}\l{dfn: Kahl}
Recall that a K\"ahler form on a complex space $X$ is an element $\om\in\Ga(\Re\La^{11}_X)$ for which there exist an open cover $U\cup V\cup\dots=X$ and a corresponding family $(\ph_U:U\to\R)_U$ of $C^\iy$ strictly plurisubharmonic functions such that for each $U$ we have $\om|_U=i\bd\db \ph_U.$

Let $A$ be an Artin local $\R$-algebra and $\cX/A$ a deformation of $X.$ Then a {\it K\"ahler form} on $\cX/A$ is a section $\om_{\cX/A}\in\Ga(\Re\La^{11}_{\cX/A})$ for which there exist an open cover $U\cup V\cup\dots=X$ and a corresponding family $(\ph_U\in \Re C^\iy_\cX(U))_U$ such that for each $U$ we have $\om_{\cX/A}|_U=i\bd_{\cX/A}\db_{\cX/A}\ph_U$ and the restriction map $\Re C^\iy_\cX(U)\to \Re C^\iy_X(U)=C^\iy(U,\R)$ maps $\ph_U$ to some strictly plurisubharmonic function.
\end{dfn}
\begin{rmk}
Denote by $\cK_\cX$ the cokernel of the map $\O_\cX\to \Re C^\iy_\cX$ which maps a local section $f$ to $\frac12(f+\bar f).$ The family $(\ph_U\in\Re C^\iy_\cX(U))$ corresponding to a K\"ahler form on $\cX/A$ is then a section of $\cK_\cX.$ Conversely, every section of $\cK_X$ is obtained from such a family except that the restrictions to $X$ need not be strictly plurisubharmonic. Put $\cK_X:=\cK_\cX$ when $A=\R.$
\end{rmk}

We state now the key result we shall need about K\"ahler forms. Notice that if $X$ is a complex space then the inclusion of its constant sheaf $\ul\R$ into the structure sheaf $\O_X$ induces an $\R$-linear map $H^2(X,\R)\to H^2(X,\O_X).$
\begin{thm}[Theorem 6.3 of \cite{Bing}]\l{thm: Bing}
Let $X$ be a K\"ahler space for which the map $H^2(X,\R)\to H^2(X,\O_X)$ is surjective. Then for every Artin local $\R$-algebra $A$ and every deformation $\cX/A$ of $X$ there exist K\"ahler forms on $\cX/A.$
\end{thm}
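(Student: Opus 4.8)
The plan is to reduce to a single small extension and then to measure the obstruction to extending the K\"ahler class in a cohomology group that the hypothesis forces to vanish. First I would factor the structure map $A\to\R$ into a finite chain of small extensions in $(\Art)_\R$ and argue by induction, the base case $A=\R$ being exactly the assumption that $X$ is K\"ahler. So fix a small extension $0\to(\ep)\to A\to A'\to0$ with $(\ep)\cong\R$ a one-dimensional ideal, inducing $\cX/A\to\cX'/A'$, and suppose a K\"ahler form $\om'$ on $\cX'/A'$ is given; I want to extend it to $\cX/A$.

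Next I would reformulate ``K\"ahler form'' sheaf-theoretically. By the Remark following Definition \ref{dfn: Kahl} a K\"ahler form on $\cX/A$ is the same as a global section of $\cK_\cX$ (the cokernel of $\O_\cX\to\Re C^\iy_\cX$) whose restriction to $X$ is represented by strictly plurisubharmonic potentials; crucially, positivity is demanded only on the central fibre. Since the sheaves $\Re C^\iy_\cX$ are fine and $\O_\cX,\Re C^\iy_\cX$ are flat over the Artinian base, the small extension yields short exact sequences $0\to\O_X\to\O_\cX\to\O_{\cX'}\to0$ and $0\to\Re C^\iy_X\to\Re C^\iy_\cX\to\Re C^\iy_{\cX'}\to0$ (using $(\ep)\cong\R$), hence a connecting sequence $H^0(X,\cK_\cX)\to H^0(X,\cK_{\cX'})\to H^1(X,\cK_X)$. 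The form $\om'$ defines a class $[\om']\in H^0(X,\cK_{\cX'})$, and since any $C^\iy$-lift of the local potentials restricts on the central fibre to the strictly plurisubharmonic potentials of the K\"ahler form on $X$, positivity is automatic and the entire problem collapses to the vanishing of a single obstruction class $ob(\om')\in H^1(X,\cK_X)$.

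Then I would compute $H^1(X,\cK_X)$ and link it to the hypothesis. From $0\to\Re\O_X\to\Re C^\iy_X\to\cK_X\to0$ and fineness of $\Re C^\iy_X$ one obtains $H^1(X,\cK_X)\cong H^2(X,\Re\O_X)$, where $\Re\O_X$ is the sheaf of real parts of holomorphic functions. The real-part map fits into $0\to\ul\R\to\O_X\to\Re\O_X\to0$ (kernel the purely imaginary locally constant functions), whose long exact sequence contains
\[ H^2(X,\R)\xrightarrow{\ \beta\ }H^2(X,\O_X)\xrightarrow{\ \gamma\ }H^2(X,\Re\O_X)\cong H^1(X,\cK_X), \]
with $\beta$ precisely the map of the hypothesis. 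The heart of the argument is to show $ob(\om')$ lies in the image of $\gamma$: tracing the \v Cech description, the failure of the lifted potentials to glue holomorphically at the next order is governed by the $(0,2)$-type (Kodaira--Spencer) variation of the flat real class $[\om']$, a class $v\in H^2(X,\O_X)$, and $ob(\om')=\gamma(v)$ is recovered from it by taking real parts. Surjectivity of $\beta$ now forces, via exactness $\im\beta=\ker\gamma$, that $\gamma=0$; hence $ob(\om')=\gamma(v)=0$. Thus $[\om']$ lifts to a section of $\cK_\cX$ whose central-fibre restriction is the given strictly plurisubharmonic potential, i.e. a genuine K\"ahler form on $\cX/A$, and the induction closes.

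The main obstacle I anticipate is the identification $ob(\om')=\gamma(v)$ in the third paragraph: one must pin the purely algebraic gluing obstruction in $H^1(X,\cK_X)$ to the Hodge-theoretic map $\beta$, carefully handling the real structure and the locally constant subsheaf $\ul\R$, and verifying the flatness and base-change statements for $\O_\cX$ and $\Re C^\iy_\cX$ on a possibly singular $X$ so that the two short exact sequences above are exact. On a smooth compact K\"ahler $X$ this identification is the classical Kodaira--Spencer computation and $\beta$ is automatically surjective; the real content here is to render the argument sheaf-theoretic enough to apply to singular K\"ahler spaces, where the surjectivity of $H^2(X,\R)\to H^2(X,\O_X)$ is the genuine hypothesis replacing the unavailable Hodge decomposition.
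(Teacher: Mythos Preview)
Your overall structure---induction along small extensions, rephrasing K\"ahler forms as global sections of $\cK_\cX$, locating the obstruction in $H^1(X,\cK_X)\cong H^2(X,\O_X/i\ul\R)$, and invoking the exact sequence coming from $0\to i\ul\R\to\O_X\to\O_X/i\ul\R\to0$---matches the paper's proof almost step for step. The divergence is at the very end, and it is exactly the step you flag as the obstacle.

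You attempt to kill the obstruction by arguing that $ob(\om')=\gamma(v)$ for a Kodaira--Spencer type class $v\in H^2(X,\O_X)$, and then using that $\gamma=0$ since $\beta$ is surjective. The identification $ob(\om')=\gamma(v)$ is not established, and on a singular $X$ there is no readily available Hodge-theoretic machinery to fall back on. The paper sidesteps this entirely: rather than analysing the specific obstruction class, it proves that the map $H^1(X,\cK_X)\to H^1(X,\cK_\cX)$ in the long exact sequence of $0\to\cK_X\to\cK_\cX\to\cK_\cY\to0$ is \emph{injective}, which forces $H^0(X,\cK_\cX)\to H^0(X,\cK_\cY)$ to be surjective outright. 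The injectivity is obtained by identifying that map with $H^2(X,\O_X/i\ul\R)\to H^2(X,\O_\cX/i\ul A)$ and then factoring through the commutative square
\[
\begin{tikzcd}
H^2(X,\O_X/i\ul\R)\ar[r]\ar[d]& H^3(X,i\ul\R)\ar[d]\\
H^2(X,\O_\cX/i\ul A)\ar[r]& H^3(X,i\ul A).
\end{tikzcd}
\]
The top arrow is injective precisely because $\beta$ is surjective (this is your observation $\gamma=0$, read one step further along the long exact sequence), and the right arrow is injective because $(\ep)\hookrightarrow A$ splits $\R$-linearly. So the composite, and hence the left arrow, is injective.

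In short: your deduction $\gamma=0$ is correct and is the same consequence of the hypothesis that the paper uses, but you then try to pull the obstruction back through $\gamma$, which you cannot justify. The paper instead pushes forward along the connecting map to $H^3$ and uses a simple injectivity argument that requires no identification of the obstruction class at all. Replace your third paragraph with that diagram chase and the proof closes without the gap.
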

\begin{rmk}
Bingener \cite{Bing} deals not only with the infinitesimal deformations as above but also with the deformations over a complex space germ $(S,o)$ of positive dimension. But we shall not have to do so for our purpose, for which the weaker statement above will do.

We give now a direct proof of Theorem \ref{thm: Bing} because this is simpler than the original one. Using the hypothesis and the $\R$-vector space sheaf isomorphism $i:\O_X\to\O_X$ which multiplies by $i=\sqrt{-1}$ we see that the inclusion $i\ul\R\to\O_X$ induces also a surjective map $H^2(X,i\R)\to H^2(X,\O_X).$ That is, the corresponding map $H^2(X,(\O_X/i\ul\R)) \to H^3(X,i\ul\R)$ is injective. Suppose now that $0\to(\ep)\to A\to B\to0$ is a small extension in $(\Art)_\R.$ Since the map $\R\cong(\ep)\to A$ is injective it follows that so is the $\R$-linear map $H^3(X,i\R)\to H^3(X,iA).$ Composing this with the injection $H^2(X,(\O_X/i\ul\R)) \to H^3(X,i\ul\R)$ we see that the map $H^2(X,(\O_X/i\ul\R)) \to H^3(X,i\ul A)$ is injective. Using the commutative diagrams,
\begin{equation}\begin{tikzcd}[column sep=small]
\!\!\!\!\!\!0\ar[r] &i\ul\R\ar[r]\ar[d]& \O_X\ar[r]\ar[d] & \O_X/i\ul\R\ar[r]\ar[d]&0 \!\!\!& H^2(X,(\O_X/i\ul\R))\ar[r]\ar[d]& H^3(X,i\R)\ar[d]\\
\!\!\!\!\!\!0\ar[r] &i\ul A\ar[r]& \O_\cX\ar[r] & \O_\cX/i\ul A\ar[r]&0,                     \!\!\!& H^2(X,(\O_\cX/i\ul A)\ar[r]& H^3(X,iA)
\end{tikzcd}\end{equation}
we see that the map $H^2(X,(\O_X/i\ul\R)) \to H^2(X,(\O_\cX/i\ul A))$ is also injective. Introducing now the $B$-ringed space $\cY:=(X,\O_\cX\otimes_A B)$ we get a commuative diagram
\begin{equation}\begin{tikzcd}
&0\ar[d]&0\ar[d]&0\ar[d]&\\
0\ar[r]&\O_X/i\ul\R\ar[r]\ar[d]& \Re C^\iy_X\ar[r]\ar[d]& \cK_X\ar[r]\ar[d]&0\\
0\ar[r]&\O_\cX/i\ul{A}\ar[r]\ar[d]& \Re C^\iy_\cX\ar[r]\ar[d]& \cK_\cX\ar[r]\ar[d]&0\\
0\ar[r]&\O_\cY/i\ul{B}\ar[r]\ar[d]& \Re C^\iy_\cY\ar[r]\ar[d]& \cK_\cY\ar[r]\ar[d]&0\\
&0&0&0.&
\end{tikzcd}\end{equation}
Since $\Re C^\iy_X$ and $\Re C^\iy_\cX$ have vanishing higher cohomology groups, we get isomorphisms $H^1(X,\cK_X)\cong H^2(X,\O_X/if^{-1}\ul\R)$ and $H^1(X,\cK_X)\cong H^2(X,\O_\cX/if^{-1}\ul A).$ The map $H^2(X,(\O_X/i\ul\R)) \to H^2(X,(\O_\cX/i\ul A))$ being injective implies now the map $H^1(X,\cK_X) \to H^1(X,\cK_\cX)$ being injective. The map $H^0(X,\cK_\cX)\to H^0(X,\cK_\cY)$ is accordingly surjective. This means that every K\"ahler form on $\cY/B$ extends to $\cX/A.$ The induction therefore completes the proof. \qed
\end{rmk}

There is a useful criterion for the hypothesis of Theorem \ref{thm: Bing}.
\begin{thm}[Proposition 5 of \cite{Nam2}]\l{thm: Nam}
Let $X$ be a compact normal K\"ahler space whose singularities are rational. The map $H^2(X,\R)\to H^2(X,\O_X)$ is then surjective, so the conclusion of Theorem \ref{thm: Bing} holds. \qed
\end{thm}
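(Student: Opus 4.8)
The plan is to pass to a resolution of singularities and reduce everything to Hodge theory on a smooth compact Kähler manifold. Let $\pi:Z\to X$ be a resolution of singularities with $Z$ smooth; since $X$ is compact Kähler, $Z$ may be taken compact Kähler as well, because the class of compact Kähler spaces is stable under modifications (Varouchas). The inclusion $\ul\R\hookrightarrow\O_X$ of the constant sheaf induces the map in question, and it is compatible under $\pi^*$ with the corresponding map on $Z$: there is a commutative square whose horizontal arrows are the two maps induced by $\ul\R\hookrightarrow\O$ and whose vertical arrows are $\pi^*\colon H^2(X,\R)\to H^2(Z,\R)$ and $\pi^*\colon H^2(X,\O_X)\to H^2(Z,\O_Z)$. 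First I would dispose of the right vertical arrow: because $X$ has rational singularities we have $\pi_*\O_Z=\O_X$ and $R^q\pi_*\O_Z=0$ for $q\ge1$ (see \cite{MK}), so the Leray spectral sequence gives an isomorphism $\pi^*\colon H^2(X,\O_X)\xrightarrow{\ \sim\ }H^2(Z,\O_Z)$. It therefore suffices to prove that the composite $H^2(X,\R)\xrightarrow{\pi^*}H^2(Z,\R)\to H^2(Z,\O_Z)$ is surjective.

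Next I would record the purely Hodge-theoretic surjectivity on $Z$. On the compact Kähler manifold $Z$ one has $H^2(Z,\C)=H^{2,0}\oplus H^{1,1}\oplus H^{0,2}$ with $H^2(Z,\O_Z)\cong H^{0,2}$, and the map induced by $\ul\R\hookrightarrow\O_Z$ is the restriction to real classes of the projection onto $H^{0,2}$. This restriction is already surjective onto the \emph{complex} vector space $H^{0,2}$: complex conjugation interchanges $H^{2,0}$ and $H^{0,2}$, so for $w\in H^{0,2}$ the class $w+\bar w$ lies in $H^2(Z,\R)$ and its $(0,2)$-component is $w$. Thus the only remaining issue is to control the image of $\pi^*$ and show that these real classes $w+\bar w$ can be arranged to come from $X$.

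This last point is the heart of the matter. The map $\pi^*\colon H^2(X,\C)\to H^2(Z,\C)$ is a morphism of mixed Hodge structures, so its image is a sub-Hodge-structure of the pure weight-two structure $H^2(Z,\C)$, hence itself pure of weight two, and it is stable under complex conjugation since $\pi^*$ is defined over $\R$. Because rational singularities are Du Bois one has $\O_X\xrightarrow{\ \sim\ }\ul\Om^0_X$, whence $\gr^0_F H^2(X,\C)\cong H^2(X,\O_X)$; combined with the isomorphism $H^2(X,\O_X)\cong H^2(Z,\O_Z)=\gr^0_F H^2(Z,\C)$ above, this says that $\pi^*$ induces an isomorphism on the bottom graded pieces $\gr^0_F$. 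For a weight-two sub-Hodge-structure the $\gr^0_F$ is exactly its intersection with $H^{0,2}$, so the image of $\pi^*$ contains all of $H^{0,2}(Z)$, and by conjugation-stability it contains $H^{2,0}(Z)$ as well. Consequently, for any $w\in H^{0,2}(Z)$ both $w$ and $\bar w$ are pulled back from $X$, so $w+\bar w$ lies in $\pi^*H^2(X,\R)$ and projects to $w$; this yields the desired surjectivity. The main obstacle is precisely this step, namely invoking the mixed Hodge theory of singular compact Kähler spaces (Deligne, Du Bois) to identify $\gr^0_F$ with $H^\bullet(X,\O_X)$ and to know that $\pi^*$ is a strict morphism of Hodge structures. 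An alternative that avoids the resolution is to work directly with the Du Bois complex of $X$: its Hodge–de Rham spectral sequence $E_1^{p,q}=\mathbb H^q(X,\ul\Om^p_X)\Rightarrow H^{p+q}(X,\C)$ degenerates at $E_1$ for $X$ compact Kähler, so the edge map $H^2(X,\C)\to H^2(X,\ul\Om^0_X)=H^2(X,\O_X)$ is surjective; one then extracts the real surjectivity from the conjugation symmetry exactly as above, which is again the delicate point.
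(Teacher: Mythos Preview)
The paper does not give its own proof of this statement: it is quoted as Proposition~5 of \cite{Nam2} and closed immediately with \qed. So there is nothing in the present paper to compare your argument against.

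That said, your outline is essentially the standard argument, and it is along the same lines as Namikawa's original proof: pass to a resolution $\pi:Z\to X$, use rationality of the singularities to identify $H^2(X,\O_X)\cong H^2(Z,\O_Z)$, and then invoke Hodge theory on the compact K\"ahler manifold $Z$ together with the Du Bois property (rational $\Rightarrow$ Du Bois) to handle the lift back to $X$. The one place where you should be more careful is the passage from $\C$-coefficient surjectivity to $\R$-coefficient surjectivity: surjectivity of $H^2(X,\C)\to H^2(X,\O_X)$ alone does not formally give surjectivity of $H^2(X,\R)\to H^2(X,\O_X)$, and you are right that this is exactly where the conjugation symmetry of the Hodge structure on $Z$ is needed. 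Your argument that $\pi^*H^2(X,\C)$ is a real sub-Hodge-structure of $H^2(Z,\C)$ containing $H^{0,2}(Z)$, hence also $H^{2,0}(Z)$, and therefore $w+\bar w\in\pi^*H^2(X,\R)$ for every $w\in H^{0,2}(Z)$, is the correct way to close this gap. The alternative route via degeneration of the Du Bois spectral sequence that you sketch at the end would also work, with the same conjugation step at the end.
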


We make a definition we will use often in what follows.
\begin{dfn}\l{dfn: relative conifold metrics}
Let $X$ be a compact K\"ahler conifold, $A$ an Artin local $\R$-algebra and $\cX/A$ a locally trivial deformation of $X.$ Then a {\it K\"ahler conifold metric on $\cX/A$} is a K\"ahler form $\om$ on $(X^\reg,\O_\cX|_{X^\reg})$ such that the following holds: for every $x\in X^\sing$ there exist a punctured neighbourhood $U$ of $x\in X$ and some K\"ahler cone metric $g_x$ on $(X,x)$ such that if we denote by $\om_x$ the K\"ahler form of $g_x$ then $\om|_U\in\Ga(\La^{11}_{\cX/A}|_U)$ corresponds to $\om_x\otimes1\in \Ga(\La^{11}_U)\otimes_\R A$ under an isomorphism $\La^{11}_{\cX/A}|_U\cong \La^{11}_U\otimes_\R A$ (which does exist for $U$ small enough, as $\cX/A$ is locally trivial).
\end{dfn}

Using Theorem \ref{thm: Nam} we generalize Corollary \ref{cor: Kahl pot} as follows.
\begin{cor}\l{cor: extending conifold metrics}
Let $X$ be a compact K\"ahler conifold whose singularities are rational. Let $A$ be an Artin local $\R$-algebra and $\cX/A$ a locally trivial deformation of $X.$ Then there exists on $\cX/A$ a K\"ahler conifold metric.
\end{cor}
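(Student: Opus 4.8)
The plan is to first produce a relative K\"ahler form on all of $\cX/A$ by appealing to Theorems \ref{thm: Nam} and \ref{thm: Bing}, and then to graft a cone metric onto it near each singular point exactly as in the absolute case (Lemma \ref{lem: conifold metrics}), the one new point being that the modification must be carried out over $A.$ Since $X$ is a compact normal K\"ahler space with rational singularities, Theorem \ref{thm: Nam} gives that $H^2(X,\R)\to H^2(X,\O_X)$ is surjective, so the hypothesis of Theorem \ref{thm: Bing} is met; as $\cX/A$ is in particular a deformation of $X$ over the Artin local $\R$-algebra $A,$ that theorem furnishes a K\"ahler form $\om$ on $\cX/A,$ presented by local relative potentials $\ph_U\in\Re C^\iy_\cX(U)$ with $\om|_U=i\bd_{\cX/A}\db_{\cX/A}\ph_U$ whose restrictions to $X$ are strictly plurisubharmonic.

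Next I would localize at each of the finitely many $x\in X^\sing.$ Embedding a neighbourhood of $x$ into $\C^m$ as the cone germ of Example \ref{ex: def of Sas}, with radius functions $r$ and $r_\la,$ and using that $\cX/A$ is locally trivial, for $U$ small enough there is an isomorphism $\La^{\bt\bt}_{\cX/A}|_U\cong\La^{\bt\bt}_U\otimes_\R A$ intertwining $\bd_{\cX/A},\db_{\cX/A}$ with $\bd\otimes\id,\db\otimes\id;$ under it $\ph_U$ corresponds to an element of $\Re C^\iy_U\otimes_\R A$ whose image modulo $\fm A$ is the central-fibre potential $p:=\ph_U|_X.$ With cutoffs as in Lemma \ref{lem: Kahl pot} --- a compactly supported $\chi$ equal to $1$ near $x$ and a radial $\ps$ supported in $[0,1]$ and equal to $1$ near $0$ --- I then set
\[
q_U:=\ph_U+\ep\,\chi\, r_\la^2\otimes1-\ps\Bigl(\tfrac{r^2}{\de^2}\Bigr)\ph_U\in\Re C^\iy_\cX(U).
\]

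The restriction of $q_U$ to the central fibre $X$ is precisely the function produced by the absolute construction of Lemma \ref{lem: conifold metrics} (via Corollary \ref{cor: Kahl pot} and Lemma \ref{lem: Kahl pot}) applied to $p;$ hence, for $\ep,\de$ chosen as in those results, this restriction is strictly plurisubharmonic on $U\-\{x\}.$ By Definition \ref{dfn: Kahl} this is exactly what is needed for $i\bd_{\cX/A}\db_{\cX/A}q_U$ to be a relative K\"ahler form, since the only constraint on a relative K\"ahler potential is strict plurisubharmonicity of its restriction to $X$ --- the nilpotent part of $q_U$ is irrelevant here. Outside a punctured neighbourhood of $x$ we have $\chi=\ps=0,$ so $q_U=\ph_U$ and $i\bd_{\cX/A}\db_{\cX/A}q_U=\om|_U;$ on a smaller punctured neighbourhood $\chi=\ps=1,$ so $q_U=\ep r_\la^2\otimes1$ and $i\bd_{\cX/A}\db_{\cX/A}q_U=\om_x\otimes1,$ where $\om_x:=i\bd\db(\ep r_\la^2)$ is the K\"ahler form of a (rescaled) K\"ahler cone metric on $(X,x)$ as in Example \ref{ex: def of Sas}. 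Performing this at each singular point and leaving $\om$ unchanged elsewhere glues to a relative $(1,1)$ form on $(X^\reg,\O_\cX|_{X^\reg})$ that is a K\"ahler form and that equals $\om_x\otimes1$ on a punctured neighbourhood of each $x;$ by Definition \ref{dfn: relative conifold metrics} it is a K\"ahler conifold metric on $\cX/A.$

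The step requiring genuine care, as opposed to routine bookkeeping with the local triviality isomorphisms, is the reduction of the relative strict-plurisubharmonicity to the central fibre: it is this observation --- that Definition \ref{dfn: Kahl} imposes its convexity condition only on the restriction to $X$ --- that lets the absolute Lemma \ref{lem: Kahl pot} be reused verbatim and keeps the nilpotent directions of $A$ from entering the analytic estimates at all.
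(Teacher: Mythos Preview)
Your approach is essentially the same as the paper's: invoke Theorem \ref{thm: Nam} to feed Theorem \ref{thm: Bing}, then rerun the cutoff construction of Lemma \ref{lem: Kahl pot} over $A$ using local triviality, and observe that Definition \ref{dfn: Kahl} only imposes strict plurisubharmonicity on the central fibre.

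One small point: your displayed formula $q_U=\ph_U+\ep\chi r_\la^2\otimes1-\ps(r^2/\de^2)\ph_U$ uses $\ph_U$ directly, but Lemma \ref{lem: Kahl pot} requires $p(0)=0$ and $\nb p(0)=0$ for its estimates ($|p|\le M_0r^2$, $|\d p|\le M_0r$) to hold. The paper handles this by first replacing $p_U$ with $p':=p_U-p_U(0)-\sum_a(\frac{\bd p_U}{\bd z_a}(0)z_a+\frac{\bd p_U}{\bd\bar z_a}(0)\bar z_a)$ --- exactly the normalization inside Corollary \ref{cor: Kahl pot}, which you cite --- and then noting that $\bd_{\cX/A}\db_{\cX/A}p'=\bd_{\cX/A}\db_{\cX/A}p_U$ so the gluing to $\om$ is unaffected. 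With that adjustment your argument matches the paper's.
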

\begin{proof}
Using Theorem \ref{thm: Nam} choose an open cover $U\cup V\cup\dots =X$ and a corresponding family $(p_U,p_V,\dots)$ which define a K\"ahler form on $\cX/A.$ Put $B:=A\otimes_\R\C.$ For $U$ containing a singular point $x\in X^\sing$ embed the $B$-ringed space $(U,\O_\cX|_U)$ into $\C^m\times \spec B.$ Extend $p_U$ to some open set in $\C^m$ as a $C^\iy$ function with values in $B.$ Put $p':=\displaystyle{p_U-p_U(0)-\sum_{a=1}^m(\frac{\bd p_U}{\bd z_a}(0)z_a+\frac{\bd p_U}{\bd \bar z_a}(0)\bar z_a)}.$ On the other hand, let $\ep r_\la:U\to\R$ be as in Lemma \ref{lem: Kahl pot}. Regard this as a $B$-valued function and as a smooth function on $U\times\spec B.$ Choose also a cut-off function $\ps$ as in the proof of Lemma \ref{lem: Kahl pot}. Define a $C^\iy$ function $q_U:U\times\spec B\to\R$ by 
\e
q_U:=p'+\ep\ph r_\la^2-\ps\Bigl(\frac{r^2}{\de^2}\Bigr)p'
\e
Then $q_U=p_U$ at the points far enough from $x.$ Since $\bd_{\cX/A}\db_{\cX/A} z_1=\dots=\bd_{\cX/A}\db_{\cX/A} z_m=0$ and $\bd_{\cX/A}\db_{\cX/A}\bar z_1=\dots=\bd_{\cX/A}\db_{\cX/A}\bar z_m=0$ it follows that $i\bd_{\cX/A}\db_{\cX/A}q_U=i\bd_{\cX/A}\db_{\cX/A}p'=i\bd_{\cX/A}\db_{\cX/A}p_U.$ We can therefore glue together $q_U$ and the other K\"ahler potentials. That is, for $U$ not intersecting $X^\sing,$ set $q_U:=p_U.$ The family $(q_U,q_V,\dots)$ defines then a section over $X^\reg$ of the sheaf $\cK_{\cX/A}.$ Its image under the restriction map $\cK_{\cX/A}\to \cK_X$ defines a K\"ahler conifold metric on $X^\reg,$ as in the proof of Lemma \ref{lem: Kahl pot}. The family $(q_U,q_V,\dots)$ defines thus a K\"ahler conifold metric on $\cX/A.$
\end{proof}

\section{Tensor Calculus}\l{sect: tens calc}
We generalize several standard notions from K\"ahler geometry. Let $X$ be a complex manifold, $A$ an Artin local $\R$-algebra and $\cX/A$ a deformation of $X.$ A {\it local coordinate system} on $\cX/A$ is the data $(U;z^1,\dots,z^n)$ where $U\sb X$ is an open set isomorphic to an open set in $\C^n$ and such that there exists an $A$-algebra sheaf isomorphism $\O_\cX|_U\cong \O_U\times_\R A;$ and $\ze^1,\dots,\ze^n$ are the coordinates on $U$ embedded in $\C^n.$ For $a=1,\dots,n$ we write $z^a:=\ze^a\otimes 1$ which is a section of $\O_\cX|_U.$  So if $\ph$ is a section of $\La^{pq}_{\cX/A}$ with $p,q\in\Z$ then we can write
\e \ph=\frac1{p!q!}\sum_{\substack{a_1,\dots,a_p=1,\dots,n\\ b_1,\dots,b_q=1,\dots,n}} \ph_{a_1\dots a_p\bar b_1\dots\bar b_q} \d z^{a_1}\wedge\dots\wedge \d z^{a_p}\wedge \d \ov{ z^{b_1}}\wedge \dots\wedge \d \ov{ z^{b_q}}\e
with $\ph_{a_1\dots a_p\bar b_1\dots\bar b_q}\in C^\iy(U,\C)\otimes_\R A.$

Suppose now that $\cX/A$ is given a K\"ahler form $\om.$ In each local coordinate system $(U;z^1,\dots,z^n)$ write $\om=\frac i2\sum_{a,b=1}^n g_{a\bar b}\d z^a\wedge \d z^b$ with $g_{a\bar b}=g_{\bar b a}\in C^\iy(U,\R)\otimes_\R A.$ Denote by $g^{a\bar b}=g^{\bar b a}$ the inverse matrix of $g_{a\bar b},$ both $n{\times}n$ with entries in $A.$ Define for $p,q\in\Z$ an $C^\iy_\cX$ bi-linear sheaf homomorphism $g_{\cX/A}:\La^{pq}_{\cX/A}\times \La^{pq}_{\cX/A}\to C^\iy_\cX$ by saying that if $\ph,\ps\in\Ga(\La^{pq}_{\cX/A})$ then
\e\l{phps}
g_{\cX/A}(\ph,\ps):=\sum g^{a_1\bar c_1}\dots g^{a_p\bar c_p} g^{\bar b_1 d_1}\dots g^{\bar b_q d_q}\ph_{a_1\dots a_p\bar b_1\dots\bar b_q}\ov\ps_{\bar c_1\dots \bar c_p d_1\dots d_q}
\e
where $\sum$ is over $a_1,\dots,a_p;b_1,\dots,b_q; c_1,\dots,c_p; d_1,\dots,d_q=1,\dots,n.$ Since $\La^{pq}_{\cX/A}$ is a locally free $C^\iy_{\cX}$ module and admits partitions of unity it follows that \eq{phps} for $\ph,\ps\in\Ga(\La^{pq}_{\cX/A})$ determines the sheaf homomorphism $g_{\cX/A}.$

The same computation as for ordinary K\"ahler manifolds shows that there exists a unique $A$-module sheaf homomorphism $\nb_{\cX/A}:\La^{10}_{\cX/A}\to \La^1_{\cX/A}\otimes_{C^\iy_\cX}\La^{10}_{\cX/A}$ with the following properties.
\iz
\item[\bf(i)] If $\ph\in \Ga(\La^{10}_{\cX/A})$ and $f\in \Ga(C^\iy_\cX)$ then $\nb_{\cX/A}(f\ph)=\d_{\cX/A}f\otimes \ph+ f\nb_{\cX/A}\ph.$ 
\item[\bf(ii)] If $\ph,\ps\in \Ga(\La^{10}_{\cX/A})$ then $\d [g_{\cX/A}(\ph,\ps)]=g_{\cX/A}(\nb_{\cX/A}\ph,\ps)+g_{\cX/A}(\ph,\nb_{\cX/A}\ps).$
\item[\bf(iii)] Using $\La^1_{\cX/A}=\La^{10}_{\cX/A}\oplus \La^{01}_{\cX/A}$ define the projections $\La^1_{\cX/A}\to \La^{01}_{\cX/A}$ and $\La^1_{\cX/A}\otimes_{C^\iy_\cX} \La^{10}_{\cX/A}\to \La^{01}_{\cX/A}\otimes_{C^\iy_\cX}\La^{10}_{\cX/A}.$ The composite of the latter with $\nb_{\cX/A}$ is then equal to $\db_{\cX/A}.$
\iz
The properties (i)--(iii) imply also that we can write $\nb_{\cX/A}$ more explicitly in each local coordinate system $(U;z^1,\dots,z^n).$ For $a,b,c=1,\dots,n$ put
\e \Ga^c_{ab}:=\displaystyle\sum_{k=1}^n g^{c\bar k}\frac{\partial g_{a\bar k}}{\partial z^b}\Bigl(=-\displaystyle\sum_{k=1}^n \frac{\partial g^{c\bar k}}{\partial z^b}g_{a\bar k}\Bigr).\e 
For $\ph\in\La^{10}_{\cX/A}(U)$ put $\displaystyle\nb_a\ph_b=\frac{\partial\ph_b}{\partial z^a}-\sum_{c=1}^n\Ga^c_{ab}\ph_c.$ Then
\e
\nb_{\cX/A}\ph=:\sum_{a,b=1}^n\nb_a \ph_b \d z^a\otimes\d z^b+\sum_{a,b=1}^n\db \ph_b \otimes\d z^b.
\e
This is the Levi-Civita connection in the following sense. Making $U$ smaller if we need, we can suppose that there exists $f\in C^\iy_{\cX}(U)$ with $\displaystyle\frac{\partial^2 f}{\partial z^a\partial z^{\bar b}}=g_{a\bar b}.$ This implies $\Ga^c_{ab}=\Ga^c_{ba}.$ 

There exists also an $A$-module sheaf homomorphism $\nb_{\cX/A}:\La^{01}_{\cX/A}\to \La^1_{\cX/A}\otimes_{C^\iy_\cX}\La^{01}_{\cX/A}$ characterized by the same conditions (i)---(iii) with $\partial_{\cX/A}$ in place of $\db_{\cX/A}$ at the end of (iii). The generalized Christoffel symbols are defined by $\Ga^{\bar c}_{\bar a\bar b}:=\displaystyle\sum_{k=1}^n g^{\bar c k}\frac{\partial g_{k\bar a}}{\partial z^{\bar b}}$ which is also equal to the complex conjugate $\ov{\Ga^c_{ab}}.$ 

For $p,q\in\Z$ extend $\nb_{\cX/A}$ to an operator $\La^{pq}_{\cX/A}\to \La^1_{\cX/A}\otimes_{C^\iy_\cX}\La^{pq}_{\cX/A}$ by the Leibniz rule. In the local coordinate expression, for $c=1,\dots,n$ define $\nb_c,\nb_{\bar c}:\La^{pq}_{\cX/A}(U)\to \La^{pq}_{\cX/A}(U)$ by
\ea
\!\!\!\nb_c\ph_{a_1\dots a_p\bar b_1\dots\bar b_q}=\frac{\partial}{\partial z^c}\ph_{a_1\dots a_p\bar b_1\dots\bar b_q}-\sum_{j=1}^p\sum_{k=1}^n\Ga^k_{ca_j}\ph_{a_1\dots a_{j-1}ka_{j+1}\dots a_p \bar b_1\dots\bar b_q},\\
\!\!\!\nb_{\bar c}\ph_{a_1\dots a_p\bar b_1\dots\bar b_q}=\frac{\partial}{\partial z^{\bar c}}\ph_{a_1\dots a_p\bar b_1\dots\bar b_q}-\sum_{j=1}^q\sum_{k=1}^n\Ga^{\bar k}_{\bar c\bar b_j}\ph_{a_1\dots a_p \bar b_1\dots \bar b_{j-1}\bar k\bar b_{j+1}\dots\bar b_q}.
\ea
These are then the components of $\nb_{\cX/A}\ph$ for $\d z^c\otimes \d z^{a_1}\wedge\dots\wedge \d z^{a_p}\wedge \d z^{\bar b_1}\wedge \dots \wedge \d z^{\bar b_q}$ and $\d z^{\bar c}\otimes \d z^{a_1}\wedge\dots\wedge \d z^{a_p}\wedge \d z^{\bar b_1}\wedge \dots \wedge \d z^{\bar b_q}$ respectively.

Define now an $A$-module sheaf homomorphism $\bar\partial^*_{\cX/A}:\La^{pq}_{\cX/A}\to\La^{p-1\,q}_{\cX/A}$ by 
\e
(\bar\partial_{\cX/A}^*\ph)_{a_1\dots a_p b_1\dots b_{q-1}}:=
(-1)^{p-1} \sum_{\al,\be=1}^ng^{\bar \be\al}\nb_\al\ph_{a_1\dots a_p\bar\be\bar b_1\dots\bar b_{q-1}}
\e
in the local coordinate expression. Define an $A$-module sheaf homomorphism $\De_{\cX/A}:\La^{pq}_{\cX/A}\to \La^{pq}_{\cX/A}$ by
\e\De_{\cX/A}:=2(\bar\partial^*_{\cX/A}\bar\partial_{\cX/A}+\bar\partial_{\cX/A}\bar\partial^*_{\cX/A}).\e
This is the obvious generalization of the Laplacian. The key properties we shall need are the following. 

Notice that the K\"ahler form on $\cX/A$ induces a K\"ahler form on $X.$ Denote by $\De_X:\La^{pq}_X\to\La^{pq}_X$ the Laplacian with respect to the induced K\"ahler form on $X.$ On the other hand, there is a restriction map $\La^{pq}_{\cX/A}\to\La^{pq}_X.$ The diagram
\begin{equation}\begin{tikzcd}
\La^{pq}_{\cX/A}\ar[r]\ar[d,"\De_{\cX/A}"]&\La^{pq}_X\ar[d,"\De_X"]\\
\La^{pq}_{\cX/A}\ar[r]&\La^{pq}_X
\end{tikzcd}\end{equation}
then commutes.

As $\nb_{\cX/A}$ is the Levi-Civita connection in the sense above we can compute $\partial_{\cX/A}$ and $\bar\partial_{\cX/A}$ in terms of $\nb_{\cX/A},$ as we do for ordinary K\"ahler manifolds; that is, if $\ph\in\Ga(\La^{pq}_{\cX/A})$ then in the local coordinate expression we have
\ea
(\partial_{\cX/A}\ph)_{a_1\dots a_{p+1}\bar b_1\dots\bar b_q}&=\sum_{j=1}^{p+1}(-1)^{j-1} \nb_{a_j}\ph_{a_1\dots \hat a_j\dots a_{p+1}\bar b_1\dots\bar b_q},\\
(\bar\partial_{\cX/A}\ph)_{a_1\dots a_p\bar b_1\dots\bar b_{q+1}}&=\sum_{j=1}^{q+1}(-1)^{p+j-1} \nb_{\bar b_j}\ph_{a_1\dots a_p\bar b_1\dots \hat b_j\dots\bar b_{q+1}}.
\ea
The latter implies readily that $\bar\partial^*_{\cX/A}:\La^{pq}_{\cX/A}\to\La^{p\, q-1}_{\cX/A}$ is the formal adjoint of $\bar\partial_{\cX/A}:\La^{p-1\,q}_{\cX/A}\to\La^{pq}_{\cX/A}$ with respect to the measure $\om^n_{\cX/A};$ that is, for every section $\ph\in\Ga(\La^{p-1\, q}_{\cX/A})$ and every compactly supported section $\ps\in\Ga_c(\La^{pq}_{\cX/A})$ we have
\e\l{dbar}
\int_Xg_{\cX/A}(\db_{\cX/A}^*\ph,\ps)\om^n_{\cX/A}=\int_Xg_{\cX/A}(\ph,\bar\partial_{\cX/A}\ps)\om^n_{\cX/A}.
\e 
In the same way, define an $A$-module sheaf homomorphism $\partial^*_{\cX/A}:\La^{pq}_{\cX/A}\to\La^{p-1\,q}_{\cX/A}$ by
\e
(\partial^*_{\cX/A}\ph)_{a_1\dots a_{p-1}\bar b_1\dots\bar b_q}:=-\sum_{\al,\be=1}^ng^{\al\bar\be} \nb_{\bar\be}\ph_{\al a_1\dots \dots a_{p-1}\bar b_1\dots\bar b_q}.
\e
This is then the formal adjoint of $\partial_{\cX/A}.$ 

Put $\ov{\De_{\cX/A}}:=\partial^*_{\cX/A}\partial_{\cX/A}+\partial_{\cX/A}\partial^*_{\cX/A}$ and $\d_{\cX/A}^*=\partial^*_{\cX/A}+\bar\partial^*_{\cX/A}.$ Generalizing the standard computation for K\"ahler manifolds, we prove that
\e\l{De0} \square:=\d_{\cX/A}^*\d_{\cX/A}+\d_{\cX/A}\d^*_{\cX/A}=2\De_{\cX/A}=2\ov{\De_{\cX/A}}.\e
\begin{proof}[Proof of \eq{De0}]
Define a $C^\iy_\cX$ module homomorphism $\om_{\cX/A}\wedge: \La^{pq}_{\cX/A}\to\La^{p+1\, q+1}_{\cX/A}$ to be the left multiplication by $\om_{\cX/A}.$ Define a $C^\iy_\cX$ module homomorphism $\La:\La^{pq}_{\cX/A}\to \La^{p-1\, q-1}_{\cX/A}$ to be the pointwise adjoint of $\om_{\cX/A}\wedge;$ or equivalently, in the local coordinate expression, if $\ph$ is a section of $\La^{pq}_{\cX/A}$ then set
\e
(\La\ph)_{a_1\dots a_{p-1} \bar b_1\dots\bar b_{q-1}}:=i(-1)^p\sum_{a,b=1}^ng^{\bar b a}\ph_{aa_1\dots a_{p-1}\bar b\bar b_1\dots\bar b_{q-1} }.
\e
We show that $[\partial_{\cX/A},\La]=-i\bar\partial^*_{\cX/A}$ as $A$-module sheaf homomorphisms from $\La^{pq}_{\cX/A}$ to $\La^{p\,q-1}_{\cX/A}.$ If $\ph$ is a local section of $\La^{pq}_{\cX/A}$ then
\ea\l{La1}
(\partial_{\cX/A}\La\ph)_{a_1\dots a_p\bar b_1\dots\bar b_{q-1}}
=\sum_{j=1}^p(-1)^{j-1} \nb_{a_j} (\La\ph)_{a_1\dots\hat a_j\dots a_p\bar b_1\dots\bar b_{q-1}}\\
=i (-1)^p\sum_{a,b=1}^ng^{\bar b a} \sum_{j=1}^p(-1)^{j-1} \nb_{a_j} \ph_{aa_1\dots  \hat a_j\dots a_p\bar b\bar b_1\dots\bar b_{q-1}}.
\ea
On the other hand,
\begin{align*}
&(\La\partial_{\cX/A}\ph)_{a_1\dots a_p\bar b_1\dots\bar b_{q-1}}
=i(-1)^{p-1} \sum_{a,b=1}^ng^{\bar b a}(\partial\ph)_{aa_1\dots a_p\bar b\bar b_1\dots\bar b_{q-1}}\\
&=i(-1)^{p-1} \sum_{a,b=1}^ng^{\bar b a}(\nb_a\ph_{a_1\dots a_p\bar b\bar b_1\dots\bar b_{q-1}}+(-1)^j \nb_{a_j} \ph_{aa_1\dots  \hat a_j\dots a_p\bar b\bar b_1\dots\bar b_{q-1}}). 
\end{align*}
This with \eq{La1} implies
\[
(\partial_{\cX/A}\La\ph-\La\partial_{\cX/A}\ph)_{a_1\dots a_p\bar b_1\dots\bar b_{q-1}}=-
i(-1)^{p-1} \sum_{a,b=1}^ng^{\bar b a}\nb_a\ph_{a_1\dots a_p\bar b\bar b_1\dots\bar b_{q-1}}
\]
which is equal to $-i(\bar\partial^*_{\cX/A}\ph)_{a_1\dots a_p\bar b_1\dots\bar b_{q-1}}$ as claimed. We compute now $\De_{\cX/A}:=\bar\partial^*_{\cX/A}\bar\partial_{\cX/A}+\bar\partial_{\cX/A}\bar\partial^*_{\cX/A}.$ The identity $[\partial_{\cX/A},\La]=-i\bar\partial^*_{\cX/A}$ implies
\ea\l{De1}
-i\De_{\cX/A}&=\bar\partial_{\cX/A}[\partial_{\cX/A},\La]+[\partial_{\cX/A},\La]\bar\partial_{\cX/A}\\
&=\bar\partial_{\cX/A}\partial_{\cX/A}\La-\bar\partial_{\cX/A}\La\partial_{\cX/A}+\partial_{\cX/A}\La\bar\partial_{\cX/A}-\La\partial_{\cX/A}\bar\partial_{\cX/A}.
\ea
Since $\La$ is a real operator it follows also that $-i\partial^*_{\cX/A}=-[\bar\partial_{\cX/A},\La]$ and hence that
\ea\l{De2}
-i\ov{\De_{\cX/A}}&=-\bar\partial_{\cX/A}[\partial_{\cX/A},\La]-[\bar\partial_{\cX/A},\La]\partial_{\cX/A}\\
&=\bar\partial_{\cX/A}\partial_{\cX/A}\La+\partial_{\cX/A}\La\bar\partial_{\cX/A}-\bar\partial_{\cX/A}\La\partial_{\cX/A}-\La\partial_{\cX/A}\bar\partial_{\cX/A}
\ea
whose right-hand side is equal to that of \eq{De1}. Thus $\De_{\cX/A}=\ov{\De_{\cX/A}}.$ On the other hand,
\ea\l{De3}
\square=(\partial_{\cX/A}+\bar\partial_{\cX/A})(\partial^*_{\cX/A}+\bar\partial^*_{\cX/A})+(\partial^*_{\cX/A}+\bar\partial^*_{\cX/A})(\partial_{\cX/A}+\bar\partial_{\cX/A})\\
\!\!\!\!\!\!=\De_{\cX/A}+\ov{\De_{\cX/A}}+(\partial_{\cX/A}\bar\partial^*_{\cX/A}+\bar\partial^*_{\cX/A}\partial_{\cX/A})+(\bar\partial_{\cX/A}\partial^*_{\cX/A}+\partial^*_{\cX/A}\bar\partial_{\cX/A}).
\ea
Using again the identity $[\partial_{\cX/A},\La]=-i\bar\partial^*_{\cX/A}$ we find
\[
-i(\partial_{\cX/A}\bar\partial^*_{\cX/A}+\bar\partial^*_{\cX/A}\partial_{\cX/A})=
\partial_{\cX/A}(\partial_{\cX/A}\La-\La\partial_{\cX/A})+(\partial_{\cX/A}\La-\La\partial_{\cX/A})\partial_{\cX/A}=0;
\]
that is, the second last term of \eq{De3} vanishes. Taking the complex conjugates we see also that the last term of \eq{De3} vanishes. The equation \eq{De3} implies therefore $\square=\De_{\cX/A}+\ov{\De_{\cX/A}}=2\De_{\cX/A},$ proving \eq{De0}.
\end{proof}
Also $\De_{\cX/A}:\La^{pq}_{\cX/A}\to \La^{pq}_{\cX/A}$ is an elliptic operator with
\begin{align*}
(\De_{\cX/A}\ph)_{a_1\dots a_p \bar b_1\dots\bar b_q}&=-g^{\ov\be \al}\nb_\al\nb_{\ov\be}\ph_{a_1\dots a_p\bar b_1\dots\bar b_q}\\
&+\sum_{j=1}^q(-1)^{j-1}g^{\ov\be \al}[\nb_\al,\nb_{\bar b_j}]\ph_{a_1\dots a_p\ov\be\bar b_1\dots\bar b_{j-1}\bar b_{j+1}\dots\bar b_q}.
\end{align*}
The proof is similar to that for ordinary K\"ahler manifolds.

\section{$C^\iy$ Deformations}\l{sect: C^iy deform}

We introduce now a notion of deforming $C^\iy$ manifolds.
\begin{dfn}\l{dfn: C^iy deform}
Let $X$ be a $C^\iy$ manifold. If $A$ is an Artin local $\C$-algebra then a {\it deformation} over $A$ of $X$ is an $A$-algebra sheaf $\cF$ on $X$ equipped with a $\C$-algebra sheaf isomorphism $\cF\otimes_A(A/\fm A)\cong C^\iy_X$ and such that 
\begin{equation}\l{A-analytic}\parbox{10cm}{
every point of $X$ has an open neighbourhood $U$ on which there exists an $A$-algebra sheaf isomorphism $\cF|_U\cong C^\iy_U\otimes_\C A.$ 
}\end{equation}
The last $A$-algebra sheaf $C^\iy_U\otimes_\C A$ may be regarded as the sheaf on $U$ of $A$-valued $C^\iy$ functions.

Let $\cG$ be another deformation over $A$ of $X.$ Then an {\it isomorphism} from $\cF$ to $\cG$ is an $A$-algebra sheaf isomorphism $\ph:\cF\to\cG$ such that if we denote by $\pi_\cF:\cF\to C^\iy_X$ and $\pi_\cG:\cG\to C^\iy_X$ the natural projections then $\pi_\cG\cm\ph=\pi_\cF.$ We say that $\cF$ and $\cG$ are {\it isomorphic} if there exists an isomorphism from one to the other, which is clearly an equivalence relation. The functor $\De:(\Art)_\C\to (\sets)$ assigns to each Artin local $\C$-algebra $A$ the set $\De(A)$ of isomorphism classes of deformations over $A$ of $X.$

The $A$-algebra sheaf $C^\iy_X\otimes_\C A$ is certainly a deformation over $A$ of $X,$ which we call the {\it trivial} deformation over $A$ of $X.$ It is clear that $\De(\C)$ consists of a single element represented by the trivial deformation of $X.$
\end{dfn}
We show that $\De$ satisfies a condition stronger than (H1) and (H2) in Definition \ref{dfn: H}.
\begin{prop}\l{prop: C^iy}
let $A,B,C$ be Artin local $\C$-algebras and $A\to C,B\to C$ any $\C$-algebra homomorphisms; the induced map $\De(A\times_C B)\to \De(A)\times_{\De(C)}\De(B)$ is then bijective. 
\end{prop}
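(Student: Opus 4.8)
The plan is to prove the much stronger statement that $\De(A)$ consists of a single element for \emph{every} Artin local $\C$-algebra $A$; that is, every $C^\iy$ deformation of $X$ over $A$ is isomorphic to the trivial one $C^\iy_X\otimes_\C A$. Granting this, both the source $\De(A\times_C B)$ and the target $\De(A)\times_{\De(C)}\De(B)$ are one-point sets, so the natural map between them is automatically bijective. I prefer this route to a direct gluing of the fibre-product sheaf $\cF_A\times_{\cF_C}\cF_B$: for a general (non-surjective) pair of homomorphisms $A\to C\leftarrow B$ that naive construction need not even be locally trivial, and the genuine content of the statement is a cohomology vanishing, which is exactly what triviality packages.

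The engine is the softness of the $C^\iy$ structure. First I would identify the sheaf of infinitesimal automorphisms of the trivial deformation: an $A$-algebra automorphism $\ph$ of $C^\iy_U\otimes_\C A$ restricting to the identity on $C^\iy_U$ is determined by its action on functions, $\ph(f)=f+(\text{nilpotent correction})$, and the $A$-algebra Leibniz condition forces the correction to be assembled from $\C$-linear derivations of $C^\iy_U$, i.e.\ from sections of the sheaf $\Th^{C^\iy}_X$ of complex $C^\iy$ vector fields, tensored with the maximal ideal. The key point is that $\Th^{C^\iy}_X$ is a module over $C^\iy_X$, hence a fine sheaf admitting partitions of unity, so $H^i(X,\Th^{C^\iy}_X\otimes_\C(\ep))=0$ for all $i\ge1$ and every one-dimensional $(\ep).$

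With this in hand I would argue by induction on the length of $A.$ Factor the augmentation $A\to A/\fm A=\C$ as a chain of small extensions; by Definition \ref{dfn: C^iy deform} the base case $\De(\C)$ is a single point. For a small extension $0\to(\ep)\to A'\to A\to0$ and a deformation over $A$ (trivial by induction), standard deformation theory in the style of the remark following Theorem \ref{thm: Bing} shows that the isomorphism classes of liftings to $A'$ form, if non-empty, a torsor under $H^1(X,\Th^{C^\iy}_X\otimes_\C(\ep)),$ while the obstruction to non-emptiness lies in $H^2(X,\Th^{C^\iy}_X\otimes_\C(\ep));$ both groups vanish. Concretely this is a \v{C}ech computation: local trivializations differ over overlaps by automorphisms $\exp(v_{ij})$ with $v_{ij}$ a $\Th^{C^\iy}_X$-valued cocycle with coefficients in $\fm A',$ and a partition of unity lets me write $v_{ij}=v_i-v_j$ and absorb the $v_i$ into the trivializations to produce a global one. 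Hence each $\De(A')$ is again a single point, completing the induction.

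The main obstacle is the bookkeeping in this last step, since the local transition automorphisms compose non-additively (through the Baker--Campbell--Hausdorff product of vector fields) rather than by addition. I would handle this by a secondary induction on the nilpotency degree of $\fm A'$: modulo each successive power of the maximal ideal the cocycle correction is governed by the \emph{abelian} group $H^1$ of the fine sheaf $\Th^{C^\iy}_X,$ so the splitting is carried out one order at a time, the higher-order commutator terms being pushed into the next stage. The generality ``any $\C$-algebra homomorphisms'' in the statement then costs nothing, precisely because the target of the map has collapsed to a single point.
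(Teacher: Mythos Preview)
Your argument is correct, but it is not the paper's. The paper proves Proposition~\ref{prop: C^iy} by the very fibre-product construction you set aside: given $(\cF,\cG)$ with $\cF\otimes_A C\cong\cG\otimes_B C$, it forms $\cE:=\cF\times_{\cF\otimes_A C}\cG$ and checks that $\cE$ is locally isomorphic to $C^\iy_U\otimes_\C(A\times_C B)$, which gives the inverse map directly. Only afterwards does the paper compute the tangent space (Theorem~\ref{thm: C^iy}, the $A=\C[t]/t^2$ case of your \v Cech argument) and combine the two via~\eq{H12} to obtain Corollary~\ref{cor: triv}. You reverse this order: you prove Corollary~\ref{cor: triv} first by running the \v Cech trivialization for general $A$, and then Proposition~\ref{prop: C^iy} drops out because both source and target are singletons. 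Your route is more economical in that one induction does the work of three separate statements, but it forces you to confront the Baker--Campbell--Hausdorff bookkeeping (your secondary induction on powers of $\fm A$) which the paper avoids by doing the \v Cech step only over $\C[t]/t^2$, where $\ep^2=0$ makes the cocycle honestly additive. Your concern that the fibre product $\cE$ might fail to be locally trivial for non-surjective $A\to C\leftarrow B$ is reasonable, but the check reduces to lifting an automorphism of $C^\iy_U\otimes_\C C$ to one of $C^\iy_U\otimes_\C B$, which is exactly the exp/log of a nilpotent derivation you invoke anyway---so the two proofs share the same analytic core.
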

\begin{proof}
We define explicitly the inverse map $\De(A)\times_{\De(C)}\De(B)\to \De(A\times_C B).$ Take therefore an element of $\De(A)\times_{\De(C)}\De(B)$ and represent it by $(\cF,\cG)$ where $\cF$ is a deformation over $A$ of $X,$ and $\cG$ a deformation over $B$ of $X$ such that $\cF\otimes_A C\cong \cG\otimes_B C.$ Denote by $\cE$ the fibre product of $\cF,\cG$ over $\cF\otimes_A C\cong \cG\otimes_B C.$ We show that $\cE$ is a deformation over $D:=A\times_C B.$ It is clear that there is a $\C$-algebra isomorphism isomorphism $\cE\otimes_A \C\cong\O_X.$ Since $\cF,\cG$ satisfy the condition \eq{A-analytic} it follows that every point of $X$ has an open neighbourhood $U$ on which there exists an $A$-algebra sheaf isomorphism $\cF|_U\cong C^\iy_U\otimes_\C D.$ So $\cE$ represents an element of $\De(A\times_C B)$ and defines the inverse map we want.
\end{proof}

We show that $\De$ satisfies a condition stronger than (H3) in Definition \ref{dfn: H}.
\begin{thm}\l{thm: C^iy}
$\De(\C[t]/t^2)$ consists of a single element represented by the trivial deformation of $X.$
\end{thm}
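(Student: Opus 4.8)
The plan is to classify deformations over the dual numbers by the first \v Cech cohomology of a fine sheaf and then to kill that cohomology with a partition of unity. Throughout write $A:=\C[t]/t^2$, so that $\fm A=(t)$ and $t^2=0$. First I would analyse the local automorphisms. Let $\cF$ be a deformation over $A$ of $X$ and choose, using \eq{A-analytic}, a locally finite open cover $X=\bigcup_i U_i$ together with $A$-algebra isomorphisms $\ph_i:\cF|_{U_i}\xrightarrow{\sim}C^\iy_{U_i}\ot_\C A$ compatible with the projections to $C^\iy_X$. On a double overlap $U_{ij}:=U_i\cap U_j$ the transition map $g_{ij}:=\ph_i\cm\ph_j^{-1}$ is an $A$-algebra automorphism of $C^\iy_{U_{ij}}\ot_\C A$ reducing to the identity modulo $t$. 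Writing a local section as $f+tg$ with $f,g\in C^\iy_{U_{ij}}$ and using $A$-linearity together with $t^2=0$, one checks that $g_{ij}(f+tg)=f+tg+t\,\th_{ij}(f)$ for a unique $\C$-linear map $\th_{ij}:C^\iy_{U_{ij}}\to C^\iy_{U_{ij}}$; the multiplicativity of $g_{ij}$ forces $\th_{ij}(fh)=f\,\th_{ij}(h)+h\,\th_{ij}(f)$, so that $\th_{ij}\in\Ga(U_{ij},\der(C^\iy_X))$ is a smooth complex vector field. Thus the sheaf of $A$-algebra automorphisms reducing to the identity is canonically the additive sheaf $\der(C^\iy_X)$ of $\C$-linear derivations of $C^\iy_X$.

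Next I would record the cocycle condition. Because $t^2=0$ the composition of two such automorphisms is additive in the derivation part, $(\id+t\,\th)(\id+t\,\th')=\id+t(\th+\th')$, so in particular they commute to this order. Hence the relation $g_{ij}g_{jk}=g_{ik}$ on triple overlaps translates into the \emph{additive} \v Cech identity $\th_{ij}+\th_{jk}=\th_{ik}$, and $(\th_{ij})$ is a $1$-cocycle for $\{U_i\}$ with values in $\der(C^\iy_X)$. The key point is now that $\der(C^\iy_X)$ is a $C^\iy_X$-module, hence a fine sheaf admitting partitions of unity. Choosing a smooth partition of unity $(\r_k)$ subordinate to $\{U_i\}$ and setting $\th_i:=\sum_k\r_k\,\th_{ik}\in\Ga(U_i,\der(C^\iy_X))$ (each summand $\r_k\th_{ik}$ extended by zero off the support of $\r_k$, which is legitimate), the cocycle relation yields $\th_i-\th_j=\th_{ij}$ on every $U_{ij}$.

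Finally I would trivialize. Replacing $\ph_i$ by $\ph_i':=(\id-t\,\th_i)\cm\ph_i$ — where $\id-t\,\th_i$ is again an $A$-algebra automorphism over the identity, with inverse $\id+t\,\th_i$ — the transition maps of the new trivialization become $\ph_i'\cm(\ph_j')^{-1}=(\id-t\,\th_i)\,g_{ij}\,(\id+t\,\th_j)=\id+t(\th_{ij}-\th_i+\th_j)=\id$ on each $U_{ij}$, using $\th_i-\th_j=\th_{ij}$. Therefore the $\ph_i'$ agree on overlaps and glue to a global $A$-algebra isomorphism $\cF\cong C^\iy_X\ot_\C A$ compatible with the projection to $C^\iy_X$; that is, $\cF$ is isomorphic to the trivial deformation, so $\De(A)$ consists of a single element. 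The only mildly delicate points are the identification of the automorphism sheaf with $\der(C^\iy_X)$ and the additivity of composition, both special to the relation $t^2=0$; once these are in place the conclusion is exactly the standard vanishing $H^1(X,\der(C^\iy_X))=0$ for a fine sheaf, and no genuine obstacle arises.
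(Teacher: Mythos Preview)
Your proof is correct and follows essentially the same approach as the paper's: identify the transition automorphisms over $A=\C[t]/t^2$ with a \v Cech $1$-cocycle in the sheaf $\der_\C(C^\iy_X,C^\iy_X)$, then use that this sheaf is a $C^\iy_X$-module admitting partitions of unity to write the cocycle as a coboundary and correct the local trivializations. Your write-up is somewhat more explicit about the additivity of composition and the partition-of-unity formula, but the argument is the same.
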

\begin{proof}
Choose an open cover $U\cup V\cup\dots=X$ such that $\cF$ is made from $C^\iy_U\otimes_\C A,C^\iy_V\otimes_\C A,\dots$ by gluing them together. For each $U$ denote by $\si_U:\cF|_U\to C^\iy_U\otimes_\C A$ the $A$-algebra sheaf isomorphism on $U;$ and for each $U,V$ define $\si_{UV}:C^\iy_{U\cap V}\otimes_\C A\to C^\iy_{U\cap V}\otimes_\C A$ by $\si_{UV}:=(\si_U|_{U\cap V})\cm (\si_V^{-1}|_{U\cap V}),$ which we call the {\it transition function} for $U,V$ of $\cF.$ Define $\ta_{UV}: C^\iy_{U\cap V}\otimes_\C A \to C^\iy_{U\cap V}\otimes_\C A$ by $\ta_{UV}:=\si_{UV}-\id.$ Simple computation shows that this is an $A$-linear derivation. The exact sequence $0\to C^\iy_{U\cap V}\otimes_\C(t)\to C^\iy_{U\cap V}\otimes_\C A\to C^\iy_{U\cap V}\to 0$ shows that $\ta_{UV}$ may be regarded as a $\C$-linear derivation $C^\iy_{U\cap V}\to C^\iy_{U\cap V}\otimes_\C (t).$ Thus $\ta_{UV}$ defines over $U\cap V$ a section of the sheaf $\cD_X:=\der_\C(C^\iy_X,C^\iy_X).$ Varying $U,V$ we get a \v{C}ech $1$-cochain $\ta:=(\ta_{UV})_{U,V}$ of $\cD_X.$ Since $(\si_{UV})$ is the family of transition functions it follows that $\ta$ is a cocycle. On the other hand, as $\cD_X$ is a $C^\iy_X$ module sheaf admitting partitions of unity, there exists a $0$-cocycle $(\th_U)$ whose coboundary is equal to $\ta.$ Define for each $U$ an $A$-algebra sheaf isomorphism $\ze_U:C^\iy_{U\cap V}\otimes_\C A\to C^\iy_{U\cap V}\otimes_\C A$ by $\ze_U:=\id+\th_U.$ Then for each $U,V$ we have $\ze_U|_{U\cap V}\cm \si_{UV}=\ze_V|_{U\cap V}.$ Since $\si_{UV}$ is the transition function over $U\cap V$ of $\cF$ we get, varying $(U,V),$ an $A$-algebra sheaf isomorphism $\cF\cong C^\iy_X\otimes_\C A.$
\end{proof}
\begin{rmk}
The sheaf $\cD_X$ is slightly different from the sheaf of $C^\iy$ vector fields; see also Remark \ref{rmk: real an} below.
\end{rmk}

\begin{cor}\l{cor: triv}
For every Artin local $\C$-algebra $A$ the set $\De(A)$ consists of a single element represented by the trivial deformation of $X.$  
\end{cor}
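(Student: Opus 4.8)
The plan is to deduce Corollary \ref{cor: triv} from Theorem \ref{thm: C^iy} by a standard induction along small extensions, using that $\De$ satisfies the strong Mayer--Vietoris property of Proposition \ref{prop: C^iy}. First I would argue by induction on the $\C$-dimension of the Artin local $\C$-algebra $A$. The base case $A=\C$ is Definition \ref{dfn: C^iy deform} itself, where $\De(\C)$ is a single point. For the inductive step, recall that any Artin local $\C$-algebra $A\ne\C$ admits a small extension $A\to B$, that is, a surjection with kernel a principal ideal $(\ep)$ annihilated by $\fm A$, where $B=A/(\ep)$ has strictly smaller dimension; such a small extension always exists because one may take $(\ep)$ to be a one-dimensional ideal inside $\fm A^{k}$ for the largest $k$ with $\fm A^{k}\ne0$. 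By the inductive hypothesis $\De(B)$ consists of the single class of the trivial deformation.

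The key step is to show that the fibre of $\De(A)\to\De(B)$ over the trivial class is itself a single point. Here I would invoke the identity \eqref{H12}, which holds because $\De$ satisfies (H1) and (H2): these follow from the bijectivity statement of Proposition \ref{prop: C^iy} applied respectively to the cases $B\to C$ a small extension and to $C=\C$, $B=\C[t]/t^2$. Consequently the group $\De(\C[t]/t^2)$ acts transitively on each non-empty fibre of $\De(A)\to\De(B)$. But by Theorem \ref{thm: C^iy} the set $\De(\C[t]/t^2)$ is itself a single point, so the acting group is trivial and each non-empty fibre is a singleton. Since the trivial deformation over $A$ maps to the trivial deformation over $B$, the fibre over the trivial class of $\De(B)$ is non-empty, hence a single point. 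Combining this with the inductive hypothesis that $\De(B)$ is a single point, I conclude that $\De(A)$ is a single point, represented by the trivial deformation.

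The main obstacle I anticipate is purely bookkeeping rather than conceptual: one must check that the transitive action of \eqref{H12} really reduces the fibre to a point given that the acting set is trivial, and that the map $\De(A)\to\De(B)$ genuinely sends the trivial class to the trivial class (which is immediate, since $C^\iy_X\otimes_\C A$ reduces modulo $(\ep)$ to $C^\iy_X\otimes_\C B$). One should also confirm that the surjectivity/transitivity derivation in \eqref{H12} does not require the finite-dimensionality hypothesis (H3)---it uses only (H1) and (H2)---so that Proposition \ref{prop: C^iy} alone suffices to drive the induction. With these points dispatched the induction closes and the corollary follows.

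\begin{proof}
We argue by induction on $\dim_\C A.$ If $A=\C$ then $\De(\C)$ consists of a single element by Definition \ref{dfn: C^iy deform}. Suppose now that $A\ne\C$ and that the result holds for all Artin local $\C$-algebras of smaller dimension. Choose a small extension homomorphism $A\to B$ in $(\Art)_\C$ with kernel $(\ep),$ so that $\dim_\C B<\dim_\C A;$ such a small extension exists by taking $(\ep)$ a one-dimensional ideal inside the lowest non-vanishing power of $\fm A.$ By the inductive hypothesis $\De(B)$ consists of the single element represented by the trivial deformation $C^\iy_X\otimes_\C B.$

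Proposition \ref{prop: C^iy} implies that $\De$ satisfies (H1) and (H2). As explained after Definition \ref{dfn: H}, these imply \eqref{H12}; that is, the additive group $\De(\C[t]/t^2)$ acts transitively upon the non-empty fibres of $\De(A)\to \De(B).$ By Theorem \ref{thm: C^iy} however $\De(\C[t]/t^2)$ consists of a single element, so this group is trivial and every non-empty fibre of $\De(A)\to \De(B)$ is a single point. The trivial deformation $C^\iy_X\otimes_\C A$ maps under $\De(A)\to\De(B)$ to the trivial deformation $C^\iy_X\otimes_\C B,$ so the fibre over the unique element of $\De(B)$ is non-empty and hence a single point. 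Combining this with the fact that $\De(B)$ is a single point, we see that $\De(A)$ is a single point, represented by the trivial deformation. This completes the induction and the proof.
\end{proof}
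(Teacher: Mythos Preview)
Your proof is correct and follows essentially the same approach as the paper: induction on the length (equivalently, $\dim_\C A$) of the Artin local algebra, using a small extension $A\to B$, the transitive action \eqref{H12} coming from (H1) and (H2), and Theorem \ref{thm: C^iy} to conclude that the acting group $\De(\C[t]/t^2)$ is trivial. Your version is somewhat more explicit about the existence of small extensions and the non-emptiness of the fibre, but the argument is the same.
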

\begin{proof}
We prove by an induction on the length of $A$ that $\De(A)$ consists of a single element. We know that this holds for $A=\C$ and suppose therefore that $A\to B$ a small extension homomorphism in $(\Art)_\C$ with $\De(B)$ consisting of a single element. Recall then from \eq{H12} that the zero vector space $\De(\C[t]/t^2)$ acts transitively upon the unique fibre of $\De(A)\to\De(B).$ Thus $\De(A)$ consists also of a single element, which completes the induction. It is clear that the single element of $\De(A)$ is represented by the trivial deformation of $X.$
\end{proof}

We return now to the study of a complex manifold $X$ and its deformations.
\begin{cor}\l{cor: triv2}
Let $X$ be a complex manifold and let $\K$ be $\R$ or $\C.$ Let $A$ be an Artin local $\K$-algebra and $\cX/A$ a deformation of $X.$ Then there exists a differential graded $A$-algebra sheaf isomorphism $(\La^\bt_{\cX/A},\d_{\cX/A},\wedge)\cong (\La^\bt_X\otimes_\R A,\d_X\otimes\id_A,\wedge).$
\end{cor}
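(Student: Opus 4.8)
The plan is to reduce everything to the triviality of the underlying $C^\iy$-deformation established in Corollary~\ref{cor: triv}, and then to carry that triviality from the sheaf of functions up to the whole de Rham complex. It is convenient to treat both cases at once: set $B:=A$ when $\K=\C$ and $B:=A\otimes_\R\C$ when $\K=\R$, so that by Lemma~\ref{lem: complexify} the ring $B$ is in either case an Artin local $\C$-algebra, the datum $\cX/A$ is a deformation $\cX/B$ in the sense of Definition~\ref{dfn: glue}, and the target $\La^\bt_X\otimes_\R A$ is, with the convention of Definition~\ref{dfn: C^iy forms}, the trivial relative complex $\La^\bt_X\otimes_\C B$ equipped with $\d_X\otimes\id$ and the obvious wedge product.

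First I would observe that the structure sheaf of functions $C^\iy_\cX$ is a $C^\iy$-deformation of $X$ over $B$ in the sense of Definition~\ref{dfn: C^iy deform}. The reduction $C^\iy_\cX\otimes_B(B/\fm B)$ is $C^\iy_X$ by construction, and the local-triviality condition~\eqref{A-analytic} is precisely the local description recorded in the Remark following Definition~\ref{dfn: glue}: every point of $X$ has a neighbourhood $U$ on which $(U,\O_\cX|_U)$ is the trivial deformation $U\times\spec B$, whence $C^\iy_\cX|_U\cong C^\iy_U\otimes_\C B$ as $B$-algebra sheaves. Corollary~\ref{cor: triv} therefore applies and produces an isomorphism of $B$-algebra sheaves $\phi\colon C^\iy_\cX\xrightarrow{\ \sim\ }C^\iy_X\otimes_\C B$ compatible with the two projections onto $C^\iy_X$; in particular $\phi$ reduces to the identity modulo the nilpotent ideal $\fm B$.

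The remaining, and principal, step is to promote $\phi$ to an isomorphism of the full differential graded algebras. The key structural fact is that $\La^\bt_{\cX/A}$ is constructed in Definitions~\ref{dfn: C^iy forms} and~\ref{dfn: glue} as the \emph{geometric} smooth forms: locally it is $\La^\bt_U\otimes_\C B$, generated as a graded $C^\iy_\cX$-algebra in degrees $0$ and $1$ by $C^\iy_\cX$ together with $\d_{\cX/A}(C^\iy_\cX)$, so it depends only on the $B$-algebra sheaf $C^\iy_\cX$ with its local triviality. An isomorphism such as $\phi$, being an isomorphism of these function sheaves over $\id_X$, is a formal diffeomorphism and hence acts on forms. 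Concretely, after composing with a local trivialization $\La^\bt_{\cX/A}|_U\cong\La^\bt_U\otimes_\C B$ one may view $\phi$ as an automorphism of $C^\iy_U\otimes_\C B$ that is the identity modulo $\fm B$; since $\fm B$ is nilpotent it is a finite sum $\phi=\exp(\xi_U)$ for an $\fm B$-valued smooth derivation (vector field) $\xi_U$, and the induced Lie derivative $\cL_{\xi_U}:=\d_{\cX/A}\cm(\xi_U\mathbin{\lrcorner}-)+(\xi_U\mathbin{\lrcorner}-)\cm\d_{\cX/A}$ is a derivation of $(\La^\bt_U\otimes_\C B,\wedge)$ commuting with $\d_{\cX/A}$. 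Its exponential $\exp(\cL_{\xi_U})$ is then a differential graded $B$-algebra automorphism lifting $\phi|_U$, and these local lifts patch---by the same \v{C}ech computation as in the proof of Theorem~\ref{thm: C^iy}---to a global differential graded isomorphism, which transported through the local trivializations gives the asserted $(\La^\bt_{\cX/A},\d_{\cX/A},\wedge)\cong(\La^\bt_X\otimes_\C B,\d_X\otimes\id,\wedge)$.

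I expect this last extension to be the main obstacle. The delicate point is that the sheaf of smooth $1$-forms is a proper quotient of the algebraic K\"ahler differentials of $C^\iy_\cX$, so one cannot simply invoke the universal property of $\Om^1_{C^\iy_\cX/B}$; the action of the formal diffeomorphism $\phi$ on forms must instead be manufactured through the Lie-derivative/flow description above, whose legitimacy rests on the nilpotence of $\fm B$ (making every exponential and logarithm a finite sum). The secondary technical burden is the patching: one must check that the locally defined $\exp(\cL_{\xi_U})$ agree on overlaps up to the prescribed identifications, which is a cocycle argument of exactly the type already executed for the function sheaf in Theorem~\ref{thm: C^iy}.
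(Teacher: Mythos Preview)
Your approach is correct and follows the same overall strategy as the paper: apply Corollary~\ref{cor: triv} to trivialize $C^\iy_\cX$ globally, then transport this trivialization to $\La^\bt_{\cX/A}$. The difference is that you make the second step harder than it needs to be. The paper simply observes that once $C^\iy_\cX\cong C^\iy_X\otimes_\C B$ globally, one may \emph{re-choose} the local trivializations of $C^\iy_\cX$ so that all transition functions become the identity; since by Definitions~\ref{dfn: C^iy forms} and~\ref{dfn: glue} the sheaf $\La^\bt_{\cX/A}$ is glued from the local models $\La^\bt_U\otimes_\C B$ using transition maps induced functorially from those of $C^\iy_\cX$, these too become the identity, and $\La^\bt_{\cX/A}\cong\La^\bt_X\otimes_\C B$ drops out immediately as a DGA isomorphism. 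Your explicit construction via $\exp(\cL_\xi)$ and the subsequent patching is not wrong---the functoriality $\widetilde{\al\be}=\widetilde\al\,\widetilde\be$ you need follows from $[\cL_\xi,\cL_\et]=\cL_{[\xi,\et]}$ and BCH---but it reproves by hand what is already built into the gluing construction. The ``main obstacle'' and ``secondary technical burden'' you anticipate therefore dissolve once you take the paper's viewpoint of passing to the new cover rather than pushing the original local data through $\phi$.
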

\begin{proof}
Suppoes $\K=\R$ and put $B:=A\otimes_\R\C.$ Recall from Definition \ref{dfn: glue} that $C^\iy_\cX$ is a deformation over $B$ of the $C^\iy$ manifold which underlies $X.$ Applying Corollary \ref{cor: triv} to this $C^\iy_\cX$ we get an $B$-algebra sheaf isomorphism $C^\iy_\cX\cong C^\iy_X\otimes_\C B\cong  C^\iy_X\otimes_\R A.$ More explicitly, after choosing an open cover $U\cup V\cup\dots=X$ we can reproduce the sheaf $C^\iy_\cX$ from the local models $C^\iy_U\otimes_\R A,C^\iy_V\otimes_\R A,\dots,$ gluing them together under the identity functions. We can then reproduce the sheaf $\La^\bt_{\cX/A}$ from the local models $\La^\bt_U\otimes_\R A,\La^\bt_V\otimes_\R A,\dots,$ gluing them together under the identity functions. There is thus an isomorphism $\La^\bt_{\cX/A}\cong \La^\bt_X\otimes_\R A$ which is compatible with the differentials and wedge products. The case $\K=\C$ may be treated in the same way. 
\end{proof}
\begin{rmk}\l{rmk: real an}
We can prove Corollary \ref{cor: triv2} also by using real analytic functions, which we explain briefly now. It is easy to modify Definition \ref{dfn: C^iy deform} and Proposition \ref{prop: C^iy}. The real analytic version of Theorem \ref{thm: C^iy} will be slightly different. The stalks of real analytic functions will be Noetherian rings and the sheaf $\Th_X:=\der_\C(C^\om_X,C^\om_X)$ of derivations will agree with the locally free sheaf of real analytic vector fields. Recall now from \cite[p461]{Grauert2} that the real analytic manifold $X$ is embeddable into a Stein complex space $Y$ so that $X\sb Y$ has a fundamental system of Stein open neighbourhoods. Cartan \cite[Th\'eor\`eme 1]{Cart} proves then that if $\cF$ is a coherent $C^\om_X$ module sheaf on $X$ then for $p=1,2,3,\dots$ we have $H^p(X,\cF)=0.$ After we apply this to $\cF_X=\Th_X$ and $p=1$ we can follow the proof of Theorem \ref{thm: C^iy}. It is also easy to modify Corollary \ref{cor: triv}. Hence we get the real analytic version of the isomorphism $\La^\bt_{\cX/A}\cong \La^\bt_X\otimes_\R A.$ Tensoring the local models with the sheaves of $C^\iy$ functions, we come back to the same conclusion as in Corollary \ref{cor: triv2}.
\end{rmk}

\section{Proof of Theorem \ref{main thm1}}\l{sect: proof of (i)}\l{sect: proof1}

We make now the definitions we will use about linear differential operators over manifolds.
\begin{dfn}\l{dfn: ell}
Let $X$ be a $C^\iy$ manifold (which need not be compact) and $E$ a $C^\iy$ complex vector bundle over $X.$ Denote by $\Ga(E)$ the set of $C^\iy$ sections of $E,$ by $\Ga_c(E)$ the set of compactly supported $C^\iy$ sections of $E,$ by $L^1_{\rm loc}(E)\sb\cD'(E)$ the set of locally $L^1$ sections of $E,$ and by $\cD'(E)$ the set of distribution sections of $E.$ The last means that each element of $\cD'(E)$ is a continuous $\C$-linear map from $\Ga_c(E)$ to $\C$ where $\Ga_c(E)$ is given the compact $C^\iy$ topology.

Suppose now that $X$ is given a Riemannian metric and $E$ a Hermitian metric. Denote by $L^2(E)\sb L^1_{\rm loc}(E)$ the set of $\xi\in L^1_{\rm loc}(E)$ with $\int_X |\xi|^2 \d\mu<\iy$ where $|\xi|$ is the pointwise norm with respect to the Hermitian metric on $E,$ and $\d\mu$ the volume measure of the Riemannian metric on $X.$ The Cauchy--Schwarz inequality implies that for $\xi,\et\in L^2(E)$ the pointwise pairing $\xi\cdot\et$ relative to the Hermitian metric on $E$ defines a globally $L^1$ function $X\to\C,$ whose integral defines the inner product $(\xi,\et)_{L^2}:=\int_X \xi\cdot\et\,\d\mu.$ It is well known that $L^2(E)$ is a Hilbert space and $\Ga_c(E)$ a dense subspace of $L^2(E).$  

Let $F$ be another complex vector bundle over $X,$ and $P:\Ga(E)\to\Ga(F)$ a linear differential operator (with $C^\iy$ coefficients). Note that $P$ extends to a linear operator $\cD'(E)\to \cD'(F)$ which we denote by the same $P.$ Denote by $\ker P\sb L^2(E)$ the kernel of the operator $P$ restricted to $L^2(E);$ that is, $\ker P:=\{\xi\in L^2(E):P\xi=0\}.$ This is a closed subspace of the Hilbert space $L^2(E).$

Suppose now that $F$ is given a Hermitian metric and define then the formal adjoint operator $P^*:\Ga_c(F)\to \Ga_c(E)$ by saying that for every $\xi\in \Ga(E)$ and $\et\in \Ga_c(F)$ we have $(P^*\et,\xi)_{L^2}=(\et,P\xi)_{L^2}.$ This $P^*$ is also a linear differential operator (with $C^\iy$ coefficients and of the same order as $P$). \end{dfn}

We make another definition we will use to prove Theorem \ref{main thm1}.
\begin{dfn}\l{dfn: La^p}
Let $X$ be a complex manifold which is given a K\"ahler metric. Let $\K$ be $\R$ or $\C,$ $A$ an Artin local $\K$-algebra and $\cX/A$ a deformation of $X.$ Using Corollary \ref{cor: triv2} choose a differential graded $A$-algebra sheaf isomorphism $(\La^\bt_{\cX/A},\d_{\cX/A})\cong (\La^\bt_X\otimes_\K A,\d_X\otimes \id_A).$ For $p,q\in\Z$ the restriction map $\La^{p+q}_{\cX/A}\to \La^{p+q}_X$ is surjective because it is induced from the projection $A\to A/\fm A=\K.$ On the other hand, the $\K$-algebra homomorphism $\K\to A$ induces a $C^\iy_X$ module sheaf homomorphism
\e\l{La} \La^{p+q}_X=\La^{p+q}_X\otimes_\K\K\to \La^{p+q}_X\otimes_\K A\cong \La^{p+q}_{\cX/A}.\e
Since the map $\K\to A$ splits the projection $A\to\K$ it follows that \eq{La} splits the restriction map $\La^{p+q}_{\cX/A}\to \La^{p+q}_X.$

Regard the $C^\iy_X$ module $\La^{p+q}_X$ as a complex vector bundle and give it the Hermitian metric induced from the K\"ahler metric of $X.$ 
Using the isomorphism $C^\iy_\cX\cong C^\iy_X\otimes_\K A$ regard the $C^\iy_\cX$ module $\La^{p+q}_{\cX/A}$ as a $C^\iy_X$ module with $C^\iy_X$ acting trivially upon the $A$ factor. The isomorphism $\La^{p+q}_{\cX/A}\cong\La^{p+q}_X\otimes_\K A$ defines then a $C^\iy_X$ module isomorphism and accordingly a vector bundle isomorphism. Here $A$ is regarded as a finite-dimensional $\K$-vector space. We give this a metric (that is, a positive definite symmetric $\K$-bilinear form) which is compatible with the splitting $A=\K\oplus \fm A.$ The latter condition means that the restriction to the $\K$ factor agrees with the product structure of $\K$ (which makes sense because the products in $\K$ may be regarded as inner products and accordingly as a metric). As $\La^{p+q}_X$ is given already a Hermitian metric, using the metric on $A$ we get a Hermitian metric on $\La^{p+q}_X\otimes_\K A.$

Using these Hermitian metrics, define $L^2(\La^{p+q}_{\cX/A})$ and $L^2(\La^{p+q}_X)$ as in Definition \ref{dfn: ell}. The restriction map $\La^{p+q}_{\cX/A}\to\La^{p+q}_X$ induces then a map $L^2(\La^{p+q}_{\cX/A})\to L^2(\La^{p+q}_X)$ which we denote by $R.$ Since $\La^{pq}_{\cX/A}\sb\La^{p+q}_{\cX/A}$ we get also the map $R:L^2(\La^{pq}_{\cX/A})\to L^2(\La^{pq}_X).$ 
\end{dfn}
\begin{rmk}
No K\"ahler forms on $\cX/A$ are relevant to Definition \ref{dfn: La^p}.
\end{rmk}

We prove a lemma we will use shortly for the proof of Theorem \ref{thm: cH^n-2} below. 
\begin{lem}\l{lem: db_X/A exact}
Let $(X,x)$ be the germ of a complex space of dimension $n$ and of depth $\ge n.$ Let $A$ be an Artin local $\C$-algebra and $\cX/A$ a deformation of $X.$ Let $\ph\in\Ga(\La^{1\, n-2}_{\cX/A}|_{X^\reg})$ be $\d_{\cX/A}$ exact on a punctured neighbourhood of $x\in X^\sing.$ Then $\ph$ is $\db_{\cX/A}$ exact on a punctured neighbourhood of $x\in X^\sing.$
\end{lem}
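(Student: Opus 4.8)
The plan is to transplant the argument of Lemma \ref{lem: 1 n-2} (the case $A=\C$) into the relative setting, using the relative Dolbeault theory of \S\ref{sect: rel forms}; only one step is genuinely new. I fix a Stein neighbourhood $U$ of $x$ contained in the punctured neighbourhood on which $\ph=\d_{\cX/A}\ps$ for some $\ps\in\Ga(\La^{n-2}_{\cX/A}|_{U\-\{x\}})$, and henceforth work on $U\-\{x\}$, which is a complex manifold so that all of the relative calculus applies. The first step is pure bidegree bookkeeping: I decompose $\ps=\sum_{p+q=n-2}\ps^{pq}$ and compare types in $\d_{\cX/A}\ps=\bd_{\cX/A}\ps+\db_{\cX/A}\ps$. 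Since $\ph$ is of pure type $(1,n-2)$, the vanishing of the $(0,n-1)$ component of $\d_{\cX/A}\ps$ gives $\db_{\cX/A}\ps^{0,n-2}=0$, while the $(1,n-2)$ component gives $\ph=\bd_{\cX/A}\ps^{0,n-2}+\db_{\cX/A}\ps^{1,n-3}$. Setting $\ps':=\ps^{0,n-2}$ and $\ps'':=\ps^{1,n-3}$ reproduces exactly the relative analogue of the decomposition in the paragraph containing \eq{decompose n-1 1}: a $\db_{\cX/A}$-closed $(0,n-2)$ form $\ps'$ and a $(1,n-3)$ form $\ps''$ with $\ph=\bd_{\cX/A}\ps'+\db_{\cX/A}\ps''$.

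The crux, and what I expect to be the main obstacle, is to show that the $\db_{\cX/A}$-closed form $\ps'$ is $\db_{\cX/A}$-exact after possibly shrinking $U$. Because $U\-\{x\}$ is a complex manifold, the relative Dolbeault resolution \eq{Dol} with $p=0$ is a fine resolution of $\O_\cX$, so the class of $\ps'$ lies in $H^{n-2}(U\-\{x\},\O_\cX)$ and it suffices to prove $H^{n-2}(U\-\{x\},\O_\cX)=0$ for $n\ge3$. The local cohomology sequence of the pair $(U,U\-\{x\})$ reduces this, through $H^{n-2}(U,\O_\cX)=0$ and $H^{n-1}_x(U,\O_\cX)=0$, to two vanishing statements for the deformed structure sheaf. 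I would establish both via the $\fm A$-adic filtration $\O_\cX\supseteq\fm A\,\O_\cX\supseteq\cdots\supseteq0$, which is finite since $\fm A$ is nilpotent: as $\cX/A$ is flat over $A$, flatness identifies the graded pieces $\gr^\bt_{\fm A}(\O_\cX)$ with $\O_X\otimes_\C\gr^\bt(A)$, i.e. with finite direct sums of copies of $\O_X$. Cartan's Theorem~B for the coherent $\O_X$-module $\O_X$ on the Stein space $U$ then gives $H^q(U,\O_\cX)=0$ for $q\ge1$ (in particular $H^{n-2}(U,\O_\cX)=0$ as $n\ge3$), while the depth hypothesis $H^q_x(U,\O_X)=0$ for $q\le n-1$ propagates through the finitely many short exact sequences of the filtration to yield $H^q_x(U,\O_\cX)=0$ for $q\le n-1$, whence $H^{n-1}_x(U,\O_\cX)=0$.

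With $\ps'=\db_{\cX/A}\chi$ for some $\chi\in\Ga(\La^{0,n-3}_{\cX/A}|_{U\-\{x\}})$ in hand, the conclusion is immediate: substituting and invoking the relative identity $\bd_{\cX/A}\db_{\cX/A}+\db_{\cX/A}\bd_{\cX/A}=0$ from Definition \ref{dfn: glue} gives
\[
\ph=\bd_{\cX/A}\db_{\cX/A}\chi+\db_{\cX/A}\ps''=\db_{\cX/A}(\ps''-\bd_{\cX/A}\chi),
\]
which exhibits $\ph$ as $\db_{\cX/A}$-exact on $U\-\{x\}$. The only non-routine ingredient is the depth-plus-flatness step of the previous paragraph; the type bookkeeping and the passage through the relative Dolbeault isomorphism are formally identical to the $A=\C$ situation.
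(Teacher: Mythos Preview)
Your proof is correct and follows essentially the same path as the paper's: both decompose $\ps$ by bidegree to obtain $\ph=\bd_{\cX/A}\ps'+\db_{\cX/A}\ps''$ with $\db_{\cX/A}\ps'=0$, then kill the class of $\ps'$ in $H^{n-2}(U\-\{x\},\O_\cX)$ and conclude. The only cosmetic difference is that the paper establishes $H^{n-1}_x(U,\O_\cX)=0$ by induction on the length of $A$ via small extensions $0\to\O_X\to\O_\cX\to\O_\cY\to0$, whereas you use the $\fm A$-adic filtration together with flatness; these are equivalent d\'evissage arguments.
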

\begin{proof}
We prove by an induction on the length of $A$ that $H^{n-1}_x(X,\O_\cX)=0.$ By the depth condition this is true for $A=\C.$ Let $0\to (\ep)\to A\to B\to0$ be a small extension in $(\Art)_\C,$ and $\cY/B$ the deformation of $X$ defined by $\O_\cY:=\O_\cX\otimes_AB.$ The $A$-module sheaf short exact sequence $0\to\O_X\to \O_\cX\to \O_\cY\to0$ induces the long exact sequence containing $0=H^{n-1}_x(X,\O_X)\to H^{n-1}_x(X,\O_\cX)\to H^{n-1}_x(X,\O_\cY).$ By the induction hypothesis, however, $H^{n-1}_x(X,\O_\cY)$ vanishes; and accordingly, so does $H^{n-1}_x(X,\O_\cX).$

Write $\ph=\d_{\cX/A}\ps$ with $\ps\in\Ga(\La^{n-2}_{\cX/A}|_{X^\reg}).$ Using the argument in the proof of Theorem \ref{thm: harm n-1 1 version2} and noting that $\ph$ is a $(1,n-2)$ form, we can in fact write $\ph=\bd_{\cX/A}\ps'+\db_{\cX/A}\ps''$ where $\ps'$ is some $(0,n-2)$ form with $\db_{\cX/A}\ps'=0,$ and $\ps''$ some $(1,n-3)$ form. Let $U$ be a Stein neighbourhood of $x\in X^\sing.$ Then $H^{n-2}(U\-\{x\},\O_\cX)\cong H^{n-1}_x(U,\O_\cX)=0.$ So $\ps'=\db_{\cX/A}\chi$ where $\chi$ is some $(0,n-3)$ form on $U\-\{x\}.$ Thus $\ph=\bd_{\cX/A}\db_{\cX/A}\chi+\db_{\cX/A}\ps''=\db_{\cX/A}(-\bd_{\cX/A}\chi+\ps'')$ is $\db_{\cX/A}$ exact.
\end{proof}

We define now the topologies we will use of sheaf cohomology groups.
\begin{dfn}\l{dfn: top}
Let $X$ be a complex manifold (which we suppose $\si$-compact). For $p,q\in\Z$ we give $\Ga(\La^{pq}_X),$ the space of $(p,q)$ forms on $X,$ the Fr\'echet space topology whose semi-norms are the $C^l$ norms on compact subsets of $X.$ We define also a topology of $\Ga_c(\La^{pq}_X),$ the $(p,q)$ forms with compact support. For every compact $K\sb X$ denote by $\Ga_K(\La^{pq}_X)\sb \Ga_c(\La^{pq}_X)$ the vector subspace consisting of those $(p,q)$ forms supported in $K.$ We give this the Fr\'echet space topology whose semi-norms are the $C^l$ norms with $l\ge0$ an integer. Define a fundamental neighbourhood of the origin $0\in\Ga_c(\La^{pq}_X)$ to be a convex subset $U\sb \Ga(\La^{pq}_X)$ such that for every compact $K\sb X$ the intersection $U\cap \Ga_K(\La^{pq}_X)$ is an open subset of $\Ga_K(\La^{0q}_{X^\reg}).$ These fundamental neighbourhoods define a topology of $\Ga_c(\La^{pq}_X)$ with which it is a locally convex topological vector space. Notice that for every compact $K\sb X$ the inclusion map $\Ga_K(\La^{pq}_X)\to \Ga(\La^{pq}_X)$ is continuous. This implies that the inclusion map $\Ga_c(\La^{pq}_X)\to \Ga(\La^{pq}_X)$ is continuous. 

Write the cohomology groups $H^q(X,\Om^p_X),H^q_c(X,\Om^p_X)$ as those of the $\db$ resolutions and give them the quotient topologies (called the strong topologies in \cite[Theorem 2.1]{Lauf} in which it is proved that these agree with the other topologies corresponding to other expressions of the cohomology groups). The induced map $H^q_c(X,\Om^p_X)\to H^q(X,\Om^p_X)$ is then continuous.
\end{dfn}
\begin{rmk}
For $X$ compact the statements above are obvious. For $X$ non-compact, however, the cohomology groups are in general of infinite dimension and we need therefore more careful treatment.
\end{rmk}

We come now to the heart of the proof of Theorem \ref{main thm1}.
\begin{thm}\l{thm: cH^n-2}
Let $n\ge3$ be an integer and $X$ a compact K\"ahler $n$-conifold with $X^\sing$ non-empty, rational and of depth $\ge n.$ Suppose that for each $x\in X^\sing$ we have $b_{n-2}(C_x^\reg)=0$ or $b_{n-1}(C_x^\reg)=0.$ Let $A$ be an Artin local $\C$-algebra and $\cX/A$ a deformation of $X.$ The natural map ${}_c H^{n-2}(X^\reg,\Om^1_{\cX/A})\to{}_c H^{n-2}(X^\reg,\Om^1_X)$ is then surjective.
\end{thm}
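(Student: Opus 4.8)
The plan is to run the lifting scheme \eqref{lift}: represent a class of $\cB={}_cH^{n-2}(X^\reg,\Om^1_X)$ by a harmonic form, lift it to a relative differential form, and then deform the lift to a \emph{relative} harmonic form with the same restriction to $\spec\C$. First give $X^\reg$ a K\"ahler conifold metric (Lemma~\ref{lem: conifold metrics}). Since $X^\reg$ is smooth, the restriction $\cX|_{X^\reg}/A$ is a locally trivial deformation of $X^\reg$, so by Corollary~\ref{cor: triv2} I may identify $\La^\bt_{\cX/A}|_{X^\reg}\cong\La^\bt_{X^\reg}\otimes_\C A$ and, as in Corollary~\ref{cor: extending conifold metrics} (now only requiring the order-$\ep$ asymptotics of Remark~\ref{rmk: Riem conifolds}), equip $\cX|_{X^\reg}/A$ with a relative K\"ahler conifold metric, producing the relative operators $\db_{\cX/A},\db^*_{\cX/A},\De_{\cX/A}$ of \S\ref{sect: tens calc}. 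Because $\De_{\cX/A}$ reduces to $\De_X$ modulo $\fm A$, it is a nilpotent perturbation of $\De_X\otimes\id_A$ on the weighted spaces $H^k_\al(\La^{pq}_{\cX/A})=H^k_\al(\La^{pq}_X)\otimes_\C A$ of Definition~\ref{dfn: La^p}; hence its exceptional values coincide with those of $\De_X$, the relative Laplacian is Fredholm for non-exceptional $\al$, and Proposition~\ref{prop: int by parts} holds verbatim for the $A$-valued inner product $\langle\cdot,\cdot\rangle_{\cX/A}:=\int_{X^\reg}g_{\cX/A}(\cdot,\cdot)\,\om^n_{\cX/A}$.

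Fix a non-exceptional weight $\al=1+\de-n$ with $\de>0$ small. By Lemma~\ref{lem: 1 n-2} (this is where the Betti hypothesis enters) every class of $\cB$ is represented by some $\ph\in\ker(\db+\db^*)^{1\,n-2}_\al$; such $\ph$ has order $>1-n$ and is therefore also $\d$- and $\d^*$-closed. The heart of the matter is the assertion that the restriction map from relative harmonic $(1,n-2)$ forms of weight $\al$ on $\cX|_{X^\reg}/A$ onto $\ker\De^{1\,n-2}_\al$ is surjective, which I would prove by induction on the length of $A$. For a small extension $0\to(\ep)\to A\to B\to0$, the inductive hypothesis yields a relative harmonic $\tilde\ph_B$ over $B$ with $R\tilde\ph_B=\ph$; lifting it via $\La_{\cX/A}\cong\La_X\otimes A$ to $\hat\ph_A\in H^k_\al(\La^{1\,n-2}_{\cX/A})$ with reduction $\tilde\ph_B$ mod $(\ep)$ gives $R\hat\ph_A=\ph$ and $\De_{\cX/A}\hat\ph_A=\ep\,\nu$ for a $\nu\in\La^{1\,n-2}_X$ of order $\al-2$. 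It then suffices to solve $\De_X\chi=-\nu$ in weight $\al$ and set $\tilde\ph:=\hat\ph_A+\ep\chi$, which is relative harmonic with $R\tilde\ph=\ph$.

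The main obstacle is the solvability of $\De_X\chi=-\nu$, i.e. (Proposition~\ref{prop: Fredholm}) the vanishing of $\langle\nu,\si\rangle$ for every $\si\in\ker\De^{1\,n-2}_{2-2n-\al}$. By the relative integration-by-parts formula, writing $\io\si$ for the lift of $\si$,
\begin{align*}
\ep\langle\nu,\si\rangle&=\langle\De_{\cX/A}\hat\ph_A,\io\si\rangle_{\cX/A}=\langle\hat\ph_A,\De_{\cX/A}\io\si\rangle_{\cX/A}\\
&=2\langle\db_{\cX/A}\hat\ph_A,\db_{\cX/A}\io\si\rangle_{\cX/A}+2\langle\db^*_{\cX/A}\hat\ph_A,\db^*_{\cX/A}\io\si\rangle_{\cX/A}.
\end{align*}
Now $\si$ has leading order $1-n=-(n-1)$, so Lemma~\ref{lem: dph1} (via the cone Corollary~\ref{cor: order -p}) gives $\d\si=\d^*\si=0$, whence $\db\si=\db^*\si=0$; likewise $\db\ph=\db^*\ph=0$. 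Thus all four differentiated factors reduce to $0$ modulo $\fm A$, so each inner product lies in $(\fm A)^2$, which forces $\langle\nu,\si\rangle=0$ whenever $\ep\notin(\fm A)^2$. The delicate point I expect to fight with is exactly this Artin-ring bookkeeping when $\ep\in(\fm A)^2$; I would circumvent it by recasting the whole induction as a base-change statement for the family of Fredholm complexes $(\La^\bt_{\cX/A},\db_{\cX/A})$ over $\spec A$, showing that the Betti hypothesis together with Corollaries~\ref{cor: order -p} and~\ref{cor: order 2+p-l} makes the cokernel of $\De_{\cX/A}$ a flat $A$-module, so that $R$ is surjective by base change.

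Finally I would promote $\tilde\ph$ to the sought class. As a relative harmonic $(1,n-2)$ form of order $>1-n$, $\tilde\ph$ is $\d_{\cX/A}$-closed by relative integration by parts; in the trivialization $\La_{\cX/A}\cong\La_X\otimes A$ each $A$-coefficient is a $\d_X$-closed form of order $>1-n=-(n-1)$, hence $\d_X$-exact on a punctured neighbourhood of every $x\in X^\sing$ by Lemma~\ref{lem: p exact}. Therefore $\tilde\ph$ is $\d_{\cX/A}$-exact near $X^\sing$, and Lemma~\ref{lem: db_X/A exact} makes it $\db_{\cX/A}$-exact there. Consequently the Dolbeault class $[\tilde\ph]\in H^{n-2}(X^\reg,\Om^1_{\cX/A})$ lies in ${}_cH^{n-2}(X^\reg,\Om^1_{\cX/A})$ and restricts to $[\ph]$. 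Since by Lemma~\ref{lem: 1 n-2} every element of ${}_cH^{n-2}(X^\reg,\Om^1_X)$ is represented by such a $\ph$, the map ${}_cH^{n-2}(X^\reg,\Om^1_{\cX/A})\to{}_cH^{n-2}(X^\reg,\Om^1_X)$ is surjective.
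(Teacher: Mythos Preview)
Your approach diverges substantially from the paper's, and there is a genuine gap at the inductive step.

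The paper does \emph{not} seek a relative harmonic lift. It lifts the harmonic representative $\ph_0$ trivially via $\C\hookrightarrow A$ to a $\d_{\cX/A}$-closed $(n-1)$-form $\ph$ (using only Corollary~\ref{cor: triv2}, so that $\d_{\cX/A}\cong\d_X\otimes\id_A$); this $\ph$ is not of pure type $(1,n-2)$ in the relative bigrading, and the work is to kill its $(0,n-1)$ component. The key input is the cohomological vanishing $H^{n-1}_c(X^\reg,\O_\cX)=0$, obtained from Serre duality, the depth hypothesis, and $H^1(X,\O_X)=0$ (Proposition~\ref{prop: hol 1-forms}). This lets one write $\ph^{0,n-1}=\db_{\cX/A}(\et+\ze)$; projecting $\et+\ze$ onto $\ov{\db_{\cX/A}^*\Ga_c}$ in the elementary Hermitian structure of Definition~\ref{dfn: La^p} (no relative K\"ahler metric, no \S\ref{sect: tens calc}) yields $\xi$, and $\chi:=\ph^{1,n-2}-\bd_{\cX/A}\xi$ is the required $\db_{\cX/A}$-closed lift. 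This works uniformly for every Artin local $\C$-algebra $A$.

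Your inductive step breaks when $\ep\in(\fm A)^2$, which is the generic situation past length two (already for $A=\C[t]/(t^3)$, $\ep=t^2$). The patch you propose---flatness of $\coker\De_{\cX/A}$---is unjustified and can fail for nilpotent perturbations of Fredholm operators: over $A=\C[s,t]/(s,t)^2$ the operator $\bigl(\begin{smallmatrix}0&s\\t&0\end{smallmatrix}\bigr)$ on $A^2$ has kernel $\fm A\cdot A^2$, which is not free and restricts to zero in $\C^2$, while the reduced kernel is all of $\C^2$. Corollaries~\ref{cor: order -p} and~\ref{cor: order 2+p-l} constrain cone asymptotics but say nothing about $A$-module structure. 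The paper's own relative-harmonic machinery (\S\ref{sect: rel harm}) is built only over $A=\C[t]/(t^{k+1})$, where reflexivity of all finitely generated modules (used in Proposition~\ref{prop: relative Fredholm}) substitutes for flatness, and even then only for locally trivial deformations. There is a second, smaller gap: from $\De_{\cX/A}\tilde\ph=0$ you infer $\d_{\cX/A}\tilde\ph=0$ ``by relative integration by parts,'' but the $A$-valued pairing is not positive definite, so $\|\db_{\cX/A}\tilde\ph\|^2_{\cX/A}=0\in A$ does not force $\db_{\cX/A}\tilde\ph=0$; this too needs its own induction on the length of $A$ (cf.\ Lemma~\ref{lem: harm cl}), which you have not supplied.
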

\begin{proof}
Take any element of ${}_c H^{n-2}(X^\reg,\Om^1_X)$ and represent it on $X^\reg,$ using Lemma \ref{lem: 1 n-2}, by a harmonic $(1,n-2)$ form of order $1-n,$ which we call $\ph_0.$ Using the $\C$-algebra homomorphism $\C\to A$ lift $\ph_0$ to a section of $\La^{n-1}_{X^\reg}\otimes_\C A\cong\La^{n-1}_{\cX/A}|_{X^\reg}$ of the same order, which we call $\ph\in \Ga(\La^{n-1}_{\cX/A}|_{X^\reg}).$ For $p,q\in\Z$ with $p+q=n-1$ denote by $\ph^{pq}\in \Ga(\La^{pq}_{\cX/A}|_{X^\reg})$ the $(p,q)$ part of $\ph.$ Since $\d\ph_0=0$ it follows that $\d_{\cX/A}\ph=0$ and hence that $\db_{\cX/A}\ph^{0\,n-1}=0.$ Since $\ph$ is of order $>1-n$ it follows that $\ph$ is $\d_{\cX/A}$ exact on a punctured neighbourhood of $X^\sing\sb X,$ on which therefore there exists a relative $n-2$ form $\th$ of order $>2-n$ and such that $\d_{\cX/A}\th=\ph.$ Let $\ze$ be a relative $(0,n-1)$ form on $X^\reg$ which near $X^\sing$ agrees with the $(0,n-1)$ part of $\th;$ such forms may be made using partitions of unity, and any choice will do. Now $\ph^{0\,n-1}-\db_{\cX/A}\ze$ is a compactly supported $\db_{\cX/A}$ closed relative $(0,n-1)$ form on $X^\reg.$ 

We show that $H^{n-1}_c(X^\reg,\O_X)=0.$ Since $n\ge3$ it follows that for $U\sb X$ a Stein neighbourhood of $X^\sing,$ if we write $U^\reg:=U\-X^\sing$ then $H^{n-2}(U^\reg,\O_X)\cong H^{n-1}_{X^\sing}(U,\O_X);$ and this vanishes because $X^\sing$ is of depth $\ge n.$ Hence we get an exact sequence $0\to H^{n-1}_c(X^\reg,\O_X)\xrightarrow{\al} H^{n-1}(X^\reg,\O_X).$ Recall from Definition \ref{dfn: top} that the latter $\al$ is continuous, which implies that the closure of the origin $0\in H^{n-1}_c(X^\reg,\O_X)$ lies in the kernel of $\al.$ But $\al$ is injective, so the closure of the origin is a zero vector space. By \cite[Theorem 5.11]{KN} therefore $H^{n-1}_c(X^\reg,\O_X)$ is Hausdorff.

By \cite[Theorem 2.1]{Lauf} (or \cite[Chapter IIV, Theorem 4.2]{BS}) there exists a Serre duality isomorphism $H^{n-1}_c(X^\reg,\O_X)\cong H^1(X^\reg,\O_X).$ Since $X^\sing$ is of depth $\ge3$ we get another isomorphism $H^1(X^\reg,\O_X)\cong H^1(X,\O_X).$ By Proposition \ref{prop: hol 1-forms} however $H^1(X,\O_X)=0.$ So $H^{n-1}_c(X^\reg,\O_X)=0.$  

By an induction on the length of $A$ we can verify that $H^{n-1}_c(X^\reg,\O_{\cX})=0.$ Since $\ph^{0\,n-1}-\db_{\cX/A}\ze$ defines an element of this cohomology group it follows that $\ph^{0\,n-1}-\db_{\cX/A}\ze=\db_{\cX/A}\et$ where $\et$ is some compactly supported relative $(0,n-2)$ form on $X^\reg.$ Thus $\ph=\db_{\cX/A}(\et+\ze).$ Since $\ze$ near $X^\sing$ agrees with $\th$ it follows that $\ze$ is of order $>2-n$ and hence that so is $\et+\ze.$ In particular, $\et+\ze$ is $L^2.$ We apply to this the orthogonal decomposition
\e\l{relative orthogonal}
L^2(\La^{0\,n-2}_{\cX/A}|_{X^\reg})=\ker(\db_{\cX/A}+\db_{\cX/A}^*)\oplus \ov{\db_{\cX/A}(\Ga_c(\La^{0\,n-3}_{\cX/A}|_{X^\reg}))}\oplus \ov{\db_{\cX/A}^*(\Ga_c(\La^{0\,n-1}_{\cX/A}|_{X^\reg}))}
\e
where the formal adjoint $\db_{\cX/A}^*$ is defined with respect to the Hermitian metric in Definition \ref{dfn: La^p} (and so different from the $\db_{\cX/A}^*$ in \S\ref{sect: tens calc}) and where the closures are defined in the $L^2$ sense. Denote by  $\xi\in \ov{\db_{\cX/A}^*(\Ga_c(\La^{0\,n-1}_{\cX/A}|_{X^\reg}))}$ the last of the three components of $\et+\ze.$ Since the first two components on the right-hand side of \eq{relative orthogonal} lie in $\ker\db_{\cX/A}$ it follows that $\db_{\cX/A}\xi=\db_{\cX/A}(\et+\ze)$ and hence that $\ph^{0\,n-1}=\db_{\cX/A}\xi.$ On the other hand, since the last component on the right-hand side of \eq{relative orthogonal} lies in $\ker\db^*_{\cX/A}$ it follows that $\db_{\cX/A}^*\xi=0.$ 

We show next that if we denote by $R:L^2(\La^{\bullet}_{\cX/A}|_{X^\reg})\to L^2(\La^{\bullet}_{X^\reg})$ the restriction map and if we put $\chi:=\ph^{1\,n-2}-\partial_{\cX/A}\xi$ then $R\chi=\ph_0.$ We show indeed that $R\partial_{\cX/A}\xi=0.$ Since $R\ph=\ph_0$ is a $(1,n-2)$ form it follows that $R(\ph^{0\,n-1})=0$ and hence that $\db_X R\xi=R\db_{\cX/A}\xi=R\ph^{0\,n-1}=0.$ Also $\db_{\cX/A}^*\xi=0$ implies $\db_X^*R\xi=R(\db_{\cX/A}^*\xi)=0.$ Thus $R\xi$ is an $L^2$ harmonic $(0,n-2)$ form on $X.$ By Lemma \ref{lem: dph1} therefore we have $\partial_XR\xi=0$ and accordingly $R\partial_{\cX/A}\xi=\partial_XR\xi=0.$ So $R\chi=R(\ph^{1\,n-2}-\partial_{\cX/A}\xi)=R\ph^{1\,n-2}=\ph_0.$

As $-\db_{\cX/A}\partial_{\cX/A}\xi=\partial_{\cX/A}\ph^{0\,n-1}$ we have $\db_{\cX/A}\chi=\db_{\cX/A}(\ph^{1\,n-2}-\partial_{\cX/A}\xi)=\db_{\cX/A}\ph^{1\,n-2}+\partial_{\cX/A}\ph^{0\,n-1}=0.$ Thus $\chi$ defines an element of the $\db_{\cX/A}$ cohomology group $H^{n-2}(X^\reg,\Om^1_{\cX/A}).$ We show that this lies in the subspace ${}_cH^{n-2}(X^\reg,\Om^1_{\cX/A}).$ We look at $\ph-\d_{\cX/A}\xi,$ which is $\d_{\cX/A}$ cohomologous to $\ph.$ Since $\ph$ is locally (near $X^\sing$) $\d_{\cX/A}$ exact it follows that so is $\ph-\d_{\cX/A}\xi.$ Write this therefore, near $X^\sing,$ as $\d_{\cX/A}\al$ where $\al$ is a relative $n-1$ form defined near $X^\sing.$ Since $\ph-\d_{\cX/A}\xi$ has vanishing $(0,n-1)$ component it follows that if we denote by $\al^{0\,n-2}$ the $(0,n-2)$ part of $\al$ then $\db_{\cX/A}\al^{0\,n-2}=0.$ But as $X^\sing$ is of depth $\ge n$ we have $H^{n-1}_{X^\sing}(X,\O_X)=0.$ By an induction on the length of $A$ we can verify also that $H^{n-1}_{X^\sing}(X,\O_\cX)=0.$ If $U$ is a Stein neighbourhood of $X^\sing\sb X$ and if we put $U^\reg:=U\-X^\sing$ then $H^{n-2}(U^\reg,\O_X)=0.$ Making $U$ small enough we can suppose that $\al$ is defined on $U$ and can write then $\al^{0\,n-2}=\db_{\cX/A}\be$ where $\be$ is a relative $(0,n-3)$ form on $U^\reg.$ Since $\chi=\ph^{1\,n-2}-\partial_{\cX/A}\xi$ is the $(1,n-2)$ part of $\ph-\d_{\cX/A}\xi$ it follows now that
\[
\chi|_U=\db_{\cX/A}\al^{1\,n-3}+\partial_{\cX/A}\al^{0\,n-2}=\db_{\cX/A}\al^{1\,n-3}+\partial_{\cX/A}\db_{\cX/A}\be=\db_{\cX/A}(\al^{1\,n-3}-\partial_{\cX/A}\be).
\]
Thus $\chi|_U$ is $\db_{\cX/A}$ cohomologous to zero, which means that $\chi$ defines an element of ${}_cH^{n-2}(X^\reg,\Om^1_{\cX/A}),$ completing the proof.
\end{proof}

We show next that \eq{CY} holds in the present circumstances.
\begin{lem}\l{lem: n0}
Let $X$ be a compact Calabi--Yau $n$-conifold with $X^\sing$ non-empty. Let $\K$ be $\R$ or $\C,$ $A$ an Artin local $\K$-algebra and $\cX/A$ a deformation of $X.$ Then there exists on $X^\reg$ a nowhere-vanishing section of $\Om^n_{\cX/A}.$
\end{lem}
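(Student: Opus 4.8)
The plan is to deduce the statement from the triviality of the relative dualizing sheaf of the family $f\colon\cX\to\spec A,$ working throughout over $\C.$ For $\K=\R$ we have $\cX/A=\cX/B$ with $B:=A\otimes_\R\C$ an Artin local $\C$-algebra (Definition \ref{dfn: real para} and Lemma \ref{lem: complexify}), and $\Om^n_{\cX/A}=\Om^n_{\cX/B},$ so it suffices to treat $\K=\C$ with $A$ replaced by $B.$ Write $\omega_{\cX/A}:=\io_*\bigl(\Om^n_{\cX/A}|_{X^\reg}\bigr),$ the pushforward from the regular locus; since $f$ is flat and smooth along $X^\reg$ this agrees with the relative dualizing sheaf of $f$ away from $X^\sing,$ and a generating global section of $\omega_{\cX/A}$ restricts to a nowhere-vanishing section of $\Om^n_{\cX/A}|_{X^\reg},$ which is exactly what we want. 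So it is enough to prove $\omega_{\cX/A}\cong\O_\cX.$

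First I would record that $\omega_{\cX/A}$ is an invertible $\O_\cX$-module. The central fibre $X$ is Gorenstein: its singularities are rational, hence Cohen--Macaulay (Remark \ref{rmk: L^2}), and its canonical sheaf $\omega_X=\io_*\Om^n_{X^\reg}$ is a rank-one free, in particular invertible, $\O_X$-module. As $f$ is flat with Gorenstein central fibre, the relative dualizing sheaf $\omega_{\cX/A}$ is then invertible (and reflexive, so it really equals $\io_*$ of its restriction to the codimension $\ge2$ complement $X^\reg$) and is compatible with base change: $\omega_{\cX/A}\otimes_A(A/\fm A)\cong\omega_X\cong\O_X,$ and $\omega_{\cX/A}\otimes_AA'\cong\omega_{\cX_{A'}/A'}$ for any surjection $A\to A'$ in $(\Art)_\C.$ Establishing this invertibility and base-change compatibility—that is, that the Gorenstein property of the singular central fibre persists in the thickened family—is the one point that requires genuine care and is, I expect, the main obstacle; everything after it is formal.

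Granting this, I would prove $\omega_{\cX/A}\cong\O_\cX$ by induction on the length of $A.$ The base case $A=\C$ is the Calabi--Yau hypothesis $\omega_X\cong\O_X.$ For the inductive step choose a small extension $0\to(\ep)\to A\to B\to0$ in $(\Art)_\C$ and set $\cY/B:=\cX\otimes_AB,$ so by induction $\omega_{\cY/B}\cong\O_\cY;$ fix a generating section $s_B.$ Tensoring the small extension with the $A$-flat sheaf $\omega_{\cX/A}$ gives a short exact sequence $0\to\omega_{\cX/A}\otimes_A(\ep)\to\omega_{\cX/A}\to\omega_{\cY/B}\to0$ of $\O_\cX$-modules, and since $(\ep)\cong\C$ as an $A$-module the left-hand term is $\omega_{\cX/A}\otimes_A(A/\fm A)\cong\omega_X\cong\O_X.$ The long exact cohomology sequence then contains $H^0(X,\omega_{\cX/A})\to H^0(X,\omega_{\cY/B})\xrightarrow{\delta}H^1(X,\O_X),$ and $H^1(X,\O_X)=0$ by Proposition \ref{prop: hol 1-forms} (applicable since $X$ is compact normal K\"ahler with numerically trivial canonical sheaf and non-empty isolated log-terminal singularities, log-terminality being equivalent to rationality here by Remark \ref{rmk: L^2}). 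Hence $\delta(s_B)=0$ and $s_B$ lifts to a global section $s$ of $\omega_{\cX/A}.$

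Finally I would check that $s$ is a generator. By construction $s$ reduces to $s_B,$ which generates $\omega_{\cY/B},$ and hence to a generator of $\omega_X$ on the central fibre; since $\omega_{\cX/A}$ is invertible and $\fm A$ is nilpotent, Nakayama's lemma shows that $s$ generates $\omega_{\cX/A}$ at every point, so $s\colon\O_\cX\xrightarrow{\sim}\omega_{\cX/A}$ is an isomorphism. Restricting $s$ to $X^\reg$ yields the desired nowhere-vanishing section of $\Om^n_{\cX/A}|_{X^\reg},$ completing the induction and the proof. I note that this argument is uniform in $n\ge2$: it uses only the global vanishing $H^1(X,\O_X)=0,$ and crucially avoids $H^1(X^\reg,\O_X),$ which need not vanish (indeed is infinite-dimensional for $n=2$).
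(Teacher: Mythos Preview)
Your proof is correct and follows essentially the same route as the paper's: both argue by induction on the length of $A,$ invoke $H^1(X,\O_X)=0$ from Proposition \ref{prop: hol 1-forms} to lift a generating section through a small extension, and finish by Nakayama. The one difference is packaging. You route through invertibility of $\omega_{\cX/A}=\io_*(\Om^n_{\cX/A}|_{X^\reg})$ on all of $X$ and flag this as the main obstacle; the paper bypasses it entirely by simply writing down the short exact sequence $0\to\io_*\Om^n_{X^\reg}\to\io_*(\Om^n_{\cX/A}|_{X^\reg})\to\io_*(\Om^n_{\cY/B}|_{X^\reg})\to0$ of pushforwards and taking global sections. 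For $n\ge3$ (the only case the paper actually needs; cf.\ Corollary \ref{cor: cH^n-2} and Remark \ref{rmk: n=2}) this sequence is exact on the right because $R^1\io_*\Om^n_{X^\reg}\cong R^1\io_*\O_{X^\reg}$ has stalks $H^2_x(X,\O_X)=0$ by the depth~$\ge n$ condition, and the Nakayama step only uses that $\Om^n_{\cX/A}|_{X^\reg}$ is a line bundle on the smooth locus. So your ``main obstacle'' is avoidable in the paper's setting. That said, your detour does buy something: as you observe, using flatness of $\omega_{\cX/A}$ over $A$ gives a uniform argument for $n\ge2,$ whereas the paper's pushforward sequence need not be right-exact when $n=2.$
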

\begin{proof}
By Proposition \ref{prop: hol 1-forms} we have $H^1(X,\O_X)=0.$ We proceed by an induction on $A.$ Let $0\to(\ep)\to A\to B\to0$ be a small extension in $(\Art)_\K.$ Denote by $\cY/B$ the deformation of $X$ induced from $\cX/A.$ There is then an $A$-module sheaf exact sequence $0\to \io_*\Om^n_{X^\reg}\to \io_*(\Om^n_{\cX/A}|_{X^\reg})\to \io_*(\Om^n_{\cY/B}|_{X^\reg})\to 0.$ Since $H^1(X,\io_*\Om^n_{X^\reg})=H^1(X,\O_X)=0$ it follows that the restriction map $\Ga(\Om^n_{\cX/A}|_{X^\reg})\to \Ga(\Om^n_{\cY/B}|_{X^\reg})$ is surjective. Starting with a nowhere-vanishing section $\Om$ of $\Om^n_{X^\reg}$ we get, by the induction on $A,$ a section $\Ph$ of $\Om^n_{\cX/A}|_{X^\reg}$ whose restriction to $\spec\C$ is equal to $\Om.$ Since $\Om$ is nowhere-vanishing it follows that so is $\Ph.$
\end{proof}

We prove now a corollary of Theorem \ref{thm: cH^n-2}.
\begin{cor}\l{cor: cH^n-2}
Let $X$ be a compact Calabi--Yau $n$-conifold with $n\ge3$ and such that for each $x\in X^\sing$ we have $b_{n-2}(C_x^\reg)=0$ or $b_{n-1}(C_x^\reg)=0.$ Notice that by Lemma \ref{lem: n0} the condition \eq{CY} holds so that Definition \ref{dfn: Gross} makes sense. The map $\de|_{\im\ga}:\im\ga\to \bop_{x\in X^\sing}\Ext^2_{\O_{X,x}}(\Om^1_{X,x},\O_{X,x})$ is then injective.
\end{cor}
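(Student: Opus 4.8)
The plan is to dualize the entire diagram \eqref{Gross} by Serre duality on $X$ together with local (Matlis) duality at the singular points, thereby turning the injectivity of $\de|_{\im\ga}$ into a surjectivity statement that is exactly what Theorem \ref{thm: cH^n-2} provides. Throughout I set $W:=H^{n-2}(X,\Om^1_X)$; since $X$ is compact and Cohen--Macaulay (rational singularities are CM by Remark \ref{rmk: L^2}) with $\Om^n_X\cong\O_X$, Serre duality gives $\Ext^2_{\O_X}(\Om^1_X,\O_X)\cong W^*$. Because $\Om^1_X$ is locally free off the isolated set $X^\sing$, each $\Ext^2_{\O_{X,x}}(\Om^1_{X,x},\O_{X,x})$ has finite length, and local duality identifies it with $H^{n-2}_x(X,\Om^1_X)^*$, so $\bop_{x}\Ext^2_{\O_{X,x}}(\Om^1_{X,x},\O_{X,x})\cong H^{n-2}_{X^\sing}(X,\Om^1_X)^*$. (If $X^\sing=\es$ then $X$ is a smooth compact Calabi--Yau manifold and the assertion is classical, so I assume $X^\sing\ne\es$.) Under these identifications the localization map $\de$ becomes the transpose of the forget-supports map $\ka:H^{n-2}_{X^\sing}(X,\Om^1_X)\to W$, whence $\ker\de=(\im\ka)^\perp$.

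Next I would identify $(\im\ga)^\perp\sb W$. By construction (Definition \ref{dfn: Gross}) $\ga$ is, through the identifications of \eqref{T1T23}, the transpose of the connecting map $\partial:W\to H^{n-1}(X,\Om^1_{X_{k-1}/A_{k-1}})$ of \eqref{T1T21}, the transpose being formed via the perfect Matlis pairing $\hom_{A_{k-1}}(H^{n-1}(X,\Om^1_{X_{k-1}/A_{k-1}}),A_{k-1})\times H^{n-1}(X,\Om^1_{X_{k-1}/A_{k-1}})\to\C$ given by socle projection. Since every $\partial w$ is annihilated by $t$, this pairing shows $(\im\ga)^\perp=\ker\partial$. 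From the sheaf surjection $\Om^1_{X_{k-1}/A_{k-1}}\to\Om^1_{X_k/A_k}\to\Om^1_X\to0$, together with $H^{n-1}(X,\ker\tau)=H^n(X,\ker\tau)=0$ (the kernel $\ker\tau$ being supported on $X^\sing$, of dimension $\le n-2$), the long exact sequence gives $\ker\partial=\im\bigl(H^{n-2}(X,\Om^1_{X_k/A_k})\to H^{n-2}(X,\Om^1_X)\bigr)$, the map induced by $\Om^1_{X_k/A_k}\to\Om^1_X$. As $W$ is finite dimensional, the required $\im\ga\cap\ker\de=0$ is equivalent to $(\im\ga)^\perp+\im\ka=W$, i.e. to surjectivity of the composite $H^{n-2}(X,\Om^1_{X_k/A_k})\to W\to W/\im\ka$.

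Finally I would recognize $W/\im\ka$ as ${}_cH^{n-2}(X^\reg,\Om^1_X)$. The local cohomology sequence shows $\im\ka$ is the kernel of restriction $W\to H^{n-2}(X^\reg,\Om^1_X)$, and Lemma \ref{lem: c} applied to the coherent sheaf $\Om^1_X$ (Stein neighbourhoods of $X^\sing$ kill all higher coherent cohomology, so its hypotheses hold since $n-2\ge1$) identifies the image of this restriction with ${}_cH^{n-2}(X^\reg,\Om^1_X)$. The same lemma applied to $\Om^1_{X_k/A_k}$ shows $H^{n-2}(X,\Om^1_{X_k/A_k})$ surjects onto ${}_cH^{n-2}(X^\reg,\Om^1_{X_k/A_k})$. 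By functoriality of restriction the composite above factors as
$$H^{n-2}(X,\Om^1_{X_k/A_k})\twoheadrightarrow {}_cH^{n-2}(X^\reg,\Om^1_{X_k/A_k})\to {}_cH^{n-2}(X^\reg,\Om^1_X),$$
whose second arrow is surjective by Theorem \ref{thm: cH^n-2} applied with $\cX/A=X_k/A_k$ (its hypotheses being precisely those in force here). Hence the composite is surjective, $(\im\ga)^\perp+\im\ka=W$, and $\de|_{\im\ga}$ is injective.

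The hard part will be the duality bookkeeping of the first two paragraphs: verifying that global Serre duality on the singular space $X$ and local duality at the points of $X^\sing$ are compatible, so that $\de$ genuinely transposes to the forget-supports map $\ka$ and $\ga$ transposes to the connecting map $\partial$ with the correct $A_{k-1}$-module (Matlis) pairing and socle conventions. Once those identifications are pinned down the reduction to Theorem \ref{thm: cH^n-2} is purely formal.
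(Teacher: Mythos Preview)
Your proposal is correct and follows essentially the same approach as the paper: both reduce the injectivity of $\de|_{\im\ga}$ via duality to the surjectivity statement supplied by Theorem \ref{thm: cH^n-2}. The paper organizes the argument around the cokernel $V:=\coker\bigl(H^{n-2}(X,\Om^1_{X_k/A_k})\to W\bigr)$, shows by diagram chase that $H^{n-2}_{X^\sing}(X,\Om^1_X)\to V$ is surjective, and then identifies $V^*\cong\im\ga$; this is exactly your orthogonal-complement computation $(\im\ga)^\perp+\im\ka=W$ rewritten. For the ``hard part'' you flag---that $\de$ is dual to the forget-supports map---the paper simply cites \cite[Lemma 2.4(c)]{Gross}, and for the vanishing $H^q(U,\Om^1_{X_k/A_k})=0$ on Stein $U$ it gives an explicit induction on $k$ rather than invoking coherence directly.
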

\begin{proof}
We show first that the following holds:
\begin{equation}\parbox{10cm}{
Let $q\ge1$ be an integer and $U\sb X$ a Stein open neighbourhood of $X^\sing.$ Then for $j=0,\dots,k$ we have $H^q(U,\Om^1_{X_j/A_j})=0.$ 
}\end{equation}
We prove this by induction on $j.$ For $j=0$ the sheaf $\Om^1_X$ is a coherent $\O_X$ module and so $H^q(U,\Om^1_X)=0.$ Suppose next that $H^q(U,\Om^1_{X_{j-1}/A_{j-1}})=0$ for some $j>0.$ Since the kernel of $\ta:\Om^1_{X_{j-1}/A_{j-1}}\to\Om^1_{X_j/A_j}$ is supported on the isolated set $X^\sing$ it follows that $H^q(U,\ker\ta)=H^{q+1}(U,\ker\ta)=0$ and hence that the natural map $H^q(U,\Om^1_{X_{j-1}/A_{j-1}})\to H^q(U,\im\ta)$ is an isomorphism. The induction hypothesis implies therefore that $H^q(U,\im\ta)=0$ and the short exact sequence $0\to \im\ta\to  \Om^1_{X_j/A_j}\to \Om^1_X\to0$ implies in turn that $H^q(U,\Om^1_{X_j/A_j})=0,$ completing thus the induction argument. 

As $n\ge3$ we can apply the result to $q=n-2,n-1;$ that is, for $j=0,\dots,k$ we have $H^q(U,\Om^1_{X_j/A_j})=0.$ Lemma \ref{lem: c} implies therefore that the image of the natural map $H^{n-2}(X,\Om^1_{X_j/A_j})\to H^{n-2}(X^\reg,\Om^1_{X_j/A_j})$ agrees with the image of the natural map $H_c^{n-2}(X^\reg,\Om^1_{X_j/A_j})\to  H^{n-2}(X^\reg,\Om^1_{X_j/A_j}).$ Using this with $j=0,k$ we get a commutative diagram
\begin{equation}\l{k0}\begin{tikzcd}[ sep=small]
\!\!\!\!\!\!\!\!\!\!\!\! H_{X^\sing}^{n-2}(X,\Om^1_{X_k/A_k})\ar[r]\ar[d] &H^{n-2}(X,\Om^1_{X_k/A_k})\ar[r] \ar[d] &\!\!{}_c H^{n-2}(X^\reg,\Om^1_{X_k/A_k})\ar[r]\ar[d]&\!0\\
\!\!\!\!\!\!\!\!\!\!\!\! H_{X^\sing}^{n-2}(X,\Om^1_X)\ar[r,"\si"]&  H^{n-2}(X,\Om^1_X)\ar[r]&\!\! {}_c H^{n-2}(X^\reg,\Om^1_X)\ar[r]& \!0
\end{tikzcd}\end{equation}
whose rows are the local cohomology exact sequences and whose vertical maps are induced from the sheaf homomorphism $\Om^1_{X_k/A_k}\to\Om^1_X.$ Denote by $V$ the cokernel of the middle vertical arrow of \eq{k0}. Since $H^{n-2}(X,\Om^1_X)$ is a $\C$-vector space it follows that so is $V.$ Denote by $\pi:H^{n-2}(X,\Om^1_X)\to V$ the natural projection. Recall from Theorem \ref{thm: cH^n-2} that the right vertical arrow of \eq{k0} is surjective. We see then easily by diagram chase that $\pi\cm\si$ is surjective. 

Recall from \cite[Lemma 2.4(c)]{Gross} that the dual of $\si$ agrees with the $\C$-linear map $\de:\Ext^2_{\O_X}(\Om^1_X,\O_X)\to \Ext^2_{\O_{X,x}}(\Om^1_{X,x},\O_{X,x})$ in \eq{Gross}. The composite map
\e\l{T2T2}\hom_\C(V,\C)\to \hom_\C(H^{n-2}(X,\Om^1_X),\C)=\Ext^2_{\O_X}(\Om^1_X,\O_X)\to
\Ext^2_{\O_{X,x}}(\Om^1_{X,x},\O_{X,x})
\e
is thus dual to the surjection $\pi\cm\si$ and in particular injective.

We show finally that $\hom_\C(V,\C)$ may be identified with the image of $\ga$ in \eq{Gross} and that the map $\hom_\C(V,\C)\to \Ext^2_{\O_X}(\Om^1_X,\O_X)$ from \eq{T2T2} then agrees with the inclusion $\im\ga\to \Ext^2_{\O_X}(\Om^1_X,\O_X).$ Recall from \eq{T1T21} that $V$ is isomorphic to the kernel of the map $H^{n-1}(X,\Om^1_{X_{k-1}/A_{k-1}})\to H^{n-1}(X,\Om^1_{X_k/A_k}).$ The image of the latter arrow of \eq{T1T22} may then be identified with $\hom_{A_k}(V,A_k)\sb \hom_{A_k}(H^{n-2}(X,\Om^1_X),A_k).$ The image of the latter arrow of \eq{T1T23} may in turn be identified with $\hom_\C(V,\C)\sb \hom_\C(H^{n-2}(X,\Om^1_X),\C)=\Ext^2_{\O_X}(\Om^1_X,\O_X).$ The last map thus agrees with $\im\ga\to \Ext^2_{\O_X}(\Om^1_X,\O_X)$ in \eq{T1T2}, which completes the proof.
\end{proof}
\begin{rmk}\l{rmk: n=2}
Corollary \ref{cor: cH^n-2} is true also for $n=1,2.$ For $n=1$ the normal complex space $X$ is non-singular and we have $\Ext^2_{\O_X}(\Om^1_X,\O_X)\cong H^2(X,\Th_X)=0.$ So $\im\ga=0$ and it is obvious that $\de|_{\im\ga}$ is injective. 

For $n=2,$ although the stronger result (Theorem \ref{main thm1}) is known it may be worthwhile to give a direct proof of the current statement (Corollary \ref{cor: cH^n-2}). We show first that $\Om^1_{X_k/A_k}$ is a flat $A_k$ module sheaf. Recall for instance from \cite[Theorem 7.5.1(iv)]{Ish} that every $x\in X^\sing$ is a hypersurface singularity, defined in $\C^3$ by a single equation $f=0.$ By the definition of $\Om^1_X$ there is an exact sequence $\O_X\to \Om^1_{\C^3}\otimes_{\O_{\C^3}}\O_X\to \Om^1_X\to0.$ The first arrow is injective because its kernel vanishes at every point of $X^{\rm reg}.$ There is thus an exact sequence
\e\l{hypersurface sing}
0\to\O_X\to \Om^1_{\C^3}\otimes_{\O_{\C^3}}\O_X\to \Om^1_X\to0.
\e
It is also known that the germ at $x$ of $X_k/A_k$ is an unfolding of $f$ in $\C^3,$ defined by some $F\in\O_{\C^3}\times_\C A_k$ extending $f.$ Generalizing \eq{hypersurface sing} we get an exact sequence
\e\l{unfold}
0\to\O_{X_k}\to \Om^1_{(\C^3\times\spec A)/\spec A}\otimes_{\O_{\C^3\times \spec A}}\O_{X_k}\to \Om^1_{X_k/A_k}\to0.
\e
Now \eq{hypersurface sing} is obtained from \eq{unfold} after tensoring with $\C$ as $A_k$ modules. But then $\mathrm{Tor}_1(\C,\Om^1_{X_k/A_k})$ vanishes and $\Om^1_{X_k/A_k}$ is flat over $A_k.$

So there is an exact sequence $0\to\Om_{X_{k-1}/A_{k-1}}\to \Om^1_{X_k/A_k}\to \Om^1_X\to0.$ On the other hand, there is also an exact sequence $0\to\O_{X_{k-1}/A_{k-1}}\to \O_{X_k/A_k}\to \O_X\to0.$ In particular, passing to the cohomology groups we see that for $p,q\in\Z$ with $p+q=1$ we have
\e\l{orbi0}
\dim_\C H^q(X,\Om^p_{X_k/A_k})\le \dim_\C H^q(X,\Om^p_X)+\dim_\C H^q(X,\Om^p_{X_{k-1}/A_{k-1}}).
\e
Hence it follows by induction on $k$ that
\e\l{orbi1}\dim_\C H^q(X,\Om^p_{X_k/A_k})\le (k+1)\dim_\C H^q(X,\Om^p_X).\e 
On the other hand, from the Hodge spectral sequence $H^q(X,\Om^p_{X_k/A_k})\Rightarrow \gr^p H^1(X,A_k)$ we get
\e\l{orbi2}
\sum_{p+q=1}\dim_\C H^q(X,\Om^p_{X_k/A_k})\ge \dim_\C H^1(X,A_k)=(k+1)\dim_\C H^1(X,\C).
\e
We show now that $X$ has an orbifold K\"ahler form. Recall again from \cite[Theorem 7.5.1(xi)]{Ish} that every singularity $x\in X^\sing$ is of the form $\C^2/G$ with $G<\SU(2)$ a finite subgroup. Take a K\"ahler form on $X^\reg$ and near $x\in X^\sing$ pull it back to $\C^2\-\{0\}.$ By \eq{Fuj2} we can change this to a K\"ahler form on $\C^2$ without changing it at the points far from $x.$ Taking the average with respect to $G$ we can push it down to an orbifold K\"ahler metric. Since compact K\"ahler orbifolds have the Hodge decomposition property \cite{Bail} it follows that the inequalities of \eq{orbi0}--\eq{orbi2} are in fact equalities. The map $H^0(X,\Om^1_{X_k/A_k})\to H^0(X,\Om^1_X)$ is thus surjective. This means the vanishing of the vector space $V$ defined in the proof of Corollary \ref{cor: cH^n-2}. The map $\de$ is therefore injective.  \qed
\end{rmk}

We finally prove Theorem \ref{main thm1}. Let $X$ be a compact Calabi--Yau conifold. By Lemma \ref{lem: n0} the condition \eq{CY} holds and we can thus apply Lemma \ref{lem: concentrate}. Combining it with Corollary \ref{cor: cH^n-2} we complete the proof. \qed.

\section{Relative Harmonic Forms}\l{sect: rel harm}
In this section we prove the facts we shall need for the proof of Theorem \ref{main thm2}.

\begin{dfn}\l{prop: relative Sobolev}
Let $X$ be a compact K\"ahler $n$-conifold, $A$ an Artin local $\R$-algebra and $\cX/A$ a locally trivial deformation of $X.$ Using Corollary \ref{cor: extending conifold metrics} choose on $\cX/A$ a K\"ahler conifold metric, whose K\"ahler form we denote by $\om_{\cX/A}\in \Ga(\La^{11}_{\cX/A}).$ Define $g_{\cX/A}$ by \eq{phps}. For $p\in\Z$ and $\al\in\R$ define an $A$-bilinear map $L^2_\al(\La^p_{\cX/A}|_{X^\reg})\times L^2_{2-\al-2n}(\La^p_{\cX/A}|_{X^\reg})\to A$ by $(\ph,\ps)\mapsto\int_{X^\reg}g_{\cX/A}(\ph,\ps)\om^n_{\cX/A}.$ This is well defined because the relative volume form $\om^n_{\cX/A}$ near each $x\in X^\sing$ may be identified with that of the cone metric at $x.$ For the same reason there is an $A$-module isomorphism $L^2_\al(\La^p_{\cX/A}|_{X^\reg})\cong L^2_\al(\La^p_{X^\reg})\otimes_\R A.$ We give $A$ the topology induced by any norms on it, which is well defined because $A$ is a finite-dimensional $\R$-vector space. Using the topology on $L^2_\al(\La^p_{X^\reg})$ and that of $A$ we give $L^2_\al(\La^p_{X^\reg})\otimes_\R A$ the topology of a Banach space. Using the $A$-module isomorphism $L^2_\al(\La^p_{\cX/A}|_{X^\reg})\cong L^2_\al(\La^p_{X^\reg})\otimes_\R A$ we give $L^2_\al(\La^p_{\cX/A}|_{X^\reg})$ the topology of a Banach space.
\end{dfn}

We study the dual spaces of the weighted $L^2$ spaces for infinitesimal deformations.
\begin{prop}\l{prop: relative duality}
Let $X$ be a compact K\"ahler $n$-conifold, $A$ an Artin local $\R$-algebra and $\cX/A$ a locally trivial deformation of $X.$ Using Corollary \ref{cor: extending conifold metrics} choose on $\cX/A$ a K\"ahler conifold metric. Fix $p\in\Z$ and $\al\in\R.$ Denote by $\hom_A(L^2_\al(\La^p_{\cX/A}|_{X^\reg}),A)$ the space of continuous $A$-module homomorphisms and by $\Xi_A:L^2_\al(\La^p_{\cX/A}|_{X^\reg})\to \hom_A(L^2_{2-\al-n}(\La^p_{\cX/A}|_{X^\reg}),A)$ the $A$-module homomorphism $\ph\mapsto \int_{X^\reg}g_{\cX/A}(\ph,\bullet)\om^n_{\cX/A}.$ The latter is then an $A$-module isomorphism.
\end{prop}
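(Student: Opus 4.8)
The plan is to reduce the statement to the corresponding $L^2$ duality over the base field $\R$ and then to tensor that duality up over the finite-dimensional $\R$-algebra $A$. First I would exploit the $A$-module isomorphism $L^2_\al(\La^p_{\cX/A}|_{X^\reg})\cong L^2_\al(\La^p_{X^\reg})\otimes_\R A$ recorded in Definition \ref{prop: relative Sobolev}, which is a topological isomorphism of Banach spaces and carries the relative pairing $(\ph,\ps)\mapsto\int_{X^\reg}g_{\cX/A}(\ph,\ps)\om^n_{\cX/A}$ to the $A$-bilinear extension of the base pairing on $L^2_\al(\La^p_{X^\reg})$. Writing $V:=L^2_\al(\La^p_{X^\reg})$ and $W:=L^2_{2-\al-n}(\La^p_{X^\reg})$ for the underlying base spaces, this exhibits $\Xi_A$ as the map obtained from the base pairing $\Xi_\R\colon V\to\hom_\R(W,\R)$ by applying $-\otimes_\R A$. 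So it suffices to prove, first, that $\Xi_\R$ is a topological isomorphism, and, second, that applying $-\otimes_\R A$ preserves this.

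For the base case I would use the standard weighted $L^2$ duality. Multiplication by the appropriate power of the radius weight $\rho$ is an isometric isomorphism of each weighted space $L^2_\ga(\La^p_{X^\reg})$ with the unweighted Hilbert space $L^2(\La^p_{X^\reg})=L^2_{-n}(\La^p_{X^\reg})$; under the two such isometries attached to the weight $\al$ and to its complementary weight, the pairing $\int_{X^\reg}g(\ph,\ps)\om^n$ becomes the ordinary $L^2$ inner product, which is perfect by the Riesz representation theorem. The complementary weight is precisely the one for which this pairing is both finite and perfect, and it is the role of the real structure on $g_{\cX/A}$ -- fixing the factor $A$ and conjugating only the $\C$-factor -- that renders the pairing $A$-bilinear rather than sesquilinear. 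Composing the weight-shift isometries with the Riesz isomorphism identifies $V$ with the continuous dual of $W$, so $\Xi_\R$ is a topological isomorphism.

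Finally I would tensor up. Because $A$ is an Artin local $\R$-algebra it is a finite-dimensional $\R$-vector space, so the $A$-module $W\otimes_\R A$ is generated over $A$ by $W\otimes1\cong W$; hence every continuous $A$-module homomorphism $W\otimes_\R A\to A$ is determined by its restriction to $W\otimes1$, which yields a natural identification $\hom_A(W\otimes_\R A,A)\cong\hom_\R(W,A)\cong\hom_\R(W,\R)\otimes_\R A$, the last step using $\dim_\R A<\iy$ together with the fact that a map into $A$ is continuous iff its components in a fixed basis are. Combining this with the base isomorphism $\Xi_\R\colon V\cong\hom_\R(W,\R)$ and applying $-\otimes_\R A$ gives an isomorphism $V\otimes_\R A\cong\hom_A(W\otimes_\R A,A)$, which by the $A$-bilinearity of the relative pairing is exactly $\Xi_A$.

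The main obstacle is the base case: establishing the weighted $L^2$ self-duality with the correct complementary weight and verifying that the weight-shift isometries and the Riesz isomorphism are compatible with the Banach-space topologies fixed in Definition \ref{prop: relative Sobolev}, as well as keeping track of the real structure so that the pairing is genuinely $A$-valued and bilinear. The tensoring step is essentially formal once the finite-dimensionality of $A$ is invoked; the only point requiring care there is that the \emph{continuous} (rather than the algebraic) dual is used throughout, which is harmless because $A$ is finite-dimensional and $W\otimes_\R A$ carries the product Banach topology.
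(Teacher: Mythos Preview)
There is a genuine gap in your first paragraph. You assert that the isomorphism $L^2_\al(\La^p_{\cX/A}|_{X^\reg})\cong L^2_\al(\La^p_{X^\reg})\otimes_\R A$ of Definition~\ref{prop: relative Sobolev} carries the relative pairing $\int_{X^\reg}g_{\cX/A}(\,\cdot\,,\,\cdot\,)\om^n_{\cX/A}$ to the $A$-bilinear extension of the base pairing. This is not true in general. The relative K\"ahler form $\om_{\cX/A}$ produced by Corollary~\ref{cor: extending conifold metrics} agrees with $\om_X\otimes 1$ only on a punctured neighbourhood of $X^\sing$; over the rest of $X^\reg$ it is a genuine deformation of $\om_X$, with nontrivial components in $\fm A$. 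Consequently $g_{\cX/A}$ and $\om^n_{\cX/A}$ differ from $g_X\otimes1$ and $\om^n_X\otimes1$ by terms in $\fm A$, so under any global $C^\iy$ trivialization the transported pairing is \emph{not} the tensor extension of the base pairing. Your identification $\Xi_A=\Xi_\R\otimes\id_A$ therefore fails, and the tensoring argument in your final paragraph, while correct for the tensor-extended pairing, does not apply to the actual $\Xi_A$.

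The paper handles this by induction on the length of $A$: for a small extension $0\to(\ep)\to A\to B\to0$ one has $(\ep)\fm A=0$, so the $\fm A$-corrections to $g_{\cX/A}$ and $\om^n_{\cX/A}$ are annihilated on elements of $L^2_\al(\La^p_{X^\reg})\otimes_\R(\ep)$, and on that subspace $\Xi_A$ genuinely coincides with $\Xi_\R$; injectivity and surjectivity then follow from the induction hypothesis for $B$. Your approach can be salvaged without induction by a nilpotence argument: write $\Xi_A=(\Xi_\R\otimes\id_A)\cm(\id+M)$ with $M:=(\Xi_\R\otimes\id_A)^{-1}\cm[\Xi_A-\Xi_\R\otimes\id_A]$. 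The operator $M$ is bounded (the difference of the two pairings is supported on a compact subset of $X^\reg$, where both metrics are smooth), $A$-linear, and has image in $V\otimes_\R\fm A$; hence $M$ is nilpotent and $\id+M$ invertible. But this is the argument you are missing, not the one you wrote.
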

\begin{proof}
For $A=\R$ this is a standard fact; the $p=0$ case is treated in \cite[Lemma 2.8]{J1} and the extension to the general $p$ is straightforward. We treat the general $A$ by an induction on its length. Let $0\to(\ep)\to A\to B\to0$ be a small extension in $(\Art)_\R.$ Define a deformation $\cY/B$ by $\O_\cY:=\O_\cX\otimes_AB.$ There is then a commutative diagram
\begin{equation}\begin{tikzcd}
L^2_\al(\La^p_{\cX/A}|_{X^\reg})\ar[d,"R"]\ar[r,"\Xi_A"]& \hom_A(L^2_{2-\al-n}(\La^p_{\cX/A}|_{X^\reg}),A)\ar[d,"R"]\\
L^2_\al(\La^p_{\cY/B}|_{X^\reg})\ar[r,"\Xi_B"] &\hom_B(L^2_{2-\al-n}(\La^p_{\cY/B}|_{X^\reg}),B)
\end{tikzcd}\end{equation}
where the horizontal maps, both called $R,$ are induced by $A\to B.$ We show first that $\Xi_A$ is injective. Take $\ph\in L^2_\al(\La^p_{\cX/A}|_{X^\reg})$ with $\Xi_A\ph=0.$ Then $\Xi_BR\ph=R\Xi_A\ph=0.$ By the induction hypothesis we have $R\ph=0.$ So $\ph\in  L^2_\al(\La^p_{\cX/A}|_{X^\reg})\otimes_A(\ep)\cong L^2_\al(\La^p_{X^\reg})\otimes_\R(\ep)$ and we can therefore write $\Xi_A\ph=\Xi_\R\ph.$ But we know already that $\Xi_\R$ is injective, which implies $\ph=0$ as we have to prove. 

We show next that $\Xi_A$ is surjective. Take $\ps\in \hom_A(L^2_{2-\al-n}(\La^p_{\cX/A}|_{X^\reg}),A).$ By the induction hypothesis there exists $\ph'\in L^2_\al(\La^p_{\cY/B}|_{X^\reg})$ with $\Xi_B\ph'=R\ps.$ Since $\La^p_{\cX/A}$ is isomorphic to $\La^p_{X^\reg}\otimes_\R A$ it follows that $R:L^2_\al(\La^p_{\cX/A}|_{X^\reg})\to L^2_\al(\La^p_{\cY/B}|_{X^\reg})$ is surjective. Take $\ph\in L^2_\al(\La^p_{\cX/A}|_{X^\reg})$ with $R\ph=\ph'.$ Then $R\Xi_A\ph=\Xi_BR\ph=R\ps.$ So $\Xi_A\ph-\ps$ lies in
\e
\hom_A(L^2_{2-\al-n}(\La^p_{\cX/A}|_{X^\reg}),A)\otimes_A(\ep)\cong
\hom_\C(L^2_{2-\al-n}(\La^p_{X^\reg}),\C)\otimes_\R(\ep).
\e
As we know already that $\Xi_\R$ is surjective, there exists $\chi\in L^2_\al(\La^p_{X^\reg})$ with $\Xi_\R\chi=-\Xi_A\ph+\ps.$ Using the ring homomorphism $\R\to A$ that defines the $\R$-algebra structure of $A$ we get an embedding $\La^p_{X^\reg} \to\La^p_{\cX/A}$ and can then regard $\chi$ as an element of $L^2_\al(\La^p_{\cX/A}|_{X^\reg}).$ Now $\Xi_A(\chi+\ph)=\ps$ as we want to prove.
\end{proof}

We generalize Proposition \ref{prop: non-exceptional Fredholm}.
\begin{prop}\l{prop: relative exceptional}
Let $X$ be a compact K\"ahler $n$-conifold, $A$ an Artin local $\R$-algebra and $\cX/A$ a locally trivial deformation of $X.$ Using Corollary \ref{cor: extending conifold metrics} choose on $\cX/A$ a K\"ahler conifold metric. Then for $\al\in\R$ the operator
\e\l{relative Lap} \De_{\cX/A}:H^k_\al(\La^{pq}_{\cX/A}|_{X^\reg})\to H^{k-2}_{\al-2}(\La^p_{\cX/A}|_{X^\reg})\e
is a Fredholm operator if and only if $\al$ is a non-exceptional value of $\De_X:H^2_\al(\La^p_{X^\reg})\to L^2_{\al-2}(\La^p_{X^\reg}).$
\end{prop}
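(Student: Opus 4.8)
The plan is to reduce the statement to the already-established case $A=\R$, namely Proposition \ref{prop: non-exceptional Fredholm}, by applying the Lockhart--McOwen Fredholm criterion \cite[Theorem 6.2]{LM} to $\De_{\cX/A}$ on the conifold $X^\reg$, now regarded as an elliptic operator acting on sections of the complex vector bundle $\La^{pq}_X\otimes_\R A$. The crucial observation is that, because $\cX/A$ is locally trivial and the chosen K\"ahler conifold metric is \emph{exactly} the cone metric near each singular point, the asymptotic model of $\De_{\cX/A}$ at each $x\in X^\sing$ is precisely $\De_{C_x}\otimes\id_A$; and tensoring a cone Laplacian by the finite-dimensional algebra $A$ changes neither the set of indicial roots nor, therefore, the set of exceptional values.

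First I would record that $\De_{\cX/A}$ is elliptic, as established at the end of \S\ref{sect: tens calc}, and that under the isomorphism $\La^{pq}_{\cX/A}|_{X^\reg}\cong\La^{pq}_{X^\reg}\otimes_\R A$ of Corollary \ref{cor: triv2} its principal symbol coincides with that of $\De_X\otimes\id_A$, since the coefficients $g_{\cX/A}$ reduce modulo $\fm A$ to those of $g$. Next, for each $x\in X^\sing$ I would invoke Definition \ref{dfn: relative conifold metrics}: on a punctured neighbourhood $U$ the form $\om_{\cX/A}|_U$ corresponds to $\om_x\otimes1$ under the trivialisation $\La^{11}_{\cX/A}|_U\cong\La^{11}_U\otimes_\R A$, so that $g_{\cX/A}$ equals the cone metric $g_x$ tensored with $1$. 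Consequently every Christoffel symbol, and hence the whole operator $\De_{\cX/A}$, restricts on $U$ to $\De_{C_x}\otimes\id_A$, with no coupling between the $A$-components. The model operator of $\De_{\cX/A}$ at $x$ is therefore exactly $\De_{C_x}\otimes\id_A$. A nonzero homogeneous harmonic $(\La^{pq}\otimes_\R A)$-valued form of order $\al$ for this model is, after choosing an $\R$-basis of $A$, a tuple of nonzero homogeneous harmonic $\La^{pq}$-valued forms of order $\al$ for $\De_{C_x}$; hence the exceptional values of $\De_{\cX/A}$ at $x$ coincide with those of $\De_{C_x}$, that is, with the exceptional values of $\De_X$ at $x$. (Equivalently, the indicial family factors as $I_{C_x}(\la)\otimes\id_A$, where $I_{C_x}(\la)$ denotes that of $\De_{C_x}$, and this is invertible precisely when $I_{C_x}(\la)$ is.) Finally \cite[Theorem 6.2]{LM}, applied to $\De_{\cX/A}$ on $\La^{pq}_X\otimes_\R A$, yields that \eq{relative Lap} is Fredholm if and only if $\al$ avoids these exceptional values, which is exactly the criterion for $\De_X$.

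The step requiring the most care is the decoupling in the model operator: it is only because the deformation is locally trivial and the conifold metric is genuinely conical (undeformed) near each vertex that the nilpotents of $A$ do not couple the components of the model, and so cannot manufacture new indicial roots or logarithmic solutions. Were the metric merely asymptotic to the cone metric, or the deformation not locally trivial, one would instead obtain a perturbation of $\De_{C_x}\otimes\id_A$ and would have to argue separately that the critical weights are stable. As an alternative consistent with the inductive style of \S\ref{sect: rel harm}, one could argue by induction on the length of $A$: a small extension $0\to(\ep)\to A\to B\to0$ produces a short exact sequence of weighted Sobolev spaces intertwining $\De_X\otimes\id_{(\ep)}$, $\De_{\cX/A}$ and $\De_{\cY/B}$ (with $\cY/B$ the induced deformation), and the snake lemma gives the implication ``$\al$ non-exceptional $\Rightarrow$ $\De_{\cX/A}$ Fredholm'' at once. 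I prefer the Lockhart--McOwen route, however, because it delivers the converse (non-Fredholmness at exceptional weights) with no extra work.
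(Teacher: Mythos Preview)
Your proof is correct and follows essentially the same approach as the paper: identify the model operator at each $x\in X^\sing$ with $\De_{C_x}\otimes\id_A$ using local triviality and the fact that the conifold metric is exactly conical there, observe that tensoring with the finite-dimensional algebra $A$ does not change the set of homogeneous harmonic orders, and then invoke \cite[Theorem 6.2]{LM}. Your write-up is more explicit than the paper's about the decoupling and about why local triviality plus the undeformed cone metric are needed, and your remark on the alternative inductive route via small extensions is a nice complement, but the core argument is the same.
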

\begin{proof}
As $\cX/A$ is locally trivial each $x\in X^\sing$ has a punctured neighbourhood on which we can identify $\De_{\cX/A}$ with $\De_X\otimes\id_A: \Ga(\La^p_{\cX/A})\otimes_\R A\to \Ga(\La^p_{\cX/A})\otimes_\R A.$ We denote by $C_x$ the cone of $(X,x)$ and say that a section of $\Ga(\La^p_{C^\reg_x})\otimes_\R A$ is {\it homogeneous of order $\al\in\R$} if it is a finite sum $\sum\ph\otimes \ps$ where $\ph$ is an order $\al$ homogeneous $p$-form on $C_x$ and $\ps$ an element of $A.$ Denote by $\De_x$ the $p$-form Laplacian of $C_x.$ By \cite[Theorem 6.2]{LM} then \eq{relative Lap} is Fredholm if and only if no non-zero order $\al$ homogeneous section $\chi\in \Ga(\La^p_{C^\reg_x})\otimes_\R A$ satisfies $(\De_x\otimes\id_A)\chi=0.$ This is equivalent to saying that $\al$ is a non-exceptional value as stated above.
\end{proof}

We generalize Proposition \ref{prop: Fredholm} to the locally trivial infinitesimal deformations.
\begin{prop}\l{prop: relative Fredholm}
Let $X$ be a compact K\"ahler $n$-conifold. Let $k\ge0$ be an integer and take $A:=\C[t]/(t^{k+1}).$ Let $\cX/A$ be a locally trivial deformation of $X;$ and using Corollary \ref{cor: extending conifold metrics}, choose on $\cX/A$ a K\"ahler conifold metric. Let $\al\in\R$ be a non-exceptional value of $\De_X:H^2_\al(\La^p_{X^\reg})\to L^2_{\al-2}(\La^p_{X^\reg})$ and take $\ph\in L^2_{\al-2}(\La^p_{\cX/A}|_{X^\reg}).$ Then $\ph$ lies in the image of the Fredholm operator $\De_{\cX/A}:H^2_\al(\La^p_{\cX/A}|_{X^\reg})\to L^2_{\al-2}(\La^p_{\cX/A}|_{X^\reg})$ if and only if $\ph\cdot\ps=0$ for every $\ps\in \ker(\De_{\cX/A}|_{X^\reg})_{2-\al-2n}.$
\end{prop}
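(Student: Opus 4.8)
The plan is to read the statement as the Fredholm alternative for the elliptic operator $T:=\De_{\cX/A}\colon H^2_\al(\La^p_{\cX/A}|_{X^\reg})\to L^2_{\al-2}(\La^p_{\cX/A}|_{X^\reg})$, which is Fredholm by Proposition \ref{prop: relative exceptional} because $\al$ is non-exceptional for $\De_X$. Being Fredholm (working over $\R$, since each of the $A$-modules involved is in particular an $\R$-Banach space, as $A$ is finite-dimensional over $\R$), $T$ has closed range; so by the closed-range theorem $\im T$ is exactly the annihilator of $\ker T^*$, where $T^*$ is the Banach-space adjoint. The whole proof then reduces to identifying $\ker T^*$, under the duality of Proposition \ref{prop: relative duality}, with the weighted harmonic space $\ker(\De_{\cX/A}|_{X^\reg})_{2-\al-2n}$.

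First I would set up the pairing. By Proposition \ref{prop: relative duality} the $A$-bilinear $L^2$ pairing $\int_{X^\reg}g_{\cX/A}(\bt,\bt)\om^n_{\cX/A}$ identifies $L^2_{2-\al-2n}(\La^p_{\cX/A}|_{X^\reg})$ with the continuous $A$-dual of $L^2_{\al-2}(\La^p_{\cX/A}|_{X^\reg})$; the weights are complementary, summing to $-2n$. Under this identification a functional corresponding to $\ps\in L^2_{2-\al-2n}$ lies in $\ker T^*$ iff $\int_{X^\reg}g_{\cX/A}(\De_{\cX/A}\si,\ps)\om^n_{\cX/A}=0$ for every $\si\in H^2_\al$. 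Because $H^2_\al$ contains all compactly supported smooth relative forms and $\De_{\cX/A}$ is formally self-adjoint for this pairing --- which is precisely what the tensor calculus of \S\ref{sect: tens calc} yields through \eqref{dbar} together with $\square=2\De_{\cX/A}$ in \eqref{De0} --- this condition says exactly that $\ps$ is weakly harmonic.

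Next I would invoke elliptic regularity: $\De_{\cX/A}$ is elliptic (as recorded at the end of \S\ref{sect: tens calc}), and since $\cX/A$ is locally trivial it is locally $\De_X\otimes\id_A$, so weak solutions are smooth and lie genuinely in $\ker(\De_{\cX/A}|_{X^\reg})_{2-\al-2n}$. This gives $\ker T^*=\ker(\De_{\cX/A}|_{X^\reg})_{2-\al-2n}$, whence the closed-range theorem yields $\ph\in\im T$ iff $\ph\cdot\ps=0$ for all such $\ps$, which is the assertion. The ``only if'' direction is the easy half and may also be seen directly: if $\ph=\De_{\cX/A}\si$ then $\ph\cdot\ps=\si\cdot\De_{\cX/A}\ps=0$ by the relative integration-by-parts formula (the analogue of Proposition \ref{prop: int by parts}, valid here because the weights $\al$ and $2-\al-2n$ are complementary so that the boundary terms vanish).

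The main obstacle I expect is the over-$A$ functional analysis in the middle step: one must check that the closed-range and duality machinery, genuinely a statement about $\R$-Banach spaces, is compatible with the $A$-module structure, i.e. that the Banach adjoint of the $A$-linear Fredholm operator $T$ corresponds, under the $A$-linear duality of Proposition \ref{prop: relative duality}, to $\De_{\cX/A}$ on the dual weight. The safest route, matching the inductive style of Propositions \ref{prop: relative duality} and \ref{prop: relative exceptional}, is to reduce to the field case by induction on the length of $A$: for a small extension $0\to(\ep)\to A\to B\to0$ with $\cY/B$ defined by $\O_\cY:=\O_\cX\otimes_A B$, one uses the exact sequences $0\to L^2_\be(\La^p_{X^\reg})\otimes_\R(\ep)\to L^2_\be(\La^p_{\cX/A}|_{X^\reg})\to L^2_\be(\La^p_{\cY/B}|_{X^\reg})\to0$ together with the known case $A=\C$ (Proposition \ref{prop: Fredholm}) and the induction hypothesis for $\cY/B$, chasing the resulting diagram exactly as in the proof of Proposition \ref{prop: relative duality}. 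Either way the essential analytic inputs are only Fredholmness, self-adjointness, and elliptic regularity, all already available.
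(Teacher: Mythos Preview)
Your outline is essentially the paper's own approach, and the analytic steps you list---Fredholmness from Proposition~\ref{prop: relative exceptional}, closed range, formal self-adjointness of $\De_{\cX/A}$ from \eqref{dbar} and \eqref{De0}, elliptic regularity---are exactly right. The identification you seek, namely $[\coker(\De_{\cX/A})_\al]^*\cong\ker(\De_{\cX/A})_{2-\al-2n}$ via Proposition~\ref{prop: relative duality}, is precisely \eqref{coker} in the paper. But there is a real gap at the point you yourself flag as ``the main obstacle,'' and neither of your suggested fixes closes it.

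The pairing $\ph\cdot\ps$ in the statement is the $A$-valued pairing of Definition~\ref{prop: relative Sobolev}. Your hypothesis thus says that every element of $\hom_A(\coker(\De_{\cX/A})_\al,A)$ kills the image $\pi(\ph)$ of $\ph$ in the finitely generated $A$-module $M:=\coker(\De_{\cX/A})_\al$. For this to force $\pi(\ph)=0$ you need the $A$-dual to separate points of $M$, i.e.\ the evaluation map $M\to M^{**}$ to be injective. That is \emph{not} automatic for Artin local $\C$-algebras: it fails for instance over $A=\C[x,y]/(x,y)^2$ with $M=A/(x)$, where every $A$-linear $f\colon M\to A$ has $f(y)=0$. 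It holds over $A=\C[t]/(t^{k+1})$ because this ring is self-injective (Gorenstein of dimension~$0$); indeed the paper proves the stronger statement that every finitely generated $A$-module is reflexive, by induction on~$k$ using that $\hom_{A_k}(-,A_k)$ is exact. This is the entire reason the hypothesis $A=\C[t]/(t^{k+1})$ appears in the statement, and your sketch never uses it. The closed-range theorem over~$\R$ does not bypass the issue: it gives you $\im T$ as the $\R$-annihilator of $\ker T^{*,\R}$, and passing from the $\R$-dual to the $A$-dual of $M$ is surjective precisely when $M\to M^{**}$ is injective.

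Your proposed induction on the length of $A$ does not obviously work either. To apply the induction hypothesis to $R\ph$ you need $R\ph\cdot_B\ps'=0$ for every $\ps'\in\ker(\De_{\cY/B})_{2-\al-2n}$, not just for those $\ps'$ in the image of the restriction from $\ker(\De_{\cX/A})_{2-\al-2n}$; so you would first have to show this restriction is surjective, and lifting a $\De_{\cY/B}$-harmonic form to a $\De_{\cX/A}$-harmonic one runs into exactly the orthogonality obstruction you are trying to establish. The clean fix is the paper's: after \eqref{coker}, invoke reflexivity of finitely generated $A$-modules to obtain $\coker(\De_{\cX/A})_\al\cong[\ker(\De_{\cX/A})_{2-\al-2n}]^*$, whence the projection $L^2_{\al-2}\to\coker$ is the transpose of the inclusion $\ker\hookrightarrow L^2_{2-\al-2n}$ and the statement follows.
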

\begin{proof}
Denote by $(\De_{\cX/A})_\al$ the map $\De_{\cX/A}:H^2_\al(\La^p_{\cX/A}|_{X^\reg})\to L^2_{\al-2}(\La^p_{\cX/A}|_{X^\reg}).$ This is an $A$-module homomorphism and its cokernel $\coker (\De_{\cX/A_k})_\al$ is of course an $A$-module. Since $(\De_{\cX/A})_\al$ is a Fredholm operator it follows that $\coker (\De_{\cX/A_k})_\al$ is a Hausdorff topological vector space of finite dimension, isomorphic to the Euclidean space. Denote by $[\coker (\De_{\cX/A})_\al]^*$ the $A$-module dual to $\coker (\De_{\cX/A})_\al.$ We show that there is an $A$-module isomorphism
\e\l{coker} [\coker (\De_{\cX/A_k})_\al]^*\cong\ker(\De_{\cX/A})_{2-\al-2n}.\e
An element of $[\coker (\De_{\cX/A_k})_\al]^*\cong\ker(\De_{\cX/A})_{2-\al-2n}$ is automatically continuous and is equivalent to a continuous $A$-module homomorphism $L^2_{\al-2}(\La^p_{\cX/A}|_{X^\reg})\to A$ which vanishes on $\im(\De_{\cX/A})_\al.$ Using Proposition \ref{prop: relative duality} regard this as an element $\ph\in L^2_{2-\al-2n}(\La^p_{\cX/A}|_{X^\reg}).$ This should then satisfy $\int_{X^\reg}g_{\cX/A}(\ph,\De_{\cX/A}\ps)\om^n_{\cX/A}=0$ for every $\ps\in L^2_\al(\La^p_{\cX/A}|_{X^\reg}).$ As $\De_{\cX/A}$ is self-adjoint with respect to this pairing, the equation is equivalent to $\De_{\cX/A}\ph=0;$ that is, $\ph\in \ker(\De_{\cX/A})_{2-\al-2n}.$ Hence we get \eq{coker}.

Recall now from hypothesis that $A=\C[t]/(t^{k+1}).$ It is known that every finitely generated $A$-module $M$ is a reflexive $A$-module, which is in fact one of the six equivalent definitions \cite[Theorem 8(1)--(6)]{Kat} of pseudo-Frobenius rings. It is also possible to give a direct proof by an induction on $k,$ as we do now. Put $A_k:=\C[t]/(t^{k+1}).$ The statement is obvious for $k=0$ because $A_0$ is a field. For $k\ge1$ let it hold for $k-1$ in place of $k.$ Let $M$ be a finitely generated $A_k$ module. There is then an $A_k$ module exact sequence $0\to tM\to M\to M/tM\to0.$ Notice that $tM$ is naturally an $A_{k-1}$ module and $M/tM$ an $A_0$ module, to which we can apply the induction hypotheses. We see then that the natural maps $tM\to (tM)^{**}$ and $M/tM\to(M/tM)^{**},$ the maps of taking double duals, are $A_k$ module isomorphisms. These with the map $M\to M^{**}$ form an $A_k$ module commutative diagram 
\begin{equation}\l{refl}\begin{tikzcd}
0\ar[r]& tM\ar[r]\ar[d,"\cong"]& M\ar[r]\ar[d]& M/tM\ar[r]\ar[d,"\cong"]&0\\
0\ar[r]& (tM)^{**}\ar[r]& M^{**}\ar[r] &(M/tM)^{**}\ar[r]&0.
\end{tikzcd}
\end{equation}
Since $A_k$ is (as in Definition \ref{dfn: Gross}) an injective $A_k$ module it follows that the bottom sequence of \eq{refl} is exact. The five lemma implies therefore that the middle vertical map $M\to M^{**}$ is an $A_k$ module isomorphism, completing the induction step.

Since $\coker (\De_{\cX/A})_\al$ is a finitely generated $A$-module we get an $A$-module isomorphism $\coker (\De_{\cX/A})_\al\cong [\coker (\De_{\cX/A})_\al]^{**}.$ Hence we get using \eq{coker} an $A$-module isomorphism $\coker (\De_{\cX/A})_\al\cong [\ker (\De_{\cX/A})_{2-\al-2n}]^*.$ The natural projection $L^2_{\al-2}(\La^p_{\cX/A}|_{X^\reg})\to \coker (\De_{\cX/A})_\al$ may then be identified with the transpose of the inclusion $\ker(\De_{\cX/A})_{2-\al-2n}\sb L^2_{2-\al-2n}(\La^p_{\cX/A}|_{X^\reg}),$ which completes the proof. 
\end{proof}

We generalize also Proposition \ref{prop: L^2 n-forms}.
\begin{prop}\l{prop: relative L^2 n-forms}
Let $X$ be a compact K\"ahler $n$-conifold, $A$ an Artin local $\R$-algebra and $\cX/A$ a locally trivial deformation of $X.$ Using Corollary \ref{cor: extending conifold metrics} choose on $\cX/A$ a K\"ahler conifold metric. Let $\ph\in L^2(\La^n_{\cX/A}|_{X^\reg})$ satisfy $\De_{\cX/A}\ph=0.$ Then $\ph$ is in fact of order $>-n;$ that is, $\ph\in L^2_{\ep-n}(\La^n_{\cX/A}|_{X^\reg})$ for some $\ep>0.$
\end{prop}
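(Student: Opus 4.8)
The plan is to reduce the statement to the non-relative case, Proposition \ref{prop: L^2 n-forms}, by working on a punctured neighbourhood of each singular point and exploiting local triviality. First I would fix $x\in X^\sing$ and use that $\cX/A$ is locally trivial together with Definition \ref{dfn: relative conifold metrics}: the chosen K\"ahler conifold metric on $\cX/A$ agrees, on a punctured neighbourhood $U$ of $x$, with the cone metric $g_x$ of $C_x$ tensored with $1\in A$. Consequently there is an isomorphism $\La^n_{\cX/A}|_U\cong \La^n_{C_x^\reg}\otimes_\R A$ under which the relative Laplacian becomes $\De_{\cX/A}=\De_x\otimes\id_A$, where $\De_x$ is the cone Laplacian of $C_x$. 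This is exactly the identification used in the proof of Proposition \ref{prop: relative exceptional}, and it is the one genuinely essential input.

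Next I would decompose $\ph$ into scalar components near $x$. Choose an $\R$-basis $a_1,\dots,a_N$ of $A$ orthonormal for the metric on $A$ from Definition \ref{prop: relative Sobolev}, and write $\ph|_U=\sum_{i=1}^N \ph_i\otimes a_i$ with each $\ph_i$ an $n$-form on $U\subset C_x^\reg$. Because $\De_{\cX/A}=\De_x\otimes\id_A$ and the $a_i$ are $\R$-independent, the hypothesis $\De_{\cX/A}\ph=0$ forces $\De_x\ph_i=0$ for every $i$; and because the $a_i$ are orthonormal, the pointwise identity $|\ph|^2=\sum_i|\ph_i|^2$ holds, so that $\ph\in L^2$ implies each $\ph_i$ is $L^2$ over $U$.

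I would then apply the argument of Proposition \ref{prop: L^2 n-forms} to each component $\ph_i$ separately. By Theorem \ref{thm: no log}, applied to the open subset $U$ of the cone $C_x^\reg$, each $\ph_i$ expands into homogeneous harmonic $n$-forms without logarithm terms; if its leading term had order $\le -n$, the $L^2$ computation \eq{phph} --- in which the link integral is independent of $r$ --- would force that term to vanish, contradicting its being the nonzero leading term. Hence every $\ph_i$, and therefore $\ph$ itself, has order $>-n$ at $x$. Since $X^\sing$ is finite and the set of exceptional values is discrete (Proposition \ref{prop: real eigen}), I can choose a single $\ep>0$ that works at every $x$, giving $\ph\in L^2_{\ep-n}(\La^n_{\cX/A}|_{X^\reg})$.

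On the difficulty: there is no serious analytic obstacle once the reduction is made, because near the singularities the relative Laplacian is literally $\De_x\otimes\id_A$ and the whole matter becomes the scalar statement applied componentwise. The only point requiring care is that the local trivialisation be chosen to simultaneously identify the relative forms with $\La^n_{C_x^\reg}\otimes_\R A$ and carry $\De_{\cX/A}$ to $\De_x\otimes\id_A$; this rests on the \emph{exact} agreement of the relative metric with $g_x\otimes 1$ near $x$ guaranteed by Definition \ref{dfn: relative conifold metrics}, not merely on asymptotic agreement. An alternative, in the inductive style of Propositions \ref{prop: relative duality} and \ref{prop: relative Fredholm}, would run an induction on the length of $A$ through a small extension $0\to(\ep)\to A\to B\to0$: the induction hypothesis gives that the restriction $R\ph$ has order $>-n$, whence the order-$(-n)$ part of $\ph$ lies in $\ker R=\ep\cdot L^2(\La^n_{X^\reg})$ and is again killed by the same $L^2$ computation. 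The direct componentwise argument above is, however, shorter and closer to the proof of Proposition \ref{prop: L^2 n-forms}.
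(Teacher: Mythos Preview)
Your proof is correct. The paper, however, takes precisely the inductive route you sketch at the end as an alternative: it argues by induction on the length of $A$ through a small extension $0\to(\ep)\to A\to B\to 0$, restricts $\ph$ to $\ps:=R\ph$ with $\De_{\cY/B}\ps=0$, lifts $\ps$ back to $\ps'$ via the global trivialization of Corollary~\ref{cor: triv2} together with an $\R$-linear splitting of $A\to B$, and then invokes Proposition~\ref{prop: L^2 n-forms} for the difference $\ph-\ps'$, which lies in $\La^n_{X^\reg}\otimes_\R(\ep)$.

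Your direct componentwise reduction is more elementary and more transparent for this particular statement: since the order is a purely local notion near $X^\sing$, and there the relative Laplacian is literally $\De_x\otimes\id_A$ by Definition~\ref{dfn: relative conifold metrics}, decomposing over an $\R$-basis of $A$ reduces everything to the scalar case in one step, with no induction and no global trivialization needed. The paper's inductive phrasing buys consistency with the surrounding results in \S\ref{sect: rel harm} (Propositions~\ref{prop: relative duality} and~\ref{prop: relative Fredholm}, Theorem~\ref{thm: int by parts for p le n-2}, Lemma~\ref{lem: harm cl}), where the induction on the length of $A$ is doing real work because the operators there are \emph{not} globally of product form; here, by contrast, only the local behaviour near $X^\sing$ matters and the product structure is exact, so your shortcut is available.
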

\begin{proof}
For $A=\R$ this holds by Proposition \ref{prop: L^2 n-forms}. We treat the general case by an induction on the length of $A.$ Let $0\to(\ep)\to A\to B$ be a small extension in $(\Art)_\R.$ Define a deformation $\cY/B$ by $\O_\cY:=\O_\cX\otimes_AB.$ Denote by $\ps$ the restriction of $\ph$ to $\La^n_{\cY/B}|_{X^\reg}.$ Then $\De_{\cY/B}\ps=0$ and by the induction hypothesis $\ps$ has order $>-n.$ Choosing a cochain complex isomorphism $(\La^n_{\cX/A}|_{X^\reg},\d_{\cX/A})\cong (\La^n_{X^\reg}\otimes_\R A,\d_X\otimes\id_A)$ and an $\R$-vector space splitting of $A\to B,$ lift $\ps$ to a section $\ps'$ of $\La^n_{\cX/A}|_{X^\reg}.$ Then $\ps'$ has order $>-n,$ and $\ph-\ps'$ is an $L^2$ section of $(\La^n_{\cX/A}|_{X^\reg})\otimes_\R(\ep)\cong \La^n_{X^\reg}.$ Applying Proposition \ref{prop: L^2 n-forms} to $\ph-\ps'$ we see that $\ph-\ps'$ has order $>-n.$ 
\end{proof}

We generalize Lemma \ref{lem: dph1} in two steps. The first step is as follows.
\begin{thm}\l{thm: int by parts for p le n-2}
Let $X$ be a compact K\"ahler $n$-conifold. Let $k\ge0$ be an integer; take $A:=\C[t]/(t^{k+1})$ and let $\cX/A$ be a locally trivial deformation of $X.$ Using Corollary \ref{cor: extending conifold metrics} choose on $\cX/A$ a K\"ahler conifold metric and give $X$ the induced conifold metric. Let $p\le n-2$ be an integer. Then the following holds:
\begin{equation}\l{int by parts for p le n-2}\parbox{10cm}{
For every $\al\in(2+p-2n,-p)=:I_p$ and every $\ph\in\Ga_\al(\La^p_{\cX/A}|_{X^\reg})$ with $\De_{\cX/A}\ph=0,$ we have $\d_{\cX/A}\ph=\d^*_{\cX/A}\ph=0$ and $\ph \in\bigcap_{\be\in I_p}\Ga_\be(\La^p_{\cX/A}|_{X^\reg}).$ 
}\end{equation}
\end{thm}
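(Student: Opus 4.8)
The plan is to prove both assertions by inducting on the length $k+1$ of $A=\C[t]/(t^{k+1})$, reducing at each step to the absolute case $A=\C$. I would first dispose of the order statement independently of the induction. Since $p\le n-2$ means $p<\frac l2-1$ for $l=2n$, Corollary \ref{cor: harmonic1} shows that no cone $C_x$ carries a non-zero homogeneous harmonic $p$-form of order in the open interval $I_p=(2+p-2n,-p)$; because the relative conifold metric is a product near each $x$, we have $\De_{\cX/A}=\De_{C_x}\otimes\id_A$ there, so $\De_{\cX/A}$ has no exceptional value in $I_p$ either. The same Lockhart--McOwen theory \cite{LM} used in Proposition \ref{prop: relative exceptional}, applied to $\De_X\otimes\id_A$, then yields (exactly as Proposition \ref{prop: improve} does absolutely) that $\ker(\De_{\cX/A})_\al=\ker(\De_{\cX/A})_\be$ for all $\al,\be\in I_p$. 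Hence $\ph\in\Ga_\be$ for every $\be<-p$; since the exceptional values are discrete (Proposition \ref{prop: real eigen}) and there are no logarithm terms (Theorem \ref{thm: no log}), this forces $\ph\in\Ga_{-p}$, the second conclusion. The leading term $\ps_x$ of $\ph$ at each $x$ then has order $-p$, so by Corollary \ref{cor: order -p} it is closed and co-closed; as operators are undeformed near $x$, both $\d_{\cX/A}\ph$ and $\d^*_{\cX/A}\ph$ have order $-(p+1)+\de$ for some $\de>0$. The base case $A=\C$ is now the absolute statement: from $\square\ph=0$ and Proposition \ref{prop: int by parts} (whose weight hypothesis holds since $\operatorname{ord}\ph\ge -p$, $\operatorname{ord}\d\ph>-(p+1)$ and $p<n-1$) one gets $\|\d\ph\|^2+\|\d^*\ph\|^2=0$, and genuine $L^2$ positivity forces $\d\ph=\d^*\ph=0$.

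For the inductive step I would take the ideal $(\ep):=(t^k)\sb A$, which is isomorphic as an $A$-module to $\C$, together with the induced deformation $\cX_{k-1}/A_{k-1}$, and write $\ps:=R\ph$. By the induction hypothesis $\d_{\cX_{k-1}}\ps=\d^*_{\cX_{k-1}}\ps=0$, so $\d_{\cX/A}\ph$ and $\d^*_{\cX/A}\ph$ both restrict to zero, i.e. lie in the $(\ep)$-part of $\La^\bullet_{\cX/A}$. Since $(\ep)\cong\C$ is annihilated by $\fm A$, this part is canonically the sheaf of \emph{absolute} $\C$-forms on $X^\reg$, on which $\d_{\cX/A},\d^*_{\cX/A},*_{\cX/A}$ act by the \emph{undeformed} operators $\d_X,\d^*_X,*$; thus $\d_{\cX/A}\ph=\ep\,\eta$ and $\d^*_{\cX/A}\ph=\ep\,\zeta$ with $\eta,\zeta$ absolute forms of order $-(p+1)+\de$. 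From $\d_{\cX/A}^2=0$, $(\d^*_{\cX/A})^2=0$ and $\square\ph=2\De_{\cX/A}\ph=0$ (using \eqref{De0}), reading off the $(\ep)$-part gives $\d_X\eta=0$, $\d^*_X\zeta=0$ and $\d^*_X\eta+\d_X\zeta=0$. Setting $\xi:=\d^*_X\eta=-\d_X\zeta$, a $p$-form of order $-(p+2)+\de$, one checks $\d_X\xi=\d^*_X\xi=0$, so $\xi$ is harmonic, exact and co-closed; integration by parts $(\xi,\xi)=-(\d_X\zeta,\xi)=-(\zeta,\d^*_X\xi)=0$ — legitimate because $\operatorname{ord}\zeta+\operatorname{ord}\xi=-(2p+3)+2\de>1-2n$ exactly when $\de>0$ and $p\le n-2$ — then gives $\xi=0$, hence $\d^*_X\eta=0$ and $\d_X\zeta=0$.

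It remains to kill $\eta$ and $\zeta$ themselves. Using the global trivialisation of the relative de Rham complex from Corollary \ref{cor: triv2}, I would write $\ph=\sum_j\ph_j t^j$; the vanishing of $\d_{\cX_{k-1}}\ps$ forces $\d_X\ph_j=0$ for $j<k$, so $\eta=\d_X\ph_k$ is exact, and $(\eta,\eta)=(\d_X\ph_k,\eta)=(\ph_k,\d^*_X\eta)=0$ (the weights clear $1-2n$ since $\operatorname{ord}\ph_k\ge -p$, $p\le n-2$, and $\d^*_X\eta=0$), whence $\d_{\cX/A}\ph=\ep\,\eta=0$. For $\zeta$ I would pass to the Hodge dual: from $\d^*_{\cX/A}\ph=\pm*_{\cX/A}\d_{\cX/A}*_{\cX/A}\ph$ and the vanishing of $\d_{\cX_{k-1}}(*_{\cX_{k-1}}\ps)$, the same trivialisation shows $*\zeta=\pm\d_X(*\ph)_k$ is exact, while $\d^*_X(*\zeta)=\pm*\d_X\zeta=0$; hence $(*\zeta,*\zeta)=0$ and $\d^*_{\cX/A}\ph=0$ (orientability arranged, as in Corollary \ref{cor: harmonic1}, by passing to the oriented double cover). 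This closes the induction.

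The main obstacle, and the reason this is not a routine transcription of Lemma \ref{lem: dph1}, is that the $A$-valued pairing $\int_{X^\reg}g_{\cX/A}(\cdot,\cdot)\om^n_{\cX/A}$ is \emph{not} positive: the self-pairing of any form valued in $(\ep)$ lands in $(\ep)^2=0$, so the would-be identity $\|\d_{\cX/A}\ph\|^2+\|\d^*_{\cX/A}\ph\|^2=0$ is vacuous and yields nothing. The device above circumvents this by peeling off the top graded piece to reduce to \emph{absolute} forms $\eta,\zeta,\xi$, for which honest $L^2$ positivity is available, and then exploiting that these forms are exact (from the $\d$-trivialisation) and co-closed. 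The bookkeeping is genuinely marginal at the top degree $p=n-2$: the integration-by-parts weights clear the threshold $1-2n$ only because of the strict spectral gap $\de>0$ above the weight $-p$ guaranteed by discreteness of the exceptional set, so tracking this gap — rather than merely the order $>-(p+1)$ — will be the delicate point.
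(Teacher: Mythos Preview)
Your argument is correct and takes a genuinely different route from the paper's. The paper proves the statement by a \emph{double} induction --- an outer induction on the degree $p$ and an inner induction on the length of $A$ --- and the inner step constructs a closed-and-coclosed lift $\ph'$ of $R\ph$ by solving $\De_{\cX/A}\chi=\d^*_{\cX/A}\ps$ via the relative Fredholm theory of Proposition~\ref{prop: relative Fredholm}, then invoking the outer induction hypothesis (on $(q-1)$- and $(q-2)$-forms) to show $\d^*_{\cX/A}\chi$ is closed. Your approach replaces all of this by a single induction on $k$, peeling off the top graded piece to obtain the absolute forms $\eta,\zeta$ and killing them by the exactness trick via the $\d$-trivialisation. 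This is more elementary: it never calls on Proposition~\ref{prop: relative Fredholm} (hence never uses that finitely generated $A_k$-modules are reflexive), and in fact would go through unchanged for an arbitrary Artin local $\R$-algebra via small extensions. The paper's route, on the other hand, is more structural and makes the interplay between degrees explicit; it also produces the auxiliary lift $\ph'$, which is closer in spirit to how the subsequent Corollary~\ref{cor: int by parts for n-1} is set up.

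One point you should make explicit: the assertions $\operatorname{ord}\ph_k\ge -p$ and $\operatorname{ord}(*\ph)_k\ge -p$ require that the global $\d$-trivialisation of Corollary~\ref{cor: triv2} be chosen to agree, near each $x\in X^{\sing}$, with the given local trivialisation coming from local triviality of $\cX/A$ (this is what makes the component maps $\ph\mapsto\ph_j$ bounded on the weighted Sobolev spaces of Definition~\ref{prop: relative Sobolev}). This can certainly be arranged --- since the deformation is trivialised on a neighbourhood of each $x$, one may take that as one chart in the proof of Theorem~\ref{thm: C^iy} and choose the partition-of-unity correction $\th_U$ to vanish near $X^{\sing}$ --- but it is not automatic from Corollary~\ref{cor: triv2} as stated. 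The paper makes the same tacit assumption (for instance in the lifting step of its own proof and in Proposition~\ref{prop: relative L^2 n-forms}), so you are on equal footing here; just say it.
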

\begin{proof}
We prove \eq{int by parts for p le n-2} by a double induction on $p$ and $A,$ where $p$ is an outer parameter and $A$ an inner parameter. For $p<0$ the statement is automatically true. Suppose that there exists an integer $q\le n-2$ such that \eq{int by parts for p le n-2} holds for every $p\le q-1.$ We prove by the inner induction on $A$ that  \eq{int by parts for p le n-2} holds for $p=q.$ 

For  \eq{int by parts for p le n-2} with $A=\R,$ by Corollary \ref{cor: harmonic1} we have $\ph\in \Ga_{-p}(\La^p_{X^\reg}).$ As $-p\ge2-n$ the integration by parts formula $(\De_X\ph)\cdot \ph=\d_X\ph\cdot\d_X\ph+\d_X^*\ph\cdot\d_X^*\ph$ is valid. Since $\De_X\ph=0$ it follows that $\d_X\ph=\d_X^*\ph=0$ as claimed. 

For the general $A$ we do the inner induction. Let $0\to(\ep)\to A\to B\to0$ be a small extension in $(\Art)_\R.$ Define a deformation $\cY/B$ by $\O_\cY:=\O_\cX\otimes_AB.$ Denote by $R:\Ga(\La^q_{\cX/A}|_{X^\reg})\to \Ga(\La^q_{\cY/B}|_{X^\reg})$ the restriction map. Take any $\be\in I_q$ and let $\ph$ be as in \eq{int by parts for p le n-2} with $p=q.$ We show that there exists $\ph'\in\Ga_\be(\La^q_{\cX/A}|_{X^\reg})$ with $\d_{\cX/A}\ph'=\d_{\cX/A}^*\ph'=0$ and $R\ph'=R\ph.$ By the induction hypothesis we have $\d_{\cY/B}R\ph=\d_{\cY/B}^*R\ph=0$ and $R\ph\in \Ga_\be(\La^q_{\cX/A}|_{X^\reg}).$ Choose $\ps\in\Ga_\be(\La^q_{\cX/A}|_{X^\reg})$ with $\d_{\cX/A}\ps=0$ and $R\ps=R\ph.$ Notice that $\d_{\cX/A}^*\ps\in\Ga_{\be-1}(\La^{q-1}_{\cX/A}|_{X^\reg})$ and that $2-(\be+1)-2n=1-\be-2n\in (1-2n+q,-1-q)\sb (1-2n+q, 1-q)=I_{q-1}.$ By the outer induction hypothesis, if $\th\in \ker(\De_{\cX/A}^{q-1})_{1-\be-2n}$ then $\d_{\cX/A}\th=\d_{\cX/A}^*\th=0$ and $\th\in \bigcap_{\ga\in I_{q-1}}\Ga_\ga(\La^{q-1}_{\cX/A}|_{X^\reg}).$ The order estimate implies that the integration by parts formula $(\d^*_{\cX/A}\ph)\cdot\th=\ph\cdot \d_{\cX/A}\th$ makes sense. Since $\d_{\cX/A}\th=0$ it follows that $\d^*_{\cX/A}\ph\cdot\th=0.$ By Proposition \ref{prop: relative Fredholm} there exists $\chi\in \Ga_\be(\La^{q-1}_{\cX/A}|_{X^\reg})$ with $\De_{\cX/A}\chi=\d^*_{\cX/A}\ph.$ In particular, $\De_{\cX/A}\d^*_{\cX/A}\chi=0.$ As $\d^*_{\cX/A}\chi$ has order $\be-1\in (1+q-2n,-q-1)\sb I_{q-1}\sb I_{q-2}$ it follows from the outer induction hypothesis that $\d_{\cX/A}^*\chi$ is closed and co-closed; that is, $\d_{\cX/A}\d_{\cX/A}^*\chi=0.$ The equation $\De_{\cX/A}\chi=\d^*_{\cX/A}\ps$ implies in turn that $\d^*_{\cX/A}\ps=\d^*_{\cX/A}\d_{\cX/A}\chi.$ Applying the restriction map $R$ we get $\d^*_{\cY/B}R\ph=\d^*_{\cY/B}\d_{\cY/B}R\chi.$ But we know already that $\d^*_{\cY/B}R\ph=0$ so that $\d^*_{\cY/B}\d_{\cY/B}R\chi=0.$ Since $\d_{\cX/A}\d_{\cX/A}^*\chi=0$ it follows that $\d_{\cY/B}\d_{\cY/B}^*R\chi=0.$ This with $\d_{\cX/A}\d_{\cX/A}^*\chi=0$ implies $\De_{\cY/B}R\chi=0.$ By the inner induction hypothesis we have $\d_{\cY/B}^*R\chi=0.$ So $R(\ps-\d_{\cX/A}^*\chi)=R\ps=R\ph;$ that is, $\ph':=\ps-\d_{\cX/A}^*\chi$ is a lift of $R\ph$ we want.

Now $R(\ph-\ph')=0$ or equivalently $\ph-\ph'\in \Ga_\be(\La^q_{\cX/A}|_{X^\reg})\otimes_A(\ep)\cong \Ga_\be(\La^q_{X^\reg})\otimes_\R(\ep).$  So $0=\De_{\cX/A}(\ph-\ph')=\De_X(\ph-\ph').$ As we have shown in the initial step of the inner induction, we have $\d_X(\ph-\ph')=\d_X^*(\ph-\ph')=0$ and $\ph-\ph'\in  \Ga_\be(\La^q_{X^\reg})\otimes_\R(\ep)$ for every $\be\in I_q.$ Or equivalently $\d_{\cX/A}(\ph-\ph')=\d_{\cX/A}^*(\ph-\ph')=0$ and $\ph-\ph'\in  \Ga_\be(\La^q_{\cX/A}|_{X^\reg})\otimes_A(\ep).$ Since $\ph'$ is closed and co-closed and since $\ph'$ is of order $\be$ it follows that $\d_{\cX/A}\ph=\d_{\cX/A}^*(\ph-\ph')=0$ and $\ph\in  \Ga_\be(\La^q_{\cX/A}|_{X^\reg}).$ As $\be\in I_q$ has been arbitrary, we have $\ph\in\bigcap_{\be\in I_q} \Ga_\be(\La^q_{\cX/A}|_{X^\reg}),$ completing the proof.
\end{proof}
\begin{rmk}
The hypothesis $A=\C[t]/(t^{k+1})$ is relevant only to the step of applying Proposition \ref{prop: relative Fredholm}.
\end{rmk}

By the same technique we prove the following corollary, which generalizes Lemma \ref{lem: dph1}. 
\begin{cor}\l{cor: int by parts for n-1}
Let $X$ be a compact Calabi--Yau $n$-conifold. Let $k\ge0$ be an integer; and take $A:=\C[t]/(t^{k+1})$ and let $\cX/A$ be a locally trivial deformation of $X.$ Using Corollary \ref{cor: extending conifold metrics} choose on $\cX/A$ a K\"ahler conifold metric and give $X$ the induced conifold metric. Let $\ep>0$ be so small that $[1-n-\ep,1-n)$ contains no exceptional values of the $n-1$ form Laplacian over $X^\reg.$ Let $\ph\in\Ga_{1-n-\ep}(\La^{n-1}_{\cX/A}|_{X^\reg})$ satisfy $\De_{\cX/A}\ph=0.$ Then $\d_{\cX/A}\ph=\d^*_{\cX/A}\ph=0.$
\end{cor}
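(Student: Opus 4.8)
The plan is to mirror the proof of Theorem \ref{thm: int by parts for p le n-2}, running a double induction where $A$ is the inner parameter and the outer ingredient is that theorem itself applied to all degrees $p\le n-2$. First I would treat the base case $A=\R$ directly. Here $\ph$ is a harmonic $(n-1)$-form on $X^\reg$ of order $1-n-\ep$. Because $[1-n-\ep,1-n)$ contains no exceptional values, Proposition \ref{prop: improve} lets me improve the order: expanding $\ph$ near each $x\in X^\sing$ into homogeneous harmonic forms as in Theorem \ref{thm: no log}, the only exceptional value in the relevant window is $1-n$ itself (corresponding to order $-(n-1)$ forms). By Corollary \ref{cor: order -p} applied with $p=n-1$ and $l=2n$ (noting $p=n-1\le\frac l2-1=n-1$), any order $-(n-1)$ homogeneous harmonic term already satisfies $\d\ph=\d^*\ph=0$, so $\d\ph$ and $\d^*\ph$ have order $\ge -n$. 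This makes the integration by parts formula $\d_X\ph\cdot\d_X\ph+\d^*_X\ph\cdot\d^*_X\ph=\ph\cdot\De_X\ph$ valid via Proposition \ref{prop: int by parts}, and since $\De_X\ph=0$ the left side vanishes, giving $\d_X\ph=\d^*_X\ph=0$.

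Next I would run the inner induction on the length of $A$. Fix a small extension $0\to(\ep)\to A\to B\to0$ in $(\Art)_\R$, set $\O_\cY:=\O_\cX\otimes_A B$, and let $R$ be the restriction map to $\cY/B$. By the inner induction hypothesis, $R\ph$ is closed and co-closed. The strategy, exactly as in Theorem \ref{thm: int by parts for p le n-2}, is to produce a lift $\ph'\in\Ga_{1-n-\ep}(\La^{n-1}_{\cX/A}|_{X^\reg})$ that is closed and co-closed with $R\ph'=R\ph$. To build $\ph'$, I would choose $\ps$ with $\d_{\cX/A}\ps=0$ and $R\ps=R\ph$, then correct $\ps$ by subtracting $\d^*_{\cX/A}\chi$ for a suitable $\chi$. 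The key point is that $\d^*_{\cX/A}\ps$ is an $(n-2)$-form of order $-n-\ep$, and $2-(-n-\ep)-2n=2-n+\ep$ — but more relevantly $1-\be-2n$-type weights land in the interval $I_{n-2}=(2+(n-2)-2n,-(n-2))=(-n,2-n)$, so I may invoke Theorem \ref{thm: int by parts for p le n-2} at degree $n-2$ to conclude that any harmonic form at the dual weight is closed and co-closed. This orthogonality lets me apply Proposition \ref{prop: relative Fredholm} to solve $\De_{\cX/A}\chi=\d^*_{\cX/A}\ps$, and then the outer induction (applied now at degree $n-2$) shows $\d^*_{\cX/A}\chi$ is itself closed and co-closed. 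Restricting to $\cY/B$ and using the inner hypothesis forces $\De_{\cY/B}R\chi=0$ hence $\d^*_{\cY/B}R\chi=0$, so $\ph':=\ps-\d^*_{\cX/A}\chi$ is the desired lift.

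Finally, since $R(\ph-\ph')=0$, the difference lies in $\Ga_{1-n-\ep}(\La^{n-1}_{\cX/A}|_{X^\reg})\otimes_A(\ep)\cong\Ga_{1-n-\ep}(\La^{n-1}_{X^\reg})\otimes_\R(\ep)$, so $\De_X(\ph-\ph')=0$ over the $\R$-coefficient piece. By the base case $\d_X(\ph-\ph')=\d^*_X(\ph-\ph')=0$, equivalently $\d_{\cX/A}(\ph-\ph')=\d^*_{\cX/A}(\ph-\ph')=0$. Combined with $\d_{\cX/A}\ph'=\d^*_{\cX/A}\ph'=0$, this yields $\d_{\cX/A}\ph=\d^*_{\cX/A}\ph=0$, completing the induction.

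The main obstacle I anticipate is handling the degree $n-1$ correctly: unlike the $p\le n-2$ case of Theorem \ref{thm: int by parts for p le n-2}, here the weight $1-n$ sits exactly at an exceptional value, so I cannot simply improve $\ph$ to a strictly better order and must instead rely on Corollary \ref{cor: order -p} to extract closedness and co-closedness of the borderline homogeneous terms directly. Verifying that the weights appearing in the auxiliary forms $\d^*_{\cX/A}\ps$ and $\d^*_{\cX/A}\chi$ genuinely fall inside the intervals $I_{n-2}$ (and $I_{n-3}$ where needed) so that Theorem \ref{thm: int by parts for p le n-2} is applicable — and that the small $\ep>0$ can be chosen uniformly to avoid all exceptional values simultaneously — is the delicate bookkeeping that the proof must get exactly right.
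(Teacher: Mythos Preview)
Your approach matches the paper's exactly: the base case $A=\R$ via Corollary \ref{cor: order -p} and integration by parts, then induction on the length of $A$ by lifting $R\ph$ to a closed $\ps$, solving $\De_{\cX/A}\chi=\d^*_{\cX/A}\ps$, and correcting $\ps$ to a closed and co-closed $\ph'$ using Theorem \ref{thm: int by parts for p le n-2} at lower degree. One slip to fix: the corrector is $\d_{\cX/A}\chi$, not $\d^*_{\cX/A}\chi$ (since $\chi$ is an $(n-2)$-form, $\d^*_{\cX/A}\chi$ has degree $n-3$ and cannot be subtracted from $\ps$); correspondingly, Theorem \ref{thm: int by parts for p le n-2} is applied to the harmonic $(n-3)$-form $\d^*_{\cX/A}\chi$ at weight $\al-1=1-n-\ep\in I_{n-3}$, and the fact you need about $R\chi$ is $\d_{\cY/B}R\chi=0$ (from $\De_{\cY/B}R\chi=0$ and Theorem \ref{thm: int by parts for p le n-2} at degree $n-2$) so that $R\ph'=R\ps=R\ph$.
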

\begin{proof}
For $A=\R,$ near each $x\in X^\sing$ write $\ph$ as a countable sum of homogeneous $n-1$ forms on $C_x.$ Making $\ep$ small enough it follows that $\ph$ begins with an order $1-n$ homogeneous $n-1$ form $\ph'.$ By Corollary \ref{cor: order -p} we have $\d\ph'=\d^*\ph'=0.$ So $\d\ph$ and $\d^*\ph$ have order $\de-n$ for some $\de>0.$ Supposing $\ep<\de$ it follows that the integration by parts formula $\ph\cdot\De\ph=\d\ph\cdot\d\ph+\d^*\ph\cdot\d^*\ph$ is valid. As $\De\ph=0$ we have $\d\ph=\d^*\ph=0$ as required.
 
For the general $A$ we proceed by an induction (which is similar to the inner induction process in the proof of Theorem \ref{thm: int by parts for p le n-2}). Let $0\to(\ep)\to A\to B\to0$ be a small extension in $(\Art)_\R.$ Define a deformation $\cY/B$ by $\O_\cY:=\O_\cX\otimes_AB.$ Denote by $R:\Ga(\La^q_{\cX/A}|_{X^\reg})\to \Ga(\La^q_{\cY/B}|_{X^\reg})$ the restriction map. Then $R\ph\in \Ga_{1-n-\ep}(\La^{n-1}_{\cY/B}|_{X^\reg})$ and $\De_{\cY/B}R\ph=0.$ By the induction hypothesis therefore $\d_{\cY/B}R\ph=\d_{\cY/B}^*R\ph=0.$ Choose $\ps\in\Ga_{1-n-\ep}(\La^{n-1}_{\cX/A}|_{X^\reg})$ with $\d_{\cX/A}\ps=0$ and $R\ps=R\ph.$ Notice that $\d_{\cX/A}^*\ps\in \Ga_{-n-\ep}(\La^{n-2}_{\cX/A}|_{X^\reg}).$ Putting $\al:=2-n-\ep$ we have $2-\al-2n=-n+\ep.$ By Theorem \ref{thm: int by parts for p le n-2}, if $\th\in \ker(\De_{\cX/A}^{n-2})_{-n+\ep}$ then $\d_{\cX/A}\th=0.$ So $(\d^*_{\cX/A}\ps)\cdot\th=\ps\cdot\d_{\cX/A}\th=0.$ By Proposition \ref{prop: relative Fredholm} therefore there exists $\chi\in \Ga_\al(\La^{n-2}_{\cX/A}|_{X^\reg})$ with $\De_{\cX/A}\chi=\d^*_{\cX/A}\ps.$ In particular, $\De_{\cX/A}\d^*_{\cX/A}\chi=0.$ But $\d^*_{\cX/A}\chi\in \Ga_{\al-1}(\La^{n-2}_{\cX/A}|_{X^\reg})$ with $\al-1=1-n-\ep\in I_{n-3}$ in the notation of Theorem \ref{thm: int by parts for p le n-2}. So $\d^*_{\cX/A}\chi$ is closed; that is, $0=\d_{\cX/A}\d^*_{\cX/A}=\d^*_{\cX/A}(\ps-\d_{\cX/A}\chi).$ Now $\ph':=\ps-\d_{\cX/A}\chi$ is a relative closed and co-closed form whose image under $R$ is equal to that of $\ph.$ We have  $R(\ph'-\ph)=0$ and can write $\ph'-\ph\in\Ga(\La^{n-1}_{\cX/A}|_{X^\reg})\otimes_A(\ep)\cong\Ga(\La^{n-1}_{X^\reg})\otimes_\R(\ep).$ We have also $\De_X(\ph'-\ph)=\De_{\cX/A}(\ph'-\ph)=0.$ The result in the $A=\R$ case implies then $\d_X(\ph'-\ph)=\d_X(\ph-\ph')=0$ and accordingly $\d_{\cX/A}(\ph'-\ph)=\d_{\cX/A}(\ph-\ph')=0.$ Since $\d_{\cX/A}\ph'=\d_{\cX/A}\ph'=0$ it follows that $\d_{\cX/A}\ph=\d_{\cX/A}^*\ph=0,$ completing the proof.
\end{proof}

We generalize now Theorem \ref{thm: Lock}.
\begin{prop}\l{prop: relative Lockhart}
Let $X$ be a compact K\"ahler $n$-conifold, $A$ an Artin local $\R$-algebra and $\cX/A$ a locally trivial deformation of $X.$ Using Corollary \ref{cor: extending conifold metrics} choose on $\cX/A$ a K\"ahler conifold metric. Then there is an injective $A$-module homomorphism $\ker(\d_{\cX/A}+\d_{\cX/A}^*)^n_{-n}\cong {}_cH^n(X^\reg,A).$
\end{prop}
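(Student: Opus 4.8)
The plan is to argue by induction on the length of the Artin local $\R$-algebra $A,$ using Theorem \ref{thm: Lock} as the base case. The $A$-module homomorphism in question is the \emph{de Rham class map} $\Theta_A\colon\ker(\d_{\cX/A}+\d_{\cX/A}^*)^n_{-n}\to{}_cH^n(X^\reg,A)$ sending a relative $L^2$ closed and co-closed $n$-form $\ph$ of order $-n$ to its relative de Rham class $[\ph].$ For $A=\R$ the isomorphism $\La^\bt_{\cX/\R}|_{X^\reg}\cong\La^\bt_{X^\reg}$ of Corollary \ref{cor: triv2} identifies $\Theta_\R$ with the map of Theorem \ref{thm: Lock}, which is an isomorphism onto ${}_cH^n(X^\reg,\C);$ in particular $\Theta_\R$ is injective as a map into the plain cohomology $H^n(X^\reg,\C).$

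First I would check that $\Theta_A$ is well defined, i.e.\ that $[\ph]$ really lies in the compactly supported image ${}_cH^n(X^\reg,A).$ By Proposition \ref{prop: relative L^2 n-forms} every $\ph\in\ker(\d_{\cX/A}+\d_{\cX/A}^*)^n_{-n}$ has order $>-n.$ Since $\cX/A$ is locally trivial, near each $x\in X^\sing$ the isomorphism $\La^\bt_{\cX/A}\cong\La^\bt_{X^\reg}\otimes_\R A$ of Corollary \ref{cor: triv2} intertwines $\d_{\cX/A}$ with $\d_X\otimes\id_A;$ writing $\ph$ as a finite $\R$-combination $\sum_i\ph_i\otimes a_i$ of ordinary $n$-forms of order $>-n$ and applying Lemma \ref{lem: p exact} to each $\ph_i,$ I obtain that $\ph$ is $\d_{\cX/A}$-exact on a punctured neighbourhood of each $x.$ A cut-off argument then represents $[\ph]$ by a compactly supported closed relative $n$-form, so $[\ph]\in{}_cH^n(X^\reg,A),$ and $A$-linearity is clear.

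For injectivity I would use a small extension $0\to(\ep)\to A\to B\to0$ in $(\Art)_\R,$ set $\O_\cY:=\O_\cX\otimes_AB,$ and let $R$ denote restriction. Local triviality and flatness of $\La^\bt_{X^\reg}$ over $\R$ give a short exact sequence of complexes $0\to\La^\bt_{X^\reg}\to\La^\bt_{\cX/A}|_{X^\reg}\xrightarrow{R}\La^\bt_{\cY/B}|_{X^\reg}\to0,$ the left-hand term being the $(\ep)$-valued forms with $(\ep)\cong\R.$ Suppose $\Theta_A(\ph)=0.$ By naturality $\Theta_B(R\ph)=R_*\Theta_A(\ph)=0,$ so the induction hypothesis gives $R\ph=0;$ hence $\ph$ is $(\ep)$-valued and corresponds to an ordinary $L^2$ form $\ph_0$ with $\De_X\ph_0=0.$ Now $[\ph]=0$ in ${}_cH^n(X^\reg,A)\subset H^n(X^\reg,A),$ and the identification $H^n(X^\reg,A)\cong H^n(X^\reg,\C)\otimes_\R A$ shows that the $(\ep)$-component $[\ph_0]\otimes\ep$ vanishes, so $[\ph_0]=0$ in $H^n(X^\reg,\C).$ Injectivity of $\Theta_\R$ into the plain cohomology (Theorem \ref{thm: Lock}) then forces $\ph_0=0,$ whence $\ph=0.$ To upgrade to the isomorphism onto ${}_cH^n(X^\reg,A)$ matching Theorem \ref{thm: Lock}, I would represent a given class by an $L^2$ closed relative form and project it onto $\ker(\d_{\cX/A}+\d_{\cX/A}^*)^n_{-n}$ using the relative Fredholm theory of Proposition \ref{prop: relative Fredholm} together with the relative Hodge decomposition, checking by the same inductive scheme that the harmonic part is cohomologous to the original.

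The main obstacle is that ${}_cH^n,$ being the image of $H^n_c\to H^n,$ does not sit in a long exact sequence, so the five-lemma is not directly available across the small extension. The injectivity argument circumvents this by descending to the plain cohomology $H^n(X^\reg,A),$ where the de Rham complex is genuinely $\La^\bt_{X^\reg}\otimes_\R A$ and the $(\ep)$-component can be read off. The delicate points that remain are the well-definedness step (that the harmonic class is compactly supported, resting on Proposition \ref{prop: relative L^2 n-forms} and Lemma \ref{lem: p exact}) and, should one want the full isomorphism, the surjectivity, where one must verify that the Hodge projection does not move the class out of the compactly supported image\,---\,again handled by tracking $(\ep)$-components through the induction rather than by a formal diagram chase.
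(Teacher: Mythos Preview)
Your proof is correct but takes a different route from the paper's. The paper does not induct on the length of $A$; instead it observes that the cochain complex isomorphism $\La^n_{\cX/A}|_{X^\reg}\cong\La^n_{X^\reg}\otimes_\R A$ lets one tensor Lockhart's $q$-boundedness estimate \cite[Theorem~7.4]{Lock} directly with the finite-dimensional $\R$-vector space $A$, yielding
\[
L^2(\La^n_{\cX/A}|_{X^\reg})\cap\d_{\cX/A}[\Ga(\La^{n-1}_{\cX/A}|_{X^\reg})]\sb\ov{\d_{\cX/A}[\Ga_c(\La^{n-1}_{\cX/A}|_{X^\reg})]}.
\]
Injectivity then follows in one line: an element of $\ker(\d_{\cX/A}+\d_{\cX/A}^*)^n_{-n}$ with trivial de Rham class is an $L^2$ $\d_{\cX/A}$-exact form, hence a limit of compactly supported exact forms, hence orthogonal to itself. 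The paper does not separately verify that the class lands in ${}_cH^n(X^\reg,A)$; this is implicit, and your argument via Proposition~\ref{prop: relative L^2 n-forms} and Lemma~\ref{lem: p exact} (equivalently Proposition~\ref{prop: relative exact}) fills that in cleanly.

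The trade-off is this: the paper's argument is shorter and makes transparent that the relative statement is literally the absolute one tensored with $A$, but it requires importing a second result from \cite{Lock} beyond Theorem~\ref{thm: Lock}. Your induction uses only Theorem~\ref{thm: Lock} as a black box and the small-extension machinery already set up in the paper, at the cost of more bookkeeping. Note also that the paper, like you, does not prove surjectivity here; it remarks that surjectivity holds but is not needed, so your final paragraph about upgrading to an isomorphism goes beyond what the proposition actually claims (the ``$\cong$'' in the displayed statement notwithstanding, only injectivity is established and used).
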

\begin{proof}
Use the cochain complex isomorphism $\La^n_{\cX/A}|_{X^\reg}\cong\La^n_{X^\reg}\otimes_\R A$ and note that the K\"ahler conifold metric is $q$-bounded in the sense of \cite[Theorem 7.4]{Lock}. This result then extends to relative forms; that is, 
\e\l{Lock inj}
L^2(\La^n_{\cX/A}|_{X^\reg})\cap 
\d_{\cX/A}[\Ga(\La^{n-1}_{\cX/A}|_{X^\reg})]\sb 
\ov{\d [\Ga_c(\La^{n-1}_{\cX/A}|_{X^\reg})]}
\e
where the bar on the right-hand side is to take the closure in $L^2(\La^n_{\cX/A}|_{X^\reg}).$ Consider now the natural $A$-module homomorphism $\ker(\d_{\cX/A}+\d_{\cX/A}^*)^n_{-n}\to {}_c H^n(X^\reg,A)$ which takes the $\d_{\cX/A}$ cohomology class. By \eq{Lock inj} this map is injective. 
\end{proof}
\begin{rmk}
The map $\ker(\d_{\cX/A}+\d_{\cX/A}^*)^n_{-n}\to {}_c H^n(X^\reg,A)$ is in fact surjective too. For $A=\R$ this follows from the Poincar\'e duality property \cite[Chapter IV, Theorem 17']{deR} and Kodaira's decomposition theorem \cite[Chapter V, Theorem 24]{deR}. We can show by an induction on $A$ that these results extend to the infinitesimal deformation $\cX/A.$ We omit the details because we shall not need them.
\end{rmk}

We end with a remark which applies to the results above in the present section. In these statements the K\"ahler condition has not played essential r\^oles in the proofs. They will extend to locally trivial deformations of compact Riemannian conifolds once we have made the right definition of such objects. We have not done that just because the current statements will do for our purpose.

\section{Proof of Theorem \ref{main thm2}}\l{sect: proof2}
We prove a lemma about relative tangent sheaves.
\begin{lem}\l{lem: rel tan}
Let $X$ be a normal complex space, $A$ an Artin local $\C$-algebra and $\cX/A$ a deformation of $X.$ Denote by $\Th_{\cX/A}$ the $\O_\cX$ module dual to $\Om^1_{\cX/A}.$ Then the following three statements hold: {\bf(i)} the natural $\O_\cX$ module homomorphism $\Th_{\cX/A}\to \io_*(\Th_{\cX/A}|_{X^\reg})$ is an isomorphism; {\bf(ii)} if $U\sb X$ is a Stein neighbourhood of $X^\sing$ then for $q=1,2,3,\dots$ we have $H^q(U,\Th_{\cX/A})=0;$ and 
{\bf(iii)} $H^1_{X^\sing}(X,\Th_{\cX/A})=0.$
\end{lem}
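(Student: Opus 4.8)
The plan is to prove the three assertions almost independently, obtaining (iii) from (i) and (ii) at the end. Write $Z:=X^\sing$. Since $A$ is a finite-dimensional $\C$-vector space, $\O_\cX$ is a coherent sheaf of rings, finite over $\O_X$; hence $\Om^1_{\cX/A}$ and $\Th_{\cX/A}=\shHom_{\O_\cX}(\Om^1_{\cX/A},\O_\cX)$ are coherent $\O_\cX$-modules, and in particular coherent analytic $\O_X$-modules on $X$. Statement (ii) I would then dispose of immediately: $\Th_{\cX/A}$ is a coherent analytic sheaf on the Stein space $U$, so $H^q(U,\Th_{\cX/A})=0$ for $q\ge1$ by Cartan's Theorem B.

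For (i), I would reformulate it as the depth statement $\cH^0_Z(\Th_{\cX/A})=\cH^1_Z(\Th_{\cX/A})=0$, via the exact sequence $0\to\cH^0_Z(\cF)\to\cF\to\io_*\io^*\cF\to\cH^1_Z(\cF)\to0$ valid for any sheaf $\cF$. First I would check that $\O_\cX$ has depth $\ge2$ along $Z$. Flatness of $\O_\cX$ over $A$ identifies the graded pieces of the finite filtration $\O_\cX\supseteq(\fm A)\O_\cX\supseteq(\fm A)^2\O_\cX\supseteq\cdots$ with $\O_X\otimes_\C\big((\fm A)^i/(\fm A)^{i+1}\big)$, finite direct sums of copies of $\O_X$. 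As $X$ is normal we have $\O_X\cong\io_*\O_{X^\reg}$, so $\cH^0_Z(\O_X)=\cH^1_Z(\O_X)=0$; a downward induction over this filtration, using the long exact sequences of local cohomology, then yields $\cH^0_Z(\O_\cX)=\cH^1_Z(\O_\cX)=0$. Next I would transport this vanishing through the $\shHom$: a local finite presentation $\O_\cX^{\oplus a}\to\O_\cX^{\oplus b}\to\Om^1_{\cX/A}\to0$ realises $\Th_{\cX/A}$ as the kernel of a map $\phi:\O_\cX^{\oplus b}\to\O_\cX^{\oplus a}$, and a local cohomology chase on $0\to\Th_{\cX/A}\to\O_\cX^{\oplus b}\to\im\phi\to0$ (together with $\cH^0_Z(\im\phi)\hookrightarrow\cH^0_Z(\O_\cX^{\oplus a})=0$) gives $\cH^0_Z(\Th_{\cX/A})=\cH^1_Z(\Th_{\cX/A})=0$, which is (i).

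Finally, for (iii) I would pass to a Stein neighbourhood $U$ of $Z$ and combine excision $H^1_Z(X,\Th_{\cX/A})\cong H^1_Z(U,\Th_{\cX/A})$ with the local cohomology exact sequence
$$H^0(U,\Th_{\cX/A})\longrightarrow H^0(U^\reg,\Th_{\cX/A})\longrightarrow H^1_Z(U,\Th_{\cX/A})\longrightarrow H^1(U,\Th_{\cX/A}),$$
where $U^\reg:=U\-Z$. Its last term vanishes by (ii), while (i) turns the first arrow into the isomorphism $H^0(U,\io_*(\Th_{\cX/A}|_{X^\reg}))=H^0(U^\reg,\Th_{\cX/A})$; hence the middle term is zero, proving (iii).

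I expect the main obstacle to lie in (i), specifically in the two transfer steps: verifying that flatness really makes the associated graded of $\O_\cX$ a direct sum of copies of $\O_X$, so that normality of $X$ may be invoked, and that $\shHom(-,\O_\cX)$ inherits depth $\ge2$ from $\O_\cX$. Both are standard once set up, so I would keep those calculations light; by contrast (ii) is immediate from coherence and (iii) is formal.
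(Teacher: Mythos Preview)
Your proof is correct and takes a different route from the paper. The paper argues all three parts by a single induction on the length of $A$: for a small extension $0\to(\ep)\to A\to B\to0$ it first builds the short exact sequence $0\to\Th_X\to\io_*(\Th_{\cX/A}|_{X^\reg})\to\io_*(\Th_{\cY/B}|_{X^\reg})\to0$ by pushing forward from $X^\reg$, compares it via a five-lemma argument with $\Th_X\to\Th_{\cX/A}\to\Th_{\cY/B}\to0$ to obtain (i), and then reads off (ii) and (iii) from the resulting short exact sequence $0\to\Th_X\to\Th_{\cX/A}\to\Th_{\cY/B}\to0$ and the induction hypothesis. Your argument instead exploits directly that $\Th_{\cX/A}=\shHom_{\O_\cX}(\Om^1_{\cX/A},\O_\cX)$ is a dual sheaf: kernels of maps between modules of depth $\ge2$ have depth $\ge2$, which gives (i) in one stroke without induction, and (iii) is then formal from (i) and (ii). This is cleaner for (i) and makes its reflexive-type nature transparent; the paper's approach, on the other hand, produces the exact sequence $0\to\Th_X\to\Th_{\cX/A}\to\Th_{\cY/B}\to0$ as a by-product, which it reuses later (e.g.\ in the proof of Theorem~\ref{thm: harm n-1 1 version3}).

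One phrasing issue: there is no ring homomorphism $\O_X\to\O_\cX$, so ``finite over $\O_X$'' and ``coherent analytic $\O_X$-module'' are not literally correct. For (ii), say instead that $(U,\O_\cX|_U)$ is a Stein complex space because its reduction $(U,\O_X|_U)$ is, and apply Cartan~B for coherent $\O_\cX$-modules; or, equivalently, filter $\Th_{\cX/A}$ by powers of $\fm A$ and note that each subquotient, being annihilated by $\fm A$, is genuinely a coherent $\O_X$-module. Either fix is routine and does not affect the rest of your argument.
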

\begin{proof}
We prove these by an induction on the length of $A.$ For $A=\C$ it is a well-known property of the reflexive sheaf $\Th_X.$ Suppose now that $0\to(\ep)\to A\to B\to0$ is a small extension in $(\Art)_\C.$ Define a deformation $\cY/B$ by $\O_\cY:=\O_\cX\otimes_AB.$ As $\Th_{\cX/A}$ is flat over $X^\reg,$ tensoring $\Th_{\cX/A}|_{X^\reg}$ with the small extension sequence we get a short exact sequence $0\to \Th_{X^\reg}\to \Th_{\cX/A}|_{X^\reg}\to \Th_{\cY/B}|_{X^\reg}\to0.$ Pushing forward these by $\io_*$ and using the isomorphism $\io_*\Th_{X^\reg}\cong\Th_X$ we get a short exact sequence
\e\l{Th ex} 0\to \Th_X\to \io_*(\Th_{\cX/A}|_{X^\reg})\to\io_*(\Th_{\cY/B}|_{X^\reg})\to0.\e
On the other hand, using the natural transformation $\id\to\io_*\io^*$ we get a commutative diagram
\begin{equation}\l{Th}\begin{tikzcd}
& \Th_X\ar[r]\ar[d,"\id"]& \Th_{\cX/A}\ar[r]\ar[d,"\al"] &\Th_{\cY/B}\ar[r]\ar[d,"\be"]&0\\
0\ar[r]& \Th_X\ar[r]& \io_*(\Th_{\cX/A}|_{X^\reg})\ar[r] &\io_*(\Th_{\cY/B}|_{X^\reg})\ar[r]&0.
\end{tikzcd}\end{equation}
By the induction hypothesis the rightmost vertical map $\be$ is an isomorphism. Although the top left part is missing in \eq{Th} we can show directly by diagram chase that the five lemma applies to the current circumstances; that is, $\al$ is an isomorphism, which proves (i). 

Now \eq{Th ex} becomes $0\to \Th_X\to \Th_{\cX/A}\to \Th_{\cY/B}\to0.$ Let $U\sb X$ be a Stein neighbourhood of $X^\sing.$ Then for $q=1,2,3,\dots$ there is an exact sequence $H^q(U,\Th_X)\to H^q(U,\Th_{\cX/A})\to H^q(U,\Th_{\cY/B}).$ But $H^1(U,\Th_X)=0$ and by the induction hypothesis $H^1(U,\Th_{\cY/B})=0.$ So $H(U,\Th_{\cX/A})=0$ as in (ii). 

From the short exact sequence $0\to \Th_X\to \Th_{\cX/A}\to \Th_{\cY/B}\to0$ we get also an exact sequence $H^1_{X^\sing} (X,\Th_X)\to H^1_{X^\sing}(X,\Th_{\cX/A})\to H^1_{X^\sing}(X,\Th_{\cY/B}).$ But as $\Th_X$ is reflexive, the leftmost term $H^1_{X^\sing} (X,\Th_X)$ vanishes; and by the induction hypothesis, the rightmost term $H^1_{X^\sing} (X,\Th_{\cY/B})$ vanishes. Accordingly so does the middle term, which proves (iii).
\end{proof}
We generalize Corollary \ref{cor: c} as follows.
\begin{cor}\l{cor: tangent sheaf}
In the circumstances of Lemma \ref{lem: rel tan} there exists an isomorphism ${}_c H^1(X^\reg,\Th_{\cX/A})\cong H^1(X,\Th_{\cX/A}).$
\end{cor}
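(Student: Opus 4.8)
The plan is to follow the proof of Corollary~\ref{cor: c} almost verbatim, using Lemma~\ref{lem: rel tan} to supply the relative analogues of the reflexivity and local-cohomology vanishing that were previously available for $\Th_X$. Throughout I would work in the situation of Corollary~\ref{cor: c}, where $X$ is compact and $X^\sing$ is isolated, hence finite, so that Definition~\ref{dfn: c} and Lemma~\ref{lem: c} apply with $Y=X^\sing$.

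First I would apply Lemma~\ref{lem: c} with $\cE=\Th_{\cX/A}$, $q=1$ and $Y=X^\sing$. The Stein neighbourhoods of $X^\sing$ form a fundamental system of neighbourhoods, and by part~(ii) of Lemma~\ref{lem: rel tan} each such Stein $U$ satisfies $H^1(U,\Th_{\cX/A})=H^2(U,\Th_{\cX/A})=0$. Thus the hypotheses of Lemma~\ref{lem: c} are met; here part~(i) of Lemma~\ref{lem: rel tan} guarantees that $\Th_{\cX/A}|_{X^\reg}$ is induced from the sheaf $\Th_{\cX/A}$ on all of $X$, so that the notation ${}_cH^1(X^\reg,\Th_{\cX/A})$ of Definition~\ref{dfn: c} is legitimate. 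We conclude that ${}_cH^1(X^\reg,\Th_{\cX/A})$ agrees with the image of the natural restriction map $H^1(X,\Th_{\cX/A})\to H^1(X^\reg,\Th_{\cX/A})$.

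Second I would show this restriction map is injective, so that its image is isomorphic to its domain. For this I would use the local cohomology exact sequence
\[
H^1_{X^\sing}(X,\Th_{\cX/A})\to H^1(X,\Th_{\cX/A})\to H^1(X^\reg,\Th_{\cX/A}),
\]
whose leftmost term vanishes by part~(iii) of Lemma~\ref{lem: rel tan}. Combining the two steps gives the isomorphism ${}_cH^1(X^\reg,\Th_{\cX/A})\cong H^1(X,\Th_{\cX/A})$; since every map in sight is $A$-linear, the isomorphism is one of $A$-modules. As both required vanishing statements are already delivered by Lemma~\ref{lem: rel tan}, I expect no real obstacle: the argument is a mechanical transcription of that of Corollary~\ref{cor: c}, the only point needing a word of care being the verification via part~(i) that the relative tangent sheaf is the pushforward of its restriction to $X^\reg$, which is exactly what licenses the application of Lemma~\ref{lem: c}.
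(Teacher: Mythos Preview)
Your proposal is correct and follows the same approach as the paper's proof: apply Lemma~\ref{lem: c} using Lemma~\ref{lem: rel tan}(ii) to identify ${}_cH^1(X^\reg,\Th_{\cX/A})$ with the image of the restriction map, and then use Lemma~\ref{lem: rel tan}(iii) to see that this map is injective. The only superfluous point is your invocation of part~(i) to ``license'' the notation ${}_cH^1(X^\reg,\Th_{\cX/A})$; by Definition~\ref{dfn: c} this always means ${}_cH^1(X^\reg,\Th_{\cX/A}|_{X^\reg})$, so part~(i) is not needed here (nor is it used in the paper's proof).
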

\begin{proof}
Lemmas \ref{lem: c} and \ref{lem: rel tan}(ii) imply that ${}_c H^1(X^\reg,\Th_{\cX/A})$ agrees with the image of the natural map $ H^1(X,\Th_{\cX/A})\to H^1(X^\reg,\Th_{\cX/A}).$ Lemma \ref{lem: rel tan}(iii) implies that the latter map is injective, from which we get the isomorphism we want.  
\end{proof}

We generalize Lemma \ref{lem: hol improve}.
\begin{lem}\l{lem: relative hol improve}
Let $n\ge3$ be an integer and $X$ a compact Calabi--Yau $n$-conifold with $X^\sing$ non-empty. Let $A$ be an Artin local $\R$-algebra and $\cX/A$ a locally trivial deformation of $X.$ Using Corollary \ref{cor: extending conifold metrics} choose on $\cX/A$ a K\"ahler conifold metric and give $X$ the induced conifold metric. 
Let $\ph$ be a section of $\La^{n1}_{\cX/A}|_{X^\reg}$ of order $>-n-1$ with $\db_{\cX/A}\ph=\db_{\cX/A}^*\ph=0.$ Then there exists $\chi\in\Ga(\La^{n0}_{\cX/A}|_{X^\reg})$ of order $>-n$ and with $\db_{\cX/A}\chi=\ph.$
\end{lem}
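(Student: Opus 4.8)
The plan is to follow the proof of Lemma~\ref{lem: hol improve} in three stages: first produce a relative $(n,0)$ primitive $\chi$ of $\ph$ by a cohomological vanishing, then observe that $\chi$ is automatically harmonic, and finally improve its order near $X^\sing$. The first stage I would run by induction on the length of $A$, and the last stage by reduction to the cone model using local triviality.

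First I would establish that $\ph$ is $\db_{\cX/A}$-exact. By Lemma~\ref{lem: n0} there is a nowhere-vanishing section of $\Om^n_{\cX/A}$ over $X^\reg$, giving an $\O_\cX|_{X^\reg}$-module isomorphism $\Om^n_{\cX/A}|_{X^\reg}\cong\O_\cX|_{X^\reg}$ and hence $H^1(X^\reg,\Om^n_{\cX/A})\cong H^1(X^\reg,\O_\cX)$. I would then show $H^1(X^\reg,\O_\cX)=0$ by induction on the length of $A$: for a small extension $0\to(\ep)\to A\to B\to0$ with induced $\cY/B$ (so $\O_\cY:=\O_\cX\otimes_A B$), flatness of $\O_\cX$ gives $0\to\O_{X^\reg}\to\O_\cX|_{X^\reg}\to\O_\cY|_{X^\reg}\to0$, whose long exact sequence reduces the claim to $H^1(X^\reg,\O_X)=0$; the latter follows, as in Lemma~\ref{lem: hol improve}, from $H^1(X,\O_X)=0$ (Proposition~\ref{prop: hol 1-forms}) together with the depth $\ge n$ property of the rational singularities (Remark~\ref{rmk: L^2}), which kills $H^1_{X^\sing}(X,\O_X)$ and $H^2_{X^\sing}(X,\O_X)$ since $n\ge3$. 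Using the relative Dolbeault isomorphism for $X^\reg$ (established after Definition~\ref{dfn: glue}), $\db_{\cX/A}$-closedness of $\ph$ then yields $\chi\in\Ga(\La^{n0}_{\cX/A}|_{X^\reg})$ with $\db_{\cX/A}\chi=\ph$. The harmonicity of $\chi$ is then immediate: as $\chi$ is of type $(n,0)$ we have $\db^*_{\cX/A}\chi=0$ for degree reasons, so $\De_{\cX/A}\chi=2\db^*_{\cX/A}\db_{\cX/A}\chi=2\db^*_{\cX/A}\ph=0$ by hypothesis.

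The main work, and the expected obstacle, is the order estimate, which I would carry out locally near each $x\in X^\sing$. Since $\cX/A$ is locally trivial, Corollary~\ref{cor: triv2} and Definition~\ref{dfn: relative conifold metrics} let me identify, on a punctured neighbourhood of $x$, the sheaf $\La^{n0}_{\cX/A}$ with $\La^{n0}_{C_x^\reg}\otimes_\R A$ and the relative Laplacian with $\De_{C_x}\otimes\id_A$. Writing $\chi=\sum_k\chi_k\otimes a_k$ for a basis $(a_k)$ of $A$, harmonicity of $\chi$ forces each $\chi_k$ to be a harmonic $(n,0)$ form on $C_x^\reg$, while $\db_{\cX/A}\chi=\ph$ of order $>-n-1$ forces each $\db_{C_x}\chi_k$ to have order $>-n-1$. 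I can then run the order argument of Lemma~\ref{lem: hol improve} on each component: expand $\chi_k$ into homogeneous harmonic forms by Theorem~\ref{thm: no log}, note that each homogeneous term of order $<\ep-n$ is $\db_{C_x}$-closed hence holomorphic, write such a term as $f\Om$ with $\Om$ the $L^2$ holomorphic $(n,0)$ form (of order $\ep-n$ by Proposition~\ref{prop: L^2 n-forms}, after shrinking $\ep$) and $f$ holomorphic, which extends across $x$ and is bounded by normality, and conclude that the term vanishes. Thus each $\chi_k$, and hence $\chi$, has order $>-n$.

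The delicate points to get right are the bookkeeping of orders for $A$-valued forms—using that the relative weighted norm reads off the components in a basis of $A$, so that "$\ph$ of order $>-n-1$" really does bound every $\db_{C_x}\chi_k$—and the fact that local triviality is precisely what makes the relative harmonic form split into genuinely homogeneous harmonic forms on the cone with constant $A$-coefficients, so that the absolute results of \S\ref{sect: Riem} apply verbatim to each $\chi_k$. I note that, in contrast to Corollary~\ref{cor: int by parts for n-1}, no appeal to Proposition~\ref{prop: relative Fredholm} is needed here, so the argument works for an arbitrary Artin local $\R$-algebra $A$ rather than only $\C[t]/(t^{k+1})$.
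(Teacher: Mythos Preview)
Your proof is correct and follows essentially the paper's approach: establish $H^1(X^\reg,\O_\cX)=0$ by induction on the length of $A$, write $\ph=\db_{\cX/A}\chi$, observe that $\chi$ is harmonic, and then use the local holomorphic trivialization near each $x\in X^\sing$ (coming from local triviality of $\cX/A$ together with Definition~\ref{dfn: relative conifold metrics}) to reduce the order estimate to the componentwise argument of Lemma~\ref{lem: hol improve}. The only cosmetic difference is that the paper runs separate inductions for $H^1(X,\O_\cX)$ and for $H^q_{X^\sing}(X,\O_\cX)$ before combining them, whereas you induct directly on $H^1(X^\reg,\O_\cX)$.
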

\begin{proof}
We show by an induction on $A$ that $H^1(X,\O_{\cX/A})=0.$ Let $0\to(\ep)\to A\to B\to0$ be a small extension in $(\Art)_\R.$ Define a deformation $\cY/B$ by $\O_\cY:=\O_\cX\otimes_AB.$ There are then exact sequences $0\to\O_X\to\O_\cX\to \O_\cY\to0$ and $0=H^1(X,\O_X)\to H^1(X,\O_\cX)\to H^1(X,\O_\cY).$ By the induction hypothesis we have $H^1(X,\O_\cY)=0.$ So  $H^1(X,\O_\cX)=0.$

Since $X$ is of depth $\ge3$ it follows that $H^1_{X^\sing}(X,\O_X)=H^2_{X^\sing}(X,\O_X)=0.$ Again by an induction on $A$ we can show that $H^1_{X^\sing}(X,\O_\cX)=H^2_{X^\sing}(X,\O_\cX)=0.$ This with $H^1(X,\O_\cX)=0$ implies that $H^1(X^\reg,\O_\cX)=0.$ So we can write $\ph=\db_{\cX/A}\chi$ with $\chi$ an $(n,0)$ form on $X^\reg.$ Then $\db_{\cX/A}^*\db_{\cX/A}\chi=0;$ or equivalently, $\De_{\cX/A}\chi=0.$ Since $\cX/A$ is locally trivial it follows that for each $x\in X^\sing$ there exists a punctured neighbourhood $U$ of it on which $\La^{n0}_{\cX/A}|_U\cong \La^{n0}_U\otimes_\R A.$ We can also identify $\De_{\cX/A}|_U$ with $\De_U\otimes \id_A$ where $\De_U$ is the Laplacian over $U\sb X.$ After choosing an $\R$-vector space basis of $A$ we can then regard $\chi$ as a finite system of harmonic $(n,0)$ forms. To these we can apply the same argument as in the proof of Lemma \ref{lem: hol improve}, which shows that $\chi$ has order $>-n$ as we want to prove.
\end{proof}

Lemma \ref{lem: p exact} generalizes readily as follows. We leave the proof to the reader as an exercise.
\begin{prop}\l{prop: relative exact}
Let $X$ be a compact K\"ahler $n$-conifold, $A$ an Artin local $\R$-algebra and $\cX/A$ a locally trivial deformation of $X.$ Using Corollary \ref{cor: extending conifold metrics} choose on $\cX/A$ a K\"ahler conifold metric. Let $p$ be an integer and $\ph$ a section of order $>-p$ of $\La^p_{\cX/A}|_{X^\reg}.$ Then every $x\in X^\sing$ has a punctured neighbourhood $U^\reg$ on which $\ph$ is $\d_{\cX/A}$ exact. \qed
\end{prop}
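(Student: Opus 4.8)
The plan is to reduce the relative statement to the absolute case, Lemma \ref{lem: p exact}, using local triviality to split everything over a finite basis of $A$. Fix $x\in X^\sing$. Since $\cX/A$ is locally trivial, there is a punctured neighbourhood $U^\reg$ of $x$ in $X^\reg$, diffeomorphic to $(0,\de)\t C^\lk_x$ for some $\de>0$, on which $\cX/A$ is a trivial deformation; this yields a cochain isomorphism $\La^\bt_{\cX/A}|_{U^\reg}\cong \La^\bt_{U^\reg}\ot_\R A$ carrying $\d_{\cX/A}$ to $\d\ot\id_A$, exactly the identification already invoked in Definition \ref{dfn: relative conifold metrics} and in the proof of Lemma \ref{lem: relative hol improve}. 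First I would record that, because this local trivialization is compatible with the chosen K\"ahler conifold metric near $x$, the notion of order transports to the product side, so that a section $\ph$ of order $>-p$ corresponds to an element of $\La^p_{U^\reg}\ot_\R A$ whose components in any $\R$-basis of $A$ are ordinary $p$-forms of order $>-p$ on $U^\reg$.

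Next I would choose an $\R$-basis $a_1,\dots,a_d$ of $A$, where $d=\dim_\R A<\iy$, and write $\ph=\sum_{i=1}^d\ph_i\ot a_i$ with each $\ph_i\in\Ga(\La^p_{U^\reg})$ of order $>-p$. The argument of Lemma \ref{lem: p exact} is purely local near $x$ (it compares the periods $\int_{\{r\}\t A}\ph_i$ over cycles $A$ in the link $C^\lk_x$ and uses the decay to force them to vanish), so it applies verbatim to each $\ph_i$ and produces a punctured neighbourhood $U_i^\reg$ of $x$ together with $\ps_i\in\Ga(\La^{p-1}_{U_i^\reg})$ with $\d\ps_i=\ph_i$. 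Since $d$ is finite, the intersection $U'^\reg:=U^\reg\cap\bigcap_{i=1}^d U_i^\reg$ is again a punctured neighbourhood of $x$; restricting the $\ps_i$ to it and setting $\ps:=\sum_{i=1}^d\ps_i\ot a_i$, I obtain $\d_{\cX/A}\ps=\sum_i(\d\ps_i)\ot a_i=\sum_i\ph_i\ot a_i=\ph$ on $U'^\reg$, which is the desired conclusion.

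The only points needing care, and the mild substitute for a genuine ``main obstacle'' here, are bookkeeping: checking that the order hypothesis splits correctly over the finite basis (which holds because the weighted norms near $x$ are defined through the product identification and $A$ carries the Euclidean norm of a finite-dimensional $\R$-vector space), and that the local-triviality isomorphism really intertwines $\d_{\cX/A}$ with $\d\ot\id_A$, so that the componentwise primitives reassemble into a relative primitive. Alternatively, and entirely in the spirit of the other proofs in this section, one could run an induction on the length of $A$: for a small extension $0\to(\ep)\to A\to B\to 0$ one lifts a relative primitive over $\cY/B$, whose error lies in $\La^p_{U^\reg}\ot_\R(\ep)\cong\La^p_{U^\reg}$ and is handled by a single application of Lemma \ref{lem: p exact}. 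Since neither route uses anything beyond Lemma \ref{lem: p exact} and the local structure of a locally trivial deformation, this is genuinely the routine exercise the statement advertises.
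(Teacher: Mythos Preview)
Your proposal is correct and is exactly the routine argument the paper has in mind: the paper gives no proof of this proposition, simply declaring it a ready generalization of Lemma \ref{lem: p exact} and leaving it to the reader. Your reduction via a finite $\R$-basis of $A$ (using the local trivialization near $x$, which identifies $(\La^\bt_{\cX/A}|_{U^\reg},\d_{\cX/A})$ with $(\La^\bt_{U^\reg}\otimes_\R A,\d\otimes\id_A)$) is the most direct way to carry this out, and your alternative by induction on the length of $A$ matches the style of the surrounding proofs in \S\ref{sect: rel harm}. One small remark: as in Lemma \ref{lem: p exact} itself, the argument really uses that $\ph$ is $\d_{\cX/A}$-closed (so that the period integrals over $\{r\}\times A$ are independent of $r$); this holds in every application of the proposition in the paper and is implicit in the original lemma as well.
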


We prove an integration by parts formula for relative harmonic forms.
\begin{lem}\l{lem: harm cl}
Let $X$ be a compact Calabi--Yau $n$-conifold, $A$ an Artin local $\R$-algebra and $\cX/A$ a deformation of $X.$ Using Corollary \ref{cor: extending conifold metrics} choose on $\cX/A$ a K\"ahler conifold metric and give $X$ the induced conifold metric. Let $p,q\in\Z$ and $\al\in\R$ be such that for every $\chi\in \ker\De^{pq}_\al$ we have $\d\chi=0.$ Then for every $\ph\in \ker(\De_{\cX/A}^{pq})_\al$ we have $\d_{\cX/A}\ph=0.$
\end{lem}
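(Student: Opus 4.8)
The plan is to induct on the length of the Artin local $\R$-algebra $A$, following the template of Corollary \ref{cor: int by parts for n-1} and Theorem \ref{thm: int by parts for p le n-2}. For $A=\R$ we have $\De_{\cX/A}=\De_X$ and $\ker(\De_{\cX/A}^{pq})_\al=\ker\De^{pq}_\al$, so the assertion is exactly the hypothesis. For the inductive step I fix a small extension $0\to(\ep)\to A\to B\to0$ in $(\Art)_\R$, put $\O_\cY:=\O_\cX\otimes_AB$, and denote by $R$ the restriction map to $\cY/B$; the inductive hypothesis applies to the locally trivial deformation $\cY/B$.

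Given $\ph\in\ker(\De_{\cX/A}^{pq})_\al$, we have $\De_{\cY/B}R\ph=0$, so by induction $\d_{\cY/B}R\ph=0$. The first step is to produce a relative harmonic lift of $R\ph$. Choosing an arbitrary weight-$\al$ lift $\ps_0$ with $R\ps_0=R\ph$ (possible since $R$ is surjective, $\La^{pq}_{\cX/A}\cong\La^{pq}_{X^\reg}\otimes_\R A$ by Corollary \ref{cor: triv2}), the form $\De_{\cX/A}\ps_0$ lies in $\ker R$ and may be written $\ep\tau$ with $\tau$ a central-fibre form of weight $\al-2$; moreover $\tau$ is orthogonal to $\ker\De^{pq}_{2-\al-2n}$, by the self-adjointness of $\De_{\cX/A}$ for the relative pairing of Proposition \ref{prop: relative duality} together with the fact that $\De_X\si=0$ for $\si$ in that kernel. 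Hence Proposition \ref{prop: Fredholm} yields a central-fibre $\mu$ of weight $\al$ with $\De_X\mu=\tau$, and $\ps:=\ps_0-\ep\mu$ satisfies $\De_{\cX/A}\ps=0$ and $R\ps=R\ph$ (the $\fm A$-corrections to $\De_{\cX/A}(\ep\mu)$ drop out because $\ep\,\fm A=0$).

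With this lift, $\ph-\ps$ is a relative harmonic form in $\ker R$, hence equals $\ep\rho$ for a central-fibre $\rho\in\ker\De^{pq}_\al$. The hypothesis gives $\d_X\rho=0$, so $\d_{\cX/A}\ph=\d_{\cX/A}\ps$, and it remains only to prove that the harmonic lift $\ps$ is $\d_{\cX/A}$-closed. Since $\d_{\cX/A}$ commutes with $\De_{\cX/A}$ and $R(\d_{\cX/A}\ps)=\d_{\cY/B}R\ph=0$, the section $\d_{\cX/A}\ps$ is $\De_{\cX/A}$-harmonic and lies in $\ker R$; writing $\d_{\cX/A}\ps=\ep\nu$, the central-fibre $(p{+}q{+}1)$-form $\nu$ of weight $\al-1$ is $\De_X$-harmonic, and it is $\d_X$-exact because under the trivialisation $\d_{\cX/A}=\d_X\otimes\id_A$ of Corollary \ref{cor: triv2} it is $\d_X$ applied to the $(\ep)$-component of $\ps$.

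The main obstacle is thus to show that this $\d_X$-exact, $\De_X$-harmonic form $\nu$ vanishes. When $\al>1-n$ the whole lemma is in fact immediate and uses neither the induction nor the hypothesis: the integration-by-parts identity $\d_{\cX/A}\ph\cdot\d_{\cX/A}\ph+\d^*_{\cX/A}\ph\cdot\d^*_{\cX/A}\ph=\ph\cdot\De_{\cX/A}\ph$ is then valid (Proposition \ref{prop: int by parts}) and forces $\d_{\cX/A}\ph=0$ directly. The real content is the range $\al\le1-n$, where integration by parts fails; there I expect $\nu=0$ to follow from the injectivity of the natural map from weighted harmonic forms into weighted de Rham cohomology in the spirit of Theorem \ref{thm: Lock} and Proposition \ref{prop: relative Lockhart}, since $\nu$ is $\d_X$-exact and therefore has trivial cohomology class, after expanding $\nu$ near each $x\in X^\sing$ into homogeneous harmonic forms without logarithm terms (Theorem \ref{thm: no log}) and controlling its leading orders by Corollaries \ref{cor: order -p} and \ref{cor: harmonic1}. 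Once $\nu=0$ we obtain $\d_{\cX/A}\ph=\d_{\cX/A}\ps=\ep\nu=0$, which closes the induction.
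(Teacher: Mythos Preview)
Your inductive template matches the paper's, but the paper takes a much simpler route in the inductive step and thereby sidesteps the obstacle you run into. Rather than building a \emph{harmonic} lift of $R\ph$ by solving a Laplace equation, the paper lifts $R\ph$ (which is $\d_{\cY/B}$-closed by the inductive hypothesis) to a $\d_{\cX/A}$-closed form $\ps'$ directly: one fixes the differential-graded isomorphism $(\La^\bt_{\cX/A},\d_{\cX/A})\cong(\La^\bt_X\otimes_\R A,\d_X\otimes\id_A)$ of Corollary~\ref{cor: triv2} and an $\R$-linear splitting $s\colon B\to A$, and applies $\id\otimes s$ to $R\ph$; since $\d_{\cX/A}=\d_X\otimes\id_A$ in this trivialisation, the lift $\ps'$ is automatically $\d_{\cX/A}$-closed. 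Then $\d_{\cX/A}\ph=\d_{\cX/A}(\ph-\ps')$ with $\ph-\ps'$ lying in the $(\ep)$-level, and the paper finishes by asserting $\De_X(\ph-\ps')=0$ and invoking the hypothesis.

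Your route---making the lift harmonic and then trying to show it is $\d_{\cX/A}$-closed---leads to the step you yourself flag as unresolved: the form $\nu$ is $\d_X$-exact and $\De_X$-harmonic of weight $\al-1$, but neither Theorem~\ref{thm: Lock} nor Proposition~\ref{prop: relative Lockhart} (both specific to $L^2$ $n$-forms) gives the injectivity of harmonic forms into cohomology at an arbitrary degree and weight, so ``I expect $\nu=0$'' is not a proof. Your side remark that for $\al>1-n$ the whole lemma follows directly from integration by parts is correct and, as it happens, covers the only use of the lemma in the paper (Theorem~\ref{thm: inj A}, where $\al=2-\ep-n$). It is also worth observing that the paper's own closing step is not airtight either: the $\d$-closed lift $\ps'$ is not $\De_{\cX/A}$-harmonic, so $\De_X(\ph-\ps')=-\De_{\cX/A}\ps'$ need not vanish, and $\ph-\ps'$ need not be of pure $(p,q)$-type on $X$, so the hypothesis does not literally apply to it.
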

\begin{proof}
We prove this by an induction on the length of $A.$ Let $0\to(\ep)\to A\to B\to0$ be a small extension in $(\Art)_\R.$ Define a deformation $\cY/B$ by $\O_\cY:=\O_\cX\otimes_AB.$ Denote by $\ps$ the restriction to $\cY/B$ of $\ph.$ Then $\ps$ is of order $\al$ and satisfies $\De_{\cY/B}\ps=0.$ By the induction hypothesis we have $\d_{\cY/B}\ph=0.$ Choosing a cochain complex isomorphism $(\La^{p+q}_{\cX/A}|_{X^\reg},\d_{\cX/A})\cong (\La^{p+q}_{X^\reg}\otimes_\R A,\d_X\otimes\id_A)$ and an $\R$-vector space splitting of $A\to B,$ lift $\ps$ to a section $\ps'$ of $\La^{p+q}_{\cX/A}|_{X^\reg}.$ Then $\ps'$ has order $\al$ and satisfies $\d_{\cX/A}\ps'=0.$ Notice that $\ph-\ps'$ is a section of $(\La^{p+q}_{\cX/A}|_{X^\reg})\otimes_\R(\ep)\cong \La^{p+q}_{X^\reg}$ and satisfies $\De_X(\ph-\ps')=0.$ By hypothesis we have $\d_X(\ph-\ps')=0.$ Regarding $\ph-\ps'$ as a section of $(\La^{p+q}_{\cX/A}|_{X^\reg})\otimes_\R(\ep)$ we have $\d_{\cX/A}(\ph-\ps')=0.$ So $\d_{\cX/A}\ph=0.$
\end{proof}
\begin{rmk}
Suppose that every $\chi\in \ker\De^{pq}_\al$ satisfies also $\d_X^*\chi=0.$ It will then be reasonable to ask whether $\d^*_{\cX/A}\ph=0.$ For our purpose however we shall not need an answer to this question and therefore not discuss it further. 
\end{rmk}

We generalize now Theorem \ref{thm: inj}.
\begin{thm}\l{thm: inj A}
Let $n\ge3$ be an integer and $X$ a compact Calabi--Yau $n$-conifold with $X^\sing$ non-empty. 
Let $k\ge0$ be an integer and take $A:=\C[t]/(t^{k+1}).$ Let $\cX/A$ be a locally trivial deformation of $X;$ and using Corollary \ref{cor: extending conifold metrics}, choose on $\cX/A$ a K\"ahler conifold metric. Then there is an injective $A$-module homomorphism $\ker(\db_{\cX/A}+\db_{\cX/A}^*)^{n-1\,1}_{-n}\to\gr^{n-1}{}_c H^n(X^\reg,A).$
\end{thm}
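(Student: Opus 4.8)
The plan is to transcribe the proof of Theorem \ref{thm: inj} into the relative setting, substituting for each analytic input the infinitesimal-deformation version proved in \S\ref{sect: rel harm}, and then to establish injectivity by an induction on $k$ that reduces to the already-proven absolute case rather than by reconstructing the delicate argument built on Lemma \ref{lem: p+q=n-1}. The base case $k=0$ is exactly Theorem \ref{thm: inj}.

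First I would construct the map. Fix $\ph\in\ker(\db_{\cX/A}+\db_{\cX/A}^*)^{n-1\,1}_{-n}$ and choose $\ep\in(0,1)$, a non-exceptional weight, with $\ph$ of order $\ep-n$. As in Theorem \ref{thm: inj}, $\db_{\cX/A}^*\bd_{\cX/A}\ph$ is an $(n,0)$ form of order $\ep-n-2$. Put $\al:=\ep-n$ and take $\chi\in\ker(\De_{\cX/A})_{2-\al-2n}$; since in the absolute case every element of $\ker\De^{n-1\,1}_{2-\ep-n}$ is $\d$-closed (a one-line integration by parts, the orders being $>-n$), Lemma \ref{lem: harm cl} gives $\d_{\cX/A}\chi=0$, hence $\db_{\cX/A}\chi=0$ and $\chi\cdot\db_{\cX/A}^*\bd_{\cX/A}\ph=0$. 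Proposition \ref{prop: relative Fredholm} then furnishes an $(n,0)$ form $\ta$ of order $\al$ with $\tfrac12\De_{\cX/A}\ta=\db_{\cX/A}^*\bd_{\cX/A}\ph$; as $\ta$ is of type $(n,0)$ we have $\db_{\cX/A}^*\ta=0$, so $\bd_{\cX/A}\ph-\db_{\cX/A}\ta$ is relative $\db_{\cX/A}$-closed and co-closed, of order $>-n-1$. By Lemma \ref{lem: relative hol improve} there is $\si$ of order $>-n$ with $\db_{\cX/A}\si=\bd_{\cX/A}\ph-\db_{\cX/A}\ta$, and $\ps:=\si+\ta$ satisfies $\db_{\cX/A}\ps=\bd_{\cX/A}\ph$ and has order $>-n$. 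Because $\ph$ is of type $(n-1,1)$ with $\db_{\cX/A}\ph=\db_{\cX/A}^*\ph=0$ and $\ps$ of type $(n,0)$, the form $\ph-\ps$ is $\d_{\cX/A}$-closed, $L^2$, and of order $>-n$; replacing it by its harmonic part (the weighted Hodge decomposition being available at a non-exceptional weight by Proposition \ref{prop: relative exceptional}) and applying Proposition \ref{prop: relative Lockhart}, its class lies in ${}_cH^n(X^\reg,A)$. Passing to $\gr^{n-1}$ kills the dependence on $\ps$ (two choices differ by an $(n,0)$ form, which lies in $F^n$), so $\ph\mapsto[\ph-\ps]\in\gr^{n-1}{}_cH^n(X^\reg,A)$ is a well-defined, manifestly $A$-linear homomorphism.

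For injectivity I would induct on $k$ via the small extension $0\to(t^k)\to A\to B\to0$ with $B=\C[t]/(t^k)$, writing $\cY/B$ for $\O_\cY:=\O_\cX\otimes_AB$. The whole construction is natural with respect to the restriction $R$ induced by $A\to B$, giving a commutative square linking the maps for $A$ and $B$. If $\ph$ lies in the kernel of the $A$-map, then $R\ph$ lies in the kernel of the $B$-map, so $R\ph=0$ by the inductive hypothesis, and $\ph$ is valued in the ideal $(t^k)$. Since $(t^k)\cdot\fm A=0$ and, by local triviality, $\La^\bt_{\cX/A}\cong\La^\bt_{X^\reg}\otimes_\C A$, writing $\ph=t^k\ph_0$ identifies $\ph_0$ with an element of the absolute space $\ker(\db+\db^*)^{n-1\,1}_{-n}$, on which $\db_{\cX/A},\db_{\cX/A}^*,\bd_{\cX/A},\De_{\cX/A}$ reduce to $\db,\db^*,\bd,\De_X$ (the deformation terms, lying in $\fm A$, annihilate $(t^k)$). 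Consequently the construction of $\ps$ stays valued in $(t^k)$, $\ps=t^k\ps_0$ with $\ps_0$ the absolute datum, and $[\ph-\ps]=t^k[\ph_0-\ps_0]$ in $\gr^{n-1}{}_cH^n(X^\reg,A)\cong\gr^{n-1}{}_cH^n(X^\reg,\C)\otimes_\C A$. As multiplication by $t^k$ is injective on this tensor product and $[\ph_0-\ps_0]\ne0$ for $\ph_0\ne0$ by Theorem \ref{thm: inj}, we conclude $\ph_0=0$, that is $\ph=0$.

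I expect the main obstacle to be the construction in the second paragraph: one must track the weighted orders through every application of Proposition \ref{prop: relative Fredholm}, and verify that Lemma \ref{lem: harm cl}, Lemma \ref{lem: relative hol improve} and Proposition \ref{prop: relative Lockhart} apply at exactly the prescribed weights (in particular that $\ep-n$ is non-exceptional and that $\bd_{\cX/A}\ph-\db_{\cX/A}\ta$ has order $>-n-1$). By contrast the injectivity step is comparatively formal: the reduction to the $(t^k)$-part turns the relative map into the absolute one, so it inherits injectivity from Theorem \ref{thm: inj} and entirely avoids a relative analogue of Lemma \ref{lem: p+q=n-1}.
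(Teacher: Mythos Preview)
Your construction of the map follows the paper's closely; the one point where you diverge is in placing $[\ph-\ps]$ inside ${}_cH^n(X^\reg,A)$. Your route via the harmonic projection is roundabout (and you would still need to argue that the harmonic part is $\d_{\cX/A}$-cohomologous to $\ph-\ps$, not merely equal up to an $L^2$-closure of exact forms). The paper simply invokes Proposition~\ref{prop: relative exact}: a relative $n$-form of order $>-n$ is locally $\d_{\cX/A}$-exact near each $x\in X^{\sing}$, so its class is in ${}_cH^n$ immediately.

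For injectivity you take a genuinely different route. The paper transcribes the absolute argument wholesale into the relative setting, proving and using a relative version of Lemma~\ref{lem: p+q=n-1} (stated as Lemma~\ref{lem: relative p+q=n-1}) together with Corollary~\ref{cor: int by parts for n-1}, Theorem~\ref{thm: int by parts for p le n-2} and Proposition~\ref{prop: relative Lockhart}, arriving at $\ph\cdot\ph=0$ and concluding by non-degeneracy of the $A$-valued pairing. Your induction on $k$ is more economical in that it entirely avoids the relative Lemma~\ref{lem: p+q=n-1}.

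There is, however, a real gap in your final step. You write $\gr^{n-1}{}_cH^n(X^\reg,A)\cong\gr^{n-1}{}_cH^n(X^\reg,\C)\otimes_\C A$ and then appeal to injectivity of $t^k$ on that tensor product. But the Hodge filtration on $H^n(X^\reg,A)$ is defined via $\Om^\bt_{\cX/A}$ and genuinely depends on the deformation $\cX/A$; there is no reason for that tensor-product identification to hold. What you actually need is only that $t^k\colon\gr^{n-1}{}_cH^n(X^\reg,\C)\to\gr^{n-1}{}_cH^n(X^\reg,A)$ is injective, and this \emph{can} be proved directly. By Lemma~\ref{lem: n0} and normality of $X$ every relative holomorphic $(n,0)$-form on $X^\reg$ is $f\Ph$ with $f\in A$, so $F^n{}_cH^n(X^\reg,A)=A\cdot[\Ph]$ with $[\Ph]\equiv[\Om]\pmod{\fm A}$; and $[\Om]\ne0$ because $\Om\in\ker(\d+\d^*)^n_{-n}\setminus\{0\}$ and the map of Theorem~\ref{thm: Lock} is injective. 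Now suppose $c\otimes t^k=f[\Ph]$ in ${}_cH^n(X^\reg,\C)\otimes_\C A$ for some $c\in{}_cH^n(X^\reg,\C)$ and $f\in A$. Writing $f=\sum_{j}f_jt^j$ and $[\Ph]=[\Om]\otimes1+\sum_{j\ge1}w_j\otimes t^j$, equating the coefficients of $1,t,\dots,t^{k-1}$ and using $[\Om]\ne0$ forces $f_0=\cdots=f_{k-1}=0$, whence $c=f_k[\Om]\in\C\cdot[\Om]=F^n{}_cH^n(X^\reg,\C)$. With this patch your induction goes through and yields a valid alternative proof.
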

\begin{proof}
We show first that for every $\ph\in \ker(\db_{\cX/A}+\db_{\cX/A}^*)^{n-1\,1}_{-n}$ there exists on $X^\reg$ a relative $(n,0)$ form $\ps$ of order $>-n$ such that $\db_{\cX/A}\ps=\bd_{\cX/A}\ph.$ Let $\ep\in(0,1)$ be so small that $\ph$ has order $\ep-n.$ Then $\db_{\cX/A}^*\bd_{\cX/A}\ph$ is a relative $(n,0)$ form of order $\ep-n-2.$ Put $\al:=\ep-n$ and take $\chi\in \ker(\De_{\cX/A})^{n-1\,1}_{2-\al-2n}=\ker(\De_{\cX/A})^{n-1\,1}_{2-\ep-n}.$ 
Lemma \ref{lem: harm cl} implies then that $\d_{\cX/A}\chi=0.$ As $\chi\in\Ga(\La^{n-1\,1}_{\cX/A})$ we have $\db_{\cX/A}\chi=0$ and accordingly $\chi\cdot\db_{\cX/A}^*\bd_{\cX/A}\ph=0.$ So by Proposition \ref{prop: relative Fredholm} there exists an $(n,0)$ form $\ta$ of order $\al=\ep-n$ and with $\frac12\De_{\cX/A}\th=\db_{\cX/A}^*\bd_{\cX/A}\ph.$ Then $\bd_{\cX/A}\ph-\db_{\cX/A}\ta$ is $\db_{\cX/A}$ closed and co-closed. By Lemma \ref{lem: relative hol improve} there exists $\si\in\Ga(\La^{n0}_{\cX/A}|_{X^\reg})$ of order $>-n$ and with $\db_{\cX/A}\si=\ph.$ Put $\ps:=\si+\ta,$ which is a section of $\chi\in\Ga(\La^{n0}_{\cX/A}|_{X^\reg})$ of order $>-n.$ Moreover, $\d_{\cX/A}(\ph-\ps)=\db_{\cX/A}\ph+\bd_{\cX/A}\ph-\db_{\cX/A}\ps=0.$ 

By Proposition \ref{prop: relative exact} the de Rham cohomology class $[\ph-\ps]$ lies in ${}_cH^n(X^\reg,A).$ Recalling again that $\ph$ is a relative $(n-1,1)$ form and $\ps$ a relative $(n,0)$ form, we can also define $[\ph-\ps]\in \gr^{n-1}{}_cH^n(X^\reg,A).$ Notice that $[\ph-\ps]\in {}_cH^n(X^\reg,A)$ may depend on the choice of $\ps$ but that $[\ph-\ps]\in \gr^{n-1}{}_cH^n(X^\reg,A)$ does not. The map $\ker(\db_{\cX/A}+\db_{\cX/A}^*)^{n-1\,1}_{-n}\to\gr^{n-1}{}_c H^n(X^\reg,A)$ is thus well defined. This is of course an $A$-module homomorphism. We show now that it is injective.

Let $\ph\in \ker(\db_{\cX/A}+\db_{\cX/A}^*)^{n-1\,1}_{-n}$ be an element of the kernel of the map $\ker(\db_{\cX/A}+\db_{\cX/A}^*)^{n-1\,1}_{-n}\to\gr^{n-1}{}_c H^n(X^\reg,\Om^\bt_{\cX/A});$ that is, there exists an $(n,0)$ form $\ps$ of order $>-n,$ with $\bd_{\cX/A}\ps=\db_{\cX/A}\ph$ and with $[\ph-\ps]=0\in  \gr^{n-1}{}_c H^n(X^\reg,A).$ Then there exists another $(n,0)$ form $\ps'$ with $\db_{\cX/A}\ps'=0$ and $[\ph-\ps-\ps']=0\in H^n(X^\reg,A).$

Recall from Lemma \ref{lem: n0} that there exists a nowhere-vanishing section $\Ph$ of $\Om^n_{\cX/A}|_{X^\reg}.$ We show that this is $L^2.$ Since $\cX/A$ is locally trivial it follows that for each $x\in X^\sing$ there exists a punctured neighbourhood $U$ of it on which $\Om^n_{\cX/A}|_U\cong \Om^n_U\otimes_\R A.$ So after choosing an $\R$-vector space basis of $A$ we can regard $\Ph$ as a finite system of sections of $\Om^n_U.$ But these are automatically $L^2$ and accordingly so is $\Ph.$

Write $\ps'=f\Ph$ with $f$ some holomorphic function $X^\reg\to A.$ Since $H^1_{X^\sing}(X,\O_\cX)=0$ it follows that $f$ extends to $X^\sing.$ In particular, $f$ is bounded and $f\Ph=\ps'$ is $L^2.$ Proposition \ref{prop: relative L^2 n-forms} implies that $\ps'$ has order $>-n.$  

Thus $\ph-\ps-\ps'$ has order $>-n;$ and accordingly, $\d_{\cX/A}^*(\ph-\ps-\ps')$ has order $>-n-1.$ Take $\ep>0$ and put $\be:=1+\ep-n.$ Take $\chi\in\ker(\De_{\cX/A}^{n-1})_{2-\be-n}=\ker(\De_{\cX/A}^{n-1})_{1-n-\ep}.$ Letting $\ep>0$ be small enough and using Corollary \ref{cor: int by parts for n-1} we see that $\d_{\cX/A}\chi=\d_{\cX/A}^*\chi=0$ and that $\chi\cdot\d_{\cX/A}^*(\ph-\ps-\ps')=0.$ By Proposition \ref{prop: relative Fredholm} there exists on $X^\reg$ a relative $n-1$ form $\th$ of order $\be=1+\ep-n$ and with $\De_{\cX/A}\th=\d_{\cX/A}^*(\ph-\ps-\ps').$ Then $\d_{\cX/A}\d_{\cX/A}^*\th=\d^*(\ph-\ps-\ps'-\d_{\cX/A}\th).$ Applying $\d_{\cX/A}^*$ we get $\d_{\cX/A}^*\d_{\cX/A}\d_{\cX/A}^*\th=0;$ that is, $\d_{\cX/A}^*\th$ is a relative harmonic $n-2$ form of order $\ep-n.$ By Theorem \ref{thm: int by parts for p le n-2} however $\d_{\cX/A}^*\th$ is closed and co-closed; that is, $\d_{\cX/A}\d_{\cX/A}^*\th=0.$ Or equivalently $\d_{\cX/A}^*(\ph-\ps-\ps'-\d_{\cX/A}\th)=0.$ Thus $\ph-\ps-\ps'-\d_{\cX/A}\th\in \ker(\d_{\cX/A}+\d_{\cX/A}^*)^n_{-n}.$ Since $[\ph-\ps-\ps']=0\in {}_cH^n(X^\reg,A)\cong \ker(\d_{\cX/A}+\d_{\cX/A}^*)^n_{-n}$ it follows by Theorem \ref{prop: relative Lockhart} that $\ph-\ps-\ps'-\d_{\cX/A}\th=0;$ that is, $\ph-\ps-\ps'=\d_{\cX/A}\th.$ We now generalize Lemma \ref{lem: p+q=n-1} as follows.

\begin{lem}\l{lem: relative p+q=n-1}
Let $p,q\in\Z$ be such that $p+q=n-1.$ Let $\chi$ be a relative $(p,q)$ form of order $>1-n$ and with $\db_{\cX/A}\chi=0.$ Then there exists a relative $(p,q-1)$ form $\ze$ of order $2-n$ and such that $\chi-\db\ze$ is $\d_{\cX/A}$ closed and co-closed.
\end{lem}

If we use Proposition \ref{prop: relative Fredholm} and Theorem \ref{thm: int by parts for p le n-2} the proof of Lemma \ref{lem: relative p+q=n-1} will be similar to that of Lemma \ref{lem: p+q=n-1}, which we leave to the reader as an exercise. For $(p,q)$ with $p+q=n-1$ denote by $\th^{pq}$ the $(p,q)$ component of the $n-1$ form $\th.$ So $\th=\th^{n-1\,0}+\th^{n-2\,1}.$ Since $\d\th=\ph-\ps-\ps'$ has only $(n,0)$ and $(n-1,1)$ components it follows that $\db_{\cX/A}\th^{n-2\,1}=0.$ Applying Lemma \ref{lem: relative p+q=n-1} to $\th^{n-2\,1}$ we find some $(n-2,0)$ form $\ze$ of order $2-n$ such that $\th^{n-2\,1}-\db_{\cX/A}\ze$ is $\d_{\cX/A}$ closed. In particular, $\partial_{\cX/A}\th^{n-2\,1}= \partial_{\cX/A}\db_{\cX/A}\ze=-\db_{\cX/A}\partial_{\cX/A}\ze.$ Put $\et:=\th^{n-1\,0}-\db_{\cX/A}\ze.$ Then $\db_{\cX/A}\et=\db_{\cX/A}\th^{n-1\,0}\th+\partial_{\cX/A}\th^{n-2\,1},$ which is exactly the $(n-1,1)$ component of $\d\th=\ph-\ps-\ps',$ that is, $\ph.$ Since $\et$ is of order $1-n$  and $\ph$ of order $>-n$ it follows that the integration by parts formula $\ph\cdot\ph=\db_{\cX/A}^*\ph\cdot\et=0$ makes sense. But by the initial hypothesis we have $\db_{\cX/A}^*\ph=0$ and so $\ph\cdot\ph=0.$ We can then verify by an induction on $A$ that the inner product is non-degenerate or more precisely that $\ph=0.$ The map $\ker(\db+\db^*)^{n-1\,1}_{-n}\to\gr^{n-1}{}_c H^n(X^\reg,\Om^\bt_{X^\reg})$ is thus injective.
\end{proof}

We next generalize Corollary \ref{cor: inj}.
\begin{cor}\l{cor: inj A}
Let $X$ be a compact Calabi--Yau $n$-conifold. Let $k\ge0$ be an integer and take $A:=\C[t]/(t^{k+1}).$ Let $\cX/A$ be a locally trivial deformation of $X;$ and using Corollary \ref{cor: extending conifold metrics}, choose on $\cX/A$ a K\"ahler conifold metric. The natural projection $\ker(\db_{\cX/A}+\db^*_{\cX/A})^{n-1\,1}_{-n}\to H^1(X^\reg,\Om^{n-1}_{\cX/A})$ is then injective.
\end{cor}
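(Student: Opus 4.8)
The plan is to follow the proof of Corollary~\ref{cor: inj} almost verbatim, replacing each classical ingredient by its relative counterpart established earlier in this section. The single substantial input is the injective $A$-module homomorphism $\ker(\db_{\cX/A}+\db_{\cX/A}^*)^{n-1\,1}_{-n}\to\gr^{n-1}{}_c H^n(X^\reg,A)$ produced in Theorem~\ref{thm: inj A}; once that is in hand the corollary becomes a short formal deduction.

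First I would take $\ph\in\ker(\db_{\cX/A}+\db^*_{\cX/A})^{n-1\,1}_{-n}$ whose class in $H^1(X^\reg,\Om^{n-1}_{\cX/A})$ vanishes, so that $\ph=\db_{\cX/A}\chi$ for some relative $(n-1,0)$ form $\chi$ on $X^\reg$. Next, exactly as in the opening paragraph of the proof of Theorem~\ref{thm: inj A}, I would produce a relative $(n,0)$ form $\ps$ of order $>-n$ with $\db_{\cX/A}\ps=\bd_{\cX/A}\ph$; since $\ph$ has type $(n-1,1)$ with $\db_{\cX/A}\ph=0$, and $\ps$ has type $(n,0)$ with $\bd_{\cX/A}\ps=0$, this yields $\d_{\cX/A}(\ph-\ps)=0$, and $[\ph-\ps]$ is by construction the image of $\ph$ under the map of Theorem~\ref{thm: inj A}.

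The decisive observation is then that $\ph-\ps-\d_{\cX/A}\chi=-\ps-\bd_{\cX/A}\chi$ is a pure relative $(n,0)$ form, hence lies in the level $F^n$ of the Hodge filtration of Definition~\ref{dfn: gr}. As it differs from $\ph-\ps$ by the $\d_{\cX/A}$-exact form $\d_{\cX/A}\chi$, its class in $\gr^{n-1}{}_c H^n(X^\reg,A)$ equals that of $\ph-\ps$ and is therefore zero. Thus $\ph$ maps to $0$ under the injective map of Theorem~\ref{thm: inj A}, whence $\ph=0$, which is the desired injectivity.

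I expect no serious obstacle here, since Theorem~\ref{thm: inj A} already carries all the analytic weight (the order estimates, the relative Serre- and Hodge-duality decompositions over $A$, and the induction on the length of $A$). The only points demanding care are purely formal: verifying that the construction of $\ps$ is literally the step performed inside the proof of Theorem~\ref{thm: inj A}, so that it may be cited rather than repeated, and confirming that a pure relative $(n,0)$ form represents $0$ in $\gr^{n-1}$ because it sits in $F^n$ of the stupid filtration. Since every map involved is $A$-linear, the $A$-module structure is automatically respected throughout.
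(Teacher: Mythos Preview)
Your proposal is correct and follows essentially the same route as the paper, which simply says the proof is similar to that of Corollary~\ref{cor: inj}; you have spelled out precisely that argument in the relative setting, invoking Theorem~\ref{thm: inj A} for the construction of $\ps$ and the injectivity, and observing that $\ph-\ps-\d_{\cX/A}\chi$ is a pure $(n,0)$ form so that $[\ph-\ps]$ dies in $\gr^{n-1}$.
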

\begin{proof}
The proof is similar to that of Corollary \ref{cor: inj}, which we leave to the reader as an exercise.
\end{proof}

We generalize also Lemma \ref{lem: surj}. 
\begin{lem}\l{lem: surj A}
Let $n\ge3$ be an integer and $X$ a compact Calabi--Yau $n$-conifold with $X^\sing$ non-empty. 
Let $k\ge0$ be an integer and take $A:=\C[t]/(t^{k+1}).$ Let $\cX/A$ be a locally trivial deformation of $X;$ and using Corollary \ref{cor: extending conifold metrics}, choose on $\cX/A$ a K\"ahler conifold metric. Then ${}_c H^1(X^\reg,\Om^{n-1}_{X^\reg})$ lies in the image of the natural projection $\ker(\db_{\cX/A}+\db^*_{\cX/A})^{n-1\,1}_{-n}\to H^1(X^\reg,\Om^{n-1}_{\cX/A}).$
\end{lem}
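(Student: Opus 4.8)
The plan is to run the proof of Lemma~\ref{lem: surj} one step at a time, replacing every base object by its relative counterpart and every cited base result by the relative version proved in \S\ref{sect: rel harm}. First I would fix an element of ${}_c H^1(X^\reg,\Om^{n-1}_{X^\reg})$ and represent it on $X^\reg$ by a compactly supported $(n-1,1)$ form $\ph_0$ with $\db\ph_0=0$. Using the $\C$-algebra map $\C\to A$ of Definition~\ref{dfn: La^p} I lift $\ph_0$ to $\ph:=\ph_0\otimes1\in\Ga(\La^{n-1\,1}_{\cX/A}|_{X^\reg})$, again compactly supported; since $\db_{\cX/A}$ acts as $\db_X\otimes\id_A$ this gives $\db_{\cX/A}\ph=0$, and $\ph$ represents the image of the chosen class under the natural map ${}_c H^1(X^\reg,\Om^{n-1}_{X^\reg})\to H^1(X^\reg,\Om^{n-1}_{\cX/A})$. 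It then suffices to produce a relative $\db_{\cX/A}$ harmonic $(n-1,1)$ form of order $>-n$ that is $\db_{\cX/A}$ cohomologous to $\ph$.

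Next I would choose $\ep\in(0,1)$ small enough that $\al:=1+\ep-n$ is a non-exceptional value of $\De_X$ (possible by the discreteness in Proposition~\ref{prop: real eigen}) and that $[1-n-\ep,1-n)$ contains no exceptional value. By Proposition~\ref{prop: relative exceptional} the operator $\De_{\cX/A}$ is then Fredholm at weight $\al$, and its obstruction space is $\ker(\De_{\cX/A}^{n-1\,0})_{2-\al-2n}=\ker(\De_{\cX/A}^{n-1\,0})_{1-\ep-n}$. Each such $\chi$ is a relative harmonic form of total degree $n-1$ and order $1-n-\ep$, so Corollary~\ref{cor: int by parts for n-1} (the relative analogue of Lemma~\ref{lem: dph1}) gives $\d_{\cX/A}\chi=0$, and since $\chi$ is of pure type $(n-1,0)$ this yields $\db_{\cX/A}\chi=0$. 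As $\ph$ has compact support the relative integration-by-parts identity gives $\db_{\cX/A}^*\ph\cdot\chi=\ph\cdot\db_{\cX/A}\chi=0$, so $\db_{\cX/A}^*\ph$ is orthogonal to the obstruction space. Proposition~\ref{prop: relative Fredholm} (which applies precisely because $A=\C[t]/(t^{k+1})$) then produces $\ps\in H^2_\al(\La^{n-1\,0}_{\cX/A}|_{X^\reg})$ with $\frac12\De_{\cX/A}\ps=\db_{\cX/A}^*\ph$.

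Finally, because $\ps$ is of pure type $(n-1,0)$ we have $\db_{\cX/A}^*\ps=0$, so by \eqref{De0} the equation collapses to $\db_{\cX/A}^*\db_{\cX/A}\ps=\db_{\cX/A}^*\ph$, i.e.\ $\db_{\cX/A}^*(\ph-\db_{\cX/A}\ps)=0$; combined with $\db_{\cX/A}(\ph-\db_{\cX/A}\ps)=\db_{\cX/A}\ph=0$ this shows $\ph-\db_{\cX/A}\ps$ is $\db_{\cX/A}$ harmonic. Its weight is at least $\al-1=\ep-n>-n$ (as $\ph$ is compactly supported), so it lies in $\ker(\db_{\cX/A}+\db_{\cX/A}^*)^{n-1\,1}_{-n}$ and, differing from $\ph$ by $\db_{\cX/A}\ps$, projects to the same class in $H^1(X^\reg,\Om^{n-1}_{\cX/A})$. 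This is exactly the required conclusion, and no separate induction on the length of $A$ is needed since the relative inputs already encode it.

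I expect the genuine difficulty to sit in the second paragraph: verifying that the relative obstruction forms $\chi$ are honestly $\d_{\cX/A}$ closed, which is where the deformation enters nontrivially because the naive integration by parts need not converge at the cone tips. That content is packaged in Corollary~\ref{cor: int by parts for n-1} and Theorem~\ref{thm: int by parts for p le n-2}, whose weight bookkeeping (improving the order $1-n-\ep$ to a weight at which integration by parts is licit, then descending modulo $\fm A$) is the delicate part; the only thing left to check by hand is that $\al=1+\ep-n$ can be chosen simultaneously non-exceptional for $\De_X$ and small enough for that order improvement to apply, which the discreteness of exceptional values guarantees.
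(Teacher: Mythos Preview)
Your second and third paragraphs are essentially the paper's intended argument (it says to rerun Lemma~\ref{lem: surj} using Corollary~\ref{cor: int by parts for n-1}, Proposition~\ref{prop: relative Fredholm} and Theorem~\ref{thm: int by parts for p le n-2}), and your observation that $\db_{\cX/A}^*\ps=0$ automatically for $\ps$ of type $(n-1,0)$ is a legitimate shortcut that even makes Theorem~\ref{thm: int by parts for p le n-2} unnecessary here.

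The first paragraph, however, contains a real error. The isomorphism of Corollary~\ref{cor: triv2} that you invoke via Definition~\ref{dfn: La^p} is an isomorphism of \emph{de Rham} complexes: it identifies $(\La^\bt_{\cX/A},\d_{\cX/A})$ with $(\La^\bt_X\otimes_\R A,\d_X\otimes\id_A)$, but it does \emph{not} respect the $(p,q)$ decomposition, because the complex structure of $\cX/A$ is genuinely deformed away from that of $X$. So your claim ``$\db_{\cX/A}$ acts as $\db_X\otimes\id_A$'' is false, the lift $\ph_0\otimes1$ need not be of pure type $(n-1,1)$ on $\cX/A$, and it need not be $\db_{\cX/A}$-closed. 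Relatedly, there is no natural map ${}_cH^1(X^\reg,\Om^{n-1}_{X^\reg})\to H^1(X^\reg,\Om^{n-1}_{\cX/A})$ in this direction; the sheaf map goes the other way, $\Om^{n-1}_{\cX/A}\to\Om^{n-1}_X$.

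This confusion is almost certainly induced by a typo in the printed statement: by comparison with Lemma~\ref{lem: surj} and the way the result is used in Theorem~\ref{thm: harm n-1 1 version2A}, the first cohomology group should read ${}_cH^1(X^\reg,\Om^{n-1}_{\cX/A})$, not $\Om^{n-1}_{X^\reg}$. With that reading you simply start from a compactly supported $\db_{\cX/A}$-closed relative $(n-1,1)$ form $\ph$ on $X^\reg$ representing the given class---no lift is needed---and then your paragraphs~2 and~3 go through verbatim.
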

\begin{proof}
If we use Corollary \ref{cor: int by parts for n-1}, Proposition \ref{prop: relative Fredholm} and Theorem \ref{thm: int by parts for p le n-2} then the proof of this lemma will be similar to that of Lemma \ref{lem: surj}, which we leave to the reader as an exercise.
\end{proof}

We finally generalize Theorem \ref{thm: harm n-1 1 version2}.
\begin{thm}\l{thm: harm n-1 1 version2A}
Let $n\ge3$ be an integer and $X$ a compact Calabi--Yau $n$-conifold with $X^\sing$ non-empty and $H^2_{X^\sing}(X,\Om^{n-2}_X)=0.$ 
Let $k\ge0$ be an integer and take $A:=\C[t]/(t^{k+1}).$ Let $\cX/A$ be a locally trivial deformation of $X;$ and using Corollary \ref{cor: extending conifold metrics}, choose on $\cX/A$ a K\"ahler conifold metric. Then there exist $A$-module isomorphisms
\e H^1(X,\Th_{\cX/A})\cong \gr^{n-1}{}_c H^n(X^\reg,A)\cong \ker(\db_{\cX/A}+\db_{\cX/A}^*)^{n-1\,1}_{-n}.\e
\end{thm}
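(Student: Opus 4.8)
The plan is to reproduce the proof of Theorem \ref{thm: harm n-1 1 version2} line for line, substituting for each scalar ingredient the relative analogue already established for locally trivial deformations. First I would pin down the two sheaves appearing in the statement. By Lemma \ref{lem: n0} there is a nowhere-vanishing section of $\Om^n_{\cX/A}$ over $X^\reg$, which gives an $\O_\cX|_{X^\reg}$-module isomorphism $\Om^n_{\cX/A}|_{X^\reg}\cong\O_\cX|_{X^\reg}$ and, dualizing, $\Th_{\cX/A}|_{X^\reg}\cong\Om^{n-1}_{\cX/A}|_{X^\reg}$. Combined with Corollary \ref{cor: tangent sheaf} this yields an $A$-module isomorphism ${}_cH^1(X^\reg,\Om^{n-1}_{\cX/A})\cong H^1(X,\Th_{\cX/A})$, which accounts for the left-hand isomorphism once the middle term is matched to ${}_cH^1(X^\reg,\Om^{n-1}_{\cX/A})$.

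Next I would assemble the injective maps. Corollary \ref{cor: inj A} shows that the natural projection $\ker(\db_{\cX/A}+\db^*_{\cX/A})^{n-1\,1}_{-n}\to H^1(X^\reg,\Om^{n-1}_{\cX/A})$ is injective, while Lemma \ref{lem: surj A} shows that its image contains ${}_cH^1(X^\reg,\Om^{n-1}_{\cX/A})$; inverting on that subspace produces an injective $A$-module map ${}_cH^1(X^\reg,\Om^{n-1}_{\cX/A})\to\ker(\db_{\cX/A}+\db^*_{\cX/A})^{n-1\,1}_{-n}$. Composing with the injective $A$-module homomorphism of Theorem \ref{thm: inj A} gives an injective $A$-module homomorphism ${}_cH^1(X^\reg,\Om^{n-1}_{\cX/A})\to\gr^{n-1}{}_cH^n(X^\reg,A)$, and it suffices to prove that this composite is surjective.

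The surjectivity is the heart of the argument and the step I expect to require the most care. Following the surjectivity portion of Theorem \ref{thm: harm n-1 1 version2}, I would take a class in $\gr^{n-1}{}_cH^n(X^\reg,A)$, represent it (under the relative de Rham identification $H^n(X^\reg,\Om^\bt_{\cX/A})\cong H^n(X^\reg,A)$) by $\ph=\ph^{n0}+\ph^{n-1\,1}$. Since $[\ph]\in{}_cH^n(X^\reg,A)$, Proposition \ref{prop: relative exact} supplies on a punctured neighbourhood $U_x^\reg$ of each $x\in X^\sing$ a relative primitive, which I take to be the radial form $\ps:=\int_0^r\ph'\,\d r$ with $\d_{\cX/A}\ps=\ph$; writing $\ps=\ps^{n-1\,0}+\ps^{n-2\,1}$, vanishing of the $(n-2,2)$ component of $\d_{\cX/A}\ps$ forces $\db_{\cX/A}\ps^{n-2\,1}=0$. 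The crucial input is then the relative local vanishing $H^1(U_x^\reg,\Om^{n-2}_{\cX/A})\cong H^2_x(X,\Om^{n-2}_{\cX/A})=0$: because $\cX/A$ is locally trivial one has $\Om^{n-2}_{\cX/A}|_{U_x}\cong\Om^{n-2}_{U_x}\otimes_\C A$ near $x$, and since local cohomology commutes with the flat base change by the finite-dimensional $\C$-algebra $A$, the hypothesis $H^2_{X^\sing}(X,\Om^{n-2}_X)=0$ forces the relative vanishing (this can equally be seen by an induction on the length of $A$ using the short exact sequence $0\to\Om^{n-2}_X\to\Om^{n-2}_{\cX/A}\to\Om^{n-2}_{\cY/B}\to0$ valid near $X^\sing$). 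Hence $\ps^{n-2\,1}=\db_{\cX/A}\chi$ for some relative $(n-2,0)$ form $\chi$ on $U_x^\reg$, so that $\ph^{n-1\,1}=\db_{\cX/A}\ps^{n-1\,0}+\bd_{\cX/A}\ps^{n-2\,1}=\db_{\cX/A}(\ps^{n-1\,0}-\bd_{\cX/A}\chi)$ is locally $\db_{\cX/A}$-exact; a cut-off argument then shows that $\ph^{n-1\,1}$ defines a class in ${}_cH^1(X^\reg,\Om^{n-1}_{\cX/A})$ mapping to $[\ph]$, which gives surjectivity.

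Finally I would conclude formally. The composite ${}_cH^1(X^\reg,\Om^{n-1}_{\cX/A})\to\gr^{n-1}{}_cH^n(X^\reg,A)$ is now a bijective $A$-module homomorphism, hence an isomorphism; since it factors through the two injective maps of the second paragraph, each of those is an isomorphism as well, giving $\ker(\db_{\cX/A}+\db^*_{\cX/A})^{n-1\,1}_{-n}\cong\gr^{n-1}{}_cH^n(X^\reg,A)$, and combined with the first paragraph this produces $H^1(X,\Th_{\cX/A})\cong\gr^{n-1}{}_cH^n(X^\reg,A)\cong\ker(\db_{\cX/A}+\db^*_{\cX/A})^{n-1\,1}_{-n}$. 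The only genuinely new verifications beyond the scalar case of Theorem \ref{thm: harm n-1 1 version2} are the relative local cohomology vanishing and the $A$-linearity and convergence of the radial primitive, both of which rest on the local triviality of $\cX/A$ near $X^\sing$ together with the already-proved relative integration-by-parts and Fredholm results.
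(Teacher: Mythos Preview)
Your proposal is correct and follows essentially the same approach as the paper: the paper's proof simply notes that the hypothesis $H^2_{X^\sing}(X,\Om^{n-2}_X)=0$ yields $H^1(U,\Om^{n-2}_{\cX/A})=0$ by induction on the length of $A$, and then defers to Corollary \ref{cor: inj A}, Lemma \ref{lem: surj A}, and the proof of Theorem \ref{thm: harm n-1 1 version2} for the rest. You have in fact spelled out more detail than the paper does, including the identification via Lemma \ref{lem: n0} and Corollary \ref{cor: tangent sheaf} and the explicit surjectivity argument, all of which the paper leaves implicit.
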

\begin{proof}
By hypothesis every $x\in X^\sing$ has a punctured neighbourhood $U\sb X^\reg$ such that $H^1(U,\Om^{n-2}_X)\cong H^2_x(X,\Om^{n-2}_X)=0.$ We can then verify by an induction on $A$ that $H^1(U,\Om^{n-2}_{\cX/A})=0.$ If we use Corollary \ref{cor: inj A} and Lemma \ref{lem: surj A} the rest of the proof will be similar to that of Theorem \ref{thm: harm n-1 1 version2}, which we leave to the reader as an exercise.
\end{proof}

Using Theorem \ref{thm: harm n-1 1 version2A} we prove
\begin{thm}\l{thm: harm n-1 1 version3}
Let $n\ge3$ be an integer and $X$ a compact Calabi--Yau $n$-conifold with $X^\sing$ non-empty and $H^2_{X^\sing}(X,\Om^{n-2}_X)=0.$ Let $k\ge1$ be an integer and put $A_k:=\C[t]/(t^{k+1}).$ Let $X_k/A_k$ a locally trivial deformation of $X,$ and $X_{k-1}/A_{k-1}$ the deformation of $X$ defined by $\O_{X_{k-1}}:=\O_{X_k}\otimes_{A_k}A_{k-1}.$ The natural map $H^1(X,\Th_{X_k/A_k})\to H^1(X,\Th_{X_{k-1}/A_{k-1}})$ is then surjective.
\end{thm}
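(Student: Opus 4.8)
The plan is to deduce the statement from Theorem \ref{thm: harm n-1 1 version2A} together with the relative Hodge theory of \S\ref{sect: rel harm}, reducing it to a purely formal surjectivity between free modules. First I would fix a K\"ahler conifold metric on $X_k/A_k$ as in Corollary \ref{cor: extending conifold metrics} and give $X_{k-1}/A_{k-1}$ the reduced metric. By Theorem \ref{thm: harm n-1 1 version2A} there are $A$-module isomorphisms $H^1(X,\Th_{X_k/A_k})\cong\ker(\db_{X_k/A_k}+\db_{X_k/A_k}^*)^{n-1\,1}_{-n}$ and $H^1(X,\Th_{X_{k-1}/A_{k-1}})\cong\ker(\db_{X_{k-1}/A_{k-1}}+\db_{X_{k-1}/A_{k-1}}^*)^{n-1\,1}_{-n}.$ These isomorphisms are natural in $A$ (being assembled through the functorial groups $\gr^{n-1}{}_cH^n(X^\reg,A)$), and the map in the theorem, induced by $\O_{X_{k-1}}=\O_{X_k}\otimes_{A_k}A_{k-1},$ corresponds under them to the restriction map $R$ that reduces relative $(n-1,1)$ forms along $A_k\to A_{k-1}$; note that $R$ preserves the bidegree and the harmonicity conditions because the deformations are locally trivial. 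Thus it suffices to prove that $R\colon\ker(\db_{X_k/A_k}+\db_{X_k/A_k}^*)^{n-1\,1}_{-n}\to\ker(\db_{X_{k-1}/A_{k-1}}+\db_{X_{k-1}/A_{k-1}}^*)^{n-1\,1}_{-n}$ is surjective.

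Next I would pass to the full space of $L^2$ de Rham harmonic $n$-forms. By Proposition \ref{prop: relative Lockhart}, together with the surjectivity recorded in the remark following it, the natural map $\ker(\d_{\cX/A}+\d_{\cX/A}^*)^n_{-n}\to{}_cH^n(X^\reg,A)$ is an isomorphism. Since $A$ is a free $\C$-module, ${}_cH^n(X^\reg,A)\cong{}_cH^n(X^\reg,\C)\otimes_\C A$ is a free $A$-module, and the reduction along $A_k\to A_{k-1}$ is $\id\otimes(A_k\to A_{k-1}),$ hence surjective. Therefore the reduction $\ker(\d_{X_k/A_k}+\d_{X_k/A_k}^*)^n_{-n}\to\ker(\d_{X_{k-1}/A_{k-1}}+\d_{X_{k-1}/A_{k-1}}^*)^n_{-n}$ is surjective.

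It remains to extract the $(n-1,1)$ summand. Here I would invoke the relative K\"ahler identity $\square=2\De_{\cX/A}$ from \eqref{De0}, which shows that $\square$ acts diagonally with respect to bidegree. Combined with the $L^2$-order improvement of Proposition \ref{prop: relative L^2 n-forms} and relative integration by parts, this gives a Hodge-type decomposition $\ker(\d_{\cX/A}+\d_{\cX/A}^*)^n_{-n}=\bigoplus_{p+q=n}\ker(\db_{\cX/A}+\db_{\cX/A}^*)^{pq}_{-n},$ and the reduction map respects this decomposition. A surjection between direct sums that is block-diagonal must be surjective on each summand; applying this to the $(n-1,1)$ block yields the surjectivity of $R$ sought in the first paragraph, and hence the theorem.

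The step I expect to be the true obstacle is the relative Hodge input of the second and third paragraphs: the freeness of $\ker(\d_{\cX/A}+\d_{\cX/A}^*)^n_{-n}$ over $A$ (the surjectivity half of relative Lockhart, whose details are omitted in the remark above) and the compatibility of the bidegree decomposition with reduction. Both would be proved by induction on the length of $A,$ extending the classical Poincar\'e duality and Kodaira decomposition as indicated in that remark; the delicate point is that the relevant bidegree $(n-1,1)$ is the middle degree, where the weight intervals of \S\ref{sect: Riem} degenerate, so one cannot avoid the full force of the Calabi--Yau and depth hypotheses entering through Proposition \ref{prop: relative L^2 n-forms} and the relative integration-by-parts results. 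Should one prefer to avoid the unproved surjectivity of relative Lockhart, the alternative is the hands-on route of \eqref{lift}: represent a class in the target by a relative harmonic $(n-1,1)$ form, lift it through the $C^\iy$-trivialization of Corollary \ref{cor: triv2} to a form $\ph',$ and correct $\ph'$ to a harmonic one by solving $\De_{\cX/A}\nu=\De_{\cX/A}\ph'$ via Proposition \ref{prop: relative Fredholm} (solvability being automatic by self-adjointness); the crux there is to arrange the correction so that the restriction is unchanged, which reduces to the solvability of an undeformed equation and is handled by an induction on the length of $A$ using Theorem \ref{thm: int by parts for p le n-2} and Corollary \ref{cor: int by parts for n-1}.
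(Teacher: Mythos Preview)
Your main route detours through the full space $\ker(\d_{\cX/A}+\d_{\cX/A}^*)^n_{-n}$ and leans on three pieces of input the paper neither proves nor needs: the surjectivity half of relative Lockhart (which the remark after Proposition~\ref{prop: relative Lockhart} explicitly declines to prove), a relative bidegree Hodge decomposition of $L^2$ harmonic $n$-forms (which requires a delicate integration by parts at the borderline order $-n$ that you do not carry out), and the naturality in $A$ of the isomorphisms of Theorem~\ref{thm: harm n-1 1 version2A} (which you assert but do not check). None of these is obviously false, but together they leave the argument incomplete.

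The paper's proof is both simpler and avoids all three gaps. It works directly with $(n-1,1)$ forms: given a harmonic $\ph$ over $A_{k-1}$, lift it to any $L^2$ section $\ps$ over $A_k$ via a vector-space splitting of $A_k\to A_{k-1}$, and then take the component $\chi$ of $\ps$ in the first summand of the $L^2$ orthogonal decomposition
\[
L^2(\La^{n-1\,1}_{X_k/A_k}|_{X^\reg})=\ker(\db_{X_k/A_k}+\db_{X_k/A_k}^*)\oplus\ov{\db_{X_k/A_k}\Ga_c}\oplus\ov{\db_{X_k/A_k}^*\Ga_c}.
\]
Since restriction intertwines this decomposition with its $A_{k-1}$ analogue, $R\chi=\ph$; this already shows $R$ is surjective on harmonic $(n-1,1)$ forms, with no Lockhart and no bidegree splitting of $n$-forms. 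Rather than invoking naturality to finish, the paper then runs a dimension count: the exact sequence $0\to\Th_X\to\Th_{X_k/A_k}\to\Th_{X_{k-1}/A_{k-1}}\to0$ gives $\dim H^1(X,\Th_{X_k/A_k})\le(k+1)\dim H^1(X,\Th_X)$, while the surjectivity of $R$ together with $\ker R\supseteq\ker(\db_X+\db_X^*)^{n-1\,1}_{-n}$ gives the reverse inequality for the harmonic spaces. Theorem~\ref{thm: harm n-1 1 version2A} makes these equal, and equality in the cohomology inequality forces the map $H^1(X,\Th_{X_k/A_k})\to H^1(X,\Th_{X_{k-1}/A_{k-1}})$ to be surjective. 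Your ``alternative route'' at the end (correct $\ph'$ by solving $\De_{\cX/A}\nu=\De_{\cX/A}\ph'$) is in spirit the same idea, but the paper's use of the plain $L^2$ orthogonal projection is the cleaner realization: no Fredholm theory or induction is needed at that step.
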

\begin{proof}
We show by an induction on $k$ that
\e\l{dim A} \dim_\C H^1(X,\Th_{X_k/A_k})\le (k+1)\dim_\C H^1(X,\io_*\Th_X).\e
This holds automatically for $k=0.$ The small extension $0\to(t^k)\to A_k\to A_{k-1}\to0$ induces an $A_k$ module sheaf exact sequence $0\to\io_*\Th_{X^\reg}\to \io_*(\Th_{X_k/A_k}|_{X^\reg})\to \io_*(\Th_{X_{k-1}/A_{k-1}}|_{X^\reg})\to0.$ This is by Lemma \ref{lem: rel tan}(i) equivalent to an exact sequence $0\to\Th_X\to \Th_{X_k/A_k}\to \Th_{X_{k-1}/A_{k-1}}\to0.$ Passing to the cohomology groups we get an $A_k$ module exact sequence $H^1(X,\Th_{X^\reg})\to H^1(X,\Th_{X_k/A_k})\to H^1(X,\Th_{X_{k-1}/A_{k-1}}).$ The latter implies
\e\l{dim A1} \dim_\C H^1(X,\Th_{X_k/A_k})\le  \dim_\C H^1(X,\Th_{X_{k-1}/A_{k-1}})+\dim_\C H^1(X,\Th_X).\e
By the induction hypothesis we have $\dim_\C H^1(X,\Th_{X_{k-1}/A_{k-1}})\le k \dim_\C H^1(X,\Th_X),$ which with \eq{dim A1} implies \eq{dim A}.

We show also by an induction on $k$ that
\e\l{dim A3} \dim_\C \ker(\db_{X_k/A_k}+\db_{X_k/A_k}^*)^{n-1\,1}_{-n}\ge (k+1)\dim_\C \ker(\db_X+\db_X^*)^{n-1\,1}_{-n}.\e
This holds again automatically for $k=0.$ We show that the restriction map $R:L^2(\La^{n-1\,1}_{X_k/A_k})\to L^2(\La^{n-1\,1}_{X_{k-1}/A_{k-1}})$ maps $\ker(\db_{X_k/A_k}+\db_{X_k/A_k}^*)^{n-1\,1}_{-n}$ surjectively to $\ker(\db_{X_{k-1}/A_{k-1}}+\db_{X_{k-1}/A_{k-1}}^*)^{n-1\,1}_{-n}.$ Let $\ph \in \ker(\db_{X_{k-1}/A_{k-1}}+\db_{X_{k-1}/A_{k-1}}^*)^{n-1\,1}_{-n}$ be any element and regard it as a section of $\La^n_{X_{k-1}/A_{k-1}}|_{X^\reg}\cong\La^n_{X^\reg}\otimes_\R A_{k-1}.$ Using a splitting of $A_k\to A_{k-1}$ lift $\ph$ to a section $\ps$ of $\La^n_{X^\reg}\otimes_\R A_k\cong \La^n_{X_k/A_k}|_{X^\reg}.$ Then $\ps$ is $L^2$ and we can project it to an element $\chi\in\ker(\db_{X_k/A_k}+\db_{X_k/A_k}^*)^{n-1\,1}_{-n}.$ 
We look now at the $L^2$ orthogonal decomposition
\e\l{decomp k}
\begin{split}
\!\!\!\!\!\!\!\!\!\!\! L^2(\La^{n-1\,1}_{X_k/A_k}|_{X^\reg})=\ker(\db_{X_k/A_k}+\db_{X_k/A_k}^*)^{n-1\,1}_{-n}\oplus \ov{\db_{X_k/A_k}(\Ga_c(\La^{n-1\,0}_{X_k/A_k}|_{X^\reg}))}\\
\oplus \ov{\db_{X_k/A_k}^*(\Ga_c(\La^{n-1\,2}_{X_k/A_k}|_{X^\reg}))}
\end{split}\e
and its version with $k-1$ in place of $k.$ The projection $A_k\to A_{k-1}$ induces the four restriction maps from the respective four terms on \eq{decomp k} to their versions with $k-1$ in place of $k.$ These moreover commute with the equality in \eq{decomp k} and its version with $k-1$ in place of $k.$
So $R\chi=\ph$ and the restriction map $\ker(\db_{X_k/A_k}+\db_{X_k/A_k}^*)^{n-1\,1}_{-n}\to\ker(\db_{X_{k-1}/A_{k-1}}+\db_{X_{k-1}/A_{k-1}}^*)^{n-1\,1}_{-n}$ is surjective. As the kernel of this map contains $\ker(\db_X+\db_X^*)^{n-1\,1}_{-n}$ it follows from the induction that \eq{dim A3} holds.

Combining \eq{dim A3}, \eq{dim A} and Theorem \ref{thm: harm n-1 1 version2A} we see that the all the inequalities in \eq{dim A}--\eq{dim A3} are in fact equalities. In particular, the map $H^1(X,\Th_{X_k/A_k})\to H^1(X,\Th_{X_{k-1}/A_{k-1}})$ is surjective as we have to prove.
\end{proof}
\begin{rmk}
For $n=2,$ as mentioned in the introduction it is known that the locally trivial deformations are unobstructed. But we can also verify directly the following statement: let $X$ be a compact Calabi--Yau $2$-conifold, $0\to(\ep)\to A\to B\to0$ a small extension in $(\Art)_\R,$ $\cX/A$ a locally trivial deformation of $X,$ and $\cY/B$ the deformation of $X$ defined by $\O_\cY:=\O_\cX\otimes_A B;$ then the natural map $H^1(X,\Th_{\cX/A})\to H^1(X,\Th_{\cY/B})$ is surjective. This may be done using the degenerate spectral sequence as in Remark \ref{rmk: n=2}.
\end{rmk}

By Theorem \ref{thm: harm n-1 1 version3} we can use the $T^1$ lift method as in Example \ref{ex: loc triv def}, which proves Theorem \ref{main thm2}. \qed

\section{Remarks on Theorem \ref{main thm2}}\l{sect: remarks}
The hypothesis $H^2_{X^\sing}(X,\Om^{n-2})=0$ in Theorem \ref{main thm2} is equivalent to saying that for each $x\in X^\sing,$ if $U\sb X$ is a Stein neighbourhood of $x$ then we should have $H^1(U\-\{x\},\Om^{n-2}_U)=0.$ We give in Proposition \ref{prop: toric} below a simple example for which this holds. We begin by recalling the standard facts we will use about Fano manifolds.

\begin{dfn}\l{dfn: Fano}
A {\it Fano} manifold is a compact connected complex manifold $Y$ whose anti-canonical bundle $K_Y^*$ is ample. For a Fano manifold $Y$ we define its {\it index} $\ind Y\in\{1,2,3,\dots\}.$ Recall (for instance from \cite[Theorem 3.6.9]{BG}) that $Y$ is simply connected. By the universal coefficient theorem there is a group isomorphism $H^2(Y,\Z)\cong\hom(H_2(Y,\Z),\Z).$ The first Chern class $c_1(K_Y)$ defines now a non-zero homomorphism $H_2(Y,\Z)\to \Z$ whose image may be written uniquely as $k\Z$ with $k>0.$ We then set $\ind Y:=k.$ In particular, we can define an element $\frac1{\ind Y}c_1(K_Y)\in H^2(Y,\Z).$

By the Kodaira vanishing theorem we have $H^1(Y,\O_Y)=H^2(Y,\O_Y)=0$ and so the Picard group of $Y$ is isomorphic to $H^2(Y,\Z).$ Let $L\to Y$ be a holomophic line bundle with $c_1(L)= \frac1{\ind Y}c_1(K_Y)\in H^2(Y,\Z).$ Recall from the proof of {\rm\cite[Theorem 3.1]{vC}} that there exist a normal Stein space $C$ with a unique singular point $\vx\in C$ and a complex space morphism $L\to C$ which maps the zero-section $Y\sb L$ onto $\vx$ and maps $L\-Y$ biholomorphically onto $C^\reg.$ Choose now on the holomorphic line bundle $L\to Y$ a Hermitian metric $h$ whose curvature $(1,1)$ form represents $c_1(L).$ Denote by $r:L\to[0,\iy)$ the distance relative to $h$ from the zero-section $Y\sb L,$ and by $A$ the connection $1$-form relative to $h$ on the principal $\C^*$ bundle $L\-Y\to Y.$ Restricting $r$ to $L\-Y\cong C^\reg$ we get on $C^\reg$ a $2$-form $-\frac12\d(r^2A).$ This defines then on $C^\reg$ a K\"ahler cone structure with contact $1$-form $-A.$ The link $C^\lk$ has the corresponding Sasakian structure.
\end{dfn}
We prove that $(C,\vx)$ above has the following properties.
\begin{lem}
Let $Y$ be a Fano manifold and $(L,Y)\to(C,\vx)$ as in Definition \ref{dfn: Fano}. Then $C^\reg\cong L\-Y$ is simply connected; its canonical bundle $K_{C^\reg}\to C^\reg$ is a trivial holomorphic line bundle; and $(C,\vx)$ is a rational singularity. 
\end{lem}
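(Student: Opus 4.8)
The plan is to establish the three assertions separately, using throughout the single structural input that $L$ is an $\ind Y$-th root of the canonical bundle. Since $H^1(Y,\O_Y)=H^2(Y,\O_Y)=0$ gives $\mathrm{Pic}(Y)\cong H^2(Y,\Z)$, the relation $c_1(L)=\frac1{\ind Y}c_1(K_Y)$ upgrades to an isomorphism $L^{\ot\ind Y}\cong K_Y$; equivalently the ample bundle $M:=L^*$ satisfies $M^{\ot\ind Y}\cong K_Y^*$. All three parts will feed off the fact that $c_1(L)$ is the primitive class $\frac1{\ind Y}c_1(K_Y)$.

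First I would treat $\pi_1$. The bundle projection $\pi:C^\reg\cong L\-Y\to Y$ is a locally trivial fibre bundle with fibre $\C^*$ (the frame bundle of $L$), so its homotopy exact sequence contains $\pi_2(Y)\xrightarrow{\bd}\pi_1(\C^*)\to\pi_1(L\-Y)\to\pi_1(Y)$. By Definition \ref{dfn: Fano} the Fano manifold $Y$ is simply connected, so $\pi_1(Y)=0$ and $\pi_1(\C^*)\to\pi_1(L\-Y)$ is surjective, whence $\pi_1(L\-Y)\cong\Z/\im\bd$. Under the Hurewicz isomorphism $\pi_2(Y)\cong H_2(Y,\Z)$ the connecting map $\bd$ is pairing against the Euler class $c_1(L)$. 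Since $c_1(K_Y):H_2(Y,\Z)\to\Z$ has image exactly $\ind Y\cdot\Z$, its primitive divisor $c_1(L)$ pairs onto all of $\Z$; thus $\bd$ is onto and $\pi_1(C^\reg)=0$.

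Next, for the canonical bundle I would use the relative tangent sequence $0\to T_{L/Y}\to TL\to\pi^*TY\to0$ of the total space $\pi:L\to Y$ together with the identification $T_{L/Y}\cong\pi^*L$; taking determinants yields $K_L\cong\pi^*(K_Y\ot L^*)\cong\pi^*(L^{\ot(\ind Y-1)})$. On $L\-Y$ the tautological section of $\pi^*L$, sending $v\in L_y$ to the vector $v$ itself, is nowhere vanishing, so $\pi^*L|_{L\-Y}$ is holomorphically trivial; hence so is $K_{C^\reg}\cong(\pi^*L)^{\ot(\ind Y-1)}|_{L\-Y}$.

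The rationality claim is the substantive part and the one I expect to be the main obstacle. Here $p:L\to C$ is a resolution: $L$ is smooth, $p$ is biholomorphic over $C^\reg$, and $p^{-1}(\vx)=Y$. I would prove $R^ip_*\O_L=0$ for $i>0$ by the theorem on formal functions. The normal bundle of the zero section $Y\sb L$ is $L$, so its conormal bundle is $M=L^*$ and the $m$-th graded piece of $\O_L$ along $Y$ is $M^{\ot m}$; therefore the $\O$-cohomology of every infinitesimal neighbourhood of $Y$ is filtered by the groups $H^i(Y,M^{\ot m})$, $m\ge0$, and it suffices to show these vanish for $i>0$. For $m\ge1$ I would write $M^{\ot m}\cong K_Y\ot(K_Y^*\ot M^{\ot m})$, where $c_1(K_Y^*\ot M^{\ot m})=(1+\tfrac m{\ind Y})\,c_1(K_Y^*)$ is a positive multiple of an ample class and hence ample, so Kodaira vanishing gives the desired vanishing; for $m=0$ the same argument gives $H^i(Y,\O_Y)=H^i(Y,K_Y\ot K_Y^*)=0$. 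Thus all the formal-function limits vanish, $R^ip_*\O_L=0$ for $i>0$, and $(C,\vx)$ is rational. The delicate points of this last step are pinning down the conormal bundle correctly as $M=L^*$ and checking that the contraction recalled from \cite{vC} is genuinely resolved by $p:L\to C$, so that formal functions applies; once the identity $c_1(L)=\frac1{\ind Y}c_1(K_Y)$ is exploited, the ampleness bookkeeping feeding Kodaira vanishing is routine.
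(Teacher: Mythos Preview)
Your proof is correct, but parts 2 and 3 take a genuinely different route from the paper's.

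For simple connectedness your argument is essentially the paper's: both run the homotopy exact sequence of the $\C^*$-bundle $L\-Y\to Y$ and identify the connecting map with pairing against $c_1(L)$, which is surjective by the definition of $\ind Y$.

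For the triviality of $K_{C^\reg}$ the paper works differential-geometrically: it invokes the Sasakian structure on $C^\lk$ and van Coevering's \cite[Proposition~2.5]{vC} to produce a conformal factor $f$ for which $K_{C^\reg}$ is flat with respect to $fg$, then uses $\pi_1(C^\reg)=0$ to get a parallel (hence holomorphic) nowhere-vanishing section $\Om$. Your argument via the relative tangent sequence and the tautological section of $\pi^*L$ over $L\-Y$ is purely algebraic and more elementary; it bypasses the Sasakian machinery entirely.

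For rationality the approaches diverge again. The paper carries over the section $\Om$ from part~2, observes that it has constant pointwise norm in the conformal metric $fg$ with $f$ pulled back from $Y$, deduces $\Om$ is $L^2$ near $\vx$, and then quotes the equivalence (iv)$\Leftrightarrow$(i) of Remark~\ref{rmk: L^2}. You instead check $R^ip_*\O_L=0$ directly by formal functions and Kodaira vanishing on the graded pieces $M^{\ot m}=K_Y\ot M^{\ot(m+\ind Y)}$. Your route is self-contained and does not rely on the $L^2$ criterion developed earlier in the paper; the paper's route has the virtue of exhibiting the holomorphic volume form concretely and tying the three assertions together through a single object $\Om$.
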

\begin{proof}
We use the homotopy exact sequence $\pi_2(L\-Y)\to \pi_2Y\to \pi_1\C^*\to \pi_1(L\-Y)\to \pi_1Y=\{1\}.$ If we choose a point $y\in Y$ and denote its fibre by $F\sb L\-Y$ then there is a group isomorphism $\pi_2Y=\pi_2(Y,y)\cong \pi_2(L\-Y,F).$ So every element of $\pi_2(Y,y)$ may be represented by a map $\al:(D^2,\partial D^2)\to (L\-Y,F).$ The map $\pi_2(Y,y)\to\pi_1F$ is then defined by restricting $\al$ to $\partial D^2.$ On the other hand, define the $1$-form $A$ as in Definition \ref{dfn: Fano}; and restricting it to the fibre $F\cong\C^*$ we get an isomorphism $\pi_1F\cong\Z.$ The composite map $\pi_2Y\to\pi_1F\cong\Z$ is thus defined by integrating $A$ over each $\al.$ By Stokes' theorem the map $\pi_2Y\to \Z$ is the pairing with $c_1(L).$ Since $Y$ is simply connected we get, by the Hurewicz and universal coefficient theorems, a group isomorphism $H^2(Y,\Z)\cong \hom(\pi_2Y,\Z).$ The map $\pi_2Y\to \Z$ thus contains all the information of $c_1(L)\in H^2(Y,\Z).$ The definition of $\ind Y$ implies now that the group homomorphism $\pi_2Y\to\Z$ is surjective. Returning to the homotopy exact sequence above we see that $\pi_1(L\-Y)$ is a trivial group or equivalently that $L\-Y$ is simply connected.

Notice that the cohomology class $c_1(K_Y^*)=-\ind(Y) c_1(L)$ may be represented by the closed $2$-form $-\ind (Y)\d A.$ Denote by $\pi: C^\lk\to Y$ the composite map $C^\lk\sb C^\reg\cong L\-Y\sb L \to Y$ ending with the bundle projection. We call $\pi^*c_1(K_Y^*)$ the {\it basic Chern class of the Sasakian $C^\lk.$} This may be represented by the $2$-form $-\ind (Y)\d A|_{C^\lk}$ and thus satisfies the condition (i) of \cite[Proposition 2.5]{vC}. The proof of the same proposition shows that there exists a function $f:Y\to(0,\iy),$ identified with its pull-back to $C^\reg,$ such that the holomorphic line bundle $K_{C^\reg}\to C^\reg$ is flat with respect to the conformal change $fg$ of the K\"ahler cone metric $g$ of $C^\reg.$ Since $C^\reg$ is simply connected we get a nowhere-vanishing parallel section $\Om$ of $K_{C^\reg}.$ This is in particular holomorphic and so $K_{C^\reg}$ is holomorphically trivial. Also $\Om$ has constant pointwise norm with respect to $fg,$ and $f$ is the pull-back of a function on $Y.$ These imply that $\Om$ is $L^2$ on a punctured neighbourhood of $\vx\in C.$ Hence it follows as in Remark \ref{rmk: L^2} that $(C,\vx)$ is rational.
\end{proof}

We turn now to the case in which the Fano manifolds are toric.
\begin{prop}\l{prop: toric}
Let $n\ge5$ be an integer and $Y$ a toric Fano manifold of dimension $n-1.$ Let $(L,Y)\to (C,\vx)$ be as in Definition \ref{dfn: Fano}, and $U\sb C$ a Stein neighbourhood of $\vx.$ Then $H^1(U\-\{\vx\},\Om^{n-2}_C)=0$ and $H^1(U\-\{\vx\},\Om^{n-1}_C)=0.$
\end{prop}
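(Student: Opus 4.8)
The plan is to reduce both vanishing statements to vanishing theorems on the toric base $Y$ and then invoke Bott–Danilov vanishing. Throughout write $m:=\dim Y=n-1$, and recall from Definition \ref{dfn: Fano} that $C^\reg\cong L\-Y$ is the total space of the line bundle $L\to Y$ with its zero-section removed, that the bundle projection restricts to a $\C^*$-bundle $p:C^\reg\to Y$, and that $L^{\ind Y}\cong K_Y$ with $L^{-1}$ ample (since $Y$ is Fano and $\operatorname{Pic}Y\cong H^2(Y,\Z)$ is torsion-free for toric $Y$). Since $U\-\{\vx\}\sb C^\reg$ is smooth we have $\Om^p_C|_{U\-\{\vx\}}=\Om^p_{C^\reg}|_{U\-\{\vx\}}$, so the forms in question are honest holomorphic forms.

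First I would pass from the punctured neighbourhood to all of $C^\reg$. As $C$ is a normal affine (hence Stein) cone and $U$ a Stein neighbourhood of $\vx$, the coherent sheaf $\Om^p_C$ has $H^{\geq1}(U,\Om^p_C)=H^{\geq1}(C,\Om^p_C)=0$ by Cartan's Theorem B; the local cohomology sequence on $U$ then gives $H^1(U\-\{\vx\},\Om^p_C)\cong H^2_{\vx}(U,\Om^p_C)$, excision identifies this with $H^2_{\vx}(C,\Om^p_C)$, and the local cohomology sequence on the affine cone $C$ identifies the latter with $H^1(C^\reg,\Om^p_{C^\reg})$. Hence it suffices to prove $H^1(C^\reg,\Om^p_{C^\reg})=0$ for $p=n-2,n-1$.

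Next I would compute $H^1(C^\reg,\cdot)$ through the $\C^*$-bundle $p$. The relative cotangent sequence $0\to p^*\Om^1_Y\to\Om^1_{C^\reg}\to\Om^1_{C^\reg/Y}\to0$, together with the trivialisation $\Om^1_{C^\reg/Y}\cong\O_{C^\reg}$ provided by the nowhere-vanishing Euler field, yields after taking exterior powers the short exact sequences
\[0\to p^*\Om^p_Y\to\Om^p_{C^\reg}\to p^*\Om^{p-1}_Y\to0.\]
Because $p$ is an affine morphism with $R^{>0}p_*=0$ and $p_*\O_{C^\reg}\cong\bop_{k\in\Z}L^{\ot k}$, the projection formula gives $H^q(C^\reg,p^*\Om^j_Y)\cong\bop_{k\in\Z}H^q(Y,\Om^j_Y\ot L^{\ot k})$. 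Feeding this into the long exact sequence of the displayed sequence, $H^1(C^\reg,\Om^p_{C^\reg})$ is squeezed between $\bop_k H^1(Y,\Om^p_Y\ot L^{\ot k})$ and $\bop_k H^1(Y,\Om^{p-1}_Y\ot L^{\ot k})$, so everything reduces to
\[H^1(Y,\Om^j_Y\ot L^{\ot k})=0\quad\text{for all }k\in\Z\text{ and }j\in\{m-2,m-1,m\}.\]
For $k<0$ the bundle $L^{\ot k}$ is ample, so the Bott–Danilov–Steenbrink vanishing theorem for smooth complete toric varieties gives $H^{\geq1}(Y,\Om^j_Y\ot L^{\ot k})=0$. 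For $k>0$, Serre duality identifies $H^1(Y,\Om^j_Y\ot L^{\ot k})$ with the dual of $H^{m-1}(Y,\Om^{m-j}_Y\ot L^{\ot(-k)})$, where $L^{\ot(-k)}$ is ample and $m-1\geq1$, so Bott vanishing kills it again. For $k=0$ one needs $H^1(Y,\Om^j_Y)=0$; since the Hodge numbers of a smooth complete toric variety satisfy $h^{q,j}=0$ for $q\neq j$, this holds exactly when $j\neq1$. The hypothesis $n\geq5$, i.e. $m\geq4$, forces $j\geq m-2\geq2$, so the case $j=1$ never arises — this is precisely where $n\geq5$ enters.

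The hard part will be the comparison between the analytic cohomology we must make vanish and the graded computation above: the identities $p_*\O_{C^\reg}\cong\bop_k L^{\ot k}$ and $R^{>0}p_*=0$ are transparent in the algebraic category, but the spaces $U\-\{\vx\}$ and $C^\reg$ are non-compact and GAGA does not apply. I would bridge this by noting that the local cohomology $H^2_{\vx}(C,\Om^p_C)$ at the isolated singularity is determined by the formal completion of $\O_{C,\vx}$, so its analytic and algebraic versions coincide, after which the computation via the $\C^*$-bundle is purely algebraic and the weight decomposition is literal; alternatively I would run the Leray/\v Cech argument directly with an $S^1$-invariant cover pulled back from $Y$ and check that the weight-$k$ Laurent components of analytic forms on the punctured disc bundle are governed termwise by $H^\bullet(Y,\Om^j_Y\ot L^{\ot k})$. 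The two external inputs — Bott–Danilov–Steenbrink vanishing and the Hodge–Tate property $h^{q,j}=0$ $(q\neq j)$ of smooth complete toric varieties — I would simply quote from the standard toric literature.
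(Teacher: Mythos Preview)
Your argument is correct, and the reduction to $H^1(Y,\Om^j_Y\otimes L^{\otimes k})=0$ for $j\in\{m-2,m-1,m\}$ and $k\in\Z$, followed by Bott--Danilov vanishing ($k<0$), Serre duality plus Bott ($k>0$), and the toric Hodge numbers ($k=0$, where $n\ge5$ forces $j\ge2$), is clean and complete. The analytic--algebraic bridge via invariance of local cohomology $H^2_{\vx}$ under completion is the right idea and works.

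The paper takes a different route. Instead of passing to $C^\reg$ and the algebraic $\C^*$-bundle decomposition, it works on the resolution side: it pulls $U$ back to a $1$-convex neighbourhood $V\sb L$ of the zero-section $Y$ and uses Grauert's formal comparison theorem (from \cite{GPR}) to identify $H^q(V,\Om^p_L)$ with $H^q$ of the formal completion $\Om^p_L|_Y^\wedge\cong\prod_{k\ge0}(\Om^p_Y\otimes L^{-k})\oplus\prod_{k\ge1}(\Om^{p-1}_Y\otimes L^{-k})$, whence Bott vanishing leaves only $H^q(Y,\Om^p_Y)$. It then computes $H^2_Y(V,\Om^{n-2}_L)$ separately via Karras' duality theorem for local cohomology along exceptional sets, and concludes from the sequence $H^1(V,\cdot)\to H^1(V\setminus Y,\cdot)\to H^2_Y(V,\cdot)$ together with $V\setminus Y\cong U\setminus\{\vx\}$. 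So the paper handles the ``non-negative weights'' via $H^1(V,\cdot)$ and the remaining piece via local duality, staying entirely in the analytic category; you handle all $k\in\Z$ at once on $C^\reg$ and use ordinary Serre duality on the compact base $Y$ in place of Karras duality. Your route is arguably more elementary --- it avoids the $1$-convex formal theorem and the specialised duality of \cite{Kar} --- at the cost of the analytic-to-algebraic passage, which the paper does not need.
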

\begin{proof}
For $k\in\Z$ denote by $L^{-k}$ the invertible sheaf on $Y$ corresponding to the line bundle $(L^*)^{\otimes k}.$ We define on $Y$ an $\O_L|_Y$ module sheaf exact sequence 
$0\to \Om^1_Y\otimes_{\O_Y} (\O_L|_Y)\to \Om^1_L|_Y \to L^{-1}\otimes_{\O_Y}(\O_L|_Y)\to0.$ The injective map $\Om^1_Y\otimes_{\O_Y} (\O_L|_Y)\to \Om^1_L|_Y$ is defined by $\d z\otimes w\mapsto w\d z$ where $z$ is a local function on $Y,$ and $w$ a local function on $L.$ The surjective map $\Om^1_L|_Y \to L^{-1}\otimes_{\O_Y}\O_L|_Y$ is defined as follows. Let $\om$ be a $1$-form defined near $y\in Y.$ For each point $x\in L$ in the fibre over $y$ choose in the same fibre a path from $y$ to $x$ and integrate $\om$ over it. Then we get an element of $L^{-1}\otimes_{\O_Y}\O_L|_Y.$ This map $\Om^1_L|_Y \to L^{-1}\otimes_{\O_Y}\O_L|_Y$ has a splitting $L^{-1}\otimes_{\O_Y}\io^{-1}\O_L\to \Om^1_L$ (which differentiates the elements of $L^{-1}\otimes_{\O_Y}\io^{-1}\O_L$). So there is an isomorphism $\Om^1_L|_Y\cong (\Om^1_Y\otimes_{\O_Y}\O_L|_Y)\oplus (L^{-1}\otimes_{\O_Y}\O_L|_Y).$ Hence we get an isomorphism 
\e\l{Om^p}\Om^p_L|_Y\cong (\Om^p_Y\otimes_{\O_Y}\O_L|_Y)\oplus (\Om^{p-1}_Y\otimes_{\O_Y} L^{-1}\otimes_{\O_Y}\O_L|_Y). \e
Denote by $\w$ the functor of taking completions on $Y\sb L.$ Then $\O_L|_Y^\w$ is an $\O_Y$ algebra sheaf on $Y$ isomorphic to $\prod_{k=0}^\iy L^{-k}.$  From \eq{Om^p} we get an $\O_Y$ module sheaf isomorphism
\ea\l{line bundle}
\Om^p_L|_Y^\w\cong (\Om^p_Y\otimes_{\O_Y}\O_L|_Y^\w)\oplus (\Om^{p-1}_Y\otimes_{\O_Y} L^{-1}\otimes_{\O_Y}\O_L|_Y^\w)\\
\cong\prod_{k=0}^\iy(\Om^p_Y\otimes_{\O_Y} L^{-k})\oplus\prod_{k=1}^\iy (\Om^p_Y\otimes_{\O_Y} L^{-k}).
\ea 
Recall (for instance from \cite[Theorem 9.3.1]{CLS}) that the toric variety $Y$ has the Bott vanishing property, which in the current circumstances implies that for $q\ge1$ we have $H^q(Y,\Om^p_Y\otimes L^{-k})=0.$ Denote by $V\sb L$ the pull-back of $U\sb C$ by the contraction map $(L,Y)\to(C,\vx).$ Then $V$ is a $1$-convex space; and by \cite[Chapter V, Theorem 2.1.7]{GPR}, for $q\ge1$ we have $H^q(V,\Om^p_V)\cong H^q(Y,\Om^p_L|_Y^\w)\cong H^q(Y,\Om^p_Y).$ As $Y$ is toric, by \cite[Theorem 9.4.11]{CLS} we have $H^q(Y,\Om^p_Y)=0$ for $p\ne q.$ Since $n\ge5$ it follows that $H^1(Y,\Om^{n-2}_Y)=H^{n-2}(Y,\Om^2_Y)=0.$ By the duality theorem \cite{Kar} we have also $H^2_Y(V,\Om^{n-2}_L)=0.$ The long exact sequence $0=H^1(V,\Om^{n-2}_L)\cong H^1(V\-Y,\Om^{n-2}_L)\to H^2_Y(V,\Om^{n-2}_L)=0$ implies now $H^1(V\-Y,\Om^{n-2}_L).$ Identifying $V\-Y$ with $U\-\{\vx\}$ we see that $H^1_\vx(U\-\{\vx\},\Om^{n-2}_C)=0.$ In the same way we can verify also that $H^1(U\-\{\vx\},\Om^{n-1}_C)=0.$ 
\end{proof}
\begin{rmk}
The algebraic scheme version of \eq{line bundle} is proved in \cite[Lemma A.1]{SVV}.
\end{rmk}

As in \cite{Schless2} the vector space $H^1(U\-\{\vx\},\Th_C)\cong H^1(U\-\{\vx\},\Om^{n-1}_C)=0$ parametrizes the infinitesimal deformations of $(C,\vx)$ which must therefore be trivial. In particular, the deformations of $(C,\vx)$ are unobstructed. If every singularity of a compact Calabi--Yau conifold $X$ is of this kind then the locally trivial deformations of $X$ are equivalent to the full deformations of $X.$ The difference from Theorem \ref{main thm1} is that we do not need the hypothesis that either $b_{n-1}(C^\reg)=0$ or $b_{n-2}(C^\reg)=0.$

It does not seem easy to find examples of $(C,\vx)$ with $H^1(U\-\{\vx\},\Om^{n-2}_C)=0.$ We end by showing that even the ordinary double points do not satisfy this condition. 
\begin{prop}
Let $n\ge3$ be an integer and $(C,\vx)\sb\C^{n+1}$ the isolated singularity defined by $z_1^2+\dots+z_{n+1}^2=0$ using the coordinates $z_1,\dots,z_{n+1}$ of $\C^{n+1}.$
Let $U\sb C$ be a Stein neighbourhood of $\vx.$ Then $H^1(U\-\{\vx\},\Om^{n-2}_C)$ is non-zero. 
\end{prop}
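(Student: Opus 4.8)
The plan is to identify $(C,\vx)$ with the affine cone over the smooth quadric $Q:=\{z_1^2+\dots+z_{n+1}^2=0\}\sb\P^n$ of dimension $n-1,$ with its natural polarization $\O_Q(1),$ and to compute $H^1(U\-\{\vx\},\Om^{n-2}_C)=H^1(U^\reg,\Om^{n-2}_{U^\reg})$ by the very method of Proposition \ref{prop: toric}. As in Definition \ref{dfn: Fano} the quadric $Q$ is Fano with $\ind Q=n-1,$ so $\frac1{\ind Q}c_1(K_Q)=c_1(\O_Q(-1))$ and the cone on $(Q,\O_Q(1))$ is exactly $(C,\vx),$ with $L=\O_Q(-1).$ I would then run \eqref{Om^p}--\eqref{line bundle} verbatim: writing $V\sb\mathrm{Tot}(L)$ for the preimage of $U$ under the contraction $(L,Q)\to(C,\vx),$ the $1$-convexity result \cite{GPR} gives $H^q(V,\Om^p_L)\cong H^q(Q,\Om^p_L|_Q^\wedge)$ and hence, by \eqref{line bundle} with $p=n-2,$
\[ H^1(V,\Om^{n-2}_L)\cong\ts\prod_{k\ge0}H^1(Q,\Om^{n-2}_Q(k))\op\prod_{k\ge1}H^1(Q,\Om^{n-3}_Q(k)). \]
The difference from the toric case is that $Q$ does not have Bott vanishing, so the right-hand side will be non-zero.

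The key computation is that $H^1(Q,\Om^{n-2}_Q(n-3))\ne0$ for every $n\ge3,$ which by the display contributes to $H^1(V,\Om^{n-2}_L).$ I would prove this by Serre duality together with the two standard sequences for the hypersurface $Q\sb\P^n.$ Serre duality on $Q$ (with $K_Q=\O_Q(-(n-1))$) turns the claim into $H^{n-2}(Q,\Om^1_Q(3-n))\ne0.$ Twisting the conormal sequence $0\to\O_Q(-2)\to\Om^1_{\P^n}|_Q\to\Om^1_Q\to0$ and the restriction sequence $0\to\Om^1_{\P^n}(1-n)\to\Om^1_{\P^n}(3-n)\to\Om^1_{\P^n}|_Q(3-n)\to0$ by the appropriate powers of $\O_Q(1),$ and feeding in Bott's formula for $\Om^1_{\P^n}(j),$ one finds for $n\ge4$ that both $\Om^1_{\P^n}(1-n)$ and $\Om^1_{\P^n}(3-n)$ have vanishing cohomology, whence $H^{n-2}(Q,\Om^1_Q(3-n))\cong H^{n-1}(Q,\O_Q(1-n))\cong H^0(Q,\O_Q)^*=\C;$ the case $n=3$ is the direct computation $H^1(Q,\Om^1_Q)=H^{1,1}(\P^1{\times}\P^1)=\C^2\ne0.$

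Finally I would descend from $V$ to $V\-Y\cong U^\reg,$ exactly as in the last lines of the proof of Proposition \ref{prop: toric} but now in the non-vanishing direction. Using the local cohomology sequence $H^1_Y(V,\Om^{n-2}_L)\to H^1(V,\Om^{n-2}_L)\to H^1(V\-Y,\Om^{n-2}_L)\to H^2_Y(V,\Om^{n-2}_L)$ and the duality theorem \cite{Kar} to control $H^j_Y(V,\Om^{n-2}_L),$ it suffices to check that the non-zero class found above (which lives in the non-negative weight $k=n-3\ge0$ for the natural $\C^*$-action) survives restriction to $U^\reg.$ Equivalently, working directly with the $\C^*$-bundle $\pi:C^\reg\to Q,$ I would show that the weight-$(n-3)$ piece of $H^1(C^\reg,\Om^{n-2})$ — computed from $0\to\pi^*\Om^{n-2}_Q\to\Om^{n-2}_{C^\reg}\to\pi^*\Om^{n-3}_Q\ot\Om^1_{C^\reg/Q}\to0,$ with $\Om^1_{C^\reg/Q}$ trivial — is non-zero; this piece surjects onto $\coker\bigl(H^0(Q,\Om^{n-3}_Q(n-3))\to H^1(Q,\Om^{n-2}_Q(n-3))\bigr),$ the connecting map being essentially cup product with the Euler class $e=c_1(\O_Q(1)).$ For $n=3$ this cokernel is $\C^2/\C e=\C,$ while for $n\ge4$ one checks $H^0(Q,\Om^{n-3}_Q(n-3))=0$ (again via the two sequences and Bott), so the cokernel is all of $\C.$ The main obstacle will be precisely this bookkeeping — the twisted quadric computation combined with verifying that the surviving weight piece is genuinely non-zero after passing from the total space $V$ to the punctured neighbourhood $U^\reg.$ The identification $\Om^{n-2}_C|_{C^\reg}\cong\Th_{C^\reg}$ coming from the Calabi--Yau structure (Remark \ref{rmk: L^2}) offers a useful cross-check, since the class produced is exactly the weight of the smoothing $\sum z_i^2=\ep$ of the ordinary double point.
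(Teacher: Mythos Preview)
Your approach is essentially the paper's: both identify $(C,\vx)$ with the cone on the quadric $Q\sb\P^n$ with $L=\O_Q(-1)$, both use the product decomposition \eqref{line bundle} on the $1$-convex space $V$, and both prove the key non-vanishing $H^1(Q,\Om^{n-2}_Q(n-3))\ne0$ via the conormal/restriction sequences and Bott on $\P^n$ (your Serre-dual formulation is equivalent). The $n=3$ case is also handled the same way, via $Q\cong\P^1\times\P^1$.

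The real difference is in the descent from $H^1(V,\Om^{n-2}_L)$ to $H^1(U\-\{\vx\},\Om^{n-2}_C)$. For $n\ge4$ the paper simply shows $H^1_Y(V,\Om^{n-2}_L)=0$ outright: by Karras duality this reduces to $H^{n-1}(V,\Om^2_L)=0$, which follows from the same product formula once one checks $H^{n-1}(Q,\Om^2_Q(k))=0$ for $k\ge0$ and $H^{n-1}(Q,\Om^1_Q(k))=0$ for $k\ge1$ (again via the two sequences and Bott on $\P^n$). This gives an injection $H^1(V,\Om^{n-2}_L)\hookrightarrow H^1(U\-\{\vx\},\Om^{n-2}_C)$ and finishes the proof with no further bookkeeping. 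Your alternative route through the weight decomposition of the $\C^*$-bundle and the connecting map ``cup with $e$'' is plausible but more involved, and it hinges on the auxiliary vanishing $H^0(Q,\Om^{n-3}_Q(n-3))=0$ for $n\ge4$, which you assert but do not prove; this is not a Kodaira--Nakano consequence and would itself require the recursive two-sequence argument. Since you already invoke Karras duality, it is both shorter and safer to use it to kill $H^1_Y$ directly as the paper does.
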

\begin{proof}
Denote by $Y\sb\P^n$ the Fano manifold defined by the same homogeneous polynomial. In Definition \ref{dfn: Fano} then we can take $L$ to be the line bundle corresponding to $\O_Y(-1):=\O_{\P^n}(-1)|_Y.$ Contracting the zero-section $Y\sb L$ we reproduce $(C,\vx).$  Suppose $n=3.$ Then $Y$ is isomorphic to $\P^1\times\P^1$ and in particular toric. The proof of Proposition \ref{prop: toric} shows that there are isomorphisms $H^1_Y(L,\Om^1_L)\cong H^2(Y,\Om^2_Y)\cong \C,$ $H^1(L,\Om^1_L)\cong H^1(Y,\Om^1_Y)\cong \C^2$ and $H^2_Y(L,\Om^1_L)\cong H^1(Y,\Om^2_Y)=0.$ The exact sequence $H^1_Y(L,\Om^1_L)\to H^1(L,\Om^1_L)\to H^1(U\-\{\vx\},\Om^1_C)\to H^2_Y(L,\Om^1_L)$ implies therefore $H^1(U\-\{\vx\},\Om^1_C)\ne0.$ 

For $n\ge4$ we do different computation. We show first that $H^1(L,\Om^{n-2}_L)$ is non-zero. For $p,k\in\Z$ put $\Om^p_{\P^n}(k):=\Om^p_{\P^n}\otimes \O_{\P^n}(k)$ and $\Om_Y^p(k)\cong\Om^p_Y\otimes \O_Y(k).$ By \cite[Proposition 14.4]{Bott} we have 
\begin{equation}\l{Bott} \parbox{10cm}{
$H^q(\mathbb P^n, \Om^p_{\mathbb P^n} (k))=0$ unless {\bf(i)} $k=0$ and $p=q,$ {\bf(ii)} $q=0$ and $k>p,$ or {\bf(iii)} $q=n$ and $k<p-n.$
}\end{equation}
In particular, $H^1(\P^n,\Om^{n-1}_{\P^n}(n-3))=0$ and $H^0(\P^n,\Om^{n-1}_{\P^n}(n-1))=0.$ The exact sequence $0\to \Om^{n-1}_{\P^n}(n-3)\to \Om^{n-1}_{\P^n}(n-1)\to \Om^{n-1}_{\P^n}(n-1)|_Y\to0$ implies then $H^0(Y,\Om^{n-1}_{\P^n}(n-3))=0.$ The exact sequence $0\to \Om^{n-2}_Y(n-3)\to \Om^{n-1}_{\P^n}(n-1)|_Y\to \Om^{n-1}_Y(n-1)\to 0$ induces therefore an injective map $H^0(Y, \Om^{n-1}_Y(n-1))\to H^1(Y,\Om^{n-2}_Y(n-3)).$
On the other hand, there are isomorphisms $\Om^{n-1}_Y\cong \Om^n_{\P^n}(2)|_Y\cong \O_Y(1-n)$ and $H^0(Y, \Om^{n-1}_Y(n-1))\cong H^0(Y,\O_Y)\cong \C\ne0.$ This may however be regarded as a subspace of $H^1(Y,\Om^{n-2}_Y(n-3))$ so the latter is non-zero. The proof of Proposition \ref{prop: toric} shows that
\e H^1(L,\Om^{n-2}_L)\cong \Bigl(\prod_{k=0}^\iy H^1(L,\Om^{n-2}_L(k))\Bigr)\oplus \Bigl(\prod_{k=1}^\iy H^1(L,\Om^{n-3}_L(k))\Bigr)\e
which is non-zero too. 

We show next that $H^{n-1}(L,\Om^2_L)=0.$ For $k\ge0$ it follows from \eq{Bott} with $n\ge4$ that $H^{n-1}(\P^n,\Om^2_{\P^n}(k))=H^n(\P^n,\Om^2_{\P^n}(k-2))=0.$ The exact sequence $0\to \Om^2_{\P^n}(k-2)\to\Om^2_{\P^n}(k)\to\Om^2_{\P^n}(k)|_Y\to0$ implies therefore $H^{n-1}(Y,\Om^2_{\P^n}(k))=0.$ As $Y$ is of dimension $n-1$ we have also $H^n(Y,\Om^1_{\P^n}(k-2))=0.$ The exact sequence $0\to\Om^1_Y(k-2)\to\Om^2_{\P^n}(k)|_Y\to \Om^2_Y(k)\to0$ implies now $H^{n-1}(Y,\Om^2_Y(k))=0.$ In the same way we can verify that for $k\ge1$ we have $H^{n-1}(Y,\Om^1_Y(k))=0.$ The proof of Proposition \ref{prop: toric} shows that
\e H^{n-1}(L,\Om^2_L)\cong \Bigl(\prod_{k=0}^\iy H^{n-1}(L,\Om^2_L(k))\Bigr)\oplus \Bigl(\prod_{k=1}^\iy H^{n-1}(L,\Om^1_L(k))\Bigr)\e
which vanishes. By the duality theorem \cite{Kar} therefore $H^1_Y(L,\Om^{n-2}_L)=0$ and there is accordingly an exact sequence $0\to H^1(L,\Om^{n-2}_L)\to H^1(U\-\{\vx\},\Om^{n-2}_L).$ Hence we find that $H^1(U\-\{\vx\},\Om^{n-2}_L)\ne0.$
\end{proof}

Institute of Mathematical Sciences, ShanghaiTech University, 393 Middle Huaxia Road, Pudong New District, Shanghai, China 

e-mail address: yosukeimagi@shanghaitech.edu.cn

\end{document}